\def\red#1{{\color{red} #1}}
\newtcolorbox{tbox}[1][]{%
    breakable,
    enhanced,
    colframe=black,
    coltitle=white,
    #1
}
\theoremstyle{plain}
\newtheorem{theorem}{Theorem}[section]
\newtheorem{proposition}[theorem]{Proposition}
\newtheorem{lemma}[theorem]{Lemma}
\newtheorem{corollary}[theorem]{Corollary}
\newtheorem{lemdef}[theorem]{Lemma--Definition}
\theoremstyle{definition}
\newcommand{\appsection}[1]{\let\oldthesection\thesection
\renewcommand{\thesection}{Appendix \oldthesection}
\section{#1}\let\thesection\oldthesection}
\newtheorem{definition}[theorem]{Definition}
\newtheorem{notation}[theorem]{Notation}
\theoremstyle{remark}
\newtheorem{remark}[theorem]{Remark}
\newtheorem{example}[theorem]{Example}
\newtheorem{computation}[theorem]{Computation}
\newtheorem{database}[theorem]{Database}
\DeclareMathOperator{\Aut}{Aut}
\DeclareMathOperator{\End}{End}
\DeclareMathOperator{\Bl}{Bl}
\DeclareMathOperator{\Cl}{Cl}
\DeclareMathOperator{\GL}{GL}
\DeclareMathOperator{\Gal}{Gal}
\DeclareMathOperator{\rd}{res}
\DeclareMathOperator{\redd}{red}
\DeclareMathOperator{\tors}{tors}
\DeclareMathOperator{\ored}{\overline{\rd}}
\DeclareMathOperator{\dra}{\dashrightarrow}
\DeclareMathOperator{\Cox}{Cox}
\DeclareMathOperator{\barMov}{\overline{\text{Mov}}}
\DeclareMathOperator{\Nef}{Nef}
\DeclareMathOperator{\barEff}{\overline{\hbox{\rm Eff}}}
\DeclareMathOperator{\Eff}{Eff}
\DeclareMathOperator{\Pic}{Pic}
\DeclareMathOperator{\Spec}{Spec}
\DeclareMathOperator{\ch}{char}
\DeclareMathOperator{\Ker}{Ker}
\DeclareMathOperator{\ord}{ord}
\DeclareMathOperator{\Imm}{Im}
\newcommand{\QED}{\ifhmode\unskip\nobreak\fi\quad {\rm Q.E.D.}} 
\newcommand{\e}{e}
\newcommand{\vol}{\operatorname{Vol}}
\newcommand{\Num}{\operatorname{Num}}
\newcommand{\NP}{\operatorname{NP}}
\newcommand{\al}{\alpha}
\newcommand{\be}{\beta}
\newcommand{\ga}{\gamma}
\newcommand{\de}{\delta}
\newcommand{\bA}{\mathbb A}
\newcommand{\bC}{\mathbb C}
\newcommand{\bD}{\mathbb D}
\newcommand{\bE}{\mathbb E}
\newcommand{\bF}{\mathbb F}
\newcommand{\bG}{\mathbb G}
\newcommand{\bP}{\mathbb P}
\newcommand{\bQ}{\mathbb Q}
\newcommand{\bR}{\mathbb R}
\newcommand{\bZ}{\mathbb Z}
\newcommand{\cB}{\mathcal B}
\newcommand{\cC}{\mathcal C}
\newcommand{\cE}{\mathcal E}
\newcommand{\cF}{\mathcal F}
\newcommand{\cL}{\mathcal L}
\newcommand{\cO}{\mathcal O}
\newcommand{\cP}{\mathcal P}
\newcommand{\cX}{\mathcal X}
\newcommand{\cY}{\mathcal Y}
\newcommand{\cZ}{\mathcal Z}
\newcommand{\ra}{\rightarrow}
\newcommand{\oM}{\overline{M}}
\newcommand{\oLM}{\overline{LM}}
\newcommand{\oEff}{\overline{\Eff}}
\newcounter{et}[section]
\numberwithin{equation}{section}
\begin{document}
\bibliographystyle{amsplain}

\title[Blown-up toric surfaces with non polyhedral 
effective cone]{Blown-up toric surfaces with\\ 
non-polyhedral 
effective cone}

\author[A-M.~Castravet]{Ana-Maria Castravet}

\address{Universit\'e Paris-Saclay, UVSQ, CNRS, Laboratoire de Math\'ematiques de Versailles, 78000, Versailles, France}
\email{ana-maria.castravet@uvsq.fr }

\author[A.~Laface]{Antonio Laface}
\address{
Departamento de Matem\'atica,
Universidad de Concepci\'on,
Casilla 160-C,
Concepci\'on, Chile}
\email{alaface@udec.cl}

\author[J.~Tevelev]{Jenia Tevelev}

\address{Department of Mathematics and Statistics, University of Massachusetts Amherst, 710 North Pleasant Street, Amherst, MA 01003, USA and 
Laboratory of Algebraic Geometry and its Applications, HSE, Moscow, Russia}
\email{tevelev@math.umass.edu}

\author[L.~Ugaglia]{Luca Ugaglia}
\address{
Dipartimento di Matematica e Informatica,
Universit\`a degli studi di Palermo,
Via Archirafi 34,
90123 Palermo, Italy}
\email{luca.ugaglia@unipa.it}

\begin{abstract}
We construct  examples
of projective toric surfaces whose blow-up 
at a general point has a non-polyhedral
pseudo-effective cone, both in characteristic~$0$ and in every prime characteristic~$p$.
As a consequence, 
we prove that the pseudo-effective cone 
of the Grothendieck--Knudsen moduli space $\overline M_{0,n}$ of stable rational curves is not polyhedral
for $n\geq 10$ in characteristic~$0$
and in characteristic $p$, for all primes $p$. 
Many of these toric surfaces are related to a very interesting 
class of arithmetic threefolds that we call arithmetic
elliptic pairs of infinite order. Their analysis in characteristic $p$ relies on tools of arithmetic geometry and Galois representations in the spirit of the Lang--Trotter conjecture, producing toric surfaces whose blow-up at a general point has a non-polyhedral
pseudo-effective cone in characteristic~$0$ and in characteristic~$p$, for an infinite set of primes $p$ of positive density.  
\end{abstract}

\subjclass[2010]{14C20, 14M25, 14E30, 14H10, 14H52, 14G17}
\keywords{Toric varieties, elliptic curves, moduli of curves}

\maketitle

\section{Introduction}

An effective cone of a projective variety $X$ and its closure, the pseudo-effective cone $\oEff(X)$,
contain an impressive amount of information about the birational geometry of $X$.
An even finer invariant is the Cox ring $\Cox(X)$, at least when the class group 
$\Cl(X)$ is finitely generated. 
If $X$ is a Mori Dream Space (MDS) then $\Cox(X)$ is finitely generated, which in turn implies
that $\oEff(X)$ is polyhedral. A~basic example of a MDS is a projective toric variety \cite{Cox}.
Its effective cone is generated by classes of toric boundary divisors.
For a toric variety $\bP$, we denote by $\Bl_e\bP$ its blow-up at the identity element of the torus.
Our main result contributes to the growing body of evidence that this is
a very intriguing class of varieties.


\begin{theorem}\label{kjshbwgwH}
In every characteristic, there exist projective toric surfaces $\bP$
such that the pseudo-effective cone $\oEff(\Bl_e\bP)$ is not polyhedral. 
\end{theorem}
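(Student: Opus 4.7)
The plan is to construct, in each characteristic, a toric surface $\bP$ together with a smooth anticanonical curve $E\subset\bP$ of genus one, and then exhibit infinitely many irreducible negative curves on $X:=\Bl_e\bP$ whose classes span distinct extremal rays of $\oEff(X)$. The principle behind the construction is the following: the torus-fixed intersection points of $E$ with the toric boundary $\partial\bP$ carry a distinguished group-theoretic structure coming from the toric geometry (their differences are torsion of prescribed orders on $E$, because $E|_{\partial\bP}$ is constrained by the toric data). If $e\in E(k)$ is chosen so that, with respect to the group law on $E$ with $e$ as origin, the differences between $e$ and these boundary points generate a subgroup of $E$ of infinite order, then iterating by translation in $E$ will produce an infinite orbit of configurations and, correspondingly, an infinite sequence of distinct negative curves on $X$.

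More concretely, first I would choose $\bP$ so that $-K_\bP$ is big and nef, and so that a general smooth member $E\in|-K_\bP|$ meets $\partial\bP$ transversely at points satisfying a prescribed numerical pattern; on a toric surface, these points are automatically torsion points of $E$ relative to any torus-fixed origin, so the relative positions of distinct boundary points give fixed torsion elements on $E$. Then I would take the strict transform $\tilde E\subset X$, an anticanonical divisor with $\tilde E^2=K_\bP^2-1$, and look for irreducible rational curves $C\subset\bP$ passing through $e$ with prescribed multiplicities at the boundary, whose proper transform $\tilde C$ on $X$ has negative self-intersection. Given one such $C_0$ with class $\alpha_0$ and intersection profile $\tilde C_0\cap\tilde E$ encoded by a divisor $D_0$ on $E$, a second curve $C_1$ with class $\alpha_1$ is engineered so that its intersection with $\tilde E$ differs from $D_0$ by translation by the non-torsion class. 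Iterating produces a sequence $\{\tilde C_n\}$ whose restrictions to $\tilde E$ are distinct divisors on $E$ (using that $e$ is non-torsion modulo the torsion generated by the boundary), hence the $\tilde C_n$ are pairwise distinct irreducible curves of negative self-intersection, and each spans a distinct extremal ray of $\NE(X)$. Dually, $\oEff(X)$ is not polyhedral.

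The characteristic $0$ instance is in some sense the easier half: for a general $E$ passing through a general $e$, non-torsion-ness of the relevant class in $E(\bar k)$ is the generic behaviour, so producing a single $(\bP,E,e)$ that works is essentially a transversality argument once $\bP$ has been specified. In characteristic $p$, however, the group $E(\bar k)$ is torsion in the supersingular case and, in the ordinary case, the subgroup of a curve defined over $\bF_p$ generated by $\bF_{p^r}$-points is always finite, so one must take an explicit toric surface $\bP$ and elliptic curve $E$ over $\bF_p$ and exhibit a closed point $e$ of sufficiently large degree so that the relevant class has infinite order in $E(k(e))$ — here $k(e)$ is a function field, and one uses that generic points of curves over $\bF_p$ give non-torsion specialisations, by a Lang--N\'eron type statement. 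Concretely, one exhibits a family of toric surfaces with an anticanonical elliptic fibration whose generic section is of infinite order.

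The main obstacle is the last point: arranging, in a controlled way, a single triple $(\bP,E,e)$ over a field of characteristic $p$ (for arbitrary $p$) for which the orbit argument in paragraph two produces infinitely many \emph{distinct} negative curves. In characteristic zero one may choose $e$ transcendental; in characteristic $p$ one cannot, and a careful choice of $\bP$ (giving an elliptic threefold $\cE\to S$, an \emph{arithmetic elliptic pair} in the terminology of the introduction) together with a non-torsion multisection is needed so that the fibre at some closed point of $S$ is an elliptic curve $E$ carrying a non-torsion point. This is precisely the arithmetic content advertised in the abstract, and it is where the deepest input of the paper will enter.
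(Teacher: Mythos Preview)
Your proposal contains a fundamental misreading of the setup that invalidates the characteristic~$p$ argument. In the statement, $e$ is \emph{the identity element of the torus} of $\bP$, not a freely chosen point; in particular $e$ is defined over the prime field. You write ``one must take an explicit toric surface $\bP$ and elliptic curve $E$ over $\bF_p$ and exhibit a closed point $e$ of sufficiently large degree so that the relevant class has infinite order in $E(k(e))$'', but this is not permitted: $e$ is already fixed and $\bF_p$-rational. Consequently every curve produced by your construction (the anticanonical $E$, the boundary points, etc.) is defined over $\bF_p$, and the relevant line bundle $\cO_X(C)|_C$ lies in $(\Pic^0 C)(\bF_p)$, which is a finite group. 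So the ``non-torsion'' mechanism you invoke simply cannot occur in positive characteristic for $\Bl_e\bP$; this is exactly the obstruction the paper identifies (Theorem~\ref{goodpairsthm}: a Lang--Trotter polygon forces $\ch k=0$).

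The paper's route in characteristic~$p$ is accordingly quite different from what you sketch. The curve $C$ is not anticanonical on $\bP$ but rather the strict transform of a curve with a high-multiplicity point at $e$, chosen so that $(C,X)$ is an \emph{elliptic pair}: $C^2=0$ and $p_a(C)=1$. In characteristic~$p$ the order $e(C,X)$ is finite, so $|e(C,X)\cdot C|$ is an elliptic pencil on $X$ (or on its minimal model $Y$), and non-polyhedrality is detected by the combinatorics of its reducible fibres, equivalently by whether enough roots of $\bE_8$ lie in the kernel of the reduced restriction map (Corollary~\ref{concrete}, Remark~\ref{rem:nonpol}). The paper then exhibits a single Halphen polygon (Theorem~\ref{ex3}) that is non-polyhedral for all primes outside a finite explicit list, and handles each remaining prime by a separate polygon checked directly. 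Your ``infinite orbit of negative curves via translation'' picture is morally related to what happens on a Halphen surface with infinite automorphism group, but turning that intuition into a proof requires exactly the fibration/root-lattice analysis the paper carries out, not a choice of transcendental base point.
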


In order to prove Theorem~\ref{kjshbwgwH}, we introduce two types of lattice polygons, Lang--Trotter polygons and Halphen polygons.
The blow-ups $X=\Bl_e\bP$ of toric surfaces associated to these 
polygons 
are examples of elliptic pairs studied in \S\ref{sDGSHSH}.
An  {\em elliptic pair} $(C,X)$ is a projective rational surface $X$,
with log terminal singularities, and a curve $C$
contained in the smooth locus of $X$, such that $p_a(C)=1$ and  $C^2=0$. 
Much of the geometry is encoded in the restriction map
$\rd:\,C^\perp\to\Pic^0(C)$, 
where $C^\perp\subseteq\Cl(X)$ is the orthogonal complement.
The order of an elliptic pair is the order of $\rd(C)$.
A~familiar example of an elliptic pair of infinite order in any characteristic is
the blow-up of $\bP^2$ in $9$ general points.
By contrast, elliptic pairs $X=\Bl_e\bP$ associated with a toric surface are defined over the base field.
In~particular, their order is automatically finite in characteristic $p$.

If~the order of an elliptic pair $(C,X)$ is infinite and  $\rho(X)\ge3$, then $\oEff(X)$ is not polyhedral
(Lemma~\ref{adfbafb}). By contrast, polyhedrality of $\oEff(X)$ is harder to control for elliptic pairs of finite order (e.g. for our blow-ups of toric surfaces in characteristic $p$)
unless the pair is {\em minimal}.
We use the logarithmic minimal model program  to construct a $(K+C)$-minimal model $(C,Y)$
of any elliptic pair $(C,X)$ and focus on the study of polyhedrality of $\oEff(Y)$.
Of course if $\oEff(Y)$ is not polyhedral then $\oEff(X)$ is also not polyhedral. 
Remarkably, $Y$ has Du Val singularities
if the order is {\em infinite} (Corollary~\ref{kjsHFkjshf}).
On the other hand, if the order is {\em finite} and $Y$ has Du Val singularities,
there is a simple criterion for polyhedrality (Corollary~\ref{concrete})
in terms of the restriction map and the root sublattice $T\subset\bE_8$.
The~synthesis of these disjoint scenarios is  the notion of an {\em arithmetic elliptic pair of infinite order},
a~flat pair of schemes $(\cC,\cX)$ over an open subset in the spectrum of a ring of algebraic integers
with elliptic pairs as geometric fibers, of infinite (resp., finite) order over an infinite (resp., finite) place.
In \S\ref{asgasrhasrh} we study distribution of polyhedral primes
for arithmetic toric elliptic pairs $(\cC,\cX)$ of infinite order.
Here we call a prime $p$ polyhedral if $\oEff(Y)$ is polyhedral,
where $(C,Y)$ is the minimal model of the geometric fiber $(C,X)$ in characteristic $p$.
This distribution is an intriguing question of arithmetic geometry,
which we reduce to the question about reductions
of points on the elliptic curve in the spirit of the Lang and Trotter analysis \cite{LT}.


We found many examples of {\em Lang--Trotter polygons}
that give rise to arithmetic elliptic pairs of infinite order, see the list of $135$ polygons displayed in Database~\ref{vewfvefvef}.
For some of these polygons, $\oEff(\Bl_e\bP)$ is not polyhedral in characteristic~$p$ for an infinite set of primes~$p$ of positive density. 
We also checked that for every prime $p<2000$, there exists 
a Lang--Trotter  polygon such that $\oEff(\Bl_e\bP)$ is not polyhedral in characteristic~$p$
(see Database~\ref{adhadthstj}). It seems likely that 
one can find such a Lang--Trotter polygon $\bP$ for every $p$, 
but this seems out of reach with our methods. We also found infinite series of Lang--Trotter polygons
such that $\oEff(\Bl_e\bP)$ is polyhedral in every positive characteristic $p$ (Theorem \ref{asgsgsrSRH}). 

We observe a different behaviour in \emph{Halphen polygons},
which give rise to elliptic pairs of finite order with Du Val singularities both in characteristic $0$ and in prime characteristic. Here the condition on singularities of the minimal model is not guaranteed by a general theory and needs to be checked by hand. We exhibit an example in Theorem \ref{ex3}
of a Halphen polygon such that 
$\oEff(\Bl_e\bP)$ is not polyhedral  in characteristic~$0$ and in characteristic~$p$ for all but a finite set of primes~$p$. Empirically, Halphen polygons seem to be harder to find than Lang--Trotter polygons.



In 
\S\ref{asfgSrhasrh},
we describe the Magma package  and databases that can be used for   computer-aided calculations, for example to check that a given polygon is a Lang--Trotter polygon.
However, this analysis 
involves the step of computing a member~$\Gamma$ of a linear system on the toric surface $\bP$
that has a point of large multiplicity at the identity of the torus
(the  elliptic curve $C$ is the normalization of $\Gamma$).
While it is relatively straightforward to implement our algorithms on the computer, this
{\em implicit method}
has obvious disadvantages, for example it is not clear how to apply it
to construct an infinite sequence of examples. To remedy the situation, in \S\ref{asrgasrharh}
we develop a {\em parametric method}. We start with an elliptic curve~$C$
and construct its map to $\bP$ that folds many points of $C$ onto a point of high multiplicity.
We do this for an infinite  sequence of elliptic curves.
We hope that this new method may help with other problems loosely related to the Nagata approximation conjecture, where it is desirable to geometrically construct curves with points of high multiplicity.

While most of the Lang--Trotter polygons that we found are not smooth, in \S\ref{sgasrhasrh} we describe a smooth toric elliptic pair $(C,X)$.
It has a large Picard number $\rho = 18$ and the Mordell--Weil rank of $C$ is equal to $9$.
We don't know if there is an upper bound on the Picard number 
(or the Mordell--Weil rank)
of a toric elliptic pair.

\medskip

Our main application of Theorem~\ref{kjshbwgwH} is to the birational geometry of the Gro\-then\-dieck--Knudsen
moduli space $\oM_{0,n}$ of stable rational curves
with $n$ marked points.
The study of effective cones of moduli spaces has a long history, 
starting with the pioneering work of 
Harris and Mumford \cite{HM}, who
used computations of effective divisors to show that $\oM_g$ is 
not unirational for $g\gg0$.

While the moduli space $\oM_{0,n}$ is a rational variety, 
its birational geometry is far from understood, in spite of numerous efforts, see for example 
\cites{AS, BG, CT_Crelle, CT_Duke, DGJ, F, FS, G, GG, GJM, GKM, GM_nef, KMcK}. The Picard number
of $\oM_{0,n}$ grows exponentially and it is not a Fano variety for $n\ge6$, in fact 
its anticanonical class is not pseudo-effective if $n\geq8$. 
In this regard, $\oM_{0,n}$ looks similar to the blow-up of $\bP^2$ in $n$ points (a connection was indeed found in \cite{CT_Comp}) but there is an important difference: $\oM_{0,n}$ is rigid, i.e., it ~has no moduli.

A question attributed to Fulton, which received a lot of attention, is whether, similarly to the case of toric varieties, any  subvariety of $\oM_{0,n}$ is numerically equivalent to a sum of strata. 
For the case of curves, the statement is known as the F-conjecture. 
A result of Gibney, Keel and Morrison \cite{GKM} proves that the F-conjecture, if known for all $n$, implies the similar statement for $\oM_{g,n}$, for all genera $g$ and number of marked points $n$, thus giving an explicit combinatorial description to the ample cone of  $\oM_{g,n}$. The conjecture holds for $n\leq 7$ and is open for $n\geq8$. 

For the case of divisors, Fulton's question is whether the class of every effective divisor on $\oM_{0,n}$ is a sum of boundary divisors. 
Every boundary divisor is an extremal ray of $\oEff(\oM_{0,n})$; 
in fact, these divisors are exceptional, i.e., they can be contracted by birational contractions.
For example, $\oM_{0,5}$ is a degree~$4$ del Pezzo surface, and its boundary divisors
form the Petersen graph of ten $(-1)$-curves, which generate $\Eff(\oM_{0,5})$.
Extremal rays of a different type for $\oM_{0,6}$  were found by Keel and Vermeire~\cite{Ver}, thus giving a negative answer to 
Fulton's question for divisors when $n\geq6$\footnote{Using forgetful maps, one has a negative answer  for all cycles of dimension $\geq2$ when $n\geq6$.}. 
However, Hassett and Tschinkel proved in \cite{HasT} that $\oEff(\oM_{0,6})$ is still fairly simple, 
namely it is a polyhedral cone, generated by the boundary and the Keel--Vermeire divisors (only one up to $S_6$ symmetry). 

A large class of exceptional divisors on $\oM_{0,n}$ was discovered by Castravet and Tevelev \cite{CT_Crelle}.
They are parametrized by  irreducible hypertrees, 
which can be obtained, for example, from bi-colored triangulations of the $2$-sphere.
Up~to~the action of the symmetric group $S_n$, this gives  
$1,2,11,93,1027,\ldots$ new types of exceptional divisors on $\oM_{0,n}$ for $n=7,8,9,10,11,\ldots$.
Equations of these divisors appear as numerators of leading singularities scattering amplitude forms for  $n$  particles in  
$N=4$ super-symmetric Yang--Mills theory \cites{MHV,Scattering}.

New extremal rays of $\oEff(\oM_{0,n})$  were found by Opie \cite{Opie} 
disproving an over-optimistic conjecture from \cite{CT_Crelle}.
Further extremal rays  were found by Doran, Giansiracusa, and Jensen \cite{DGJ}.
Our second  result explains  this complexity.
\begin{theorem}\label{asgarh}
The cone $\oEff(\oM_{0,n})$ is not polyhedral for $n\geq 10$,
both in characteristic $0$ and in characteristic $p$, for all primes $p$. 
\end{theorem}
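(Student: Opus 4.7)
The plan is to deduce Theorem~\ref{asgarh} directly from Theorem~\ref{kjshbwgwH}, by transferring non-polyhedrality of the pseudo-effective cone through a dominant rational map. Fix a characteristic, either $0$ or a prime $p$. By Theorem~\ref{kjshbwgwH} we may select a projective toric surface $\bP$---a Lang--Trotter polygon in prime characteristic, or a Lang--Trotter or Halphen polygon in characteristic $0$---whose blow-up $X:=\Bl_e\bP$ has non-polyhedral pseudo-effective cone. We arrange moreover that $\bP$ has few enough boundary components that $X$ receives a dominant rational map from $\oM_{0,10}$; this is what dictates the threshold $n=10$.

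Next, I would exhibit such a dominant rational map $f\colon\oM_{0,10}\dashrightarrow X$ by identifying $X$ as a natural quotient or projection of $\oM_{0,10}$. A natural construction uses Kapranov's realization of $\oM_{0,10}$ as an iterated blow-up of $\bP^{7}$ at nine general points, combined with a linear projection to a plane carrying the toric structure of $\bP$, arranging that the identity $e\in\bP$ lies below a point blown up in the Kapranov model. For $n>10$ one then precomposes with the dominant forgetful morphism $\oM_{0,n}\to\oM_{0,10}$. To conclude, one invokes the following general principle: non-polyhedrality of the pseudo-effective cone is inherited under dominant rational maps of normal projective varieties. Resolving the indeterminacy of $f$ yields a surjective morphism $\widetilde f\colon\widetilde M\to X$ from a birational modification $p\colon\widetilde M\to\oM_{0,10}$. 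For a surjective projective morphism, the identity $\widetilde f^{*}\oEff(X)=\oEff(\widetilde M)\cap\widetilde f^{*}N^{1}(X)_{\bR}$ follows from the BDPP characterization via movable curves, so $\oEff(\widetilde M)$ must be non-polyhedral. Since $p$ is birational, one has $\oEff(\widetilde M)=p^{*}\oEff(\oM_{0,10})+C$, where $C$ is the finitely-generated cone spanned by the $p$-exceptional divisors, hence non-polyhedrality descends to $\oEff(\oM_{0,10})$ and then to $\oEff(\oM_{0,n})$ for all $n\ge 10$.

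The main obstacle is the explicit construction in step~2: for each characteristic-dependent choice of $\bP$, one must produce a dominant rational map $\oM_{0,10}\dashrightarrow\Bl_e\bP$, matching the combinatorial structure of the Lang--Trotter or Halphen polygon to a configuration of special curves or boundary strata inside $\oM_{0,10}$. This is particularly delicate because different characteristics may require different polygons (Theorem~\ref{kjshbwgwH} outputs a different $\bP$ for each characteristic), so no single uniform map exists; one must instead select, for each characteristic, the suitable $\bP$ and build the corresponding rational map, drawing on explicit connections between $\oM_{0,n}$ and blow-ups of toric surfaces already developed in \cite{CT_Comp}.
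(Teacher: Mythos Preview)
Your overall strategy is in the right spirit, but there is a genuine gap in the transfer step and a misunderstanding of which polygons work in which characteristic.

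First, the claim that non-polyhedrality is inherited under arbitrary dominant rational maps is false, and your proposed justification breaks down. After resolving indeterminacy you obtain $p\colon\widetilde M\to\oM_{0,10}$ birational and $\widetilde f\colon\widetilde M\to X$ surjective. The formula $\oEff(\widetilde M)=p^{*}\oEff(\oM_{0,10})+C$ with $C$ the cone of $p$-exceptionals is \emph{not} valid in general: already for $\widetilde M=\Bl_9\bP^2$ over $M=\bP^2$, the infinitely many $(-1)$-curves on $\widetilde M$ are not of the form $p^*(\text{effective})+\sum a_iE_i$ with $a_i\ge 0$. So even if $\oEff(\widetilde M)$ were non-polyhedral, you could not descend this to $\oEff(\oM_{0,10})$. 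The paper circumvents this by working only with \emph{rational contractions}: compositions of small $\bQ$-factorial modifications (which identify pseudo-effective cones) and surjective morphisms (for which Lemma~\ref{fvqevf} applies directly, via pushforward of movable curves). The chain is $\oM_{0,n}\dra\Bl_e\oLM_n\dra\Bl_e\bP$, where the first map comes from \cite{CT_Duke} and the second from Theorem~\ref{srgwrG} and its refinement Corollary~\ref{shape3}; every arrow is an SQM followed by a surjective morphism, so polyhedrality of $\oEff(\oM_{0,n})$ would force polyhedrality of $\oEff(\Bl_e\bP)$, contradiction.

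Second, you have the roles of Lang--Trotter and Halphen polygons reversed. Lang--Trotter polygons have $e(C,X)=\infty$, which forces characteristic~$0$ (Theorem~\ref{goodpairsthm}); they give non-polyhedral cones there, and only for a positive-density (not all) set of primes after reduction. To cover \emph{every} prime~$p$, the paper uses a single Halphen$^+$ polygon (Theorem~\ref{ex3}) whose normal fan fits inside the region of Corollary~\ref{shape3}, handling characteristic~$0$ and all $p\notin\{2,3,5,7,11,19,71\}$ at once; the seven exceptional primes are then dispatched individually by exhibiting, for each, a separate polygon and directly counting reducible fibers of the associated elliptic pencil against Remark~\ref{rem:nonpol}. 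Your sketch does not account for this case analysis, and the Kapranov-projection idea you propose does not give the needed control on which toric surfaces are reached from $\oM_{0,10}$.
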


The moduli space $\oM_{0,n}$ is related to blown-up toric varieties via the notion of a {\em rational contraction},
a dominant rational  map $X\dra Y$ of projective varieties that can be decomposed into a sequence
of small $\bQ$-factorial modifications \cite{HK} and surjective morphisms.
Given a rational contraction, if  $\oEff(X)$ is a (rational) polyhedral cone then 
$\oEff(Y)$ is also (rational) polyhedral (Lemma~\ref{fvqevf}). By \cite{CT_Duke},
there exist rational contractions
$\Bl_e\oLM_{n+1}\dra\oM_{0,n}\dra\Bl_e\oLM_n$,
where $\oLM_n$ is the Losev--Manin moduli space of chains
of rational curves, see
\cites{LM,CT_AG}.
This is a toric variety associated with the permutohedron.
Thus $\oM_{0,n}$ has essentially the same birational geometry
as the blow-up of a toric variety in the identity element of the torus.
Moreover, a  feature of $\oLM_n$, 
noticed in \cite{CT_Duke} and proved in Theorem~\ref{srgwrG},
is its ``universality'' among all projective toric varieties $\bP$.
Specifically, for any projective toric variety $\bP$ there exist  
rational contractions
$\oLM_n\dra \bP$ and $\Bl_e\oLM_n\dra \Bl_e\bP$
for $n$ sufficiently large. 
In particular, if the cone $\oEff(\Bl_e\bP)$ is not polyhedral for some toric variety $\bP$
then $\oEff(\Bl_e\oLM_n)$, and therefore $\oEff(\oM_{0,n})$, are not polyhedral either, for $n$ sufficiently large.

A similar strategy was used in \cite{CT_Duke} to show that $\oM_{0,n}$ is not a MDS in characteristic~$0$
for $n\ge134$, answering a question of Hu and Keel \cite{HK}.
The~bound was lowered to $13$ 
by Gonzalez and Karu \cite{GKK} and then to $10$ by  Hausen, Keicher, and Laface \cite{HKL}.
Theorem~\ref{asgarh} gives the same bound $n\ge10$, but it exhibits an even wilder behaviour than previously expected, as effective 
cones are a much rougher invariant than Cox rings
(the Cox ring is graded and the effective cone is the semigroup of possible weights of the grading).
For instance, 
the toric surfaces used in \cite{CT_Duke} were the weighted projective planes $\bP(a,b,c)$.
Of~course, $\Bl_e\bP(a,b,c)$ has Picard number~$2$ and its effective cone
is polyhedral. Nevertheless, Goto, Nishida and Watanabe \cite{GNW} proved
(motivated by a question of Cowsik in commutative algebra) that 
$\Bl_e\bP(a,b,c)$ is not a MDS in characteristic $0$
for certain values of $a,b,c$, by exhibiting
a nef but not semi-ample line bundle. However, 
in~characteristic~$p>0$, this line bundle is semi-ample, and therefore this space is a MDS, by Artin's  criterion \cite{Artin},  
hence, this technique cannot be used. The following  corollary of Thm.~\ref{asgarh}
is  new:
\begin{corollary}\label{cor-pos}
If $n\geq 10$, the moduli space $\oM_{0,n}$ is not a MDS in characteristic $p$, for all primes $p$. 
\end{corollary}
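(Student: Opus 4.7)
The plan is to derive this corollary as an immediate consequence of Theorem~\ref{asgarh} together with the general principle recalled in the introduction: if a projective variety $X$ is a Mori Dream Space, then $\Cox(X)$ is finitely generated, and consequently the pseudo-effective cone $\oEff(X)$ is a rational polyhedral cone. This implication is standard (Hu--Keel) and does not depend on the characteristic of the base field.

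First, I would fix a prime $p$ and an integer $n \geq 10$, and argue by contradiction: assume that $\oM_{0,n}$ is a Mori Dream Space in characteristic $p$. By the recalled implication, $\oEff(\oM_{0,n})$ would then be a rational polyhedral cone in characteristic $p$. But Theorem~\ref{asgarh} asserts precisely the opposite, namely that $\oEff(\oM_{0,n})$ fails to be polyhedral in characteristic $p$ for every prime $p$ when $n \geq 10$. This contradiction completes the deduction.

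There is essentially no obstacle to carrying out this argument; the entire content of the corollary has already been absorbed into the statement of Theorem~\ref{asgarh}. The only point worth emphasizing in the write-up is the contrast with the approach of Goto--Nishida--Watanabe \cite{GNW} and its refinements \cite{GKK,HKL} used to obtain MDS obstructions for $\oM_{0,n}$: those arguments proceed by exhibiting a nef line bundle that fails to be semi-ample, a strategy which is not available in positive characteristic because of Artin's criterion \cite{Artin}. By contrast, the present corollary rests on non-polyhedrality of $\oEff$, which is a strictly coarser invariant than $\Cox$ but survives reduction modulo $p$ through the arithmetic-elliptic-pair machinery underlying Theorem~\ref{asgarh}. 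In the write-up I would therefore keep the proof to a single line invoking Theorem~\ref{asgarh} and the MDS-implies-polyhedral implication, and devote a short remark to explaining why this gives new information in characteristic $p$.
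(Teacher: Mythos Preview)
Your proposal is correct and matches the paper's approach exactly: the corollary is stated immediately after Theorem~\ref{asgarh} with no separate proof, relying on the implication (recalled in the introduction) that a Mori Dream Space has polyhedral pseudo-effective cone. Your additional commentary on why this is new in positive characteristic also mirrors the paper's own discussion preceding the corollary.
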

By contrast, $\oM_{0,n}$ is a MDS in all characteristics
if $n\le 6$ \cites{HK,Cas}. This leaves open the cases $n=7,8,9$. 

\medskip 

{\em Acknowledgements. }
A.-M. C. ~ has been partially 
supported by the ANR-20-CE40-0023 grant. 
A.L.~ has been partially 
supported by Proyecto FONDECYT Regular
n.~1190777.
J.T.~was partially supported by the NSF grant DMS-1701704, 
Simons Fellowship, and by the HSE University Basic Research Program and Russian Academic Excellence Project '5-100'.

We thank Igor Dolgachev, Gavril Farkas and Brian Lehmann
for useful discussions  and for answering our questions. 
We are especially grateful to Tom Weston
for sharing and  explaining his paper \cite{W}.
In the REU directed by J.T.~in 2017, Stephen Obinna \cite{Obinna}
has started to collect evidence for existence of blown-up toric surfaces with non-polyhedral effective cone.
The software Magma~\cite{mag} was used extensively.
Some of the graphics are by the 
 \href{https://www.plainformstudio.com}{Plain Form Studio}.


\setcounter{tocdepth}{1}

\tableofcontents

\section{Polyhedrality of  effective cones}\label{asrgasrg}

Let $k$ be an algebraically closed field of arbitrary characteristic. 
We recall some definitions (see for example \cites{LazI, LazII}). 
If  $X$ is a normal projective irreducible variety over $k$, let $\Cl(X)$ be the divisor class group and let 
$\Pic(X)$ be the Picard group of $X$. As usual, we denote by $\sim$ the linear equivalence of divisors
and by $\equiv$ the numerical equivalence.
Recall that for  Cartier divisors $D_1$, $D_2$, we have
 $D_1\equiv D_2$ if and only if $D_1\cdot C=D_2\cdot C$, for any curve $C\subseteq X$. 
We let
$$\Num^1(X):=\Pic(X)/\equiv$$ be the group of numerical 
equivalence classes of Cartier divisors on $X$. 
We denote $\Num^1(X)_{\bR}=\Num^1(X)\otimes_{\bZ}\bR$, $\Num^1(X)_{\bQ}=\Num^1(X)\otimes_{\bZ}\bQ$. 

Sometimes we extend $\sim$ to the linear equivalence of $\bQ$-divisors in a usual way
(for $\bQ$-divisors, $A\sim B$ if $kA\sim kB$ as Cartier divisors for some $k>0$) but mostly
we use numerical equivalence of $\bQ$-divisors to avoid confusion.

Similarly, we define $\text{Z}_1(X)_{\bR}$ to be the group of $\bR$-linear combinations of irreducible curves in $X$, i.e., formal sums
$$\ga=\sum a_i C_i,\quad a_i\in\bR$$
with all $C_i\subseteq X$  irreducible curves. 
As in \cite{LazI}*{Def.~1.4.25}, we let
$$\Num_1(X)_{\bR}={\text{Z}_1(X)_{\bR}}/\equiv,$$
where for two one-cycle classes $\ga_1,\ga_2\in{\text{Z}_1(X)}_{\bR}$ we have numerical equivalence $\ga_1\equiv \ga_2$ if and only if 
$D\cdot\ga_1=D\cdot \ga_2$ for all Cartier divisors $D$ on $X$. 
It follows from the definitions that 
$$\Num^1(X)_{\bR}\otimes \Num_1(X)\ra\bR,\quad (\de,\ga)\mapsto \de\cdot \ga\in\bR$$  
is a perfect pairing, so $\Num^1(X)_{\bR}$ and $\Num_1(X)_{\bR}$ are dual vector spaces. 
In particular, both 
 $\Num^1(X)_{\bR}$ and  $\Num_1(X)_{\bR}$ are finite dimensional real vector spaces. 
 We define the pseudo-effective cone
$$\barEff(X)\subseteq \Num^1(X)_{\bR},$$ 
as the closure of the effective cone $\Eff(X)$, i.e., the convex cone generated by numerical 
classes of effective Cartier divisors (\cite{LazII}*{Def.~2.2.25}). We let 
$\Nef(X)\subseteq \Num^1(X)_\bR$ be the
cone generated by the classes of {\em nef divisors}.  
We define 
$$\barMov_1(X)\subseteq \Num_1(X)_{\bR}$$ 
the closure of the cone generated by numerical classes of  \emph{movable} $1$-cycles, see 
\cite{LazII}*{Def. 11.4.16}. The cones $\barEff(X)$ and $\barMov_1(X)$ are dual to each other. 
This was proved first in \cite{BDPP} 
for the case when $X$ is a smooth projective variety in characteristic $0$, 
but it holds in general. For $X$ irreducible projective variety 
over a field $k$ of characteristic $0$ this is proved in \cite{LazI}*{Thm. 11.4.19}. 
For the case of arbitrary characteristic, the same proof holds, see for example 
\cite{Fulger}*{Rmk. 2.1}.

\begin{definition}
A convex cone $\cC\subseteq \bR^s$ 
is called \emph{polyhedral} if there are finitely many vectors 
$v_1,\ldots v_s\in \bR^s$ such that 
$\cC=\bR_{\geq0}v_1+\ldots+\bR_{\geq0}v_s$. The cone is said to be \emph{rational polyhedral} if one can choose
the $v_i$'s in  $\bQ^s$.  
\end{definition}

\begin{lemma}\label{fvqevf}
Let $f:\,X\to Y$ be a surjective morphism of  normal projective irreducible varieties. 
If $\barEff(X)$ is (rational) polyhedral  then the same is true for $\barEff(Y)$.
\end{lemma}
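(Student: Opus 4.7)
The plan is to exploit the pullback map $f^*\colon \Num^1(Y)_\bR \to \Num^1(X)_\bR$ and reduce polyhedrality of $\barEff(Y)$ to the polyhedrality of $\barEff(X)$ intersected with a linear subspace. Setting $V := f^*\Num^1(Y)_\bR$, the entire argument rests on establishing the identity
\[
f^*\barEff(Y) \;=\; \barEff(X) \cap V.
\]

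First, I would check that $f^*$ is injective and that $f^*\barEff(Y)\subseteq \barEff(X)$. For injectivity, given any irreducible curve $C\subseteq Y$, one produces a cycle $\tilde C$ on $X$ dominating $C$ by taking a general complete intersection of $f^{-1}(C)$ with $m-n$ very ample divisors on $X$, where $m=\dim X$ and $n=\dim Y$; then $f_*[\tilde C]=c[C]$ for some $c>0$, so the projection formula forces any class $D$ with $f^*D\equiv 0$ to satisfy $D\cdot C=0$ for every $C$. The inclusion $f^*\barEff(Y)\subseteq\barEff(X)$ follows because the pullback of an effective Cartier divisor under a surjective morphism is effective, and then by continuity.

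The key step is the reverse containment: if $f^*\alpha\in\barEff(X)$ for some $\alpha\in\Num^1(Y)_\bR$, then $\alpha\in\barEff(Y)$. By the duality $\barEff(Y)^\vee=\barMov_1(Y)$ recalled in the preceding paragraphs, it suffices to verify $\alpha\cdot[C]\geq 0$ for every irreducible curve $C\subseteq Y$ that moves in a family covering $Y$. For such $C$, the lifted cycle $\tilde C$ constructed above is itself movable on $X$: as $C$ varies in its covering family, $f^{-1}(C)$ sweeps out $X$, and intersecting with the chosen very ample divisors yields a family of curves still covering $X$. Hence $[\tilde C]\in\barMov_1(X)$, so $f^*\alpha\in\barEff(X)$ pairs non-negatively with $[\tilde C]$, and the projection formula gives $c\,\alpha\cdot[C]=f^*\alpha\cdot[\tilde C]\geq 0$.

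Once the equality $f^*\barEff(Y)=\barEff(X)\cap V$ is in hand, polyhedrality is almost automatic. The subspace $V$ is defined over $\bQ$ because $f^*$ sends $\Num^1(Y)_\bQ$ into $\Num^1(X)_\bQ$. The intersection of a polyhedral (resp.\ rational polyhedral) cone with a rational linear subspace is again polyhedral (resp.\ rational polyhedral) by standard convex geometry. Pulling this back through the $\bQ$-linear injection $f^*$ transfers the property to $\barEff(Y)$.

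The main obstacle I anticipate is the movability of the lifted cycle $\tilde C$. Verifying that the complete intersections used in the construction yield an honestly covering family on $X$, and that the push-forward class is a positive multiple of $[C]$, requires a careful covering argument and Bertini-type dimension counts. Everything else amounts to convex-geometric bookkeeping once this lifting is available.
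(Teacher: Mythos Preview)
Your approach is correct and genuinely different from the paper's. The paper works on the dual side: it uses the push-forward $f_*\colon\Num_1(X)_\bR\to\Num_1(Y)_\bR$ and cites \cite{FL}*{Cor.~3.12} for the identity $f_*\barMov_1(X)=\barMov_1(Y)$, then concludes because a linear image of a (rational) polyhedral cone is (rational) polyhedral, and dualizes back. Your identity $f^*\barEff(Y)=\barEff(X)\cap V$ is exactly the dual statement, and you are essentially reproving the part of the Fulger--Lehmann result needed here rather than citing it. Your route has the advantage of being self-contained; the paper's route is shorter and isolates the nontrivial geometric input in a reference.

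One point in your movability argument needs adjustment. When you write that ``intersecting with the chosen very ample divisors yields a family of curves still covering $X$,'' this fails if the divisors $H_1,\ldots,H_{m-n}$ are held fixed: all the resulting curves then lie in the $n$-dimensional locus $H_1\cap\cdots\cap H_{m-n}$, which cannot cover $X$ when $m>n$. You must let the $H_i$ vary in their linear system as well. The clean way to phrase it: set $W=H_1\cap\cdots\cap H_{m-n}$ for general $H_i\in|H|$, so $f|_W\colon W\to Y$ is generically finite; then $\tilde C_t:=(f|_W)^{-1}(C_t)$ gives a connected family of curves parametrized by pairs $(W,t)$, all numerically equivalent, and through a general $x\in X$ one finds such a curve by choosing $W\ni x$ and $C_t\ni f(x)$. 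This also sidesteps the worry that $f^{-1}(C)$ could have components of excess dimension over points where the fibers of $f$ jump: working on a general $W$ with $f|_W$ generically finite avoids that issue for general $C_t$.
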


\begin{proof}
Suppose $\barEff(X)$ is a (rational) polyhedral cone.
By the duality between the cones $\barEff(X)$ and $\barMov_1(X)$, 
it follows that 
$\barMov_1(X)$ is also a
(rational) polyhedral cone. 
The proper push-forward of $1$-cycles induces a map of $\bR$-vector spaces
$$f_*:{\Num_1(X)}_{\bR}\ra{\Num_1(Y)}_{\bR}.$$
 
 By \cite{FL}*{Cor. 3.12}, $f_*(\barMov_1(X))=\barMov_1(Y)$. The definitions of $\Num_1(X)$ and $\barMov_1(X)$ given in
\cite{FL} coincide with the ones given above, see 
\cite{FL}*{Section~2.1, Ex.~3.3}.
It follows that $\barMov_1(Y)$ is a (rational) polyhedral cone. Again by the duality between the cones $\barEff(Y)$ and $\barMov_1(Y)$, it follows 
that $\barEff(Y)$ is a (rational) polyhedral cone.
\end{proof}

We concentrate on the case of 
surfaces. The cone and contraction theorems hold in any characteristic
with very mild assumptions, see 
\cites{KM, Tanaka, FT, Fujino}.

\begin{proposition}\label{prop1}
\label{prop:nonpol}
Let $X$ be a normal projective $\bQ$-factorial
surface with Picard number at least $3$
and such that the cone $\barEff(X)$
is polyhedral. Then:
\begin{enumerate}
\item Every class $C\in \Num^1(X)$ of self intersection $0$ (or its opposite $-C$) is in
the relative interior of either the cone $\barEff(X)$ or its codimension one facet.
\item The effective cone $\Eff(X)$ is generated
by 
finitely many negative curves\footnote{A negative curve is an irreducible curve  $B$ with $B^2<0$.}.
In~particular, $\barEff(X)=\Eff(X)$ is a rational polyhedral cone.
\end{enumerate}
\end{proposition}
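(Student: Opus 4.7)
My plan is to establish (1) by combining the Hodge index theorem on $\Num^1(X)_\bR$ with the polyhedral duality between $\barEff(X)$ and $\Nef(X)=\barMov_1(X)$ recalled above, and then to deduce (2) by classifying the extremal rays of $\barEff(X)$ using (1).

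For (1), fix an ample class $H$. By Hodge index the intersection form on $\Num^1(X)_\bR$ has signature $(1,\rho-1)$, so any nonzero $C$ with $C^2=0$ satisfies $C\cdot H\neq 0$; moreover, any rational class in $\{D^2>0,\, D\cdot H>0\}$ is big by Riemann--Roch, so the closure of this cone lies in $\barEff(X)$, and hence one of $\pm C$ lies in $\barEff(X)$. Assuming $C$ itself does, let $F\subseteq\barEff(X)$ be the unique face in whose relative interior $C$ lies. By polyhedral duality $\operatorname{codim}(F)$ equals the dimension of the dual face $F^\vee\subseteq\Nef(X)$, consisting of all nef classes $N$ with $N\cdot C=0$. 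The key observation is that the restriction of the intersection form to $C^\perp$ has radical $\bR C$ and is negative definite on $C^\perp/\bR C$; hence any $N\in C^\perp\setminus\bR C$ satisfies $N^2<0$, contradicting nefness. Thus $F^\vee\subseteq\bR C$, so $\operatorname{codim}(F)\leq 1$, which is (1).

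For (2), I would show each extremal ray $R$ of the polyhedral cone $\barEff(X)$ is generated by an irreducible negative curve. A generator $v$ of $R$ must satisfy $v^2<0$: the case $v^2>0$ would place $v$ in the interior of $\barEff(X)$ (being big), impossible on an extremal ray since $\rho\geq 3$; and part (1) rules out $v^2=0$, since an extremal ray is a face of codimension $\rho-1\geq 2$. With $v^2<0$ and $v$ pseudo-effective, a standard application of Zariski decomposition on $\bQ$-factorial surfaces writes $v=\lambda B+v'$ with $\lambda>0$, $B$ an irreducible negative curve, and $v'\in\barEff(X)$; extremality of $R$ forces both summands onto $R$, so $R=\bR_{\geq 0}[B]$. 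Since $\barEff(X)$ has finitely many extremal rays, each rational (integral, in fact), we conclude $\Eff(X)=\barEff(X)$ is a rational polyhedral cone.

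The main obstacle, and what all of (1) rests on, is correctly treating the degenerate Hodge-index situation $C^2=0$: identifying $\bR C$ as the radical of the intersection form on $C^\perp$ and using the negative-definiteness on the quotient. Once this hyperbolic input is in place, (1) and (2) follow by formal polyhedral-cone manipulations, using the duality $\barEff(X)\leftrightarrow\barMov_1(X)=\Nef(X)$ available in arbitrary characteristic as recalled in the preceding discussion.
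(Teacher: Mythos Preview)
Your proof is correct, and the overall strategy matches the paper's, though the mechanics of part (1) differ. The paper argues geometrically: the light cone $Q=\{\omega^2\ge 0,\ \omega\cdot h\ge 0\}$ sits inside $\barEff(X)$ by Riemann--Roch, and since $\rho\ge 3$ the boundary $\partial Q$ is a round (strictly convex) quadric, so a point of $\partial Q$ can only meet $\partial\barEff(X)$ in the relative interior of a facet. You instead pass to the dual cone and use Hodge index on $C^\perp$ to bound $\dim F^\vee\le 1$. Both arguments encode the same hyperbolic fact---that $C^\perp$ is the unique supporting hyperplane of $Q$ at $C$---but yours avoids the pictorial ``roundness'' step at the cost of invoking the face/dual-face dimension formula for full-dimensional polyhedral cones. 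For part (2) the paper simply cites \cite{deb}*{Lemma~6.2(e)} for the fact that an extremal class with negative square is a curve class; your Zariski-decomposition argument is exactly how that lemma is proved, so there is no real difference there.
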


Part (2) of Proposition \ref{prop1} appears also in \cite{Nikulin}. 

\begin{proof}
(1) Fix $h$ an ample divisor. Let
\begin{equation}\label{Q}
Q := \{\,\omega\ \mid 
\ \omega^2 \geq 0,\ \omega\cdot h \geq 0\,\}\subseteq \Num^1(X)_\bR
\end{equation}
be the non-negative part of the light cone.
Then either $C$ or its opposite $-C$ lies on the boundary $\partial Q$. 
By Riemann-Roch, the cone $Q$ is contained in $\overline{\Eff}(X)$.
Since the Picard number of $X$ is
at least $3$, the cone $Q$ is round. In particular, 
$\partial Q$ can intersect only a facet of $\overline{\Eff}(X)$ of codimension~$1$ and only 
 in its relative interior.

(2) By (1), any  $\omega\in\Num^1(X)$ generating an extremal ray
of $\barEff(X)$ has $\omega^2 < 0$. 
By \cite{deb}*{Lemma 6.2(e)}\footnote{The proof in  \cite{deb}
is for smooth surfaces but the argument works verbatim in our case.}, 
for any such $\omega$
there exists an irreducible  curve $E$ such that $\omega$ is a positive multiple of the
class of $E$. 
\end{proof}

\begin{remark}
\label{rem:nonpol}
In the settings of Proposition~\ref{prop:nonpol},
if the class $C$ admits a positive integer
multiple $nC$ such that $|nC|$ is a base 
point free pencil, then $C$ is not big. Thus
it lies in the relative interior of a maximal facet 
$\tau$ of $\overline{\Eff}(X)$, and 
by the Hodge Index Theorem, 
the supporting hyperplane of $\tau$ is $C^\perp$.
In particular, any class of an irreducible curve $R$
which generates an extremal ray of $\tau$ satisfies
$R\cdot C = 0$, so that $R$ is an irreducible 
component of a fiber of the fibration $\pi\colon
X\to\mathbb P^1$ induced 
by $|nC|$. Since the contribution of the 
components of a fiber to the ``vertical''
rank of the Picard group is the number
of components minus one, it follows that
in order for $\overline{\Eff}(X)$ to be polyhedral
it must be 
\begin{equation}
 \label{eq:fib}
 1+\sum_{b\in\mathbb P^1}(|\text{Comp. of $f^{-1}(b)$}|-1)
 = 
 {\rm rk}({\rm Pic}(X)) - 1.
\end{equation}
\end{remark}

\begin{proposition}\label{prop2}
Let $X$ be a normal projective $\bQ$-factorial
surface with Picard number at least $3$.
Assume that $C\subseteq X$ is an irreducible curve with $C^2=0$
and $C\equiv -\alpha K_X$ 
with
$\alpha\in\mathbb Q_{>0}$. Then the following
are equivalent:
\begin{enumerate}
\item
There exist irreducible negative curves 
$B_1,\ldots, B_s$, 
that generate $C^{\perp}\subseteq\Num^1(X)_{\mathbb Q}$, and  such that
\begin{equation}\label{asrgasgas}
C\equiv a_1B_1+\ldots+a_sB_s\quad  \text{with}\quad  a_1,\ldots a_s\in\mathbb Q_{>0}.
\end{equation}
\item
$\barEff(X)$ is a rational polyhedral cone generated by 
negative curves.
\end{enumerate}
\end{proposition}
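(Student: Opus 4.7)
The plan is to prove the two implications separately. The direction $(2)\Rightarrow(1)$ relies on the Hodge Index Theorem and Proposition~\ref{prop1}; the harder direction $(1)\Rightarrow(2)$ proceeds by first analyzing the face $F:=\barEff(X)\cap C^{\perp}$ and then the remaining extremal rays, and the main obstacle will be finiteness of extremal rays not contained in $F$.

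For $(2)\Rightarrow(1)$: since $C$ is an irreducible effective divisor with $C^2=0$, it is nef but not big. By Proposition~\ref{prop1}(1), the class $C$ lies in the relative interior of some codimension one facet $\tau$ of $\barEff(X)$. The supporting hyperplane of $\tau$ is cut out by a class $L$ that is nef (since $L$ is non-negative on every effective divisor, equivalently on every curve of the surface) and satisfies $L\cdot C=0$. Combined with $C^2=0$, the Hodge Index Theorem forces $L$ to be proportional to $C$, so $\tau$ spans $C^{\perp}$. Proposition~\ref{prop1}(2) then identifies the extremal rays of $\tau$ as those generated by irreducible negative curves $B_1,\ldots,B_s$, which therefore span $C^{\perp}$ as a vector space; since $C$ lies in the relative interior of $\tau$, it is a strictly positive $\bQ$-combination of these $B_i$.

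For $(1)\Rightarrow(2)$, the first step is to show that $F$ is rational polyhedral. Any extremal ray of $F$ is extremal in $\barEff(X)$, hence (by the argument in Proposition~\ref{prop1}(1) combined with Debarre's lemma) generated by an irreducible negative curve $E$. The relation $C\equiv -\alpha K_X$ forces $E\cdot K_X=0$, and adjunction then identifies $E$ as a $(-2)$-curve. Because the intersection form on $C^{\perp}/\bR C$ is negative definite (Hodge Index applied to the isotropic class $C$), only finitely many of its classes have self-intersection $-2$, so $F$ has finitely many extremal rays and is rational polyhedral.

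Any extremal ray $R$ of $\barEff(X)$ not in $F$ satisfies $R\cdot C>0$ and $R\cdot K_X<0$; combined with $R^2<0$ and the adjunction inequality $R^2+R\cdot K_X\geq -2$, this forces $R$ to be a smooth rational $(-1)$-curve with $R\cdot C=\alpha$. The main obstacle is to show that only finitely many such $(-1)$-curves appear as extremal rays: the plan is to invoke the cone theorem for surfaces in arbitrary characteristic (\cites{KM, Tanaka, FT, Fujino}), which gives local polyhedrality of the $K_X$-negative part of $\barEff(X)$, and to combine it with the rational polyhedrality of the bounding face $F$, the uniform numerical bound $R\cdot C=\alpha$, and a compactness argument on a fixed cross-section of $\barEff(X)$ to rule out infinite accumulation of such rays towards $F$. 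Once this finiteness is in hand, $\barEff(X)$ is generated as a rational polyhedral cone by the finitely many $(-2)$-curves in $F$ and the finitely many $(-1)$-curves outside, establishing (2).
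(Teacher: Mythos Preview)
Your argument for $(2)\Rightarrow(1)$ is fine and matches the paper. The direction $(1)\Rightarrow(2)$ has two real problems. First, you repeatedly invoke the classification of negative curves as $(-1)$- or $(-2)$-curves; this needs $X$ smooth (or at least the curves to avoid the singular locus), but the proposition only assumes $X$ normal and $\bQ$-factorial, so self-intersections are rational and adjunction does not pin down $E^2\in\{-1,-2\}$. Second, and more seriously, you yourself identify the finiteness of $K_X$-negative extremal rays as ``the main obstacle'' and then do not resolve it. The cone theorem only says such rays are locally discrete in the open half-space $K_X\cdot(\,\cdot\,)<0$; they \emph{are} allowed to accumulate towards $K_X^\perp=C^\perp$, and nothing in your sketch (polyhedrality of $F$, the bound $R\cdot C=\alpha$, compactness of a slice) rules this out. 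Indeed, if $R_n\to R_\infty$ projectively with $R_n\cdot H\to\infty$, then $(R_n/(R_n\cdot H))^2\to 0$ and $(R_n/(R_n\cdot H))\cdot C\to 0$, which is perfectly consistent with $R_\infty$ a multiple of $C$.

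The paper closes both gaps at once by using hypothesis~(1) much more directly. For any irreducible $E\ne B_i$ with $E^2\le 0$: if $E\cdot C=0$, then from $C\equiv\sum a_iB_i$ with $a_i>0$ and $E\cdot B_i\ge 0$ one gets $E\cdot B_i=0$ for all $i$; since the $B_i$ span $C^\perp$, $E$ is a rational multiple of $C$, which is excluded. If $E\cdot C>0$, adjunction (in the form $E\cdot(-K_X)\le E^2+2\le 2$, valid on normal surfaces) gives $\sum a_i(E\cdot B_i)=\alpha\, E\cdot(-K_X)\le 2\alpha$, hence $0\le E\cdot B_i\le 2\alpha/a_i$; combined with the global index $l$ making $l\,E\cdot B_i\in\bZ$, each $E\cdot B_i$ lies in a finite set, and likewise $E^2\in[-2,0]$. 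Because the $B_i$ span $C^\perp$, the values $E\cdot B_i$ determine $E$ up to adding a multiple of $C$, and since $E\cdot C>0$ while $C^2=0$, the additional datum $E^2$ pins $E$ down uniquely. This gives finiteness without any smoothness assumption and without appealing to accumulation arguments.
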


\begin{proof}
Proposition~\ref{prop:nonpol} gives $(2)\Rightarrow (1)$. 
We prove  $(1)\Rightarrow (2)$ under 
our additional assumptions. 
Note that $C$ (hence, $-K$) is nef. Recall 
that any $\omega\in\Num^1(X)_{\mathbb R}$ 
generating an extremal ray must have $\omega^2\leq0$ and  
if $\omega^2<0$ then $\omega$ is the class of a multiple of a curve 
 \cite{deb}*{Lemma 6.2}. 
The same is true when 
$\omega^2=0$, as if $\omega\cdot C=0$, by the Hodge Index theorem, $\omega$ and $C$ generate the same ray, while if
$\omega\cdot C>0$, then $\omega\cdot K<0$ and $\omega$ is generated by the class of a curve by the Cone theorem. 
Hence, it suffices to prove that $X$ contains
finitely many irreducible  curves $E$ with $E^2\leq0$ such that $E$ is not numerically equivalent
to a rational multiple of $C$. We can also assume that
 $E\neq B_i$ for all $i$.
 
We consider two cases.
If $E\cdot C=0$  then $E\cdot B_i=0$ for all $i$
by \eqref{asrgasgas} and by our assumption that
$E\neq B_i$ for all $i$. 
Since $B_1\ldots, B_s$ generate 
$C^\perp$ over $\mathbb Q$, $E$ must be numerically equivalent to a rational multiple of $C$,
which we have also ruled out.

Suppose $E\cdot C>0$.
Since $B_1\ldots, B_s$ generate 
$C^\perp$ over $\mathbb Q$, the classes which have fixed intersections
with the $B_i$'s form an affine subspace of dimension one in 
$\Num^1(X)_{\mathbb Q}$, differing one from another by a multiple of the class of $C$.
Since $E\cdot C>0$ and $C\cdot C=0$, there is at most one such class
with $E^2$ also fixed. 
Hence, it suffices to prove that 
$E\cdot B_i$ and $E^2$ belong to a finite set. 
By assumption~(1) and adjunction, we have 
$$\frac{1}{\al}\sum a_i(E\cdot B_i)=E\cdot(-K)\leq E^2+2\leq 2.$$
Hence, $0\leq E\cdot B_i\leq 2\al/a_i$. As there exists  $l\in\mathbb Z_{>0}$ (the index of $\Pic(X)$ in $\Cl(X)$) such that the $lD$ is Cartier for any curve 
$D$ (hence, $l(D\cdot E)$ is an integer), it follows that $E\cdot B_i$ belongs to a finite set. We have 
$-2\leq E^2$ by adjunction and nefness of~$-K$. As $E^2\leq0$, it follows similarly that $E^2$ must belong to a finite set.  
\end{proof}

\section{Elliptic pairs: general theory}\label{sDGSHSH}

As in \S~\ref{asrgasrg}, we work over an algebraically closed field $k$ of arbitrary characteristic.  
While Propositions~\ref{prop1} and \ref{prop2} address
polyhedrality  of $\barEff(X)$ for a general surface~$X$,
in this section we study polyhedrality further 
for a rational surface in the presence of a curve $C$ with self-intersection $0$ under some additional assumptions.

\begin{definition}\label{sGSRG}
An  {\em elliptic pair} $(C,X)$ consists of  a projective rational surface $X$
with log terminal singularities and an irreducible curve $C\subseteq X$, of  arithmetic genus one, disjoint from the singular locus of $X$ and such that $C^2=0$. 
 Let $C^\perp\subseteq\Cl(X)$ be the orthogonal complement to $C$. 
We~define the {\em restriction map}
$$\rd:\,C^\perp\to\Pic^0(C),\quad D\mapsto \cO(D)|_C.$$
Since $K\cdot C=0$ by adjunction, we can also define
the {\em reduced restriction map}
$$
\ored:\,\Cl_0(X):=C^\perp/\langle K\rangle\to\Pic^0(C)/\langle\rd(K)\rangle.
$$
We will often study a birational morphism $X\to Y$, which is an 
isomorphism in a neighborhood of $C$. We will then use notation 
$C_X$, $C_Y$, etc, to avoid confusion.
\end{definition}

The most familiar elliptic pairs are rational elliptic fibrations $X\to\bP^1$ with a fiber $C$
(which can be a multiple fiber).
However, we do not make this assumption. 
Note that as $X$ is rational,  $h^1(X,\cO_X)=0$, and 
hence $\Pic(X)_{\bQ}=\Num^1(X)_{\bQ}$. 

\begin{lemdef}\label{asfar}
We define the order $e=e(C,X)$ of the elliptic pair $(C,X)$ to be the  positive integer
satisfying any of the following equivalent conditions (or $\infty$ if none of them are~met):
\begin{enumerate}
\item $\rd(C)\in\Pic^0(C)$ is a torsion line bundle of order $e$.
\item $e$  is the smallest positive integer such that $h^0(C,\rd(eC))=1$.
\item  $e$  is the smallest positive integer such that $h^0(X,eC)=2$.
\item $e$ is the smallest positive integer such that $h^0(X,eC)>1$.
\end{enumerate}
The order $e(C,X)$ only depends on a Zariski neighborhood of $C$ in $X$. 
\end{lemdef}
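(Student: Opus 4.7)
The plan is to run the cycle
(1) $\Leftrightarrow$ (2), (1) $\Rightarrow$ (3) $\Rightarrow$ (4) $\Rightarrow$ (1),
using a single technical input: the ideal-sheaf exact sequence
\[
0 \to \mathcal{O}_X((n-1)C) \to \mathcal{O}_X(nC) \to \rd(C)^{\otimes n} \to 0,
\]
which is valid because $C$ lies in the smooth locus of $X$ and is therefore Cartier. I would take the resulting long exact sequence in cohomology as the workhorse throughout.

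First I would dispatch the elementary equivalence (1) $\Leftrightarrow$ (2). On a connected reduced curve of arithmetic genus one, a degree-zero line bundle $L$ satisfies $h^0(L) \leq 1$, with equality iff $L \cong \mathcal{O}_C$: a nonzero section would exhibit $L$ as $\mathcal{O}_C(D)$ for an effective divisor $D$ of degree zero, hence $D=0$. Applied to $L = \rd(nC) = \rd(C)^{\otimes n}$, this turns condition (2) into the assertion that $n$ is the order of $\rd(C)$ in $\Pic^0(C)$, which is (1).

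The main obstacle is (1) $\Rightarrow$ (3): producing the \emph{exact} equality $h^0(X, eC) = 2$ rather than just a lower bound. My approach is an inductive vanishing argument. The indispensable preliminary is $H^1(X, \mathcal{O}_X) = 0$, which holds because $X$ is rational and its log terminal surface singularities are rational (in any characteristic), so $H^i(X, \mathcal{O}_X)$ is computed by any resolution $\widetilde X \to X$, which is a smooth rational surface. For $1 \leq n \leq e-1$, the line bundle $\rd(C)^{\otimes n} \in \Pic^0(C)$ is nontrivial, so its $h^0$ vanishes; Riemann-Roch on $C$, where every degree-zero bundle has $\chi = 0$, upgrades this to $h^1 = 0$ as well. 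The long exact sequence then gives both $h^0(X, nC) = h^0(X, (n-1)C)$ and an injection $H^1(X, nC) \hookrightarrow H^1(X, (n-1)C)$, so by induction $h^0(X, nC) = 1$ and $H^1(X, nC) = 0$ for every $0 \leq n \leq e-1$. At $n=e$ we have $\rd(C)^{\otimes e} \cong \mathcal{O}_C$, and the tail
\[
0 \to H^0(X, (e-1)C) \to H^0(X, eC) \to H^0(C, \mathcal{O}_C) \to H^1(X, (e-1)C) = 0
\]
of the long exact sequence forces $h^0(X, eC) = 1 + 1 = 2$, which is (3).

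The implication (3) $\Rightarrow$ (4) is trivial, and (4) $\Rightarrow$ (1) follows by combining the bound $h^0(X, nC) - h^0(X, (n-1)C) \leq h^0(C, \rd(C)^{\otimes n}) \leq 1$ with the already-established (1) $\Rightarrow$ (3): if $e_4$ denotes the smallest $n$ with $h^0(X, nC) > 1$, the bound forces $\rd(C)^{\otimes e_4} \cong \mathcal{O}_C$, so the order $e$ of $\rd(C)$ divides $e_4$, while (1) $\Rightarrow$ (3) gives $e_4 \leq e$; hence $e = e_4$. Finally, the Zariski-neighborhood independence is immediate from characterization (1), since $\rd(C) = \mathcal{O}_C(C) = N_{C/X}$ is determined by the first-order infinitesimal neighborhood of $C$, a fortiori by any Zariski open neighborhood of $C$ in $X$.
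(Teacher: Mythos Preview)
Your proof is correct, with one small slip in exposition: you write ``an injection $H^1(X,nC)\hookrightarrow H^1(X,(n-1)C)$,'' but the map in the long exact sequence runs the other way, $H^1(X,(n-1)C)\to H^1(X,nC)$. What actually drives the induction is that this map is \emph{surjective} (since $H^1(C,\rd(C)^{\otimes n})=0$), so $H^1(X,(n-1)C)=0$ forces $H^1(X,nC)=0$. In fact, combined with $H^0(C,\rd(C)^{\otimes n})=0$ it is an isomorphism, so the conclusion is unaffected.

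Your route is genuinely different from the paper's. The paper first passes to a resolution $\tilde X\to X$ (using that log terminal singularities are rational to preserve $h^0(nC)$), then argues on the smooth surface via Serre duality ($h^2(X,nC)=h^0(X,K_X-nC)=0$, since $K_X$ is not effective on a rational surface) and Riemann--Roch on $X$ to get $\chi(\mathcal O_X(nC))=1$ for all $n$; the dichotomy between (3)/(4) and the infinite-order case then drops out in one stroke. You instead stay on the singular $X$, use only $H^1(X,\mathcal O_X)=0$, and run Riemann--Roch on the \emph{curve} $C$ together with the ideal-sheaf filtration. Your approach avoids Serre duality and the surface Riemann--Roch computation; the paper's approach avoids the inductive $H^1$-bookkeeping and gets the full statement (including the infinite-order alternative) from a single Euler-characteristic identity.
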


\begin{proof}
The equivalence of (1) and (2) is clear. We use this as a  definition of $e$.
In~particular, $e(C,X)$ only depends on a Zariski neighborhood of $C$ in $X$.
Since log terminal singularities are rational and $C$ is disjoint from the singular locus of~$X$,
if~$\tilde X$ is a resolution of singularities of $X$, then $h^0(\tilde X,nC_{\tilde X})=h^0(X,nC_X)$ for any integer~$n$.
Hence, to prove the remaining equivalences we may assume that $X$ is smooth.
For~any $n\ge0$, we have $h^2(X,nC)=h^0(X,K_X-nC)=0$,
since otherwise $K_X$ would be effective.
Moreover, by Riemann-Roch $\chi(\cO_X(nC))=1$ for all $n$.  
Thus either $h^0(X,nC)=1$ and $h^0(C,\rd(nC))=0$
for every $n>0$, or for some $n>0$ we have
$h^0(X,nC)=2$, $h^0(C,\rd(nC))=1$ and $h^0(X,lC)=1$, $h^0(C,\rd(lC))=0$ for $1\le l<n$.
\end{proof}

\begin{lemma}\label{adfbafb} 
Suppose $(C,X)$ is an elliptic pair. Let $e=e(C,X)$. Then 
\begin{enumerate}
\item $e<\infty$ if and only if $C$ is a (multiple) fiber of a (quasi)-elliptic fibration\footnote{
If $C$ is smooth or if $\ch k\ne 2,3$ then the fibration is automatically elliptic \cite{BoMu}. If not, it can be quasi-elliptic, i.e.~have cuspidal generic fibers.}.
\item If $e=\infty$, then $C$ is rigid, which means that $h^0(nC)=1$ for all~$n>0$. In~this case
$\barEff(X)$ is not polyhedral if the Picard number $\rho(X)\geq3$.   
\end{enumerate}
\end{lemma}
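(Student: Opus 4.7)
The plan is to deduce both parts from Lemma--Definition~\ref{asfar} together with Proposition~\ref{prop1}. Because log terminal singularities are rational and $C$ lies in the smooth locus of $X$, passing to a resolution alters neither $h^0(X, nC)$ nor the intersection theory near $C$, so I shall assume $X$ is smooth without loss of generality.

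For part~(1), the backward direction is immediate: a (multiple) fiber of multiplicity $m$ of a (quasi-)elliptic fibration $f\colon X\to \bP^1$ satisfies $mC\sim f^*\cO_{\bP^1}(1)$, so $h^0(X, mC)\ge 2$ and $e\le m$. For the forward direction, suppose $e<\infty$. Then Lemma--Definition~\ref{asfar}(3) provides a pencil $\Lambda\subseteq |eC|$, and because $(eC)^2=0$ its general members meet trivially; removing the fixed part yields a base-point-free movable pencil defining $f\colon X \to \bP^1$. A general fiber $D$ satisfies $D^2=0$ and $K_X\cdot D=0$ (using $K_X\cdot C=0$), so by adjunction $p_a(D)=1$, making $f$ (quasi-)elliptic. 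Since $f|_C$ has degree $C\cdot eC=0$, the curve $C$ sits in a single fiber $F\sim eC$, and comparing with the minimality of $e$ in Lemma--Definition~\ref{asfar}(4) forces $F=eC$ as divisors, realising $C$ as a fiber of multiplicity $e$.

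For part~(2), rigidity $h^0(X, nC)=1$ for all $n>0$ is a direct restatement of Lemma--Definition~\ref{asfar}(4), since $nC$ is effective. For the non-polyhedrality claim I argue by contradiction: assume $\barEff(X)$ is polyhedral with $\rho(X)\ge 3$. Proposition~\ref{prop1}(1) places the effective class $C$ (with $C^2=0$) either in the relative interior of $\barEff(X)$ or in the relative interior of a codimension-one facet $\tau$. The first alternative would make $C$ big, forcing $h^0(X,nC)\sim cn^2$ and contradicting rigidity. So $C$ lies in the relative interior of a codimension-one facet $\tau$. Because the light cone $Q$ is contained in $\barEff(X)$ and the unique hyperplane supporting $Q$ along the ray through $C$ is $C^\perp$ (by the Hodge Index Theorem, as in the proof of Proposition~\ref{prop1}), the supporting hyperplane of $\tau$ must be $C^\perp$.

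By Proposition~\ref{prop1}(2) the extremal generators of $\tau$ are irreducible negative curves $B_1,\dots,B_s$ with $B_i\cdot C=0$, hence distinct from $C$ (as $B_i^2<0=C^2$) and disjoint from it. The hypothesis $\rho(X)\ge 3$ yields $\dim \tau\ge 2$, so $s\ge 2$, and we obtain a numerical relation $C\equiv \sum_i a_i B_i$ with all $a_i>0$. Since $X$ is a rational surface, numerical and $\bQ$-linear equivalence agree on $\Pic(X)\otimes \bQ$, so after clearing denominators $mC \sim \sum_i (ma_i)B_i$ for some integer $m>0$. These are two effective divisors in $|mC|$ with disjoint supports, hence distinct, giving $h^0(X,mC)\ge 2$ and contradicting rigidity. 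The most delicate step is this final promotion from numerical to $\bQ$-linear equivalence, which is where the rationality of $X$ enters essentially; a related subtlety is pinpointing $C^\perp$ as the supporting hyperplane of $\tau$, for which the key ingredient is the inclusion $Q\subseteq\barEff(X)$.
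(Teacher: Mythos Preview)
Your proof is correct and follows essentially the same route as the paper's. The only notable difference is in part~(1): the paper observes directly that $|eC|$ is already base-point-free (the two members $eC$ and $\sum D_i$ from Lemma--Definition~\ref{asfar}(3) have disjoint supports since $C\cdot D_i=0$ and $D_i\ne C$), so no ``removing the fixed part'' step is needed; in part~(2) your argument spells out in detail exactly the step the paper leaves terse.
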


\begin{proof}
Suppose $e<\infty$. Then 
$eC\sim \sum D_i$, for some irreducible curves $D_i\neq C$ by Lemma~\ref{asfar}~(3). 
As $C^2=0$, it 
follows that the $D_i$'s are disjoint from $C$ and $|eC|$ is a base-point-free  pencil. 
Since $C^2=K\cdot C=0$ by adjunction, $\varphi_{|eC|}:\,X\to\bP^1$ is a (quasi)-elliptic fibration. 
Suppose $e=\infty$. Then $C$ is rigid by Lemma \ref{asfar}~(4). 
By~Prop. ~\ref{prop:nonpol}, if $\barEff(X)$ is polyhedral and the Picard number of $X$ is at least $3$, then 
$\barEff(X)$ is generated by negative curves and $C$ is contained in the interior of a facet. Thus $h^0(X,kC)>1$ for some~$k$ and therefore
$e(C,X)<\infty$ by Lemma~\ref{asfar}~(4). 
\end{proof}

\begin{lemma}\label{adfharh}
If $(C,X)$ is an elliptic pair, then $K_X+C$ is an effective divisor. 
\end{lemma}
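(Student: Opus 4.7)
The plan is to run Riemann--Roch and Serre duality on a resolution, then push the resulting effective divisor down to $X$. Let $\pi\colon\tilde X\to X$ be the minimal resolution of singularities. Because $C$ is disjoint from $\Sing(X)$, the curve $C$ lifts isomorphically to $\tilde X$, still with $C^2=0$ and (by the arithmetic genus formula applied on $\tilde X$) $K_{\tilde X}\cdot C = 2p_a(C)-2-C^2 = 0$. Moreover, since log terminal singularities are rational and $X$ is a rational surface, $\tilde X$ is a smooth rational projective surface, so $\chi(\mathcal O_{\tilde X})=1$.

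First I would compute $\chi(\mathcal O_{\tilde X}(K_{\tilde X}+C))$ via Riemann--Roch for smooth surfaces:
\[
\chi(\mathcal O_{\tilde X}(K_{\tilde X}+C))
=\chi(\mathcal O_{\tilde X})+\tfrac12(K_{\tilde X}+C)\cdot C
=1+\tfrac12\bigl(K_{\tilde X}\cdot C+C^2\bigr)=1,
\]
using the two intersection identities above. Next I would kill $h^2$ by Serre duality:
\[
h^2\bigl(\tilde X,\mathcal O(K_{\tilde X}+C)\bigr)=h^0\bigl(\tilde X,\mathcal O(-C)\bigr)=0,
\]
since $C$ is a nonzero effective divisor, so $-C$ is not effective. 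Combining these gives $h^0(\tilde X,\mathcal O(K_{\tilde X}+C))\ge\chi=1$, hence $K_{\tilde X}+C$ is effective on $\tilde X$.

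Finally I would descend to $X$: pick an effective divisor $D\in|K_{\tilde X}+C|$ and push it forward. Since $C\subseteq\tilde X$ has no exceptional components (it lies in the locus where $\pi$ is an isomorphism), $\pi_*C=C$. As pushforward of Weil divisors discards only exceptional components, $\pi_*K_{\tilde X}=K_X$, so $\pi_*D$ is an effective Weil divisor linearly equivalent to $K_X+C$; this gives a nonzero section of the reflexive sheaf $\mathcal O_X(K_X+C)$ and proves the claim.

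The main (and in fact only) subtlety I expect is the descent step: one must be careful that $\pi_*(K_{\tilde X}+C)$ really represents the class $K_X+C$ on the (possibly singular) surface $X$. This is straightforward here because $C$ avoids the exceptional locus and $K_X$ is $\bQ$-Cartier by log terminality, so no discrepancy correction is needed when comparing $\pi^*K_X$ with $K_{\tilde X}$ after applying $\pi_*$. Everything else is a one-line Riemann--Roch calculation that exploits the two key numerical vanishings $C^2=0$ and $K_{\tilde X}\cdot C=0$.
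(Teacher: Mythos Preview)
Your proof is correct and follows essentially the same approach as the paper: both reduce to a smooth resolution and then run a short cohomology computation. The only cosmetic difference is that the paper uses the restriction exact sequence $0\to\cO(K)\to\cO(K+C)\to\cO_C\to0$ together with $h^0(K)=h^1(K)=0$ (which in fact pins down $h^0(K+C)=1$ exactly), whereas you obtain $h^0(K+C)\ge1$ directly from Riemann--Roch and Serre duality; either route suffices for the lemma.
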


\begin{proof}
As $C$ is contained in the smooth locus of $X$, we can pass to a resolution of singularities and prove for a smooth surface ~$X$ that $h^2(-C)=h^0(K+C)>0$. 
By~adjunction $\cO_X(K+C)|_C\simeq\omega_C\simeq\cO_C$, so there is an exact 
sequence\footnote{
This trick is from the proof of the canonical bundle formula for elliptic fibrations in~\cite{BoMuII}}
$$0\to \cO_X(K)\to\cO_X(K+C)\to\cO_C\to 0.$$
The statement follows from the vanishing $h^0(X,K)=h^1(X,K)=0$.\end{proof}

\begin{definition}
We say that $(C,X)$ is a {\em minimal} elliptic pair if it does not contain 
irreducible curves $E$
such that $K\cdot E<0$ and $C\cdot E=0$.
\end{definition}

\begin{remark}\label{wfvwevw}
A curve $E$ as in the definition must have $E^2<0$.  Indeed,  $E^2\leq0$
by the Hodge Index theorem, with equality if and only if the classes of $C$ and $E$ are multiples of each other.
But since $E\cdot K<0$ and $C\cdot K=0$, the latter is not possible. 
Moreover, $E$ is a rational curve \cite{Fujino}*{Thm~ 5.6}.  By~the contraction theorem, there exists a
morphism $\phi: X\ra Y$ contracting only~$E$. As $\phi$ is an isomorphism in a Zariski neighborhood of $C$ and $Y$ is log terminal, $(C,Y)$ is  an elliptic pair. 
Moreover, $K_X\equiv\phi^*K_Y+aE$, for some $a\in\mathbb Q$.
 Since $E\cdot K_X<0$ and $E^2<0$, it follows 
that $a>0$. Furthermore, $K^2_X<K^2_Y$.  
\end{remark}

\begin{lemma}\label{arharhadj}
Let $(C,X)$ be an elliptic pair. The following conditions are equivalent:
\begin{enumerate}
\item $(C,X)$ is minimal;
\item $K+C$ is nef;
\item $C\sim\alpha(-K)$, for some $\alpha\in\mathbb Q_{>0}$, a linear equivalence of $\bQ$-divisors;
\item $K^2=0$.
\end{enumerate}
\end{lemma}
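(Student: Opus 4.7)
The plan is to close the cycle $(3)\Rightarrow(2)\Rightarrow(1)\Rightarrow(4)\Rightarrow(3)$, using three main tools: the effectivity of $K+C$ from Lemma~\ref{adfharh}, adjunction (which yields $K\cdot C=0$), and the Hodge Index Theorem.

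First I would handle the easy implications. For $(3)\Rightarrow(2)$: writing $K\equiv -C/\alpha$ gives $K+C\equiv\frac{\alpha-1}{\alpha}C$; testing effectivity of $K+C$ against an ample class forces $\alpha\geq 1$, so $K+C$ is a non-negative multiple of the nef class $C$, hence nef. For $(2)\Rightarrow(1)$: if $K+C$ is nef, then for any irreducible $E$, the inequality $(K+C)\cdot E\geq 0$ gives $K\cdot E\geq -C\cdot E$; in particular, $C\cdot E=0$ forces $K\cdot E\geq 0$, so no curve violates minimality.

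The main step is $(1)\Rightarrow(4)$. By Lemma~\ref{adfharh}, one can write $K+C=\sum m_i E_i$ as an effective divisor with $m_i>0$ and $E_i$ irreducible. Since $(K+C)\cdot C=K\cdot C+C^2=0$ by adjunction and each $E_i\cdot C\geq 0$, every component must satisfy $E_i\cdot C=0$. Minimality then forces $K\cdot E_i\geq 0$ for each $i$, so
\[
K^2=K\cdot(K+C)=\sum m_i\,(K\cdot E_i)\geq 0.
\]
The reverse inequality $K^2\leq 0$ comes from the Hodge Index Theorem: the intersection form restricted to $C^\perp$ is negative semi-definite with radical $\bR C$, and $K\in C^\perp$. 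Hence $K^2=0$.

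For $(4)\Rightarrow(3)$, the equalities $K^2=C^2=K\cdot C=0$ place $K$ and $C$ in a totally isotropic subspace of $\Num^1(X)_\bR$. Since the Witt index of a form of signature $(1,\rho-1)$ is $1$, I would conclude $K\equiv\lambda C$ for some $\lambda\in\bQ$. Effectivity of $K+C\equiv(\lambda+1)C$ then gives $\lambda\geq -1$. If $\lambda\geq 0$, then $K\equiv\lambda C$ is numerically equivalent to an effective $\bQ$-divisor, and since $h^1(\cO_X)=0$ for rational $X$ identifies $\Pic(X)_\bQ=\Num^1(X)_\bQ$, some multiple $mK$ would be linearly equivalent to an effective divisor, contradicting $\kappa(X)=-\infty$. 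Hence $\lambda<0$, and setting $\alpha:=-1/\lambda\in[1,\infty)$ yields $C\equiv\alpha(-K)$, which upgrades to $\bQ$-linear equivalence by the same identification $\Pic_\bQ=\Num_\bQ$. The main obstacle is this last sign determination for $\lambda$, which requires invoking rationality of $X$ (beyond just the numerical data) to rule out the possibility $\lambda\geq 0$.
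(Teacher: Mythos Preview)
Your proof is correct, but it runs the cycle in the opposite direction from the paper: you prove $(3)\Rightarrow(2)\Rightarrow(1)\Rightarrow(4)\Rightarrow(3)$, whereas the paper does $(1)\Rightarrow(2)\Rightarrow(3)\Rightarrow(4)\Rightarrow(1)$. The key ingredients are the same (effectivity of $K+C$, adjunction, Hodge index, and the fact that no positive multiple of $K$ is effective on a rational surface), but the packaging differs. Your $(2)\Rightarrow(1)$ is a one-line tautology, while the paper's $(1)\Rightarrow(2)$ invokes the cone theorem for log surfaces to locate an extremal curve. Likewise, your $(1)\Rightarrow(4)$ is a clean direct computation---decompose $K+C$ as an effective sum, observe each component is $C$-orthogonal, apply minimality componentwise to get $K^2\geq 0$, and finish with Hodge---whereas the paper's $(4)\Rightarrow(1)$ appeals to Remark~\ref{wfvwevw} (i.e., the contraction theorem and the inequality $K_X^2<K_Y^2$ under a $(K+C)$-negative contraction). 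Your $(4)\Rightarrow(3)$ via the Witt index is the same Hodge argument the paper uses for its $(2)\Rightarrow(3)$, just applied at a different point in the cycle. In short, your route is slightly more elementary: it avoids both the cone and contraction theorems, relying only on Lemma~\ref{adfharh}, Hodge, and $\kappa(X)=-\infty$.
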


\begin{proof}
To prove $(1)\Rightarrow (2)$, assume that $K+C$ is not nef.
By the cone theorem\footnote{Note that there are no singularity assumptions on $K+C$ in the cone theorem for surfaces.} 
for a log surface $(X,C)$ \cites{Tanaka, Fujino},
there is an irreducible curve $E$ such that $(K+C)\cdot E<0$ and $E^2<0$. 
Since $K+C$ is effective, $E$ must be one of its components. Since $C\cdot (K+C)=0$ and $C$ is nef,  
we must have $C\cdot E=0$, hence, $K\cdot E<0$. This contradicts the minimality of $(C,X)$.

Next we prove
$(2)\Rightarrow (3)$. Since 
$(K+C)\cdot C=0$, by the Hodge Index theorem we must have 
$(K+C)^2\leq0$. But since $K+C$ is nef, $(K+C)^2\geq0$.
Thus $(K+C)^2=0$ and it must be that $K+C\equiv\lambda C$, for some $\lambda\in\mathbb Q$. As no multiple of $K$ is effective,
it follows that $C\equiv\alpha(-K)$, for some $\alpha\in\mathbb Q_{>0}$. 
Since $X$ is rational, in fact $C\sim\alpha(-K)$,  a linear equivalence of $\bQ$-divisors;

The  implication $(3)\Rightarrow (4)$ is clear.
To see  $(4)\Rightarrow (1)$, suppose $(X,C)$ is not minimal.
By Remark~\ref{wfvwevw},  
there is a contraction $\phi: X\ra Y$ of a curve
$E$ such that $K\cdot E<0$, $E^2<0$ and $C\cdot E=0$.
Moreover, $K_Y^2>K_X^2=0$. But 
$(C,Y)$ is an elliptic pair and 
so $K_Y^2\le 0$ by the Hodge Index theorem, which gives a contradiction.
\end{proof}

\begin{theorem}\label{sHAHA}
Let $(C,Z)$ be an elliptic pair with smooth $Z$. Then  $(C,Z)$ is minimal if and only if 
$\rho(Z)=10$, or equivalently, $K^2=0$.
If $(C,Z)$ is minimal then
\begin{enumerate}
\item[(i) ] $C\sim n(-K)$ for some positive integer $n$; 
\item[(ii) ] $Z$ is a blow-up of $\bP^2$ at $9$ points (possibly infinitely near) and the intersection pairing on $Z$ makes $\Cl_0(Z)$ isomorphic to the negative definite lattice $\bE_8$.
\end{enumerate}
Suppose that $(C,Z)$ is minimal and 
$e(C,Z)<\infty$. The following  are equivalent:
\begin{enumerate}
\item $\oEff(Z)$ is polyhedral and generated by $(-2)$ and $(-1)$ curves.
\item $\oEff(Z)$ is polyhedral.
\item $\Ker(\ored)\subseteq \bE_8$ contains $8$ linearly independent roots of~$\bE_8$.
\end{enumerate}
\end{theorem}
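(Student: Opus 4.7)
The plan is to first establish the three equivalent numerical conditions (minimality, $K_Z^2 = 0$, $\rho(Z) = 10$) and deduce the structural statements (i)--(ii), then treat the three-way equivalence by combining Shioda--Tate with Proposition~\ref{prop2}. The equivalence of $(C,Z)$ being minimal with $K_Z^2 = 0$ is Lemma~\ref{arharhadj}, and Noether's formula on the smooth rational surface $Z$ gives $K_Z^2 + c_2(Z) = 12$ with $c_2(Z) = 2 + \rho(Z)$, whence $K_Z^2 = 10 - \rho(Z)$. For (i), Lemma~\ref{arharhadj}(3) produces $C \sim \alpha(-K)$ with $\alpha \in \bQ_{>0}$; Lemma~\ref{adfbafb} makes $|eC|$ a base-point-free pencil inducing a (quasi-)elliptic fibration in which $C$ is a general or (possibly non-reduced) multiple fiber, and the canonical bundle formula for rational (quasi-)elliptic surfaces forces $\alpha = n \in \bZ_{>0}$. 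For (ii), I would iteratively contract $(-1)$-curves on $Z$ --- each contraction preserving the invariant $K^2 + \rho = 10$ --- down to a minimal rational surface, which must be $\bP^2$, a Hirzebruch surface, or $\bP^1 \times \bP^1$; in the latter two cases a further blow-up/down identifies $Z$ with a nine-point blow-up of $\bP^2$ (points possibly infinitely near). In the resulting basis $H, E_1,\ldots,E_9$ of $\Pic(Z)$ with $K = -3H + \sum E_i$, a direct lattice computation identifies $K^\perp/\langle K\rangle$ with the $\bE_8$ root lattice, and $C^\perp = K^\perp$ from $C \sim n(-K)$, giving $\Cl_0(Z) \cong \bE_8$.

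Among the equivalences, $(1)\Rightarrow(2)$ is immediate. For $(2)\Rightarrow(1)$, Proposition~\ref{prop1}(2) applies (since $\rho = 10 \geq 3$) and shows that $\oEff(Z)$ is generated by finitely many negative curves. Any such irreducible $E$ satisfies $E^2 + E\cdot K = 2p_a(E) - 2 \geq -2$ by adjunction and $E\cdot K \leq 0$ because $-K$ is nef (from $C \sim n(-K)$ being nef); combined with $E^2 < 0$ this forces $(E^2, E\cdot K)$ to be $(-1,-1)$ or $(-2,0)$, so $E$ is a $(-1)$- or $(-2)$-curve.

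For $(1)\Rightarrow(3)$, the hypothesis $e < \infty$ yields an elliptic fibration $f\colon Z\to \bP^1$ with $C$ as a (possibly multiple) fiber, and Shioda--Tate reads $\rk\,\mathrm{MW}(f) + t = 8$, where $t = \sum_v(m_v - 1)$ counts non-identity fiber components. Each $(-1)$-curve is a (multi-)section of $f$; if $\rk\,\mathrm{MW} \geq 1$, MW-translation produces infinitely many sections with distinct numerical classes, each an extremal ray of $\oEff(Z)$, contradicting (1). Hence $\rk\,\mathrm{MW} = 0$ and the $(-2)$-fiber components span a rank-$8$ sublattice of $\bE_8$; each is disjoint from $C$ and so lies in $\Ker(\rd) \subseteq \Ker(\ored)$, supplying the required 8 linearly independent roots.

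The hardest step is $(3)\Rightarrow(1)$. Given 8 linearly independent roots in $\Ker(\ored)$, I would use the vanishing $H^0(\bP^1, R^1 f_*\cO_Z) = 0$ for the rational elliptic surface $Z$ to show that the specialization of $\mathrm{MW}(f)$ to the relative $\Pic^0$-fiber over $C$ is injective modulo torsion, so that $\ored$ induces an injection $\mathrm{MW}(f)/\mathrm{tors}\hookrightarrow \Pic^0(C)/\langle\rd(K)\rangle$. Modulo the $(-2)$-fiber-component sublattice, $\ored$-kernel classes must then vanish in $\mathrm{MW}$, so the hypothesis forces the $(-2)$-fiber-component sublattice to have rank $8$ and hence $\rk\,\mathrm{MW} = 0$. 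The $(-1)$-curves are then finite (a torsor over the torsion group $\mathrm{MW}$) and the $(-2)$-curves are finite by Kodaira classification; a positive $\bQ$-combination of the components of a singular fiber is $\sim -nK\sim C$, so Proposition~\ref{prop2} concludes that $\oEff(Z)$ is rational polyhedral, generated by these $(-1)$- and $(-2)$-curves. The main obstacle is this specialization step, which must be handled carefully when $C$ is singular (nodal or cuspidal) or non-reduced, where $\Pic^0(C)$ is non-proper and specialization must be carried out via the relative $\mathbf{Pic}^0$-scheme over $\bP^1$, using the cohomological vanishing to prevent a nontrivial MW section from specializing to the identity.
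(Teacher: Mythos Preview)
Your argument for (i) has a gap: you invoke the fibration $|eC|$ and the canonical bundle formula, but that requires $e(C,Z)<\infty$, whereas (i) must hold for all minimal pairs, including those with $e=\infty$. The paper instead proves (ii) first and then notes that $-K$ is a \emph{primitive} vector in $\Pic(Z)\cong\bZ^{10}$; since $C$ is an integral divisor with $C\sim\alpha(-K)$ (Lemma~\ref{arharhadj}(3)), $\alpha$ must be a positive integer.

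For $(3)\Rightarrow(1)$ your route through Mordell--Weil specialization is genuinely different, but the injectivity you need---that $\ored$ embeds $\mathrm{MW}/\mathrm{tors}$ into $\Pic^0(C)/\langle\rd(K)\rangle$---is not clearly available: specialization to a \emph{fixed} fiber can kill a non-torsion section, and the vanishing $H^0(\bP^1,R^1f_*\cO_Z)=0$ (equivalent to $h^1(\cO_Z)=0$ here) does not by itself prevent this; matters are worse when $e>1$ (no sections) or $C$ is singular, as you note. The paper sidesteps MW entirely with an elementary trick. After adding a multiple of $K$ so that $\rd(\beta)=0$, one has $\cO(\beta)|_C\cong\cO_C$ and an exact sequence
\[
0\to\cO_Z(\beta-C)\to\cO_Z(\beta)\to\cO_C\to 0.
\]
If $\beta$ is not effective then $h^0(\beta-C)=0$ and the long exact sequence forces $h^1(\beta-C)\ge1$; Riemann--Roch gives $\chi(\beta-C)=1+\tfrac12\beta^2=0$, so $h^2(\beta-C)>0$ and Serre duality makes $(K+C)-\beta$ effective. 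Either way one obtains an effective divisor in $C^\perp$ whose class in $\bE_8$ is $\pm\beta$; being orthogonal to the nef class $C$ it is fiber-supported, hence lies in the span of the $(-2)$-fiber components. Eight such $\beta$'s force the $(-2)$-curves to generate $C^\perp$ over $\bQ$, and Proposition~\ref{prop2} finishes. Your $(1)\Rightarrow(3)$ via MW-translation is also heavier than needed (and in the Halphen case without sections it tacitly uses that the MW group of the Jacobian acts on $Z$); the paper simply applies Proposition~\ref{prop2} to get irreducible $B_i$ generating $C^\perp$ with $B_i\cdot K=0$, hence $(-2)$-curves disjoint from $C$, giving eight independent roots in $\Ker(\rd)\subseteq\Ker(\ored)$.
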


\begin{remark}
Smooth projective rational surfaces $Z$
for which there is an integer $m>0$ such that the linear system $|-mK_Z|$ is base-point free and of dimension $1$, are called 
Halphen surfaces of \emph{index} $m$ and have been studied from many different points of view, see for example \cites{ADHL, CD, Grivaux}. If $(C,Z)$ is an elliptic pair as in Thm. \ref{sHAHA}, 
then $Z$ is a Halphen surface with index $n\cdot e$, where $e:=e(C,Z)$. Let $N$ be the sub-lattice of $\bE_8$ that is generated by roots contained in $\Ker(\ored)$, i.e., $N$ is  generated by the classes of all the $(-2)$ curves on $Z$ (see the proof of Thm. \ref{sHAHA}). By the Hodge Index Theorem, the $(-2)$ curves on $Z$  are precisely the irreducible components of reducible fibers of the fibration induced by the linear system $|eC|$, call them $S_1,\ldots, S_\lambda$. If $\mu_j$ denotes the number of irreducible components of $S_j$, the rank of $N$ (i.e., the maximum number of linearly independent roots of~$\bE_8$ contained in $\Ker(\ored)$, or equivalently, the maximum number of  $(-2)$ curves that are linearly independent modulo $K_Z$) equals $\sum_{i=1}^\lambda(\mu_i-1)$. 
By a result of Gizatullin, $\sum_{i=1}^\lambda(\mu_i-1)<8$ if and only if the automorphism group $\Aut(Z)$ is infinite; in this case there exists a free abelian group $G$ of rank $8-\sum_{i=1}^\lambda(\mu_i-1)$, of finite index in $\Aut(Z)$, such that any non-zero element in $G$ is an automorphism that acts by translation on each fiber of the elliptic fibration \cite{Grivaux}[Thm. 7.11, Cor. 7.12], i.e., $\oEff(Z)$ is not polyhedral if and only if $\Aut(Z)$ is infinite. 
\end{remark}

\begin{proof}[Proof of Thm. \ref{sHAHA}]
By Lemma \ref{arharhadj}, the elliptic pair $(C,Z)$ is minimal if and only if $K^2=0$. 
Since $Z$ is a smooth rational surface, it is an iterated blow-up of $\bP^2$ or a Hirzebruch surface $\mathbb F_e$. 
As $K^2$ goes down by one and the Picard number goes up by one when blowing-up a smooth point,  $K^2=0$ 
if and only if $\rho(Z)=10$. We claim that $Z$ is the blow-up of $\bP^2$ at $9$ points. Assume not. Then $Z$ is the iterated blow-up of a Hirzebruch surface $\mathbb F_e$ ($e=0$ or $e\geq2$) at $8$ points. A negative curve $B$ on $\mathbb F_e$ has $B^2=-e$ and 
$B^2$ goes down by blow-up. By adjunction, and since $-K_Z$ is nef,  the only negative curves on $Z$ are $(-1)$ and $(-2)$-curves, so we must have $e=0$, or $e=2$ and none of the blown up (possibly infinitely near) points on $\mathbb F_2$ lie on the negative section. If $e=0$, we are done, as $\mathbb \Bl_p\bP^1\times\bP^1$ is isomorphic to the blow-up of $\bP^2$ at $2$ points. If $e=2$, we are also done, as a blow-up of $\mathbb F_2$ at a point not lying on the negative section, is isomorphic, via an elementary transformation, to a blow-up of $\mathbb F_1$ at one point. This proves the claim. It follows that $\Cl_0(Z)\cong\bE_8$. Since $-K$ is a primitive vector of $\Pic(Z)$, it follows by Lemma \ref{arharhadj}(3) that  $C\sim n(-K)$ for some integer $n>0$.


 
Suppose that $(C,Z)$ is minimal and 
$e=e(C,Z)<\infty$. 
By Lemma~\ref{adfbafb}, $|eC|$ gives a (quasi)-elliptic fibration $Z\rightarrow \bP^1$. Clearly  $(1)\Rightarrow (2)$ and Prop.~\ref{prop1} (2) implies $(2)\Rightarrow (1)$, as 
the only negative curves are $(-1)$ and $(-2)$ curves. Assume (1). By Proposition \ref{prop2}, $C\equiv\sum a_iB_i$ for $a_i\in\mathbb Q_{>0}$, with irreducible negative curves $B_i$ 
generating $C^\perp$ over $\mathbb Q$. 
Since $B_i$ is irreducible, $\rd(B_i)=0$.
Since $B_i\cdot K=0$, each
$B_i$ is a   $(-2)$ curve. Since the curves $B_i$ generate $C^\perp$ over~$\mathbb Q$, 
eight of them are linearly independent modulo $K$. This proves (3). 

Assume (3). Let $\be_1,\ldots, \be_8$ be $(-2)$-classes in $C^{\perp}$, linearly independent modulo~$K$, 
and such that $\ored(\be_i)=0$. Adding to each $\beta_i$ a multiple of $K$, we may assume that each $\be_i$ restricts trivially to $C$.
We claim that, for each $i$, either $\beta_i$ or $(K+C)-\beta_i$ is effective. Indeed, for each $\be:=\be_i$ we have a short exact sequence
$$0\to\cO(\beta-C)\to\cO(\beta)\to\cO_C\to0.$$
If $\beta$ is not effective, $\beta-C$ is not effective either. Hence, $h^1(Z,\cO(\beta-C))>0$.
But $\chi(\cO(\beta-C))=0$ by Riemann--Roch.
Thus $h^2(Z,\cO(\beta-C))>0$ and so $(K+C)-\beta$ is effective.  
We have found $8$ effective divisors $D_1,\ldots,D_8$ with $\rd(D_i)=0$,  $D_i^2=-2$ and linearly independent modulo $K$. 
Each of the divisors $D_i$ belongs to a union of the fibers of the (quasi)-elliptic fibration (and no $D_i$ is a rational multiple of $C$). Since $(-2)$-curves are precisely the irreducible components of reducible fibers, it follows that $(-2)$-curves generate $C^\perp$ over $\bQ$. Clearly, for some integer $l\gg 0$, $lC$ is an effective combination of $(-2)$ curves. Then Prop.~\ref{prop2}~(1) implies (2).
\end{proof}


\begin{theorem}\label{asfhadrhad}
For any elliptic pair $(C,X)$, there exists a minimal elliptic pair 
$(C,Y)$ and a morphism $\pi:\,X\to Y$, which is an isomorphism over a neighborhood~of~$C$.
Consider the Zariski decomposition on $X$ of $K+C$,
$$K+C\sim N+P,\quad N=a_1C_1+\ldots+a_sC_s,\quad a_i\in\bQ_{>0},$$
the linear equivalence of $\bQ$-divisors\footnote{Recall that the $C_i$'s are irreducible curves with a negative-definite
intersection matrix and $P$ is a nef effective $\bQ$-divisor such that $P\cdot C_i=0$ for all $i$. 
The $\bQ$-divisor $N$ is determined uniquely.}.
Then: 
\begin{enumerate}
\item $Y$ is obtained by contracting curves $C_1,\ldots,C_s$ on $X$.
\item $P\equiv0$ if and only if $-K_Y\sim C_Y$, in which case 
$N$ is an integral combination of $C_1,\ldots,C_s$ and
$Y$ has Du Val singularities.
\end{enumerate}
\end{theorem}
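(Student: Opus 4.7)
The plan is to build $(C,Y)$ via the $(K+C)$-MMP on $(X,C)$ and identify the resulting birational data with the Zariski decomposition of $K+C$.

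First, I would verify that every $(K+C)$-negative extremal curve $E$ on $X$ satisfies $C\cdot E=0$. Since $K+C$ is effective by Lemma~\ref{adfharh}, I fix an effective representative $K+C\sim\sum n_iD_i$ with $n_i>0$ and $D_i$ irreducible. The inequality $(K+C)\cdot E<0$ forces some $D_i\cdot E<0$, hence $E=D_i$; in particular $E$ is a component of this representative. Combined with $C\cdot(K+C)=0$, the nefness of $C$, and $D_i\cdot C\ge 0$, this yields $D_i\cdot C=0$ for all $i$, hence $E\cdot C=0$ and $K\cdot E<0$. By Remark~\ref{wfvwevw}, contracting $E$ is an isomorphism near $C$ and produces another elliptic pair with strictly larger $K^2$. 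Since $K^2\le 0$ on every elliptic pair (Hodge Index applied to $K+C$: $(K+C)^2=K^2$ and $C\cdot(K+C)=0=C^2$), the iteration terminates at a minimal elliptic pair $(C,Y)$ with $\pi\colon X\to Y$ an isomorphism near $C$.

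For part (1), I would write
\[
K_X+C_X\sim\pi^*(K_Y+C_Y)+\sum_i d_iE_i,
\]
with $E_i$ the $\pi$-exceptional divisors and $d_i>0$ (positivity of discrepancies is preserved under composition of contractions of $K$-negative curves). By Lemma~\ref{arharhadj}, $K_Y+C_Y$ is nef; hence $\pi^*(K_Y+C_Y)$ is nef on $X$ and $\pi^*(K_Y+C_Y)\cdot E_i=(K_Y+C_Y)\cdot\pi_*E_i=0$. The $E_i$'s form a negative-definite configuration, so uniqueness of the Zariski decomposition identifies $N=\sum d_iE_i$ and $P=\pi^*(K_Y+C_Y)$.

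For part (2), the equivalence $P\equiv 0\Leftrightarrow -K_Y\sim C_Y$ is immediate from injectivity of $\pi^*$ on $\Num^1$ (as $\pi$ is surjective) together with the agreement of numerical and $\bQ$-linear equivalence on the rational surface $Y$. To establish the Du Val condition, I would pass to the minimal resolution $\mu\colon\tilde Y\to Y$ and write $K_{\tilde Y}=\mu^*K_Y+\sum e_jF_j$ with $e_j\le 0$ (log terminality). From $K_Y+C_Y\sim_\bQ 0$ and $K_Y^2=0$ (minimality, Lemma~\ref{arharhadj}) I would deduce $K_{\tilde Y}^2=(\sum e_jF_j)^2\le 0$; then comparing the smooth elliptic pair $(\tilde Y,C_{\tilde Y})$ with its smooth minimal model $Z$ (a blow-up of $\bP^2$ at nine points with $K_Z^2=0$, by Theorem~\ref{sHAHA}) shows $K_{\tilde Y}^2=0$, so $(\sum e_jF_j)^2=0$ and the negative-definiteness of the intersection matrix of the $F_j$'s forces $e_j=0$ for all $j$. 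Hence $Y$ is Du Val. Integrality of $N$ then follows: $K_Y$ is Cartier (Gorenstein~+~lt), $\Pic(Y)\hookrightarrow\Pic(\tilde Y)$ is torsion-free (rational singularities, smooth rational resolution), and the effective divisor $K_Y+C_Y$, being $\sim_\bQ 0$, must vanish as a Cartier divisor; so $P=0$ and $N=K_X+C_X$ is integral.

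The main obstacle I anticipate is the last step of the Du Val argument: identifying $\tilde Y$ with the smooth minimal model $Z$, i.e.\ ruling out residual $(-1)$-curves on $\tilde Y$ orthogonal to $C_{\tilde Y}$ under the hypothesis $P\equiv 0$. A candidate such curve $E'$ with image $E$ on $Y$ would force $\sum_j e_j(F_j\cdot E')=K_{\tilde Y}\cdot E'-K_Y\cdot E\le -1-0=-1$, requiring $E$ to meet non-canonical singularities in a very specific way; one must exclude this by comparing the MMP on $X$ with the minimal resolution of $Y$ on a common smooth dominant model $\tilde X\to X$, $\tilde X\to \tilde Y$, and tracking how discrepancies accumulate along the two procedures.
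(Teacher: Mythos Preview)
Your construction of the minimal model via the $(K+C)$-MMP and the identification of $N=\sum d_iE_i$, $P=\pi^*(K_Y+C_Y)$ with the Zariski decomposition are exactly as in the paper, and your derivation of the equivalence $P\equiv 0\Leftrightarrow K_Y+C_Y\sim 0$ is fine.

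The gap you flag in the Du Val step is real, but it only arises because you chose the hard route. The paper's argument is one line: by definition of an elliptic pair, $C_Y$ lies in the smooth locus of $Y$, hence $C_Y$ is Cartier; since $-K_Y\sim C_Y$, the canonical divisor $K_Y$ is Cartier as well. A surface singularity that is log terminal with Cartier canonical divisor is canonical, i.e.\ Du Val. There is no need to pass to the minimal resolution $\tilde Y$, compute $K_{\tilde Y}^2$, or invoke Theorem~\ref{sHAHA}; in particular, the obstacle of ruling out extra $(-1)$-curves on $\tilde Y$ never arises.

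For integrality of $N$, your argument makes the Du Val conclusion a prerequisite and then appeals to torsion-freeness of $\Pic(Y)$. The paper instead argues directly on $X$ via Bauer's algorithm for the Zariski decomposition: fix an integral effective representative $K+C\sim\sum b_iB_i$; the algorithm outputs $N'=\sum x_iB_i$ with $0\le x_i\le b_i$ and $P'=\sum(b_i-x_i)B_i$ effective. By uniqueness of the negative part, $N'=N$, so $P'\equiv P\equiv 0$; an effective divisor numerically trivial on a projective surface is zero, hence $P'=0$ and $N=\sum b_iB_i$ has integer coefficients. This is cleaner and logically independent of the singularity statement.
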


\begin{definition}
We call an elliptic pair $(C,Y)$ a {\em minimal model} of $(C,X)$.
\end{definition}

\begin{corollary}\label{kjsHFkjshf}
Let $(C,Y)$ be a minimal model of an elliptic pair $(C,X)$ such that  $e(C,X)=\infty$. Then $Y$ has Du Val singularities. Consider the Zariski decomposition
$$K+C\sim N+P$$
on $X$. Then $P\sim 0$ and 
$K+C\sim N$ is an \underline{integral} effective
combination of irreducible curves $C_1,\ldots,C_s$ with a negative-definite
intersection matrix. The minimal model $Y$ is obtained by contracting curves $C_1,\ldots,C_s$ and $C_Y\sim -K_Y$.
\end{corollary}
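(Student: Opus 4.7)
The strategy is to reduce to Theorem~\ref{asfhadrhad}(2) by establishing the single integral linear equivalence $-K_Y\sim C_Y$: once this is known, Theorem~\ref{asfhadrhad}(2) directly yields $P\equiv 0$, the integrality of $N=K_X+C_X$, the description of $Y$ as the contraction of $C_1,\ldots,C_s$ (with negative-definite intersection matrix from Theorem~\ref{asfhadrhad}), and the Du Val property of $Y$. The upgrade $P\equiv 0\Rightarrow P\sim 0$ is automatic: since $X$ is a rational surface, $h^1(\cO_X)=0$ forces $\Pic^0(X)=0$, so numerical and linear equivalence agree on $\Pic(X)_\bQ$.

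To prove $-K_Y\sim C_Y$, I first transfer the hypothesis. Since $\pi\colon X\to Y$ is an isomorphism in a neighborhood of $C$, Lemma~\ref{asfar} gives $e(C,Y)=e(C,X)=\infty$. Applying Lemma~\ref{arharhadj}(3) to the minimal pair $(C,Y)$ produces $\alpha\in\bQ_{>0}$ with $C_Y\sim\alpha(-K_Y)$ as $\bQ$-divisors. The heart of the proof is to force $\alpha=1$ by restricting to $C_Y$ and comparing with adjunction.

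Pick $m\in\bZ_{>0}$ with $m\alpha\in\bZ$, so $mC_Y+m\alpha K_Y\sim 0$ as Cartier divisors. Because $C_Y$ lies in the smooth locus of $Y$ and is cut out by a single equation there, it is Gorenstein; being an irreducible curve of arithmetic genus one, adjunction yields $\omega_{C_Y}\cong\cO_{C_Y}$, equivalently $\rd(K_Y)=-\rd(C_Y)$ in $\Pic^0(C_Y)$. Restricting the Cartier relation above to $C_Y$ then gives
\[
m\,\rd(C_Y)-m\alpha\,\rd(C_Y)=m(1-\alpha)\rd(C_Y)=0\quad\text{in }\Pic^0(C_Y).
\]
By Lemma~\ref{asfar}(1), $e(C,Y)=\infty$ means $\rd(C_Y)$ has infinite order in $\Pic^0(C_Y)$, so $1-\alpha=0$, i.e.\ $\alpha=1$. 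This is the main obstacle and the only place the hypothesis $e(C,X)=\infty$ enters; everything else is formal.

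It remains to promote $\alpha=1$, which merely gives $m(K_Y+C_Y)\sim 0$, to the integer equivalence $K_Y+C_Y\sim 0$. Applying Lemma~\ref{adfharh} to $(C,Y)$ provides an effective representative $D\geq 0$ of the class $K_Y+C_Y$, and then $mD\sim 0$. On a connected projective variety, the only effective divisor linearly equivalent to zero is zero itself, so $mD=0$, hence $D=0$ and $-K_Y\sim C_Y$. With this in hand, Theorem~\ref{asfhadrhad}(2) delivers all the remaining conclusions of the corollary.
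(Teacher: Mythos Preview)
Your proof is correct and follows essentially the same route as the paper. Both arguments use minimality to get $C_Y\sim\alpha(-K_Y)$ from Lemma~\ref{arharhadj}(3), then restrict to $C_Y$ and invoke adjunction ($\rd(K_Y+C_Y)=0$) together with the non-torsion of $\rd(C_Y)$ to force $\alpha=1$; the paper phrases this as a contradiction from $P\not\equiv0$, while you argue directly. The one thing you spell out that the paper leaves implicit is the promotion from the $\bQ$-equivalence $m(K_Y+C_Y)\sim 0$ to the integral equivalence $K_Y+C_Y\sim 0$ via Lemma~\ref{adfharh}, which is a nice touch.
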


\begin{proof}
We first prove the theorem and then its corollary.
We  obtain a minimal model $\pi: X\ra Y$ by running a $(K+C)$-MMP \cites{Tanaka, Fujino}.
Equivalently (by~Lemma~\ref{arharhadj}), 
$\pi$ is a composition of contractions of the form
$\phi: X\ra Y$, where each $\phi$ is the contraction of a $K$-negative curve $E$ such that $E\cdot C=0$.
On each step, $K_X+C_X\sim \phi^*(K_Y+C_Y)+aE$, with $a\in\mathbb Q_{>0}$, a linear equivalence
of $\bQ$-divisors.  
At the end we obtain that $K_Y+C_Y$ is nef, i.e., $(C, Y)$ is minimal.   
If the curves contracted by $\pi$ are $C_1,\ldots,C_s\subseteq X$, then 
$K_X+C_X\sim N+P$, with 
$$P=\pi^*(K_Y+C_Y),\quad N=\sum_{i=1}^s a_i C_i\quad a_i\in\bQ_{>0},$$
  a linear equivalence
of $\bQ$-divisors.  The divisor
$P$ is nef and effective (Lemma~\ref{adfharh}) and $P\cdot C_i=0$ for all $i$. Hence, this is the Zariski decomposition of 
$K+C$. Moreover, $P\equiv0$ if and only if $K_Y+C_Y\sim 0$. 

Assume now that $P\equiv 0$. Recall an algorithm 
for computing the Zariski decomposition \cite{bauer}. Write $K+C\sim b_1B_1+\ldots+b_tB_t$ 
as an integral, effective sum of irreducible curves $B_i$. 
Let $N':=\sum x_i B_i$, where $0\leq x_i\leq b_i$ are maximal such that $P':=\sum (b_i-x_i)B_i$ intersects all $C_i$ non-negatively.
Then $N'$ and $P'$ give a Zariski decomposition of $K+C$. Since $N=N'$ is unique
and  $P'\equiv P\equiv0$,  the Zariski decomposition is $K+C\sim b_1B_1+\ldots+b_tB_t$.
To prove the singularity statement, note that
$-K_Y\sim C_Y$ implies that $K_Y$ is Cartier. Thus $Y$ has Du Val singularities.

Finally, we prove the corollary. Suppose that $e(C,X)=e(C_Y,Y)=\infty$. If~$P\not\equiv 0$, we have
$C_Y\sim \al(-K_Y)$, for some $\al\in\mathbb Q$, $\al\neq 1$. Then
$C_Y\sim \frac{\alpha}{\alpha-1}(K_Y+C_Y)$, a linear equivalence of $\bQ$-divisors.
But $K_Y+C_Y$ restricts trivially to $C_Y$ by adjunction, and therefore
 $\rd(C)$ is torsion, which is a 
contradiction. So we must have $P\equiv0$ and this finishes the proof of the corollary by (1)--(2) of the theorem. 
\end{proof}

\begin{remark}
We give an example of a minimal rational elliptic fibration that does not 
satisfy $C\sim-K$. Let~$W$ be a minimal smooth rational elliptic fibration
with a nodal fiber $I_0$. Blow-up the node of the fiber and contract the proper transform of the fiber
(which has self-intersection $-4$). This produces a minimal rational 
elliptic fibration $Y$ with a $\frac14(1,1)$ singularity, which is log terminal.
The fiber $C_0$ through the singularity is a nodal multiple fiber of multiplicity $2$. We have 
$C\sim 2C_0\sim-2K$.
\end{remark}

\begin{lemma}\label{lkJSBwkjbg}
Let $(C,Y)$ be an elliptic pair such that $Y$ has Du Val singularities.
Let $\pi:\,Z\to Y$ be its minimal resolution. 
\begin{enumerate}
\item $(C,Y)$ is minimal if and only if $(C,Z)$ is minimal.
Equivalently,
$$\rho(Y)=10-R,$$
where $R$ is the rank
of the root system associated with singularities of $Y$. 

\item
Assume  $(C,Y)$ is a minimal elliptic pair. 
Then the following are equivalent:
\begin{itemize} 
\item $\barEff(Y)$ is a polyhedral cone;
\item $\barEff(Y)$ is a rational polyhedral cone;
\item $\barEff(Z)$ is  a polyhedral cone.
\end{itemize}
\end{enumerate}
When $\rho(Y)=2$, all the above statements hold. 
\end{lemma}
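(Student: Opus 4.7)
The plan is to split the proof according to the two parts of the lemma. For part (1), the strategy is to exploit that Du Val singularities are canonical, so $K_Z=\pi^*K_Y$; since $C$ lies in the smooth locus of $Y$, $\pi$ is an isomorphism over a neighborhood of $C$, giving $C_Z=\pi^*C_Y$ and hence $K_Z+C_Z=\pi^*(K_Y+C_Y)$. Nefness of $K+C$ then transfers in both directions: pullback of a nef class is nef, and conversely for any curve $\gamma\subset Y$ we have $(K_Y+C_Y)\cdot\gamma=(K_Z+C_Z)\cdot\pi^*\gamma\ge0$. Lemma~\ref{arharhadj}(2) then delivers the equivalence ``$(C,Y)$ minimal iff $(C,Z)$ minimal''. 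For the Picard-number formula, Theorem~\ref{sHAHA} gives $(C,Z)$ minimal iff $\rho(Z)=10$ (since $Z$ is smooth), while $\rho(Z)=\rho(Y)+R$ because the minimal resolution introduces exactly one exceptional $(-2)$-curve per node of the Dynkin diagrams of the Du Val singularities of $Y$.

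For part (2), the implication ``rational polyhedral $\Rightarrow$ polyhedral'' is trivial. When $\rho(Y)\geq3$, the converse for $\oEff(Y)$ follows from Proposition~\ref{prop1}(2), as $Y$ is $\bQ$-factorial (Du Val singularities have finite local class groups). For the equivalence of polyhedrality of $\oEff(Y)$ and $\oEff(Z)$ in this range, Lemma~\ref{fvqevf} applied to the surjective $\pi$ yields one direction; for the converse, I would apply Proposition~\ref{prop1}(2) on both sides (note $\rho(Z)\geq\rho(Y)\geq3$), characterizing polyhedrality via finiteness of the set of irreducible negative curves, and match the two sets: the negative curves on $Z$ are the $R$ exceptional $(-2)$-curves of $\pi$ together with the strict transforms of negative curves on $Y$, so finiteness transfers.

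The case $\rho(Y)=2$ requires a separate argument. Here $\Num^1(Y)_\bR$ is two-dimensional, so $\oEff(Y)$ is automatically polyhedral; for rationality, $[C]$ is a rational extremal ray on the light cone, and the other extremal ray is either the second null direction (rational by Vieta's formula applied to the rational quadratic form cutting out the light cone, since one root is already rational) or a ray outside the light cone generated by an irreducible negative curve via \cite{deb}*{Lemma 6.2}, rational either way. For polyhedrality of $\oEff(Z)$: part (1) forces $R=8$, and the $8$ exceptional $(-2)$-curves $E_i$ of $\pi$ are disjoint from $C$ (so $\rd(E_i)=0$ and $E_i\in\Ker(\ored)$), $K$-trivial by adjunction, and linearly independent modulo $K$ by negative-definiteness of their intersection matrix, giving $8$ linearly independent roots in $\Ker(\ored)\subseteq\bE_8$; Theorem~\ref{sHAHA}(3) then delivers polyhedrality.

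The hard part is ensuring that Theorem~\ref{sHAHA} actually applies in this last step, i.e.\ that $e(C,Z)<\infty$. The plan is to combine $\rd(C)=-n\,\rd(K_Z)$ (from $C\sim n(-K_Z)$ given by Theorem~\ref{sHAHA}(i)) with $\rd(C)+\rd(K_Z)=0$ from adjunction to deduce $(n-1)\rd(K_Z)=0$ in $\Pic^0(C)$, yielding $e<\infty$ whenever $n\geq2$. The residual case $n=1$, where $C\sim-K_Z$, must be handled separately, most likely by observing that the rank-$8$ configuration of $(-2)$-curves on $Z=\Bl_9\bP^2$ forces the $9$ blown-up points into a very special position that produces a pencil $|-mK_Z|$ for some $m$, yielding an elliptic fibration on $Z$ and finite $e$.
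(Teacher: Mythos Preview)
Part~(1) and the easy directions of Part~(2) match the paper. The genuine gap is in your argument for ``$\oEff(Y)$ polyhedral $\Rightarrow$ $\oEff(Z)$ polyhedral'' when $\rho(Y)\ge 3$. Your claim that the negative curves on $Z$ are exactly the $R$ exceptional $(-2)$-curves together with strict transforms of negative curves on $Y$ is false: writing $\pi^*\bar E=E+\sum a_iE_i$ one has $E^2=\bar E^2+\bigl(\sum a_iE_i\bigr)^2\le\bar E^2$, so an irreducible $\bar E\subset Y$ with $\bar E^2\ge 0$ passing through $\Sing(Y)$ can have negative strict transform. Concretely, a $(-1)$-curve on $Z$ meeting the exceptional curves over two distinct $A_1$ points pushes forward to a curve of self-intersection $0$ on $Y$. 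Since $(-1)$-curves on $Z$ are not a priori finite in number (think of sections of a rational elliptic fibration with positive Mordell--Weil rank), finiteness of negative curves does not transfer along $\pi_*$ in the way you assert.

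The paper takes a different route. From polyhedrality of $\oEff(Y)$ with $\rho(Y)\ge 3$ it first derives $e(C,Y)<\infty$ via Lemma~\ref{adfbafb}, and Proposition~\ref{prop1}(1) places $[C_Y]$ in the interior of a facet of $\oEff(Y)$; the extremal rays of this facet supply $\rho(Y)-2$ irreducible curves in $C_Y^\perp$, linearly independent modulo $K_Y$ and restricting trivially to $C$. Pulling back and adjoining the $R$ exceptional $(-2)$-curves gives $\rho(Y)-2+R=8$ such effective divisors in $C_Z^\perp$, after which the argument of Theorem~\ref{sHAHA} (which uses $e<\infty$) yields polyhedrality of $\oEff(Z)$. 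For $\rho(Y)=2$ your Vieta argument for rationality of $\oEff(Y)$ is fine (the paper uses the Cone theorem instead), and your observation $(n-1)\rd(K_Z)=0$ neatly handles $n\ge2$; but your $n=1$ case is hand-waving, not a proof. The paper runs the same scheme there with $\rho(Y)-2=0$, so the eight divisors on $Z$ are simply the exceptional curves.
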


\begin{proof}
As $K_Z=\pi^*K_Y$, the pair $(C,Z)$ is minimal if and only if $(C,Y)$ is minimal
by Lemma \ref{arharhadj}. As 
$\rho(Y)=\rho(Z)-R$, the first statement follows. 
If $\barEff(Z)$ is (rational) polyhedral then $\barEff(Y)$ is (rational) polyhedral by Lemma \ref{fvqevf}.
Assume now $\barEff(Y)$ is polyhedral. If $\rho(Y)\geq3$,  then $e(C,Y)<\infty$, by Lemma \ref{adfbafb}
and by Proposition \ref{prop1}(1), $\barEff(Y)$ is a rational polyhedral cone with $C_Y$ contained in the interior
of a maximal facet. If $\rho(Y)=2$ (the smallest possible), then $\Eff(Y)$ is a rational polyhedral cone by the 
Cone theorem (it~is spanned by the class of $C$ and by the class of the unique negative curve).
Note that this doesn't provide any information about $e(C,Y)$.
In both cases, it~follows that $C_Y^\perp$ contains $\rho(Y)-2$ effective divisors 
which are linearly independent modulo $K_Y$ and restrict trivially to $C$. 
As $\Cl(Z)_{\mathbb Q}$ decomposes as  $\pi^*\Cl(Y)_{\mathbb Q}\oplus T_{\mathbb Q}$,
where $T$ is a sublattice spanned by classes of $(-2)$-curves over singularities of~$Y$,
we have $(C_Z^\perp)_{\mathbb Q}=(\pi^*C_Y^{\perp})_{\mathbb Q}\oplus T_{\mathbb Q}$.
It  follows that $C_Z^\perp$ contains $\rho(Y)-2+R=8$ effective divisors which are linearly independent modulo $K_Z$
and restrict trivially to $C$. As in the proof of Theorem \ref{sHAHA}, it follows that $\barEff(Z)$ is a polyhedral cone. 
\end{proof}

\begin{remark}\label{index e}
In the set-up of Lemma \ref{lkJSBwkjbg}, if $(C,Y)$ has Du Val singularities and 
$C_Y\sim -K_Y$, with $\pi:Z\to Y$ is its minimal resolution, then $Z$ is a 
Halphen surface of index $e(C,Z)$, as $C_Z\sim -K_Z$. Indeed, this follows from 
$\pi^*K_Y=K_Z$, $\pi^*C_Y=C_Z$. 
\end{remark}

\begin{definition}
Let $(C,X)$ be an elliptic pair such that  the minimal model $(C,Y)$ has Du Val singularities.
Let $\pi: Z\ra Y$ be the minimal resolution of $Y$. Let
$$T\subseteq \bE_8=\Cl_0(Z)$$ 
be a root sublattice spanned by classes of $(-2)$-curves over singularities of~$Y$.
We~call $T$ {\em the root lattice of $(C,X)$} and we denote by
$\hat T$ its saturation~$\bE_8\cap(T\otimes\bQ)$. 

The push forward $\pi_*: \Cl(Z)\ra\Cl(Y)$ induces 
a map $\Cl_0(Z)\ra \Cl_0(Y)$ with kernel $T$, i.e., $\Cl_0(Y)\simeq \bE_8/T$ and the map $\ored_Z$ factors through
$\ored_Y$. Moreover, 
$$\Cl_0(Y)/\hbox{\rm torsion}\simeq\bE_8/\hat T.$$ 
The intersection pairing on $Y$ and pull back
of $\bQ$-divisors realizes $\bE_8/\hat T$ as a sublattice of the vector space
$(T\otimes\bQ)^\perp\subseteq \bE_8\otimes\bQ$ with the intersection pairing on~$Z$. 
\end{definition}

\begin{remark}\label{sGASRHA}\label{asgasrh}
Root lattices $T\subset \bE_8$ were classified by Dynkin
\cite{Dynkin}*{Table~11}. The quotient group $\Cl_0(Y)\simeq \bE_8/T$ was computed, e.g.,  in \cite{OS}.
\end{remark}

\begin{corollary}\label{concrete}
Let $(C,Y)$ be a minimal elliptic pair with Du Val singularities and 
$\rho(Y)\geq3$. 
Let $R$ be the rank of the root lattice of $(C,Y)$ and suppose $e(C,Y)<\infty$.
Then  $\oEff(Y)$ is polyhedral
if and only if there are roots  $\beta_1,\ldots,\be_{8-R} \in\bE_8\setminus\hat T$, linearly independent modulo $\hat T$ and such that
$\ored(\beta_i)=0$. In particular, if 
$R=7$ then $\oEff(Y)$ is polyhedral
if and only if $\ored(\beta)=0$ for some root $\beta\in\bE_8\setminus\hat T$.
\end{corollary}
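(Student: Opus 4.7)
The plan is to combine Lemma~\ref{lkJSBwkjbg} with Theorem~\ref{sHAHA} via the minimal resolution $\pi\colon Z\to Y$. Since $(C,Y)$ is minimal with $Y$ Du Val, Lemma~\ref{lkJSBwkjbg}~(1)--(2) shows that $(C,Z)$ is a smooth minimal elliptic pair with $\rho(Z)=10$, and that polyhedrality of $\oEff(Y)$ is equivalent to polyhedrality of $\oEff(Z)$ (using $\rho(Y)\geq 3$ and $e(C,Y)=e(C,Z)<\infty$ since $\pi$ is an isomorphism near $C$). Applying Theorem~\ref{sHAHA} to $(C,Z)$, the statement reduces to a purely lattice-theoretic equivalence inside $\bE_8=\Cl_0(Z)$: the existence of $8$ linearly independent roots in $\Ker(\ored_Z)$ is equivalent to the existence of $8-R$ roots $\beta_1,\dots,\beta_{8-R}\in\bE_8\setminus\hat T$, linearly independent modulo $\hat T$, with $\ored_Z(\beta_i)=0$.

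To carry this out, first note that the $(-2)$-curves above the singular points of $Y$ are disjoint from $C$, so their classes lie in $\Ker(\ored_Z)\cap T$ and furnish $R$ linearly independent roots of $\bE_8$ inside $T\subseteq\hat T\subseteq\Ker(\ored_Z)$. Since $\hat T$ has rank $R$, the quotient $\bE_8/\hat T$ is free of rank $8-R$, and because $T\subseteq\Ker(\ored_Z)$ and $\ored_Z$ kills the finite torsion of $\bE_8/T$, the map $\ored_Z$ descends unambiguously to $\bE_8/\hat T$, so the condition ``$\ored(\beta_i)=0$'' in the statement makes sense. For the backward implication one adjoins $R$ linearly independent roots $\alpha_1,\dots,\alpha_R\in T$ to the given $\beta_1,\dots,\beta_{8-R}$: any relation $\sum c_i\alpha_i+\sum d_j\beta_j=0$ becomes $\sum d_j\beta_j\equiv 0\pmod{\hat T}$, forcing all $d_j=0$ by hypothesis, and then all $c_i=0$ by linear independence of the $\alpha_i$'s in $T$. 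For the forward implication, $8$ linearly independent roots $\gamma_1,\dots,\gamma_8\in\Ker(\ored_Z)$ span $\bE_8\otimes\bQ$, so their images in $\bE_8/\hat T$ span a $\bQ$-space of rank $8-R$; picking $8-R$ of them whose images in $\bE_8/\hat T$ are $\bQ$-linearly independent produces the required $\beta_i$'s (which automatically lie in $\bE_8\setminus\hat T$ and still satisfy $\ored_Z=0$). The ``in particular'' assertion is the special case $R=7$, where only a single such root $\beta$ is needed.

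The main bookkeeping difficulty I anticipate is the distinction between the root sublattice $T$ and its saturation $\hat T$: since $\hat T$ may contain roots of $\bE_8$ that do not lie in $T$, the clause $\beta_i\in\bE_8\setminus\hat T$ (rather than $\bE_8\setminus T$) is precisely what keeps the dimension count honest, and one must check that the three compatible incarnations of the reduced restriction on $\Cl_0(Z)$, $\Cl_0(Y)$ and $\Cl_0(Y)/\mathrm{torsion}\simeq\bE_8/\hat T$ give the same vanishing condition on roots outside $\hat T$. Once these points are settled, the corollary follows immediately from Theorem~\ref{sHAHA}, Lemma~\ref{lkJSBwkjbg}, and the linear-algebra arguments above.
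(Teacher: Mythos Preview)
Your proof is correct and follows exactly the approach the paper intends: the corollary has no separate proof in the paper because it is meant to be read off directly from Theorem~\ref{sHAHA} applied to the minimal resolution $Z$ of $Y$, together with Lemma~\ref{lkJSBwkjbg}, which is precisely what you do.

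One small point: the side remark that ``$\ored_Z$ kills the finite torsion of $\bE_8/T$, so the map descends to $\bE_8/\hat T$'' is not quite right as stated (if $x\in\hat T$ then $\ored_Z(x)$ is torsion in $\Pic^0(C)/\langle\rd(K)\rangle$, but that group has torsion, so one cannot conclude it vanishes), and it is also unnecessary. The condition $\ored(\beta_i)=0$ already makes sense because $\beta_i\in\bE_8=\Cl_0(Z)$ and $\ored_Z$ is defined on $\Cl_0(Z)$; equivalently, by the factorization $\ored_Z=\ored_Y\circ\pi_*$ noted just before the corollary, it is the same as $\ored_Y$ applied to the image in $\Cl_0(Y)\simeq\bE_8/T$. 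Your actual forward and backward implications never use the descent to $\bE_8/\hat T$, so the argument stands once this remark is dropped.
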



The reader will notice a  discrepancy between Corollary~\ref{concrete}, 
which provides an effective criterion of polyhedrality for minimal elliptic pairs $(C,Y)$ with Du Val singularities
and $e(C,Y)<\infty$
and Corollary~\ref{kjsHFkjshf}, which shows that a minimal model $(C,Y)$  
of an elliptic pair $(C,X)$ with  $e(C,X)=\infty$ has Du Val singularities. These disjoint scenarios are reconciled in the following definition:

\begin{definition}\label{eSGsgSG}
Let $(C,X)$ be an elliptic pair with $e(C,X)=\infty$ defined over $K$, a finite extension of $\bQ$.
Let $R\subset K$ be the corresponding ring of algebraic integers. There exists an open subset $U\subset\Spec R$
and a pair of schemes $(\cC,\cX)$ flat over $U$, which we call an \emph{arithmetic elliptic pair of infinite order}, such that
\begin{itemize}
\item Each geometric fiber $(C,X)$ of $(\cC,\cX)$ is an elliptic pair of order $e_b$ which depends only on the corresponding point $b\in U$. We have $e_b<\infty$ for $b\ne0$.
\item The contraction morphism $X\to Y$ to the minimal model extends to the contraction of schemes $\cX\to\cY$ flat over $U$.
\item All geometric fibers $(C,Y)$ of $(\cC,\cY)$ over $U$ are minimal elliptic pairs with Du Val singularities
and the same root lattice $T\subset\bE_8$. 
\end{itemize} 
Let $X$, $Y$ be geometric fibers over a place $b\in U$, $b\ne 0$.
We call $b$ a \emph{polyhedral prime} if $\oEff(Y)$ is polyhedral.
If $b$ is not polyhedral then $\oEff(X)$ is also not polyhedral.
\end{definition}

Distribution of polyhedral primes is an intriguing question in arithmetic geometry
that we will start to address for arithmetic toric elliptic pairs.


\section{Lang--Trotter polygons and toric elliptic pairs}\label{asfasgsrh}

At the beginning we work over an algebraically closed field $k$ of any characteristic.
We recall that a polygon $\Delta\subseteq \bR^2$
is called a lattice polygon if its vertices 
are in~$\mathbb Z^2$.
If $\Delta$ is a lattice polygon, we will 
denote by $\vol(\Delta)$ its 
{\em normalized volume},
i.e.~twice its euclidean area
(so that $\vol(\Delta)$ is always a
non-negative integer).
We~recall that given any Laurent 
polynomial 
\begin{equation}\label{sdcqfsv}
f = \sum_{u\in \mathbb Z^2}\alpha_ux^u
 \in k[x_1^{\pm 1},x_2^{\pm 2}],
 \end{equation}
 where $x^u
:= x_1^{u_1}x_2^{u_2}$,
we can construct a lattice polygon $\NP(f)$, 
called the {\em Newton polygon} of $f$, 
by taking the convex hull of the points 
$u\in \mathbb Z^2$ 
such that $\alpha_u\neq 0$.

A lattice polygon $\Delta$
defines a morphism
\[
 g_\Delta\colon \bG_m^2
 \to\bP^{|\Delta\cap \mathbb Z^2|-1},
 \qquad
 x \mapsto [x^u : u\in \Delta\cap \mathbb Z^2],
\]
where $x = (x_1,x_2)\in(k^*)^2$.
We will denote by $\bP_{\Delta}$
the projective toric surface defined by $\Delta$,
i.e. the closure of the image of $g_\Delta$, and 
by $e\in\bP_{\Delta}$ the image $g_\Delta(1,1)$.
A hyperplane section is denoted by
$H_\Delta$. 
The linear system $|H_\Delta|$ is denoted by $\cL_{\Delta}$,
  and, given a positive integer $m$, we let
  $\cL_{\Delta}(m)$ to be the subsystem of $\cL_{\Delta}$ 
  consisting of the curves having
  multiplicity at least $m$ at $\e$.
  We will denote 
by $\pi_\Delta \colon X_{\Delta}\to \bP_{\Delta}$ 
the blowing up at $e\in\bP_{\Delta}$
and by $E$ the exceptional divisor of~$\pi_\Delta$.

%

\begin{notation}\label{werfwev2e}
Given a triple  
$(\Delta, m, \Gamma)$
where $\Delta$ is a lattice polygon, $m$ a positive integer and $\Gamma\in\cL_{\Delta}(m)$,
the curve $\Gamma$ is given by a Laurent polynomial \eqref{sdcqfsv}
and the curve $V(f)=\Gamma\cap\bG_m^2$ will also be denoted by $\Gamma$.
We denote by $C$ the proper transform of $\Gamma$ in $X_\Delta$.
In~this section we will investigate properties of pairs $(C,X_\Delta)$.
We drop the subscript $\Delta$ from notation $\bP_{\Delta},  X_\Delta$ ~if no confusion arises.
\end{notation}


\begin{proposition}\label{prop:gen}
Consider a triple 
$(\Delta, m, \Gamma)$ as in Notation~\ref{werfwev2e}.
Suppose $\Gamma$ is irreducible and its Newton polygon is~$\Delta$.
The following hold:
\begin{itemize}
\item[(i)] the arithmetic genus of $C$ is
\[
 p_a(C) = \frac12
 \left(
 \vol(\Delta) - m^2 + m - |\partial \Delta \cap \mathbb Z^2|
 \right) + 1;
\]
\item[(ii)] 
any edge $F$ of $\Delta$ of lattice length $1$ gives a smooth  point $p_F\in C$ defined as 
the~intersection of $C$ with the  toric boundary divisor corresponding to $F$. This point is defined over 
the field of definition of $\Gamma$.
\end{itemize}
\end{proposition}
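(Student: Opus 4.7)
The plan is to prove (i) by adjunction on the blown-up surface $X_\Delta$ and (ii) by a direct analysis of the edge polynomial of $f$ along $F$.

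For (i), I first record two standard toric intersection identities on $\bP_\Delta$: $H_\Delta^2 = \vol(\Delta)$ and $-K_{\bP_\Delta}\cdot H_\Delta = |\partial\Delta\cap\bZ^2|$. The second follows because $-K_{\bP_\Delta}=\sum_F D_F$ summed over the edges $F$ of $\Delta$, together with the identity $D_F\cdot H_\Delta=(\text{lattice length of }F)$; summing lattice lengths over all edges recovers $|\partial\Delta\cap\bZ^2|$. Since $e$ lies in the smooth torus orbit of $\bP_\Delta$ and $\Gamma$ has multiplicity exactly $m$ there, the proper transform satisfies $C\equiv\pi_\Delta^*H_\Delta-mE$ in $\Pic(X_\Delta)$. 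Using $E^2=-1$, $E\cdot\pi_\Delta^*(-)=0$, and $K_{X_\Delta}=\pi_\Delta^*K_{\bP_\Delta}+E$, I compute
\[
C^2 = \vol(\Delta)-m^2 \quad\text{and}\quad K_{X_\Delta}\cdot C = -|\partial\Delta\cap\bZ^2|+m,
\]
and then adjunction $2p_a(C)-2=C^2+K_{X_\Delta}\cdot C$ yields the claimed formula. An alternative route, bypassing any singularity question, is to invoke Khovanskii's formula $p_a(\Gamma)=|\mathrm{Int}(\Delta)\cap\bZ^2|$ (applied to the Cartier divisor class $H_\Delta$, whose arithmetic genus is constant in the flat family $|H_\Delta|$), use Pick's theorem to rewrite the right-hand side as $\tfrac{1}{2}(\vol(\Delta)-|\partial\Delta\cap\bZ^2|)+1$, and subtract $\binom{m}{2}$ for the effect of blowing up the multiplicity-$m$ point at $e$.

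For (ii), fix an edge $F$ of lattice length $1$ with endpoints $v_0,v_1$, so $v_1-v_0$ is primitive. Because $\Gamma$ has Newton polygon exactly $\Delta$, the coefficients $\alpha_{v_0}$ and $\alpha_{v_1}$ appearing in \eqref{sdcqfsv} are both nonzero. The toric boundary divisor $D_F\subset\bP_\Delta$ satisfies $D_F\cdot H_\Delta=1$, so $\Gamma\cap D_F$ is a length-one subscheme, giving a single point $p_F$ cut out transversally. To locate $p_F$, I use the edge polynomial $f_F:=\alpha_{v_0}x^{v_0}+\alpha_{v_1}x^{v_1}$ of $f$: restricted to the open torus orbit $O_F\cong\bG_m$ of $D_F$, with coordinate $t=x^{v_1-v_0}$ determined by the primitive direction of $F$, the equation $f_F=0$ becomes $\alpha_{v_0}+\alpha_{v_1}t=0$, with unique solution $t=-\alpha_{v_0}/\alpha_{v_1}\in\bG_m$. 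Hence $p_F$ lies in $O_F$, which is contained in the smooth locus of $\bP_\Delta$ (a normal toric surface is smooth away from its torus-fixed points). Combined with transversal intersection, this shows that $p_F$ is a smooth point of $\Gamma$, and since $p_F\neq e$ (the former lies on the toric boundary, the latter in the open torus), $p_F$ remains a smooth point of the proper transform $C$. Finally, the coordinates of $p_F$ depend rationally on $\alpha_{v_0},\alpha_{v_1}$, so $p_F$ is defined over the field of definition of $\Gamma$.

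The main subtlety lies in (i): the ambient surfaces $\bP_\Delta$ and $X_\Delta$ may be singular, and an arbitrary $\Gamma$ could in principle pass through torus-fixed points, so one must be careful about Weil versus Cartier divisors when invoking adjunction. Along the adjunction route this is resolved by first passing to a toric resolution of $\bP_\Delta$, where all intersection numbers used above are preserved by pullback; along the Khovanskii route it is automatic from flatness of the linear system $|H_\Delta|$. Part (ii) is essentially bookkeeping once coordinates adapted to the edge $F$ are chosen.
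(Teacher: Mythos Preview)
Your argument follows the same strategy as the paper's --- adjunction for (i), the edge polynomial for (ii) --- and part (ii) is essentially identical to the paper's treatment.

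For part (i), however, you miss the one observation that makes the adjunction route clean, and your proposed workaround does not fill the gap. The hypothesis that the Newton polygon of $\Gamma$ equals $\Delta$ forces every vertex coefficient of $f$ to be nonzero, so $\Gamma$ meets each toric boundary divisor only inside its open torus orbit and in particular avoids every torus-fixed point of $\bP_\Delta$. This is the first sentence of the paper's proof, and it is what allows adjunction to be applied directly: $C$ then lies in the smooth locus of $X_\Delta$, and the formula $2p_a(C)-2=C^2+K_{X_\Delta}\cdot C$ is unproblematic. You flag the issue but do not resolve it. Your suggestion to ``pass to a toric resolution $\tilde\bP\to\bP_\Delta$, where all intersection numbers are preserved by pullback'' is circular: on $\tilde\bP$ adjunction must be applied to $K_{\tilde\bP}$ and the \emph{strict} transform $\tilde\Gamma$, not to $\pi^*K_{\bP_\Delta}$ and $\pi^*H_\Delta$; and $\tilde\Gamma=\pi^*H_\Delta$ (with no exceptional corrections) holds precisely when $\Gamma$ already avoids the torus-fixed points --- the very fact you have not established. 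Your Khovanskii alternative is valid and does sidestep the issue, but once the smooth-locus observation is made it becomes unnecessary.
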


\begin{proof}
Since $\Delta$ is the Newton polygon of $\Gamma$,
$\Gamma\subseteq\bP$ does not
contain any torus-invariant point of $\bP$. In particular,
$\Gamma$ is contained in the smooth
locus of $\bP$, and hence $C$ is contained in
the smooth locus of $X$. 
By adjunction formula,
\[
p_a(C) = \frac12(C^2 + C\cdot K_X) + 1 =  
\frac12(\vol(\Delta) - m^2 + C\cdot K_X) + 1,
\]
where the second equality follows 
from~\cite{CLS}*{Prop.~10.5.6}. 
But $C\cdot K_X = 
\Gamma\cdot K_{\bP} + m$, so that 
in order to prove (i) we only need
to show that 
\begin{equation}\label{qqefv2ev}
\Gamma\cdot K_{\bP} = 
- |\partial \Delta \cap \mathbb Z^2|.
\end{equation}
Observe that $-K_{\bP}$
is the sum of all the prime invariant divisors of $\bP$
and each prime invariant divisor $D\subseteq
\mathbb P$ corresponds to an edge $F$ of $\Delta$, see \cite{CLS}*{Prop.~10.5.6}. 
Let us fix such an edge $F$.
By a monomial change of variables, we can assume that $F$ lies on
the $x_2$ axis. The inclusion of algebras $k[x_1,x_2^{\pm1}] \to 
k[x_1^{\pm1},x_2^{\pm1}]$
gives the inclusion $\bG_m^2 \to \bG_m\times\bA^1$, and 
$V(x_1)\subseteq\bG_m\times\bA^1$ is an affine open subset of
$D$. Since $\Gamma$ does not
contain any torus-invariant points of $\bP$, 
$\Gamma\cap D = \Gamma\cap V(x_1)$,
and the latter intersection has equation
\begin{equation}\label{sdcqfvqfe}
 f|_F :=
 \sum_{u\in F\cap\mathbb Z^2}\alpha_ux^u 
 = f(0,x_2)
 = 0.
\end{equation}
The degree of this Laurent polynomial 
is the lattice length of $F$, so that 
\eqref{qqefv2ev} holds.

Moreover, if $F$ has length $1$, the equation \eqref{sdcqfvqfe} has degree $1$, 
which means that $\Gamma$ intersects 
the prime divisor $D$ transversally
at a smooth point $p_F\in\Gamma$. Since $D$ is defined over the base field,
if $\Gamma$ is defined over a subfield $k_0\subset k$ then so is $p_F$.
\end{proof}


\begin{definition}
 \label{def:good}
 Let $\Delta\subseteq \bR^2$ be a lattice polygon
 with at least $4$ vertices. We say that $\Delta$ is {\em good}, 
 if, for some integer $m$, the
 following hold:
 \begin{itemize}
 \item[(i)] $\vol(\Delta) = m^2$;
 \item[(ii)] $|\partial\Delta\cap \bZ^2| = m$;
 \item[(iii)] $\dim\cL_{\Delta}(m) = 0$, and the only 
 curve $\Gamma\in\cL_{\Delta}(m)$ is irreducible;
  \item[(iv)] the Newton polygon of $\Gamma$ coincides with $\Delta$;
 \end{itemize} 
A good polygon is said to be:
\begin{itemize}
    \item a {\em Halphen polygon} if 
    $\rd(C)=\cO_X(C)|_C$ is torsion;
    \item a {\em Lang--Trotter polygon} if
    $\rd(C)=\cO_X(C)|_C$ is not torsion.
\end{itemize}
\end{definition}

\begin{theorem}\label{goodpairsthm}
If $\Delta$ is a good  polygon then $(C,X_\Delta)$ is an elliptic pair
(we~call it a \underline{toric elliptic pair}),
$e(C,X_\Delta)>1$, and  $C$ is defined over the base field.
If $\Delta$ is Lang--Trotter then $\ch k=0$, $e(C,X_\Delta)=\infty$,
and $\barEff(X_\Delta)$ is not polyhedral.
\end{theorem}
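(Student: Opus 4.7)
The plan is to verify the elliptic-pair axioms directly from Proposition \ref{prop:gen} and the four goodness conditions, then extract the order and descent statements from (iii)--(iv), and finally use a short arithmetic argument together with Lemma \ref{adfbafb} in the Lang--Trotter case.

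First I would check that $(C,X_\Delta)$ satisfies Definition \ref{sGSRG}. Rationality of $X_\Delta$ is immediate. Toric surface singularities are quotient singularities, hence log terminal, and blowing up the smooth torus point $e$ preserves log terminality. For $C\subset X_\Delta^{\rm sm}$ I would invoke the argument already used in the proof of Proposition \ref{prop:gen}: since $\NP(\Gamma)=\Delta$ by condition~(iv), the curve $\Gamma$ contains no torus-invariant points and hence avoids $\Sing(\bP_\Delta)$, while the exceptional divisor $E$ lies in the smooth locus of $X_\Delta$ by construction. The arithmetic genus $p_a(C)=1$ follows from Proposition \ref{prop:gen}(i) together with the numerical conditions (i)--(ii) of Definition \ref{def:good}, and $C^2=\vol(\Delta)-m^2=0$ follows from~(i).

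Second, the inequality $e(C,X_\Delta)>1$ is immediate from Lemma--Definition \ref{asfar}(3): the equality $e=1$ would force $h^0(X_\Delta,C)\ge 2$, i.e.~$\dim\cL_\Delta(m)\ge 1$, contradicting~(iii). For the descent statement, the linear system $\cL_\Delta$ and the subsystem $\cL_\Delta(m)$ of curves vanishing to order $m$ at the rational point $e\in\bG_m^2$ are defined over the prime field $k_0\subset k$; since $\cL_\Delta(m)$ is zero-dimensional by (iii), its unique element $\Gamma$, and therefore also $C$, is defined over $k_0$.

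Now assume that $\Delta$ is Lang--Trotter. I would first rule out positive characteristic, which is the only step of genuine content in the proof. If $\ch k=p>0$, then $k_0=\bF_p$, and by the previous paragraph both $C$ and the line bundle $\cO_{X_\Delta}(C)|_C$ are defined over $\bF_p$. The Jacobian of $C$ is then an abelian variety over $\bF_p$, so $\Pic^0(C)(\bF_p)$ is a finite group and $\rd(C)$ is automatically torsion, contradicting the defining property of a Lang--Trotter polygon. Hence $\ch k=0$, and $e(C,X_\Delta)=\infty$ is a restatement of the non-torsion assumption via Lemma--Definition \ref{asfar}(1). To conclude non-polyhedrality, I would observe that the Picard rank of a complete toric surface equals the number of edges of $\Delta$ minus $2$, so since $\Delta$ has at least four vertices, $\rho(X_\Delta)=\rho(\bP_\Delta)+1\ge 3$; Lemma \ref{adfbafb}(2) then yields that $\barEff(X_\Delta)$ is not polyhedral. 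I expect no serious obstacles; the only non-formal ingredient is the finiteness of the group of $\bF_p$-points of an abelian variety used to force $\ch k=0$.
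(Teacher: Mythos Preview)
Your proof is correct and follows essentially the same approach as the paper's own proof. You are in fact slightly more thorough: you explicitly derive $e(C,X_\Delta)>1$ from $\dim\cL_\Delta(m)=0$ via Lemma--Definition~\ref{asfar}, a step the paper's proof leaves implicit. One very minor imprecision: you refer to the Jacobian of $C$ as an abelian variety, but the definition of an elliptic pair only requires $p_a(C)=1$, so $C$ could in principle be nodal or cuspidal, in which case $\Pic^0(C)$ is $\bG_m$ or $\bG_a$ rather than an abelian variety --- the conclusion that $\Pic^0(C)(\bF_p)$ is finite (hence torsion) holds regardless, so this does not affect the argument.
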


%

%

\begin{proof} 
Let $\Delta$ be a good lattice polygon.
The curve $\Gamma$ is irreducible by (iii) and does not pass through the torus-invariant points of $\bP_\Delta$
by (iv). It follows that $C$ is contained in the smooth locus of $X_\Delta$.
Toric surface singularities, i.e.~cyclic quotient singularities, are log terminal.
By (iv) and~\cite{CLS}*{Prop.~10.5.6}, 
$\Gamma^2 = \vol(\Delta)$, so that 
(i) is equivalent to $C^2=0$.
Finally, conditions (i) and (ii), together with 
Prop.~\ref{prop:gen},
imply that $p_a(C) = 1$.
Thus  $(C,X_\Delta)$ is an elliptic pair.
Observe that
$\mathcal O_X(C)|_C = \rd(C) \in \Pic^0(C)$ 
(see Definition~\ref{sGSRG}), so that 
being Lang--trotter is equivalent to $e(C,X_\Delta)=\infty$. 
Suppose this is the case.
Since $\dim\cL_{\Delta}(m) = 0$, the curve $\Gamma$, and thus also the curve $C$,
and thus also the line bundle $\cO_X(C)|_C$, are all defined over the base field. 
In  characteristic~$p$, the group $(\Pic^0C)(\bF_p)$ is torsion,
which contradicts $e(C,X_\Delta)=\infty$.
 Thus $\ch k=0$.
Since $\Delta$ has at least $4$ vertices, $\rho(X_\Delta)\ge3$ and 
$\barEff(X)$ is 
not polyhedral
by Lemma~\ref{adfbafb}.
\end{proof}

\begin{remark}
We don't know examples of Lang--Trotter quadrilaterals.
Indeed, in the following proposition we are going to prove that they don't exist, under the additional
hypothesis that the multiplicity $m$ coincides with the lattice width of the polygon. 
\end{remark}

\begin{proposition}
There are no Lang--Trotter
quadrilaterals $\Delta$ such
that $m = {\rm width}(\Delta)$. \end{proposition}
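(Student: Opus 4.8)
The plan is to use the classification of lattice quadrilaterals with $\vol(\Delta) = m^2$, $|\partial\Delta\cap\mathbb Z^2| = m$, and $\mathrm{width}(\Delta) = m$, and show directly that in every such case the associated curve $C$ is either reducible (contradicting (iii)), or does not have Newton polygon $\Delta$ (contradicting (iv)), or — when $(C,X_\Delta)$ genuinely is an elliptic pair — the restriction $\rd(C)$ is torsion, so $\Delta$ is Halphen rather than Lang--Trotter. Since by Theorem~\ref{goodpairsthm} a Lang--Trotter polygon forces $\ch k = 0$ and $e(C,X_\Delta) = \infty$, it suffices to produce a torsion element, i.e.\ to show $h^0(X_\Delta, e C) > 1$ for some $e$, equivalently to exhibit a pencil containing $C$.

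First I would set up coordinates using the width hypothesis: after a unimodular transformation we may assume $\Delta$ lies in the horizontal strip $0 \le x_2 \le m$, with the two horizontal edges (at heights $0$ and $m$) being genuine edges of the quadrilateral — so the four vertices are $(a,0), (b,0)$ on the bottom and $(c,m),(d,m)$ on top. The lattice-length-$m$ strip means the bottom edge has some lattice length $p = b - a$ and the top edge lattice length $q = d - c$, and the two slanted edges contribute the remaining boundary points. The constraint $|\partial\Delta\cap\mathbb Z^2| = m$ together with $\vol(\Delta) = m^2$ is extremely restrictive: by Pick's theorem the number of interior points is $\frac12(m^2 - m) + 1 = p_a(C)$-related, and feeding this into the combinatorics of a quadrilateral in a height-$m$ strip should pin down $(p,q)$ and the edge slopes up to finitely many shapes (I expect essentially trapezoids, or the "double-triangle" degenerate shapes). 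This is the bookkeeping step.

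Next, for each surviving shape I would analyze the linear system $\cL_\Delta(m)$ directly. The curve $\Gamma$ has a point of multiplicity $m$ at $e = (1,1)$; projecting along the width direction $x_2$, i.e.\ composing with the map $\bP_\Delta \dashrightarrow \bP^1$ given by the pencil of horizontal lines, exhibits $X_\Delta$ (after the blow-up) as mapping to $\bP^1$ with $C$ mapping to a point or dominating $\bP^1$. The key observation: because $\mathrm{width}(\Delta) = m$ equals the multiplicity, the generic horizontal line $x_2 = t$ meets $\Gamma$ in a divisor whose degree drops by exactly the right amount at $e$, and a Bezout/linear-algebra count on the Laurent polynomial $f$ — writing $f = \sum_{j=0}^m f_j(x_1) x_2^j$ with $\deg f_j$ controlled by the slanted edges — shows that the multiplicity-$m$ condition at $(1,1)$ together with $\dim \cL_\Delta(m) = 0$ forces $f$ to factor, OR forces a second member of $|eC|$ for small $e$, OR forces $C$ to be a fiber of a fibration. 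In the fibration case $\rd(C)$ is torsion (it is a multiple fiber class, killed by the canonical bundle formula), so $\Delta$ is Halphen.

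The main obstacle I anticipate is the enumeration step: showing that the three numerical constraints (volume $m^2$, boundary $m$, width $m$) really do cut the list of quadrilaterals down to a short explicit list, uniformly in $m$ — one must be careful that the slanted edges can have arbitrarily large slopes as $m$ grows, so the argument cannot be a finite check but must exploit the interplay of Pick's theorem with the strip width. I would handle this by observing that a lattice quadrilateral of width $m$ and normalized volume $m^2$ with only $m$ boundary points is forced (via $\vol = 2\cdot(\text{area})$ and the trapezoid/triangle area formulas in the strip) to have its slanted edges each of lattice length comparable to $m$, eating up all the boundary budget and leaving the shape rigid; then verifying the factorization of $\Gamma$ or the existence of the pencil is a short argument in each case. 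The case analysis is the bulk of the work; everything after the shape is determined is a routine computation with the Newton polygon and Pick's theorem.
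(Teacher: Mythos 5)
Your proposal rests on two steps, and both have genuine gaps. First, the normal form is wrong: having ${\rm width}(\Delta)=m$ in the vertical direction only places $\Delta$ in a lattice strip $0\le x_2\le m$; it does not force the lines $x_2=0$ and $x_2=m$ to contain \emph{edges} of the quadrilateral (each extreme may be attained at a single vertex), so the claimed vertex configuration $(a,0),(b,0),(c,m),(d,m)$ is not available after a unimodular change of coordinates. Second, and more seriously, the entire strategy hinges on the assertion that the three numerical constraints (volume $m^2$, $m$ boundary points, width $m$) pin the quadrilateral down to a short list of shapes ``uniformly in $m$.'' You flag this yourself as the main obstacle, but you offer no argument, and there is no reason to believe it: Pick's formula only fixes the \emph{number} of interior points, not the shape, and the slopes of the four edges remain essentially free. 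Finally, even granting a classification, the step where you conclude in each case that $f$ factors, or that $|eC|$ is a pencil, or that $C$ is a fiber, is stated as a trichotomy without proof --- that is exactly the content of the proposition, so nothing has been established.

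The paper's proof avoids all of this combinatorics by working on the surface. Since $\Delta$ is a quadrilateral, $X_\Delta$ has Picard number $3$, so $C^\perp$ is two-dimensional and contains the class of $C$ itself. By Lemma~\ref{adfharh}, $K+C$ is effective, and since $(K+C)\cdot C=0$ with $C$ nef, every irreducible component of an effective representative lies in $C^\perp$. One then runs through the possible shapes of this effective divisor inside a rank-two lattice: if its support contains two negative curves, or a negative curve together with $C$, a linear relation in $C^\perp$ restricts to $C$ and shows $\rd(C)$ is torsion; if $K+C\sim 0$ then $m=C\cdot E=1$; the only remaining case is $K+C\sim nR$ for a single irreducible $R\in C^\perp$, and it is only here that the hypothesis $m={\rm width}(\Delta)$ is used, to identify $R$ with the strict transform of a one-parameter subgroup in a width direction, after which Pick's formula yields a quadratic equation in $m$ with no integer solutions. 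If you want to salvage your approach, you would need to either prove the finiteness of the classification (which I doubt holds) or, better, replace the enumeration by this kind of intrinsic argument on $C^\perp$.
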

\begin{proof}
Assume $\Delta$ is a good quadrilateral
and let $(C,X_\Delta)$ be the corresponding 
elliptic pair. The divisor $K+C$ is linearly equivalent 
to an effective one whose components
in the support are in $C^\perp$.
In particular, if $C^\perp$ contains the classes of 
two negative curves $R_1, R_2$ then, 
being this space two-dimensional,
there are integers $a_i$, with $a_0\neq 0$, 
such that $a_0C+a_1R_1+a_2R_2 \sim 0$.
Taking restriction to $C$ one deduces 
that $\Delta$ is not Lang--Trotter.
If $K+C\sim \alpha R + \beta C$, with
$\alpha,\beta\in\mathbb Q_{>0}$,
then, after cleaning denominators and
restricting to $C$ one again concludes
that $\Delta$ is not Lang--Trotter.
If $K+C\sim 0$, then by considering multiplicities at $e$, we must have
$m=1$, which is impossible since $m=|\partial\Delta\cap \bZ^2|\geq4$.
It remains to analyze the case  
$K+C\sim nR$, with $n>0$ and $R$ 
is an irreducible curve in $C^\perp$.
Since we are assuming that $m = {\rm width}(\Delta)$, 
the class of the one-parameter subgroup defined 
by one width direction lies in $C^\perp$, 
so that $R$ must be this class, and in particular its
Newton polygon is a segment of lattice length $1$.
Moreover, by considering multiplicities at $e$, it must be that $n = m-1$, so that $\Delta$ has $m$ 
interior lattice points, lying on a line. 
If $\Delta$ were Lang--Trotter, Pick's formula
would give $m^2 = 3m - 2$, which has 
integer solutions $m=1,2$, but this is again impossible. 

\end{proof}

\begin{example}
The polygon $\Delta$ with vertices
\[
\left[
\begin{matrix}
0 & 12 & 14 & 9\\
0 & 4 & 5 & 15
\end{matrix}
\right]
\]
satisfies the conditions $\vol(\Delta) = 169$,
$|\partial\Delta\cap {\mathbb Z}^2| = 13$ and 
$\cL_\Delta(13)$ contains only one curve $\Gamma$,
irreducible. Moreover, ${\rm width}(\Delta) = 14$, 
so that this is an example of good polygon with 
$m < {\rm width}(\Delta)$. In particular, the 
proof of the above proposition does not apply. 
Nevertheless it is possible to show that $\Delta$
is not Lang--Trotter since $e(C,X_\Delta) = 6$.

We remark that even if in all the examples of 
Lang--Trotter polygons appearing in 
Database~\ref{asfgSrhasrh} the condition 
$m = {\rm width}(\Delta)$ is satisfied, it is
possible to find examples in which $m$ is 
smaller. For instance, one can check by a computation similar to Computation~\ref{asdcvq}  that the
polygon with vertices 
$
[ 0, 0 ],
[ 12, 4 ],
[ 11, 7 ],
[ 9, 12 ],
[ 8, 12 ]
$
is Lang--Trotter and it has $m = 11$ and 
${\rm width}(\Delta) = 12$.
\end{example}

\begin{example}\label{ex:exp}{\bf Polygon~111} is the 
polygon $\Delta$ with vertices:
\[
\left[
\begin{matrix}
6 & 5 & 1 & 8 & 0 & 0 & 3\\
1 & 4 & 3 & 2 & 6 & 7 & 0
\end{matrix}
\right]
\]
which appears in Table~\ref{tab:good}
for $m = 7$ (where it corresponds to the blue matrix) and
we will use it later in the proof of Theorem~\ref{asgarh}.
We claim that $\Delta$  is  Lang--Trotter.

First of all, $\vol(\Delta) = 49$ and $|\partial \Delta\cap\bZ| = 7$
(see Computation~\ref{sdfvwefvwefv}).
By Computation~\ref{asarsgwRG}, $\cL_\Delta(7)$ 
has dimension $0$ and the unique curve 
$\Gamma\in\cL_\Delta(7)$  has equation
\begin{gather*}
   -\textcolor{red}{u^8v^2} + 4u^7v^2 + 8u^6v^3 - 5u^6v^2 - \textcolor{red}{3u^6v} - \textcolor{red}{5u^5v^4} - 50u^5v^3 + 21u^5v^2 +\\ 
    6u^5v + 40u^4v^4 + 85u^4v^3 - 55u^4v^2 - 6u^3v^5 - 85u^3v^4 - 40u^3v^3 + 56u^3v^2 -\\ 
    10u^3v + \textcolor{red}{u^3} + 15u^2v^5 +  80u^2v^4 - 40u^2v^3 + u^2v^2 + 3uv^6 - 30uv^5 +\\ 
    5uv^4 + \textcolor{red}{2uv^3} -    \textcolor{red}{v^7} + \textcolor{red}{4v^6} = 0.
\end{gather*}
The exponents of the red monomials
are the vertices of $\Delta$, so that 
the Newton polygon of $\Gamma$ is
$\Delta$. By Computation~\ref{adfafgarg} 
the curve $\Gamma$ is irreducible and
its strict transform $C\subseteq X_\Delta$ 
is a smooth elliptic curve.
It has the minimal equation
$$y^2 + xy = x^3 - x^2 - 4x + 4$$
by Computation~\ref{asdcvq}.
This is the curve labelled 
\href{https://www.lmfdb.org/EllipticCurve/Q/446/a/1}{446.a1}
in the LMFDB database~\cite{lmfdb}.
Since $e(C,X)>1$,  $\rd(C)\in\Pic^0(C)$ 
is not trivial. Since the Mordell--Weil group is $\bZ^2$, 
$\rd(C)$ is not torsion and therefore $\Delta$ is Lang--Trotter.
\end{example}

\begin{remark}
 \label{ex:g2}
 If in Definition~\ref{def:good} we substitute condition 
 (ii) with $|\partial\Delta\cap \bZ^2| < m$, 
 the curve $C$ will have arithmetic genus 
 $p_a(C) > 1$, so that $(C,X_\Delta)$ is no
 longer an elliptic pair. However, if 
 $\rd(C)$ is not torsion, we can still conclude that $\oEff(X_\Delta)$
 is not polyhedral by Proposition~\ref{prop:nonpol}.
 In the database~\cite{bel}, there are only two polygons 
satisfying $|\partial\Delta\cap \bZ^2| < m$ together with (i), (iii) and (iv).
Both polygons have volume $49$ and $5$
boundary points, so that by Proposition~\ref{prop:gen}
the corresponding curve $C$ has genus $2$. 
In the first case we verified 
that $2C$ moves (Computation~\ref{asarsgwRG}), so $\rd(C)$ 
is torsion. 
The second polygon has the following vertices
\[
\left[
\begin{matrix}
 0 & 5 & 7 & 3 & 1 \\
 0 & 2 & 3 & 8 & 3  
\end{matrix}
\right]
\]
and we claim that in this case
$\rd(C)$ is not torsion.
Indeed 
the curve $C$ is isomorphic to 
a hyperelliptic curve with equation
\[
 y^2 + (x^2 + x + 1)y = x^5 - 3x^4 + x^3 - x.
\]
This is the curve labelled 
\href{https://www.lmfdb.org/Genus2Curve/Q/1415/a/1415/1}{1415.a.1415.1}
in the LMFDB database~\cite{lmfdb} and the Mordell-Weil
group of the corresponding jacobian surface 
is  isomorphic to $\mathbb Z
\oplus\mathbb Z/2\mathbb Z$. By Computation~\ref {asarsgwRG},
$\dim |2C| = 0$
and we conclude that $\rd(C)$
is non-torsion.
\end{remark}


\section{Arithmetic toric elliptic pairs of infinite order}\label{asgasrhasrh}

\begin{notation}\label{arithmetic set-up}
Given a lattice polygon $\Delta\subseteq\bZ^2$,
let $\cP$ be the projective  toric scheme over $\Spec\bZ$ given by the normal fan of $\Delta$, 
with a relatively ample invertible sheaf $\cL$ given by the polygon $\Delta$.
Let $\cX$ be the blow-up of $\cP$ along the identity section of the torus group scheme.
Let $\cE\simeq\bP^1_\bZ$ be the exceptional divisor. 
For any  field $k$, we denote by $\bP_k,L_k,X_k,E_k$
the corresponding base change (or simply 
by $\bP,L,X,E$ if $k$ is clear from the context).
We will assume that $\Delta$ is a Lang--Trotter polygon,
i.e., 
$(C_\bC,X_\bC)$ is an elliptic pair of order $e(C_\bC,X_\bC)=\infty$.
Then
$(C_\bC,X_\bC)$ is a geometric fiber of an arithmetic elliptic pair $(\cC,\cX)$ of infinite order
flat over an open subset $U\subset\Spec \bZ$, see Definition~\ref{eSGsgSG}.
We  assume that  $C_\bC$ is {\em a smooth elliptic curve}.
A geometric fiber $(C,X)$ of $(\cC,\cX)$ over a prime $p\in U$
is an elliptic pair of finite order $e_p$. There is a 
morphism of schemes $\cX\to\cY$ flat over $U$, inducing 
a morphism $X\to Y$ to the minimal model for any geometric fiber.
Geometric fibers $(C,Y)$ of $(\cC,\cY)$ over $U$ are minimal elliptic pairs with Du Val singularities and the same root lattice $T$, 
which we call {\em the root lattice of~$\Delta$}.
Recall that we call $p$ a polyhedral prime of $\Delta$ if 
$\oEff(Y)$ is a polyhedral cone in characteristic~$p$.
We are interested in the distribution of polyhedral and non-polyhedral primes.
Recall that polyhedrality is governed by Corollary~\ref{concrete}:
$p$ is polyhedral
if and only if there are roots  $\beta_1,\ldots,\be_{8-R} \in\bE_8\setminus\hat T$, linearly independent modulo $\hat T$ and such that
$\ored(\beta_i)=0$ in $C(\bF_p)/\rd(C)$. Here $R$ is the rank of $T$.
\end{notation}

We will need a lemma on arithmetic geometry of elliptic curves.

\begin{lemma}\label{qfvqefvefv}
Let $C$ be an elliptic curve defined over~$\bQ$ without complex multiplication over~$\bar\bQ$.
Fix points $x_0,\ldots,x_r\in C(\bQ)$
of infinite order. 
Suppose the subgroup  $\langle 
x_1,\ldots,x_r
\rangle\subset C(\bQ)$ generated by $x_1,\ldots,x_r$
is free abelian and does not contain a multiple of $x_0$.
Then the reductions $\bar x_1,\ldots,\bar x_r$ modulo  $p$
are not contained in the cyclic subgroup  generated by the reduction $\bar x_0$ 
for a set of primes 
of  positive density.
\end{lemma}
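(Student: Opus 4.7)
The plan is to combine Serre's open image theorem, Bashmakov--Ribet Kummer theory, and the Chebotarev density theorem. First I would observe that the hypothesis forces $x_0, x_1, \ldots, x_r$ to be linearly independent modulo torsion in $C(\bQ)$: any relation $a x_0 = \sum b_i x_i$ with $a \neq 0$ would place a multiple of $x_0$ in $\langle x_1, \ldots, x_r \rangle$, contradicting the hypothesis, and a relation among the $x_i$ alone is ruled out by freeness. It is then enough to produce a positive-density set of primes for which $\bar x_1 \notin \langle \bar x_0 \rangle$; the coordinates $x_2,\ldots,x_r$ play no role.

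Fix an auxiliary prime $\ell$ large enough that (a) by Serre's open image theorem the mod-$\ell$ representation $\rho_\ell\colon \Gal(\bar\bQ/\bQ)\to \GL_2(\bF_\ell)$ on $C[\ell]$ has image $H_\ell$ containing $SL_2(\bF_\ell)$, and (b) by the Bashmakov--Ribet Kummer theory applied to points independent modulo torsion on an elliptic curve without CM, the Kummer extension $L_\ell:=K_\ell(y_0,y_1,\ldots,y_r)$ of $K_\ell:=\bQ(C[\ell])$, where $\ell y_i = x_i$, has $\Gal(L_\ell/K_\ell) = C[\ell]^{r+1}$. Thus $G_\ell:=\Gal(L_\ell/\bQ)$ is a semidirect product $C[\ell]^{r+1}\rtimes H_\ell$.

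For any prime $p$ of good reduction, coprime to $\ell$, and unramified in $L_\ell$ (excluding only finitely many $p$), the Frobenius $\phi_p\in G_\ell$ controls the reduction: the Kummer cocycle $c_i(\phi_p):=\phi_p\bar y_i-\bar y_i\in C[\ell]$ equals the image of $\bar x_i$ under the Kummer connecting map $C(\bF_p)/\ell \hookrightarrow H^1(\bF_p,C[\ell]) = C[\ell]/(\bar\phi_p-1)C[\ell]$. Consequently $\bar x_i\in \ell C(\bF_p)$ if and only if $c_i(\phi_p)\in (\bar\phi_p-1)C[\ell]$, where $\bar\phi_p$ is the image of $\phi_p$ in $H_\ell$.

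I would then let $S\subseteq G_\ell$ be the set of $\phi$ whose image $\bar\phi$ is a nontrivial transvection in $H_\ell$ (so $(\bar\phi-1)C[\ell]$ is a $1$-dimensional subspace of $C[\ell]\cong\bF_\ell^2$), with $c_0(\phi)\in (\bar\phi-1)C[\ell]$ and $c_1(\phi)\notin(\bar\phi-1)C[\ell]$. A short calculation in the semidirect product shows that $S$ is conjugation-invariant; since $\Gal(L_\ell/K_\ell)=C[\ell]^{r+1}$ translates the Kummer cocycle freely, the relative proportion of $S$ in the fiber over each transvection is $\tfrac{\ell-1}{\ell^2}>0$, and transvections exist in $SL_2(\bF_\ell)\subseteq H_\ell$, so $S$ has positive density in $G_\ell$. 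Chebotarev then produces a positive-density set of primes with Frobenius in $S$; for any such $p$ we have $\bar x_0\in\ell C(\bF_p)$ while $\bar x_1\notin \ell C(\bF_p)$, and since any integer multiple of $\bar x_0$ must lie in $\ell C(\bF_p)$, we conclude $\bar x_1\notin\langle\bar x_0\rangle$. The main obstacle is the Bashmakov--Ribet Kummer surjectivity $\Gal(L_\ell/K_\ell)=C[\ell]^{r+1}$, which is precisely where the no-CM hypothesis on $C$ and the linear independence modulo torsion of the $x_i$ both enter crucially.
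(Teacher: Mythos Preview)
Your overall strategy---Serre's open image theorem, Bashmakov/Ribet Kummer surjectivity, then Chebotarev applied to a Frobenius condition distinguishing $x_0$ from the others---is exactly the paper's approach. But there are two genuine gaps, both stemming from a misreading of the hypotheses.

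First, you assert that freeness of $\langle x_1,\ldots,x_r\rangle$ forces the $x_i$ to be linearly independent. This is false: the hypothesis only says the \emph{subgroup} is free, not that the given generators form a basis. Indeed the paper explicitly remarks that ``$x_1,\ldots,x_r$ are not assumed linearly independent,'' and in the applications one takes, e.g., $x_k=kP$ for $k=1,2,3$, where the subgroup has rank~$1$. Consequently your claim that $\Gal(L_\ell/K_\ell)=C[\ell]^{r+1}$ is unjustified. The paper fixes this by first choosing a basis $y_1,\ldots,y_s$ of $\langle x_1,\ldots,x_r\rangle$, so that $y_0:=x_0,y_1,\ldots,y_s$ are genuinely independent and Bashmakov yields $\Gal\simeq \GL_2(\bZ/q\bZ)\ltimes(C[q])^{s+1}$; one then writes $x_i=\sum_j a_{ij}y_j$.

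Second, and independently, your reduction ``it is enough to produce a positive-density set of primes for which $\bar x_1\notin\langle\bar x_0\rangle$; the coordinates $x_2,\ldots,x_r$ play no role'' is incorrect. The lemma asserts that \emph{all} of $\bar x_1,\ldots,\bar x_r$ lie outside $\langle\bar x_0\rangle$ \emph{simultaneously} for a positive-density set of primes; this is precisely how it is used later (one needs $kP\notin\langle Q\rangle$ for $k=1,2,3$ at the same prime). Your Chebotarev set $S$, defined only by conditions on $c_0$ and $c_1$, says nothing about $\bar x_2,\ldots,\bar x_r$. The correct condition, as in the paper, is to require over a transvection $\gamma$ that $\tau_0\in\Imm(\gamma-1)$ while $\sum_j a_{ij}\tau_j\notin\Imm(\gamma-1)$ for \emph{every} $i=1,\ldots,r$; one then checks this union of conjugacy classes is nonempty for $q\gg0$ (the $(\tau_1,\ldots,\tau_s)$ vary freely, $\Imm(\gamma-1)$ is a line, and none of the linear forms $\sum_j a_{ij}\tau_j$ is identically zero mod~$q$). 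With these two corrections your argument becomes the paper's.
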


\begin{remark}
Note that  $x_1,\ldots,x_r\in  C(\bQ)$ are not assumed linearly independent.
\end{remark}

\begin{proof}
For a fixed integer~$q$, let $C[q]\subset C(\bar\bQ)$ be the set of $q$-torsion points, so that $C[q]\simeq(\bZ/q\bZ)^2$ as a group.
Let~$K$  be the field
$\bQ(C[q])$. 
Since $C$ does not have complex multiplication,  
\begin{equation}
\Gal(K/\bQ)\simeq\GL_2(\bZ/q\bZ)
\end{equation} 
for almost all primes $q$ by Serre's theorem~\cite{Se}.
Choose a basis $y_1,\ldots,y_s$ of $\langle 
x_1,\ldots,x_r\rangle$. Since $x_0$ has infinite order,
$y_0=x_0$, $y_1,\ldots,y_s$ is a basis of the free abelian group 
$\langle x_0,\ldots,x_r\rangle$.
Choose points $y_0/q,\ldots,y_s/q\in C(\bar\bQ)$.
Let  $K_{y_0,\ldots,y_s}$ be a field extension of $K$
generated by $y_0/q,\ldots,y_s/q$ (any choice of quotients gives the same field).
By~Bashmakov's theorem \cite{Bash}, for almost all primes $q$ we have 
$$\Gal(K_{y_0,\ldots,y_s}/\bQ)\simeq \GL_2(\bZ/q\bZ)
\ltimes((\bZ/q\bZ)^2)^{s+1}.$$

For any $x\in C(\bQ)$, let $i(\bar x)$ denote the index of the subgroup $\langle\bar x\rangle\subset C(\bF_p)$.
It~suffices to prove that $i(\bar x_1),\ldots,i(\bar x_r)$ 
are not divisible by~$q$ but $i(\bar x_0)$ is divisible by~$q$ for a set of primes $p$ of positive density. 
By~\cite{LT},
$i(\bar x)$ is divisible by~$q$ if and only if the Frobenius element 
$$\sigma_p=(\gamma_p,\tau_p)\in \Gal(K_x/\bQ)\simeq\GL_2(\bZ/q\bZ)\ltimes(\bZ/q\bZ)^2$$
belongs to one of the following conjugacy classes: either $\gamma_p= 1$ 
or $\gamma_p$ has eigenvalue~$1$ and $\tau_p\in\Imm(\gamma_p-1)$.
We can express $x_i=\sum\limits_{j=1}^s a_{ij}y_j$ for $i=1,\ldots,r$, $a_{ij}\in \bZ$.
To~apply the Chebotarev  density theorem \cite{Chebotarev}, 
it remains to note that the 
subset of tuples
$(\gamma,\tau_0,\ldots\tau_s)\in\GL_2(\bZ/q\bZ)\ltimes((\bZ/q\bZ)^2)^{s+1}$ such that 
$\gamma$ has eigenvalue~$1$, $\tau_0\in\Imm(\gamma-1)$ and 
$\sum\limits_{j=1}^s a_{ij}\tau_j\not\in\Imm(\gamma-1)$ for $i=1,\ldots,r$, is non-empty for $q\gg0$.
\end{proof}

\begin{remark}
We were inspired by the following theorem of Tom Weston \cite{W}.
Suppose we are
given an abelian variety  $A$ over a number field $F$ such that $\End_FA$ is commutative,
an~element $x\in A(F)$ and a subgroup $\Sigma\subset A(F)$.
If  ${\redd}_vx\in{\redd}_v\Sigma$
for almost all places $v$ of $F$ then $x\in\Sigma+A(F)_{\tors}$.
\end{remark}

Here is another variation on the same theme:

\begin{lemma}\label{jhgfjhgf}
Let $C$ be an elliptic curve defined over~$\bQ$ with points 
 $x, y\in C(\bQ)$ of infinite order such that $y=dx$ for 
 a square-free integer $d$. 
Suppose there exists a prime $p$ of good reduction
and coprime to $d$ such that the index of 
$\langle\bar x\rangle$  is coprime to $d$
but the index of $\langle\bar y\rangle$ is divisible by~$d$.
Then $\bar x,2\bar x,\ldots,(d-1)\bar x\not\in\langle\bar y\rangle$
for a set of primes of positive density.
\end{lemma}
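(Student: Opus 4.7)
The plan is to reformulate both the hypothesis and the conclusion in terms of the divisibility $d \mid \ord(\bar x_p)$, then run a Kummer-theoretic Chebotarev argument in the spirit of Lemma~\ref{qfvqefvefv}.

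\textbf{Step 1 (Reformulation).} Since $d\bar x_p = \bar y_p$, the order of the image of $\bar x_p$ in the quotient $C(\bF_p)/\langle \bar y_p\rangle$ equals $\gcd(d,\ord(\bar x_p))$; hence the conclusion $\bar x_p,\ldots,(d-1)\bar x_p \notin \langle \bar y_p\rangle$ is equivalent to $d \mid \ord(\bar x_p)$. Similarly, from $d \mid i(\bar y_{p_0}) = |C(\bF_{p_0})|/\ord(\bar y_{p_0})$ together with $\ord(\bar y_{p_0}) = \ord(\bar x_{p_0})/\gcd(\ord(\bar x_{p_0}),d)$ one derives $d \mid |C(\bF_{p_0})|$, and coprimality of $i(\bar x_{p_0})$ with $d$ then gives $d \mid \ord(\bar x_{p_0})$. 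Since $d$ is square-free, both statements further decompose into the divisibilities $\ell \mid \ord(\bar x_{\,\cdot\,})$ for each prime $\ell \mid d$.

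\textbf{Step 2 (Kummer condition).} Fix $\ell \mid d$ and $k \geq 1$, choose an $\ell^k$-division point $x/\ell^k \in C(\bar\bQ)$, and set $K_{\ell,k} := \bQ(C[\ell^k],\,x/\ell^k)$. Standard Kummer theory gives, for any prime $p$ of good reduction, coprime to $\ell$, and unramified in $K_{\ell,k}$, the equivalence
\[
\bar x_p \in \ell^k C(\bF_p) \;\iff\; \sigma_p(x/\ell^k) - x/\ell^k \in (1-\sigma_p)\,C[\ell^k],
\]
where $\sigma_p \in \Gal(K_{\ell,k}/\bQ)$ is the Frobenius; a direct calculation shows the right-hand side is independent of the choice of $x/\ell^k$ and invariant under conjugation in $\Gal(K_{\ell,k}/\bQ)$. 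Since the prime-to-$\ell$ part of $C(\bF_p)$ is $\ell$-divisible, $\bar x_p \notin \ell^k C(\bF_p)$ implies $\ell \mid \ord(\bar x_p)$.

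\textbf{Step 3 (Non-emptiness and Chebotarev).} For each $\ell \mid d$, choose $k_\ell$ so that $\ell^{k_\ell}$ annihilates the $\ell$-primary part of $C(\bF_{p_0})$; then $\ell^{k_\ell} C(\bF_{p_0})$ coincides with the prime-to-$\ell$ part of $C(\bF_{p_0})$, and $\ell \mid \ord(\bar x_{p_0})$ forces $\bar x_{p_0} \notin \ell^{k_\ell} C(\bF_{p_0})$. Since $p_0$ is coprime to $d$ and $C$ has good reduction at $p_0$, $p_0$ is unramified in each $K_{\ell,k_\ell}$, and by Step~2 its Frobenius lies in a non-empty union of conjugacy classes $\mathcal{B} \subseteq \Gal(K/\bQ)$, where $K$ is the compositum of the $K_{\ell,k_\ell}$ over $\ell \mid d$ and $\mathcal{B}$ encodes the Kummer condition of Step~2 simultaneously for every $\ell \mid d$. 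Chebotarev's density theorem applied to $\mathcal{B}$ then yields a positive density of primes $p$ with $\ell \mid \ord(\bar x_p)$ for every $\ell \mid d$, equivalently $d \mid \ord(\bar x_p)$, which by Step~1 is the desired conclusion.

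The main obstacle is in Step~2: the target divisibility $\ell \mid \ord(\bar x_p)$ is strictly weaker than the Kummer-detectable condition $\bar x_p \notin \ell C(\bF_p)$, and in general cannot be captured by Frobenius at fixed finite level. This forces us to pass to higher $\ell^{k_\ell}$-division points, which is precisely where the hypothesized prime $p_0$ enters: it calibrates each $k_\ell$ so that a single Frobenius witness establishes non-emptiness of the Chebotarev set $\mathcal{B}$ simultaneously for all $\ell \mid d$.
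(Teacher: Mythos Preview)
Your proof is correct. The overall architecture—reformulate the target as a Kummer-type Frobenius condition, then use the hypothesised prime $p_0$ to certify non-emptiness of the relevant conjugacy-class union and invoke Chebotarev—is exactly the paper's strategy, so in that sense the approaches coincide.

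The one genuine difference is in \emph{which} condition you feed into Chebotarev. You work with the exact conclusion $d\mid\ord(\bar x_p)$; as you correctly observe, this is not in general detectable at Kummer level $\ell$, which forces you to climb to level $\ell^{k_\ell}$ calibrated by the $\ell$-part of $C(\bF_{p_0})$. The paper instead uses the \emph{stronger} sufficient condition ``$\gcd(i(\bar x_p),d)=1$ and $d\mid i(\bar y_p)$''—literally the hypothesis on $p_0$—and this \emph{is} detectable at level $d$: for each prime $\ell\mid d$ the Lang--Trotter criterion from the proof of Lemma~\ref{qfvqefvefv} expresses both $\ell\nmid i(\bar x_p)$ and $\ell\mid i(\bar y_p)$ (the latter simplifying since $y/\ell=(d/\ell)x$ is rational, so its Kummer cocycle is trivial) purely in terms of Frobenius in $\Gal(\bQ(C[\ell],x/\ell)/\bQ)$. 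Thus the paper stays inside the single field $L=\bQ(C[d],x/d)$ and avoids your Step~3 entirely. Your route buys nothing extra here, but it is a perfectly valid alternative and illustrates clearly why the ``witness prime'' $p_0$ is needed.
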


\begin{proof}
We need to prove positive density
of primes such that the index of the subgroup $\langle\bar y\rangle$ in $\langle\bar x\rangle$ is equal to~$d$.
It is enough to prove positive density for the set of primes such that the index of 
$\langle\bar x\rangle$ in $C(\bF_p)$ is coprime to $d$
but the index of $\langle\bar y\rangle$ is divisible by~$d$.
Arguing as in the proof of Lemma~\ref{qfvqefvefv},
we can express this condition as a condition that the Frobenius element $\sigma_p$ is contained
in the union of certain conjugacy classes in the Galois group $\Gal L/\bQ$, where
$L$ is obtained by adjoining the $d$-torsion $C[d]$ 
and the point $x/d$.
To apply Chebotarev density theorem, we need to know that this conjugacy class is non-empty.
Arguing in reverse, it suffices to find a specific $p$ such that the condition holds.
\end{proof}

\begin{theorem}\label{sgasrg}
Consider Lang--Trotter polygons from Table~\ref{sdfsg} (numbered as in Table~\ref{tab:good}).
We~list the root lattice $T$, the minimal equation of the elliptic curve~$C$, its Mordell-Weil group $C(\bQ)$
and $\rd(C)$.
\begin{table}[htbp]
\begin{tabular}{|c|c|c|c|c|}
\hline
$N$  & $T$ & $C$ & MW & $\rd(C)$ \cr
\hline
$19$ & $A_7$ & $y^2 + y = x^3 - x^2 - 24x + 54$ & $\bZ^2$&
$-(1,5)$\cr
\hline
$24$  & $A_6\oplus A_1$ & $y^2 + y = x^3 + x^2$ & $\bZ$ &
$6\left(0, 0\right) $
\cr
\hline
$111$  & $A_6\oplus A_1$ & $y^2+xy = x^3 - x^2 -4x+4 $& $\bZ^2$ 
&$(-1, -2)$
\cr
\hline
$128$  & $A_3\oplus A_3$ & $y ^2+y=x^3+x^2-240x+1190$ & $\bZ^3$ 
 &$(15,34)$
\cr
\hline
\end{tabular}
\caption{}\label{sdfsg}
\end{table}
The set
of non-polyhedral primes is infinite of  positive density
and includes primes under $2000$ from Table~\ref{sdfghjghjsg}.
\begin{table}[htbp]
\begin{tabular}{|c|c|}
\hline
$N$  & {\rm primes} \cr
\hline
&\\
$19$ & 
$\begin{smallmatrix}
11, 41, 67, 173, 307, 317, 347, 467, 503, 523, 571, 593, 631, 677, 733, 797, 
809, 811, 827, 907,\\ 
937, 1019, 1021, 1087, 1097, 1109, 1213, 1231,
1237, 1259, 1409, 1433, 1439, 1471, 1483,\\ 1493, 1567, 1601, 1619, 1669, 1709, 1801, 1811, 
1823, 1867, 1877, 1933, 1951, 1993
\end{smallmatrix}
$\\
&\\
\hline
&\\
$24$ & 
$\begin{smallmatrix}
29, 59, 73, 137, 157, 163, 223, 257, 389, 421, 449, 461, 607, 641, 647, 673, 691, 743, 797, 929, 937,\\ 
983, 991, 1049, 1087, 1097, 1103, 1151, 1171, 1217, 1223, 1259, 1279, 1319, 1367, 1399, 1427,\\ 
1487, 1549, 1567, 1609, 1667, 1697, 1747, 1861, 1867, 1871, 1913
\end{smallmatrix}
$
\\
&\\
\hline
&\\
$111$ & 
$\begin{smallmatrix}
47,71,103,197,233,239,277,313,367,379,409,503,563,599,647,677,683,691,719,\\
727,761,829,911,997,1103,1123,1151,1171,1187,1231,1283, 1327, 1481, 1493,\\ 
1709, 1723, 1861, 1907, 1997
\end{smallmatrix}$
\\
&\\
\hline

&\\
$128$ & 
$\begin{smallmatrix}
13, 17, 23, 71, 101, 103, 109, 191, 233, 277, 281, 283, 311, 349, 379, 397, 419, 433, 439, 443, 449, 457,\\ 
479, 509, 547, 557, 571, 631, 647, 653, 691, 701, 727, 743, 811, 829, 877, 929, 953, 1021, 1031, 1033,\\ 
1097, 1123, 1129, 1151, 1187, 1213, 1237, 1277, 1297, 1423, 1459, 1471, 1483, 1499, 1531, 1549,\\ 
1559, 1583, 1621, 1637, 1699, 1753, 1783, 1879, 1889, 1907, 1979
\end{smallmatrix}$
\\
&\\
\hline
\end{tabular}
\caption{}\label{sdfghjghjsg}
\end{table}
\end{theorem}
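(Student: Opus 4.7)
The plan is to combine the criterion of Corollary~\ref{concrete} with the density result of Lemma~\ref{qfvqefvefv}. The first step, for each of the four polygons in Table~\ref{sdfsg}, is to verify the tabulated data by explicit computation: construct the minimal model $(C,Y)$ of the toric elliptic pair $(C,X_\Delta)$ via Theorem~\ref{asfhadrhad}, identify the root sublattice $T\subset\bE_8$ from the $(-2)$-curves of its minimal resolution $Z$, compute a minimal Weierstrass equation for $C$ (matching it with the LMFDB entry), compute the Mordell--Weil group $C(\bQ)$, and write $\rd(C)$ in the displayed form. This is a finite set of computations of the same type as Example~\ref{ex:exp}.

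The second step is to translate Corollary~\ref{concrete} into an arithmetic condition on $C(\bF_p)$. For each polygon, enumerate a finite system of representatives $\beta_1,\dots,\beta_N\in\bE_8\setminus\hat T$ of nontrivial root cosets modulo $\hat T$, and compute each $\rd(\beta_j)\in C(\bQ)$ using the explicit realization of the exceptional $(-1)$ and $(-2)$-curves on $Z$. For polygons $N=19,24,111$ we have $R=\mathrm{rk}(T)=7$, so the prime $p$ is non-polyhedral iff $\ored(\beta_j)\neq 0$ in $C(\bF_p)/\langle\rd(C)\rangle$ for every $j$; equivalently, $\rd(\beta_j)\not\in\langle\rd(C)\rangle$ in $C(\bF_p)$. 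Applying Lemma~\ref{qfvqefvefv} with $x_0=\rd(C)$ and $x_1,\dots,x_N$ the collection $\rd(\beta_j)$ then yields the set of non-polyhedral primes of positive density, provided one verifies: (a) $C$ has no complex multiplication, which is immediate from the $j$-invariants of the four listed curves; (b) each $\rd(\beta_j)$ is of infinite order, which follows from the Mordell--Weil data and direct evaluation; (c) the subgroup $\langle \rd(\beta_1),\dots,\rd(\beta_N)\rangle\subseteq C(\bQ)$ is free abelian and does not contain any nonzero multiple of $\rd(C)$, which is a finite linear algebra verification in the free part of $C(\bQ)$.

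For $N=128$ we have $R=6$, so two linearly independent (modulo $\hat T$) root cosets in $\ker\ored$ are required for polyhedrality. The approach is to locate a single root $\beta_0\in\bE_8\setminus\hat T$ whose restriction is already a rational multiple of $\rd(C)$ inside $C(\bQ)$ (so $\ored(\beta_0)=0$ at every good prime), and then apply Lemma~\ref{qfvqefvefv} to the remaining root coset representatives $\beta_j$ taken modulo $\hat T+\bZ\beta_0$; a positive density of primes $p$ at which none of those $\rd(\beta_j)$ enter $\langle\rd(C)\rangle\subset C(\bF_p)$ then produces a positive density of non-polyhedral primes. For the explicit primes displayed in Table~\ref{sdfghjghjsg}, non-polyhedrality is verified by direct computation in $C(\bF_p)$: reduce $\rd(C)$ and each $\rd(\beta_j)$, compute the cyclic subgroup $\langle\overline{\rd(C)}\rangle$ in the finite group $C(\bF_p)$, and check the relevant (in)equalities.

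The main technical obstacle will be the Mordell--Weil bookkeeping in (c) above (and its analogue for $N=128$), namely ensuring that the finitely many $\rd(\beta_j)$'s generate a free abelian subgroup of $C(\bQ)$ avoiding the line $\bQ\cdot\rd(C)$; in the highest-rank case $N=128$, where $C(\bQ)\simeq\bZ^3$, this forces one to actually solve a small system of linear relations among the computed restrictions $\rd(\beta_j)$, which is the step most likely to require care beyond a routine calculation.
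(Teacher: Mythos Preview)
Your framework is the same as the paper's, but two of the four cases do not go through as written.

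\textbf{Polygon 24.} Here $C(\bQ)\simeq\bZ$, generated by $(0,0)$, with $\rd(C)=6(0,0)$ and $\rd(\alpha)=-(0,0)$. Every root image $\rd(\beta_j)$ is a multiple of $(0,0)$, so the subgroup $\langle\rd(\beta_1),\dots,\rd(\beta_N)\rangle$ is all of $\bZ\cdot(0,0)$ and in particular contains $\rd(C)$. Your hypothesis (c) therefore fails, and Lemma~\ref{qfvqefvefv} does not apply. The paper handles this case with the separate Lemma~\ref{jhgfjhgf}: one exhibits a single prime (the paper uses $p=233$) at which the index of $\langle\overline{(0,0)}\rangle$ in $C(\bF_p)$ is coprime to $6$ while the index of $\langle\overline{6(0,0)}\rangle$ is divisible by $6$, and then Chebotarev gives positive density.

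\textbf{Polygon 128.} Your plan hinges on finding a root $\beta_0\in\bE_8\setminus\hat T$ with $\rd(\beta_0)$ a rational multiple of $\rd(C)$. No such root exists: in the paper's notation $\rd(C)=-2A-B+C$, and the root images are $a\cdot\rd(u)+b\cdot\rd(v)=(b-a)A+(b-a)B+(b-2a)C$ for the ten $(a,b)$ in \eqref{vkbvv}; equating this to $\lambda\cdot\rd(C)$ forces $\lambda=0$ and $a=b=0$. The paper's argument is simpler than yours: apply Lemma~\ref{qfvqefvefv} directly with $x_0=\rd(C)$ and $x_1,\dots,x_{10}$ the ten root images. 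The conclusion is that for a positive-density set of primes \emph{none} of the $\overline{x_j}$ lie in $\langle\overline{x_0}\rangle$; this is stronger than non-polyhedrality (which only requires that at most one linearly independent root coset lands there) and no reduction to a rank-one situation is needed. The verification of hypothesis (c) in $\bZ^3$ is a routine linear-algebra check using the explicit coordinates of $\rd(C),\rd(u),\rd(v)$.
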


\begin{proof}
We first explain an outline of the argument and then proceed case-by-case.
We compute the normal fan of $\Delta$ and the fan of the minimal resolution $\tilde\bP_\Delta$ of $\bP_\Delta$
using Computation~\ref{sdfvwefvwefv}. We use Computation~\ref{sfsvwefv} to compute the Zariski decomposition
of $K_X+C$, which by Theorem~\ref{asfhadrhad} gives curves 
$C_1,\ldots,C_s$ contracted by the morphism to the minimal model $Y$,
and the classes of proper transforms of these curves in  $\tilde\bP_\Delta$.
Whenever $\Delta$ has lattice width $m$ in horizontal and vertical directions,
these curves  include $1$-parameter subgroups $C_1=(v=1)$ and $C_2=(u=1)$.
We~use Computation~\ref{efvwefvwef} to compute the root lattice $T$, $\Cl_0(Y)$,
and the push-forward map to $\Cl_0(Y)$.
Computation~\ref{asarsgwRG} gives the equation of the unique member $\Gamma$
of the linear system $\cL_\Delta(m)$ and its Newton polygon
and Computation \ref{adfafgarg} shows that the proper transform $C$ of this curve
in $X$ is an elliptic curve.
We use Computation~\ref{asdcvq} to compute the minimal equation of $C$,
 intersection points of $C$ with the toric boundary divisors,
$\rd(C)$ and the images of roots in $\bE_8$.
Reading off the Mordell-Weil group of $C$ from the LMFDB database~\cite{lmfdb}, we can deduce that
$\Delta$ is Lang--Trotter. 
In the same Computation \ref{asdcvq}, we apply 
Corollary~\ref{concrete} to test polyhedrality of specific primes from Table~\ref{sdfghjghjsg}.
Finally, we apply Lemma~\ref{qfvqefvefv} or Lemma~\ref{jhgfjhgf} 
to prove positive density of non-polyhedral primes.
\end{proof}

\begin{example}
{\bf Polygon 19} has vertices 
\begin{equation}\label{awgarg}
\left[
\begin{array}{cccccc}
 4 & 3 & 1 & 0 & 6 & 5 \\
 6 & 5 & 2 & 0 & 1 & 4 \\
\end{array}
\right]
\end{equation}
The minimal resolution $\tilde\bP_\Delta$ has the  fan from  Figure~\ref{hgJGHVfhg},
where bold arrows indicate  the fan of $\bP_\Delta$.
Note that $\tilde\bP_\Delta$ has a toric map to $\bP^1\times\bP^1$ and proper transforms of 
$1$-parameter subgroups $C_1$, $C_2$ are preimages of rulings. Thus they have self-intersection $-1$
after blowing up $e$.
The minimal resolution of $X$
contains the configuration of curves from the right of 
Figure~\ref{hgJGHVfhg} (toric boundary divisors and curves $C_1, C_2$).
\begin{figure}[htbp]
\includegraphics[width=2in]{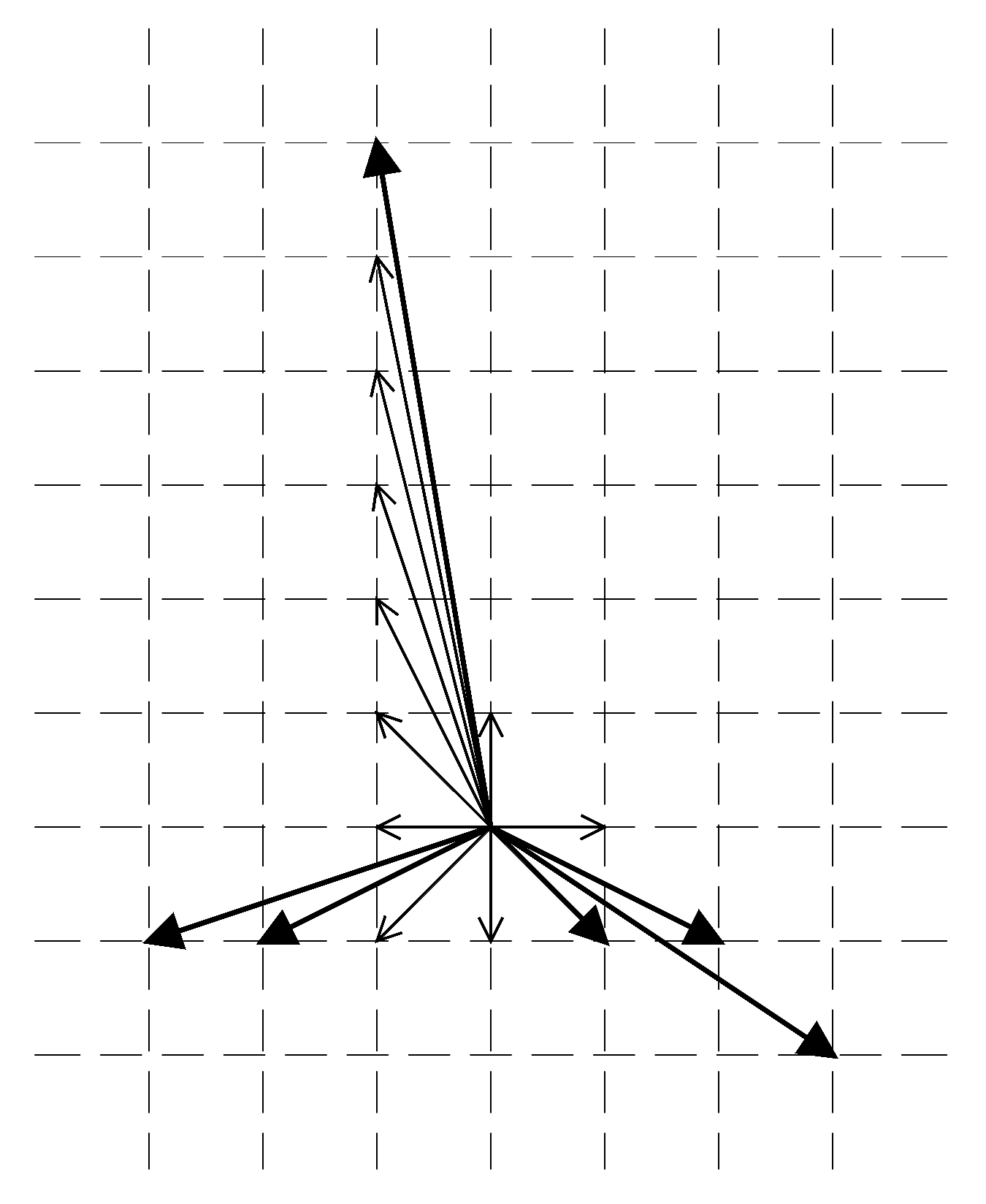}\raisebox{30pt}{\includegraphics[width=2.8in]{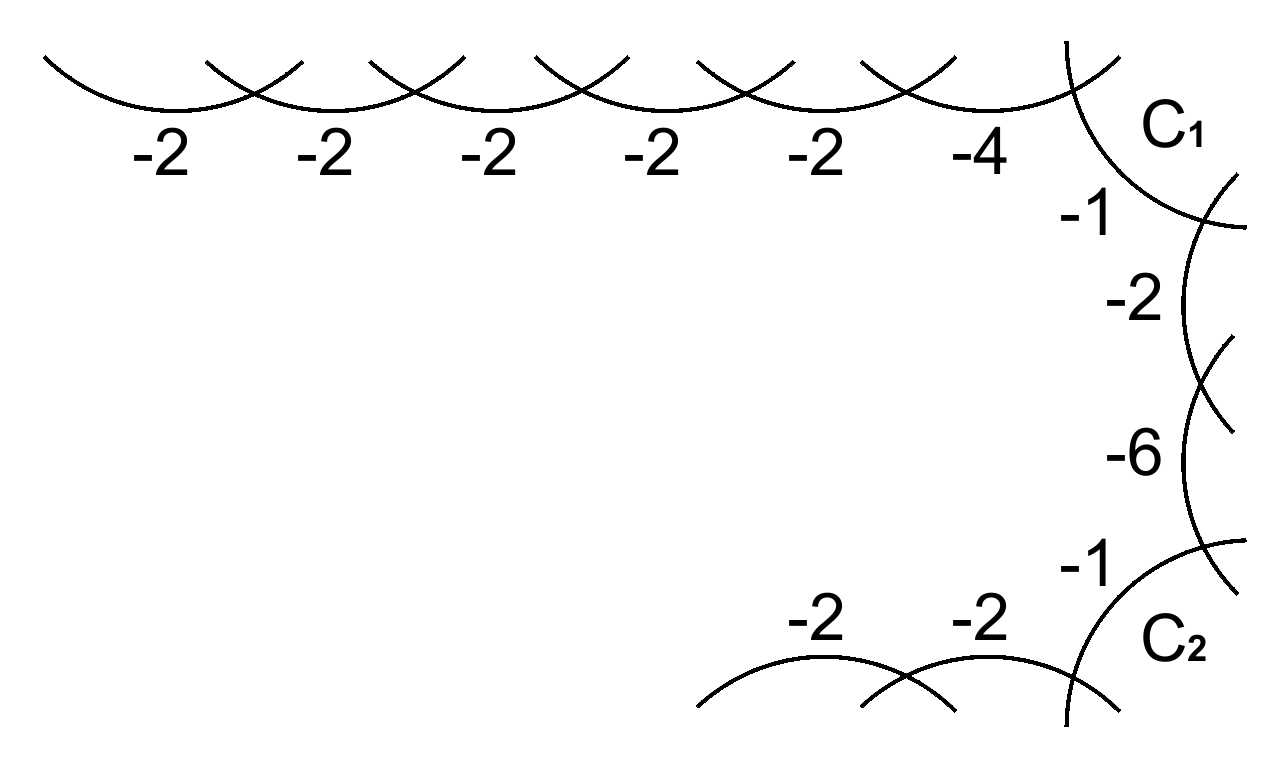}}
\caption{Polygon 19}\label{hgJGHVfhg}
\end{figure}
Only curves $C_1$ and $C_2$ contribute to the Zariski decomposition of $K+C$
and are contracted by the morphism $X\to Y$. Equivalently, 
the surface $Y$ is obtained by contracting the chain of rational curves above.
After blowing down $(-1)$-curves, this is equivalent to contracting a chain of seven $(-2)$-curves.
Thus $Y$ has an $A_7$ singularity and Picard number $3$.
There are two conjugate classes of root sublattices of type $A_7$ in $\bE_8$ (\cite{OS}[p. 85]).
In our case $\Cl_0(Y)\simeq\bZ$  is torsion-free, thus the embedding is primitive.
More precisely, we have $\Cl_0(Y)=\bE_8/\bA_7$,
which corresponds to the $\bZ$-grading of the Lie algebra
${\mathfrak e}_8=\bigoplus\limits_{\bar\beta\in\Cl_0(Y)}({\mathfrak e}_8)_{\bar\beta}$
of the form
$$\begin{matrix}
\bC^8&\Lambda^2\bC^8&\Lambda^3\bC^8& \red{\mathfrak{gl}_8} &\Lambda^3\bC^8&\Lambda^2\bC^8&\bC^8\cr
 8 & 28 & 56 &                \red{64}   & 56                      & 28                      & 8       \cr
\end{matrix}$$
Let $\alpha$ be a generator of $\Cl_0(Y)$.
The images of the roots of $\bE_8$ are $\pm k\alpha$ for $k\le3$.
Thus polyhedrality condition is that $k\rd(\alpha)\not\in\langle\rd(C)\rangle$ in $\ch p$ for $k=1,2,3$.

Next we compute $\rd(\alpha)$ and $\rd(C)$.
The curve $\Gamma$ has equation
$$
f=\red{u^4v^6}+\red{6u^5v^4}-2u^4v^5-14u^5v^3-17u^4v^4-\red{4u^3v^5}+\red{u^6v}+
11u^5v^2
$$
$$+38u^4v^3+26u^3v^4-9u^5v-27u^4v^2-34u^3v^3+22u^4v+16u^3v^2-
10u^2v^3$$
$$ 
-24u^3v+10u^2v^2+15u^2v+\red{5uv^2}-11uv+\red{1}=0,
$$ 
and passes through $e$ with multiplicity $m=6$.
When ~$p\ne2,3,5$, $C$ has Newton polygon \eqref{awgarg}
and is isomorphic to an elliptic curve with the  minimal equation
$$y^2 + y = x^3 - x^2 - 24x + 54.$$
The curve $C$ is labelled 
\href{https://www.lmfdb.org/EllipticCurve/Q/997/a/1}{997.a1}
in the LMFDB database~\cite{lmfdb} and its Mordell-Weil group  $C(\bQ)\simeq\bZ^2$ is generated by
$Q=(1,5)$ and $P=(6,-10)$. 
We have $\rd(C)=-Q$, $\rd(\alpha)=P-Q$, in particular
 $\rd(C)$ is not torsion in characteristic~$0$ and thus
$\Delta$ is Lang--Trotter. Thus $\oEff(X)$ and $\oEff(Y)$ are not polyhedral in characteristic~$0$.

In characteristic $p$,
$k\bar P$ is not contained in the cyclic subgroup 
of $C(\bF_p)$ generated by $\bar Q$ for $k=1,2,3$ for all primes in Table~\ref{sdfghjghjsg}.
According to the LMFDB database~\cite{lmfdb}, $C$ has no complex multiplication.
To prove positive density of non-polyhedral primes, we apply Lemma~\ref{qfvqefvefv} to $x_0=Q$ and $x_k=kP$ for $k=1,2,3$.
\end{example}

\begin{remark}
Empirically, about $18\%$ of primes are not polyhedral for this polygon.
It would be interesting to obtain heuristics for density of non-polyhedral primes.
\end{remark}

\begin{remark}
Since $C$ contains an irrational $2$-torsion point, the Lang--Trotter conjecture \cite{LT}
predicts that $\bar Q$ generates $C(\bF_p)$ for a set of primes $p$
of positive density.
If true, the Lang--Trotter conjecture
implies that $\overline{\Eff}(Y)$ is polyhedral  in characteristic~$p$ for
a set of primes
of positive density. However, 
the Lang--Trotter conjecture is only known for curves with complex multiplication \cite{GM_compositio}. 
\end{remark}

\begin{example}
{\bf Polygon 24} has  vertices
\[
\left[
\begin{array}{cccccc}
 0 & 2 & 5 & 6 & 1 & 0 \\
 0 & 1 & 3 & 4 & 6 & 1 \\
\end{array}
\right]
\]
The minimal resolution $\tilde \bP_\Delta$ of $\bP_\Delta$ has the  fan from the left side of Figure~\ref{EGwegwE},
where bold arrows indicate  the fan of $\bP_\Delta$.
As for the Polygon~$19$, the proper transforms of 
$1$-parameter subgroups $C_1$, $C_2$ in $X$ have self-intersection $-1$
and are the only curves contracted by the map to $Y$,
which therefore can be obtained by contracting the configuration of rational curves from the right of 
Figure~\ref{EGwegwE}.
\begin{figure}[htbp]
\includegraphics[width=2in]{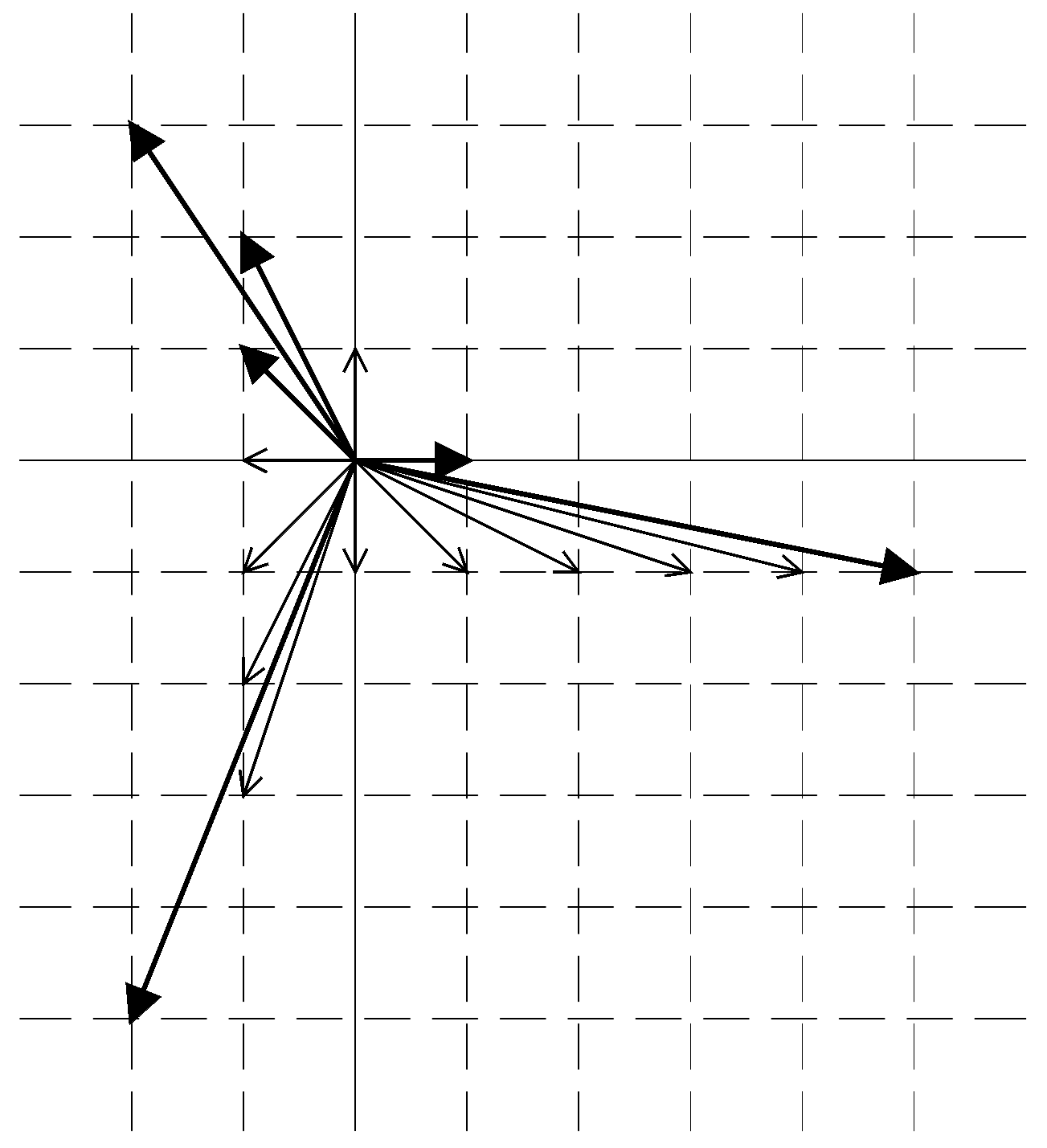}\includegraphics[width=2.8in]{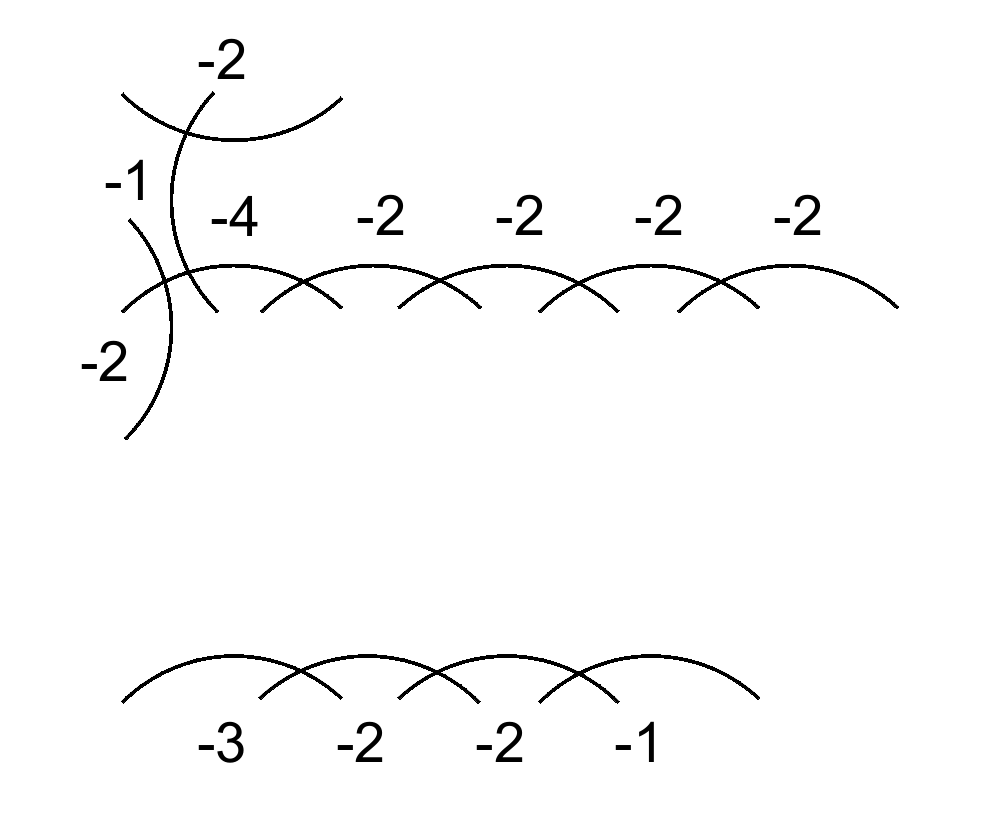}
\caption{Polygon 24}\label{EGwegwE}
\end{figure}
It follows that $Y$ has Picard number $3$ 
and  singularities $A_1$ and $A_6$.
The curve $\Gamma$ has a point of multiplicity $6$ at $e$ and equation
$$f=-\red{1}+\red{2v}+7uv-\red{3u^2v}-23uv^2+6u^2v^2+2u^3v^2+18uv^3+20u^2v^3$$
$$-26u^3v^3+10u^4v^3-\red{2u^5v^3}-12uv^4-11u^2v^4$$
$$+6u^3v^4+5u^4v^4-4u^5v^4+\red{u^6v^4}+5uv^5+3u^2v^5-2u^3v^5-\red{uv^6},$$
which has a required Newton polygon when $p\ne2,3$.
From the Dynkin classification it follows that $\Cl_0(Y)\simeq\bZ$.
Let $\alpha$ be a generator.
The images of roots of $\bE_8$ are equal to $\pm k\alpha$ for 
$0\le k\le 4$.
Thus the polyhedrality condition is that $k\rd(\alpha)\not\in\langle\rd(C)\rangle$ in $\ch p$ for $k=1,2,3,4$.
The minimal equation of the elliptic curve $C$ is 
$$y^2 + y = x^3 + x^2.$$
It is the curve
\href{https://www.lmfdb.org/EllipticCurve/Q/43/a/1}{43.a1}
from the LMFDB database~\cite{lmfdb} of elliptic curves.
Its Mordell-Weil group is $\bZ$ generated by $(0,0)$.
We have $\rd(C)=Q=6\left(0, 0\right)$ and $\rd(\alpha)=P=-\left(0, 0\right)$.
It follows that $\rd(C)$ is not torsion and thus $\Delta$ is Lang-Trotter and 
$\overline\Eff(X)$, $\overline\Eff(Y)$ are not polyhedral in characteristic~$0$. 
In characteristic $p$, $k\bar P$ is not contained in the cyclic subgroup 
of $C(\bF_p)$ generated by $\bar Q$ for $k=1,2,3,4$ for all prime numbers in the table.
Thus these primes are not polyhedral.
The positive density follows from Lemma~\ref{jhgfjhgf} with 
$p=223$, when the index of $\bar P$ is $1$ and the index of $\bar Q$ is $6$.
\end{example}

\begin{example}
{\bf Polygon 111} (discussed in Example \ref{ex:exp} and followed through in computations \ref{sdfvwefvwefv}--\ref{asdcvq}). 
The corresponding curve has the required Newton polygon in all characteristics $p\neq 2,3,5$. 
The minimal resolution $\tilde\bP_\Delta$ has the  
fan from  Figure~\ref{figure 111},
where bold arrows indicate  the fan of $\bP_\Delta$.
Note that $\tilde\bP_\Delta$ has a toric map to $\bP^1\times\bP^1$ 
and proper transforms of $1$-parameter subgroups $C_1$, $C_2$ are preimages of rulings\footnote{The $1$-parameter subgroups are in this case 
$\{u=1\}$ and $\{u=v\}$.}; hence, they have self-intersection $-1$
after blowing up $e$. The Zariski decomposition of $K+C$ is 
$2C_1+C_2+C_3$, where $C_3$ is a curve whose image in $\bP_\Delta$ has  multiplicity $3$ at $e$. The Newton polygon of $C_3$ has vertices
\[
\left[
\begin{matrix}
 3 & 0 & 0 & 1 \\
 1 & 3 & 2 & 0  
\end{matrix}
\right]
\]
and equation
$$u^3v - 3u^2v - uv^2 + 5uv - u + u^3 - 2u^2=0.$$
On $X$ the curve $C_3$ is disjoint from $C_1$ and $C_2$. The minimal resolution of $X$ contains the configuration of curves from the right of 
Figure~\ref{figure 111} (toric boundary divisors and curves $C_1, C_2, C_3$). The curves $C_1$, $C_2$, $C_3$ are contracted by the morphism $X\to Y$. Equivalently, the surface $Y$ is obtained by contracting the chain of rational curves above. It follows that the root lattice is 
$\bA_6\oplus \bA_1$ and the Picard number of $Y$ is $3$.
From the Dynkin classification, we have that 
$$\Cl_0(Y)=\bE_8/\bA_6\oplus\bA_1\simeq\bZ.$$ 
Let $\alpha$ be a generator.
The images of the roots of 
$\bE_8$ are equal to $\pm k\alpha$, for $0\leq k\leq 4$. 
Thus in characteristic $p$ the non-polyhedrality condition is that $k\rd(\alpha)\not\in\langle\rd(C)\rangle$, for $k=1,2,3,4$.
To prove that this holds for a set of primes of positive density, 
we apply Lemma \ref{qfvqefvefv} to $x_i=\rd(i\alpha)$, $x_0=\rd(C)$, 
for $i=1,2,3,4$.
Let us check that the conditions in the lemma are satisfied.
Using the minimal equation of the curve $C$ (see Example \ref{ex:exp})
and Computation \ref{asdcvq}, we find that $\rd(\alpha)=P=(0,2)$ and 
$\rd(C)=Q=(-1,-2)$. The curve $C$ (labeled 
\href{https://www.lmfdb.org/EllipticCurve/Q/446/a/1}{446.a1}
in the LMFDB database~\cite{lmfdb}) 
has no complex multiplication and has Mordell--Weil group $\bZ\times\bZ$
generated by $P$ and $-Q=(-1,3)$. Hence, the points $P$ and $Q$ have
infinite order and no multiple of $Q$ is contained in the subgroup generated by $P$.  

\begin{figure}[htbp]
\includegraphics[width=2in]{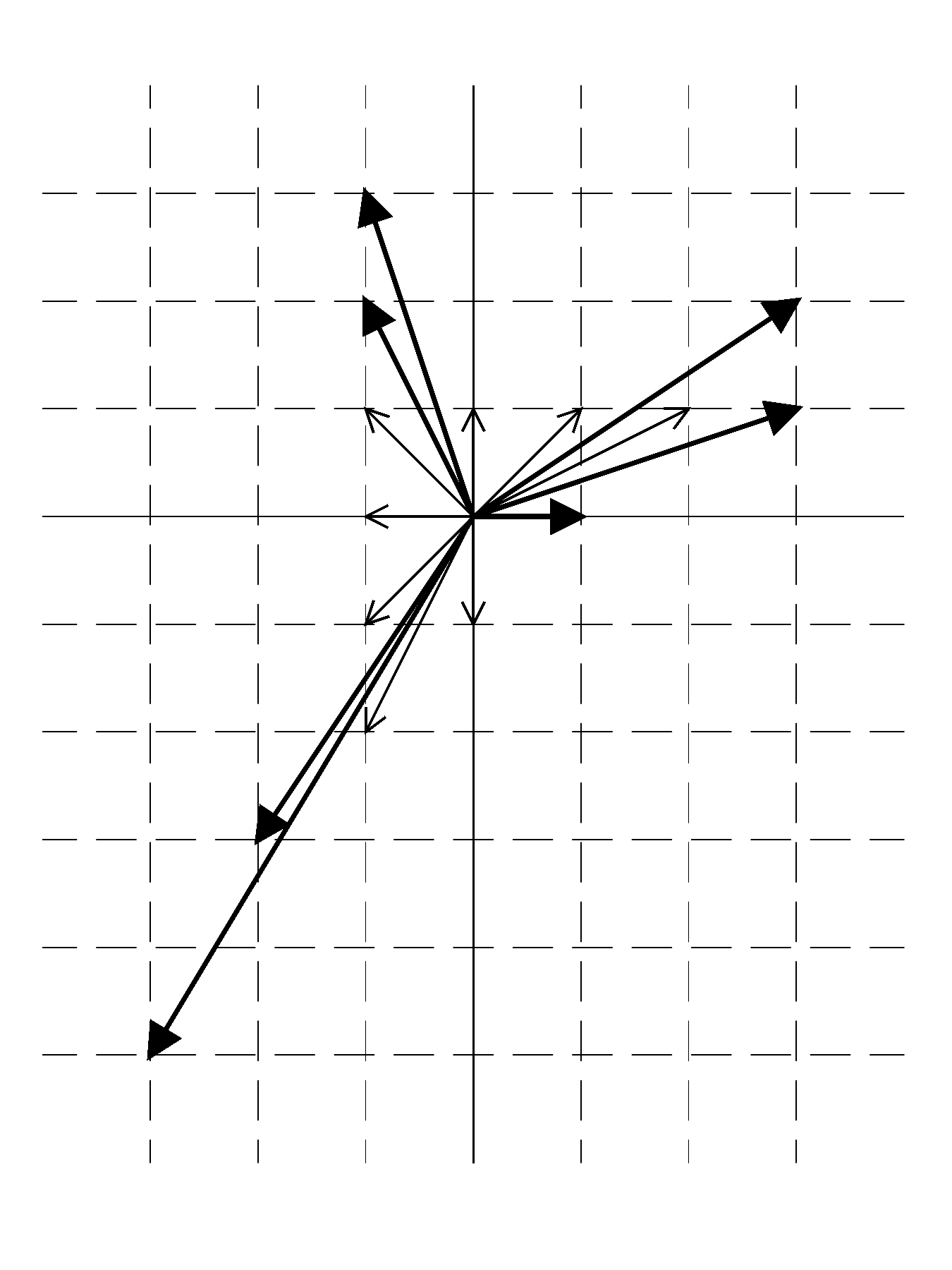}
\includegraphics[width=2.8in]{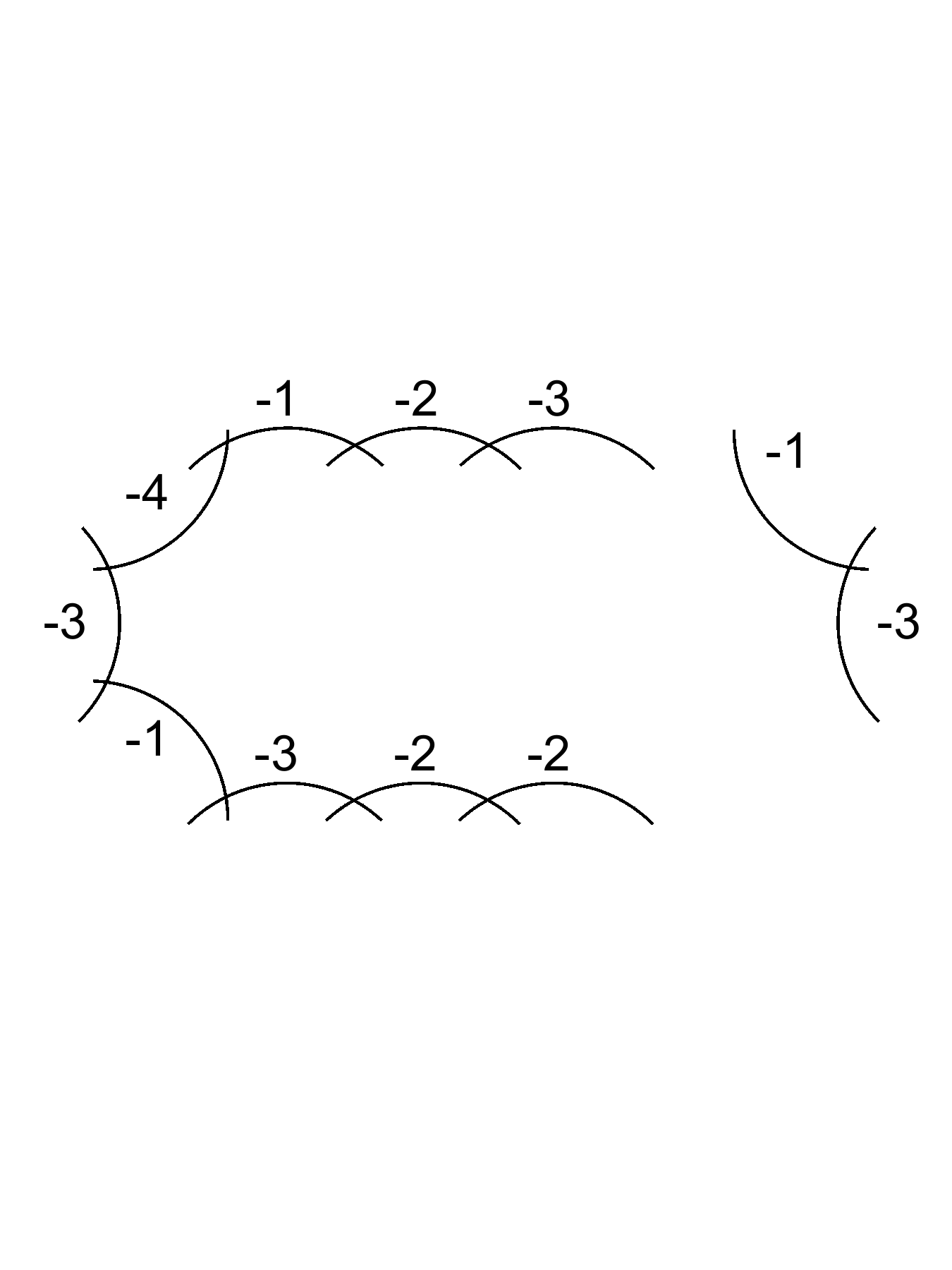}
\caption{Polygon 111}\label{figure 111}
\end{figure}
\end{example}

\begin{example}\label{asrgwgfeew4g}
{\bf Polygon 128.}
This is a polygon with vertices
\[
\left[
\begin{array}{ccccccc}
 0 & 1 & 6 & 7 & 6 & 3 & 1 \\
 5 & 6 & 7 & 7 & 5 & 0 & 3 \\
\end{array}
\right]
\]
The minimal resolution $\tilde \bP$ of $\bP$ has the  fan from the left side of Figure~\ref{Poly128},
where bold arrows indicate the fan of $\bP$.
\begin{figure}[htbp]
\includegraphics[width=2.6in]{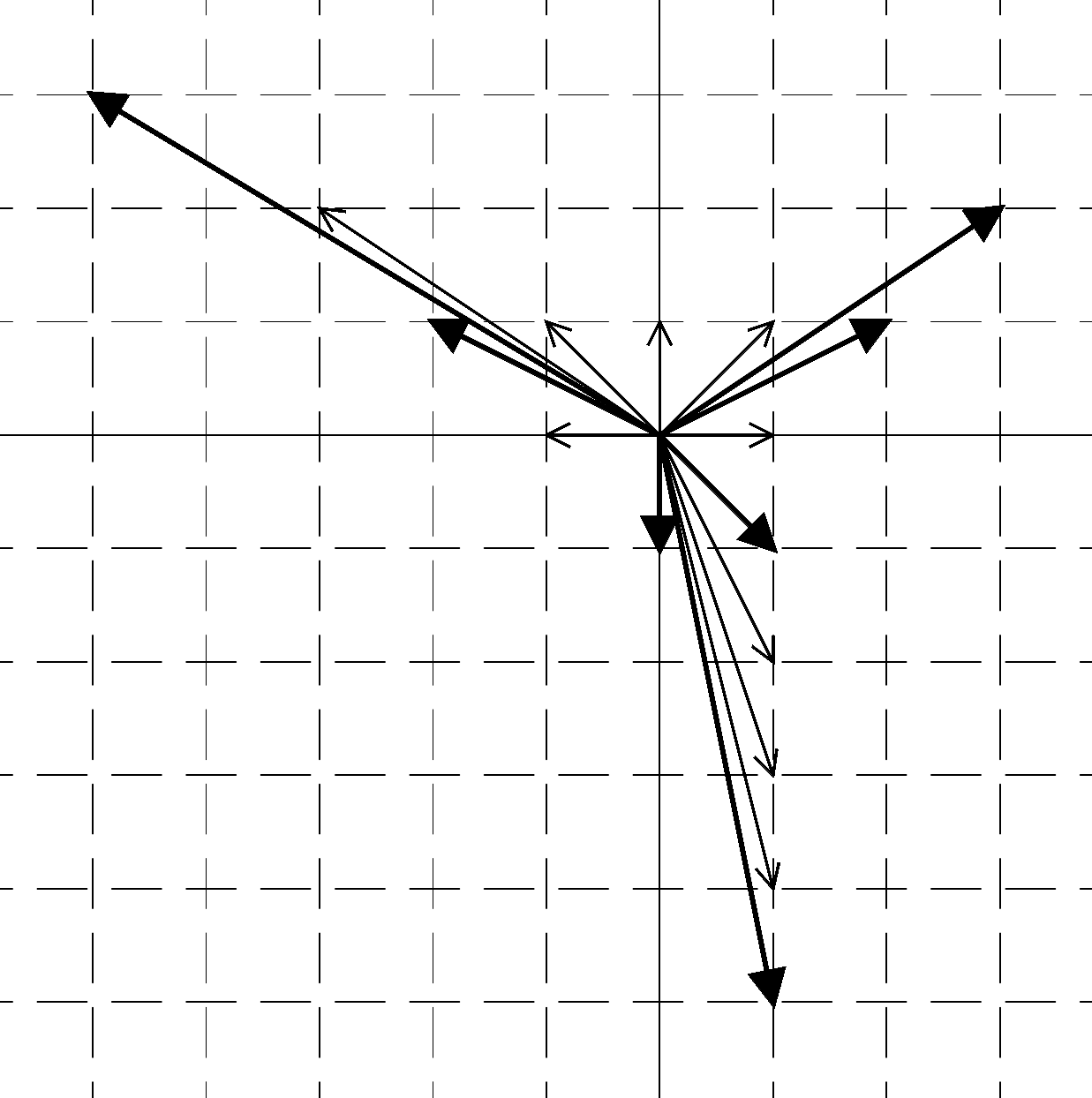}\includegraphics[width=2.2in]{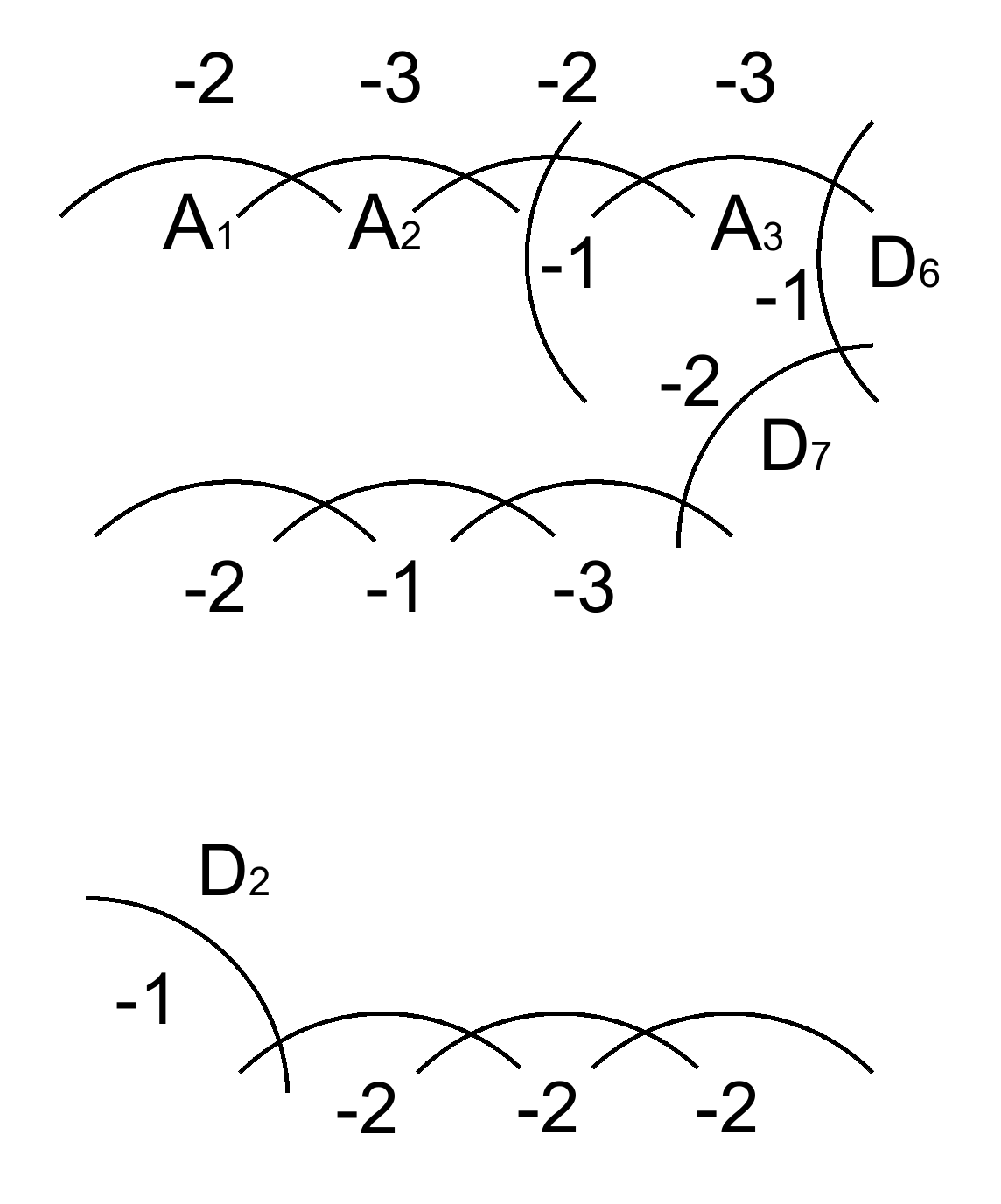}
\caption{Polygon $128$}\label{Poly128}
\end{figure}
The proper transforms of $1$-parameter subgroups $C_1$, $C_2$ are the only curves contracted by the map
$X\to Y$. Here $Y$ can be obtained from $\Bl_e\tilde \bP$ by contracting
a configuration of rational curves from the right of 
Figure~\ref{Poly128}, where we also indicate three boundary divisors, $D_2$, $D_6$ and $D_7$ 
(the only ones in Figure~\ref{Poly128} that do not get contracted by the map to $Y$). 
The root lattice is  $\bA_3\oplus \bA_3$, the Picard number of $Y$ is $4$.
One of the $\bA_3$'s is indicated with the chain $A_1$, $A_2$, $A_3$ of $(-2)$-curves (after contracting all $(-1)$-curves).
By the Dynkin classification, $\bE_8$ contains two lattices $\bA_3\oplus \bA_3$, one primitive and one non-primitive.
In our case  $\Cl_0(Y)\simeq\bZ^2$ is torsion-free, and therefore we have the primitive one.
Next we describe the images in $\Cl_0(Y)$ of roots in ~$\bE_8$. In other words, we have a grading
of the Lie algebra $\mathfrak e_8$ by the abelian group $\Cl_0(Y)$
$${\mathfrak e_8}=\sum_{\bar\beta\in \Cl_0(Y)}({\mathfrak e_8})_{\bar\beta}$$
and we need to describe the subset of non-empty weight spaces $\cB\subset \Cl_0(Y)$.
A~convenient interpretation of the lattice $\bE_8$ is the lattice $K^\perp\subset\Pic(\Bl_8\bP^2)$
with the standard basis $h,e_1,\ldots,e_8$.
The positive roots are $e_i-e_j$ for $i<j$, $h-e_i-e_j-e_k$, $2h-e_1-\ldots-\hat e_i-\ldots-\hat e_j-\ldots-e_8$
and $3h-e_1-\ldots-2e_i-\ldots -e_8$.
The~primitive sublattice $\bA_3\oplus \bA_3$ is generated by simple roots marked black on Figure~\ref{sFsrg}.
\begin{figure}[htbp]
\includegraphics[width=2.2in]{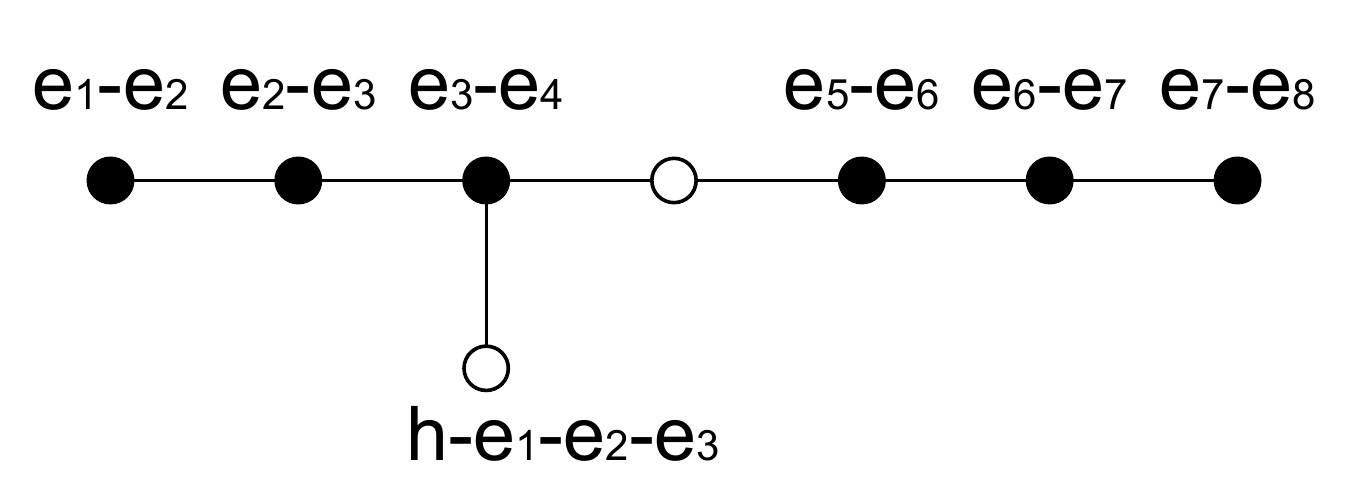}
\caption{$\bA_3\oplus \bA_3\subset\bE_8$}\label{sFsrg}
\end{figure}
It~follows that the $\bZ^2$ grading on $\bE_8$ is obtained by pairing with fundamental weights $h$ and
$e_5+e_6+e_7+e_8$ that correspond to white vertices of the Dynkin diagram.
The $\bZ^2$ grading of ${\mathfrak e_8}$ has the following non-empty weight spaces (in coordinates given by pairing with 
$h$ and $e_5+e_6+e_7+e_8$, respectively), where we also indicate dimensions.

\vspace{-23mm}
\[
\rotatebox{-45}{$
\begin{matrix}
&&&&&&4\\
&&&&&6&4\\
&&&&4&16&\\
&&&&24&6&\\
&&&16&24&&\\
&&4&{\mathbf 32}&4&&\\
&&24&16&&&\\
&6&24&&&&\\
&16&4&&&&\\
4&6&&&&&\\
4&&&&&&
\end{matrix}
$}
\]
\vspace{-23mm}

\noindent
It follows that the subset $\cB\subset\Cl_0(Y)$ is given by the $\pm$ columns of the matrix
\begin{equation}\label{vkbvv}
\begin{matrix}
1 &0 &1 &1 &1 &2 &2 &2 &3 &3\cr
0 &1 &1 &2 &3 &2 &3 &4 &4 &5\cr
\end{matrix}
\end{equation}
in the basis $u,v$, where $u$ (resp., ~$v$) is the image of the simple root $h-e_1-e_2-e_3$ (resp., ~$e_4-e_5$).
Next we compute vectors $u$ and $v$ in $\Cl_0(Y)$. By inspecting Figure~\ref{Poly128},
one can prove that, in the minimal resolution $Z$ of $Y$, 
$h-e_1-e_2-e_3$ corresponds to the $(-2)$-class $D_2-D_7$ and 
$e_4-e_5$ to $D_2-D_6-A_1-A_2-A_3$, which has pushforward $D_2-D_6$ on $X$.
Next we compute $\rd(C)$, $\rd(u)$ and $\rd(v)$.

The curve $\Gamma$ has a point of multiplicity $7$ at $e$ and its Newton polygon is 
$\Delta$ for $p\ne 2,3,7,11$.
The minimal equation of the elliptic curve $C$ is 
$$y ^2+y=x^3+x^2-240x+1190,$$
which is the curve
\href{https://www.lmfdb.org/EllipticCurve/Q/29157b1/}{29157b1}
from the LMFDB database of elliptic curves. It has Mordell--Weil group $\bZ^3$
generated by $P=(12,13)$, $R=(-6,49)$ and $Q=(-15,40)$.  
We have
$$\rd(C)=(15,-35)=P-Q-R,$$
$$\rd(u)=(120 ,1309)=Q-R,\quad \rd(v)=(-6,  49)=R.$$
We see that  $\rd(C)$ is not torsion in characteristic $0$ and thus
$\overline\Eff(Y)$ is not polyhedral.
In characteristic $p$, the condition of polyhedrality is that there exist 
two linearly independent column-vectors of the matrix \eqref{vkbvv} which, when dotted with the row vector
$(\rd(u), \rd(v))$ are contained in the cyclic subgroup of $C(\bF_p)$ generated by $\rd(C)$.
This gives the list of non-polyhedral primes in the table.
To prove the  positive density, we apply
Lemma~\ref{qfvqefvefv} (with $r=10$).
\end{example}

\begin{remark}
In Example~\ref{asrgwgfeew4g},
by Lemma~\ref{qfvqefvefv}, we get 
positive density not only for 
the set of non-polyhedral
primes but also for the set
of primes $p$ such that the Halphen pencil $|e_pC|$
on $Y$ has only irreducible fibers.
For example,
$\rd(C)$ has order $2$ in characteristic $23$
and none of the elements
of $\cB$, when restricted to $C$, are contained in the cyclic subgroup of $C(\bF_{23})$ generated by $\rd(C)$.
It follows that $|2C|$ on $Y$ is a Halphen pencil with only irreducible fibers.
This property is stronger than non-polyhedrality: 
in characteristic $13$, $\rd(C)$ has torsion $5$ and 
$\rd(u+v)$ is contained in the cyclic subgroup generated by $\rd(C)$
but no other linearly independent vector in $\cB$ is.
It follows that $\oEff(Y)$ is not polyhedral, 
but the Halphen pencil $|5C|$ on $Y$ contains a reducible fiber with two components
and no other reducible fibers.
\end{remark}

\section{Infinite sequences of Lang--Trotter polygons}\label{asrgasrharh}

\subsection*{An infinite sequence of pentagons.}\label{adrhdjd}

\begin{notation}
Let $k\geq 1$ be an integer and let $m=2k+4$. 
Let $\Delta$ be the pentagon with vertices
$(0,0)$, $(m-4,0)$, $(m,1)$, $(m-2,m)$, $(m-3,m-1)$.
\begin{figure}[htbp]
\begin{tikzpicture}[scale=.5]
\tkzInit[xmax=8,ymax=8]\tkzGrid
 \tkzDefPoint(5,7){P1}
 \tkzDefPoint(6,8){P2}
 \tkzDefPoint(8,1){P3}
 \tkzDefPoint(4,0){P4}
 \tkzDefPoint(0,0){P5}
 \tkzDrawSegments[color=black](P1,P2)
 \tkzDrawSegments[color=black](P2,P3)
 \tkzDrawSegments[color=black](P3,P4)
 \tkzDrawSegments[color=black](P4,P5)
 \tkzDrawSegments[color=black](P1,P5)
 
\node[below] at (4,0) {$G$};
\node[right] at (8,4) {$B$};
\node[left] at (0,4) {$F$};
\node[above] at (4,8) {$A$};

\node[above] at (2.3,0) {$D_3$};
\node[above] at (6,.7) {$D_2$};
\node[right] at (7,5) {$D_1$};
\node[below] at (5.7,7.6) {$D_5$};
\node[left] at (3,4.3) {$D_4$};
\end{tikzpicture}
\caption{Polygon $\Delta$ for $k=2$, $m=8$}\label{Sgsrhsrhsr}
\end{figure}
\end{notation}

\begin{theorem}\label{asgsgsrSRH}
The  polygon $\Delta$ is Lang--Trotter for every $k\ge1$.
Furthermore, every prime is a polyhedral prime of $\Delta$.
\end{theorem}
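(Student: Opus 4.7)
Step~1 is to verify the goodness conditions (i) and (ii) of Definition~\ref{def:good}. The Shoelace formula applied to the vertices $(0,0),(m-4,0),(m,1),(m-2,m),(m-3,m-1)$ yields $\vol(\Delta)=m^2$. Four of the five edges have primitive direction vectors, namely $(4,1)$, $(-2,m-1)$, $(-1,-1)$ and $(-(m-3),-(m-1))$, coprime in both coordinates since $m=2k+4$ makes both $m-1$ and $m-3$ odd; each such edge contributes exactly one lattice segment. The bottom edge $D_3$ has lattice length $m-4$. Hence $|\partial\Delta\cap\bZ^2|=(m-4)+4=m$.

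Step~2 is to construct the unique $\Gamma$ and establish the Lang--Trotter property. I~would apply the parametric method introduced in this section: start with an elliptic curve $C$ depending on $k$ together with two meromorphic functions $u,v$ on $C$ whose pole and zero divisors are prescribed by the intersection of $\Gamma$ with the toric boundary of $\bP_\Delta$ (one simple intersection with each of the four short divisors $D_1,D_2,D_4,D_5$ and $m-4$ simple intersections along $D_3$), chosen so that the preimage of $e=(1,1)$ under $(u,v)\colon C\to\bG_m^2$ has support a single $\Pic^0(C)$-class of total degree $m$. This realises an irreducible $\Gamma\in\cL_\Delta(m)$ with Newton polygon~$\Delta$; uniqueness, so $\dim\cL_\Delta(m)=0$, matches the count that the multiplicity-$m$ condition at $e$ imposes exactly $\binom{m+1}{2}=\dim\cL_\Delta$ linear conditions. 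Identifying $\rd(C)$ with the explicit divisor class $\sum p_i-m\cdot p_0\in\Pic^0(C)$ coming from the parametric data, I~would verify that it is non-torsion in characteristic $0$ for every $k\geq1$, either by direct computation on the explicit equation or by a specialisation argument from a base value of~$k$.

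Step~3 is to prove polyhedrality in every characteristic via Lemma~\ref{lkJSBwkjbg}. By Corollary~\ref{kjsHFkjshf} the minimal model $Y$ in characteristic~$0$ has Du Val singularities and $C_Y\sim-K_Y$; by Definition~\ref{eSGsgSG}, this structure, including the root lattice $T\subset\bE_8$ of singularities, is preserved across geometric fibres of the arithmetic family. Running the $(K+C)$-MMP on $X=\Bl_e\bP_\Delta$ contracts rational curves from the Hirzebruch--Jung resolutions of the four cyclic quotient singularities of $\bP_\Delta$ (of multiplicities $4m-2$, $m+1$, $2$ and $m-1$) together with suitable boundary divisors, producing~$Y$. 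The key claim, uniform in $k$, is that the resulting Du Val singularities have total rank $R=8$, equivalently $\rho(Y)=10-R=2$. Once this is established, Lemma~\ref{lkJSBwkjbg}~(2) immediately yields that $\oEff(Y)$ is rational polyhedral in every characteristic, so every prime is a polyhedral prime of~$\Delta$.

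The hard part will be the Hirzebruch--Jung bookkeeping of Step~3: one must show uniformly in $k$ that the MMP contractions, starting from singularities whose multiplicities grow linearly in~$k$, assemble into exactly a rank-$8$ Du Val configuration on~$Y$; the parallel uniform non-torsion claim of Step~2 is the other delicate point, but both become tractable once the parametric description provides a single family of curves in place of ad-hoc polynomial computations for each $k$.
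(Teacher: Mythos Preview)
Your Step~1 is correct and matches the paper. Your Step~2 is in the right spirit: the paper does use the parametric method, writing down an explicit Weierstrass equation $y^2=x(x^2+ax+b)$ with $a,b$ polynomials in $k$, explicit points $d_1,d_2,d_4,d_5\in C$ corresponding to the short edges, and explicit rational functions $f,g$ whose zeros and poles realise the map $C\to\bP_\Delta$. One remark: the specialisation argument you mention would only give non-torsion for \emph{almost all} $k$; the paper instead computes $\rd(C)=2d_1-2d_2$ and checks $n d_2\ne 0$ for $1\le n\le 12$ via Mazur's theorem, which gives every $k\ge 1$.

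Your Step~3, however, misidentifies both the mechanism and the difficulty. The $(K+C)$-MMP on $X=\Bl_e\bP_\Delta$ does not contract curves from Hirzebruch--Jung resolutions: those curves live on the resolution $\tilde X$, not on $X$. The curves contracted are curves on $X$ itself with $C\cdot E=0$ and $K\cdot E<0$. The paper's argument here is far simpler than the root-lattice bookkeeping you propose, and requires no uniform-in-$k$ combinatorics at all. Since $\Delta$ is a pentagon, $\rho(X)=4$. Since $\Delta$ has lattice width exactly $m$ in both the horizontal and vertical directions, the proper transforms $C_1,C_2\subset X$ of the one-parameter subgroups $\{v=1\}$ and $\{u=1\}$ satisfy $C\cdot C_i=m-m=0$; a short computation on the surface $S$ dominating both $\bP_\Delta$ and $\bP^1\times\bP^1$ shows $C_1^2=C_2^2=-1$. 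Contracting $C_1$ and $C_2$ drops the Picard number to $2$, and since $\rho(Y)\ge 2$ for any minimal elliptic pair, this already \emph{is} the minimal model. Lemma~\ref{lkJSBwkjbg} then gives polyhedrality in every characteristic immediately. The rank-$8$ root lattice you are aiming for (it turns out to be $\bD_6\oplus\bA_1\oplus\bA_1$) is a consequence, not an input.
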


\begin{notation}\label{setup}
Consider an elliptic curve $C\subseteq \bP^2$ with the Weierstrass equation 
$$y^2=x(x^2+ax+b),\quad\text{ where}$$ 
$$a=-(12k^2+24k+11),\quad b=4(k+1)^2(3k+2)(3k+4).$$
Let 
$$x_0=2(k+1)(3k+2), \quad x_1=2(k+1)(3k+4).$$ 
Consider the following points on $C$ in homogeneous coordinates:
$$d_1=[0:1:0],\quad d_2=[x_0:-x_0:1],\quad  \tilde{d}_2=[x_0:x_0:1],$$
$$d_4=[0:0:1],\quad d_5=[x_1:x_1:1].$$
Define rational functions on $C$ as follows: 
$$f(x,y)=\frac{x^{k+1}(x-y)}{x-x_0},\quad g(x,y)=\frac{(x-x_0)(x^{k+1}-x^ky-2x_0^{k+1})}{x^k(x-y)}.$$
\end{notation}

\begin{lemma}\label{basics}
The curve $C$ is a smooth elliptic curve defined over $\bQ$. 
The points 
$d_1$, $d_2$, $\tilde{d}_2$, $d_4$, $d_5$ are mutually distinct and have the following properties: 
\begin{itemize}
\item[(i) ] 
The given lines intersect  $C$ at the following points, with multiplicities:
$$\begin{matrix}
z=0: &3d_1,&\qquad&
x=0: &d_1+2d_4,\cr
y=x: &d_4+d_5+\tilde{d}_2,&\qquad&
x=x_0: &d_1+d_2+\tilde{d}_2.\cr
\end{matrix}$$
In particular, we have equivalences of divisors on $C$ as follows: 
$$2d_1\sim 2d_4,\quad d_1+d_2\sim d_4+d_5.$$
\item[(ii) ] 
The divisors of zeros and poles of the rational functions $f$ and $g$ are 
$$(f)=\big((m-1)d_4+d_5\big)-\big((m-1)d_1+d_2\big),$$
$$(g)=\big(4d_2+\gamma)-\big(2d_1+(m-3)d_4+d_5\big),$$
where $\gamma$ is an effective divisor of degree $m-4$ disjoint from  $d_1,d_4,d_5$. 
\item[(iii) ] The line bundle $\cO(d_2-d_1)$ is not a torsion element of $\Pic^0(C)$. 
\end{itemize}
\end{lemma}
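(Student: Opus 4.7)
The plan is to verify (i) and (ii) by routine computation on the Weierstrass model, and tackle (iii) --- the main obstacle --- via Nagell--Lutz applied to a carefully chosen multiple of $d_2$.

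For (i), smoothness of $C$ follows from computing the discriminant of the cubic $x(x^2+ax+b)$ to be $b^2(1-8(k+1)^2)$, which is nonzero for $k\geq1$; mutual distinctness of $d_1,d_2,\tilde d_2,d_4,d_5$ is immediate from their coordinates. I would substitute each listed point into the Weierstrass equation to confirm it lies on $C$, and then analyze the four hyperplane intersections directly. The key algebraic identities behind the computation are $a-1=-12(k+1)^2$ and $x_0^2+(a-1)x_0+b=0$. The first, combined with the fact that the discriminant of $x^2+(a-1)x+b$ equals $[4(k+1)]^2$, factors the polynomial $x(x^2+(a-1)x+b)$ --- the equation cut out on $y=x$ by $C$ --- as $x(x-x_0)(x-x_1)$, giving $d_4,\tilde d_2,d_5$. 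The second identity gives $y^2=x_0^2$ on $x=x_0\cap C$, so the affine intersections are $d_2$ and $\tilde d_2$. The lines $z=0$ and $x=0$ are standard. The asserted linear equivalences then follow because any two hyperplane sections of $C\subset\bP^2$ are linearly equivalent.

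For (ii), the divisors
\[
(x)=2d_4-2d_1,\quad (x-y)=d_4+d_5+\tilde d_2-3d_1,\quad (x-x_0)=d_2+\tilde d_2-2d_1
\]
are read off from (i), and the formula for $(f)$ follows immediately. For $(g)$ the only new input is the divisor of $h=x^{k+1}-x^ky-2x_0^{k+1}$. Since $\mathrm{ord}_{d_1}(x)=-2$ and $\mathrm{ord}_{d_1}(y)=-3$, the dominant pole of $h$ at $d_1$ is the $x^ky$ term, so $\mathrm{ord}_{d_1}(h)=-(2k+3)=-(m-1)$. At $d_2$ I would introduce the local parameter $t=x-x_0$ and expand $y=-x_0+(2k+1)t+\alpha t^2+\beta t^3+O(t^4)$ from the defining equation; substituting into $h$, the linear term vanishes because the tangent slope of $C$ at $d_2$ equals $2k+1$ (the zero direction of $dh$ at $d_2$), the quadratic term vanishes with $\alpha=-k(k+1)/x_0$, and the cubic coefficient computes to a non-zero expression for every $k\geq1$, so $h$ vanishes to order exactly $3$ at $d_2$. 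Since $h(d_4)=h(d_5)=-2x_0^{k+1}\neq0$, the remaining $m-4$ zeros of $h$ form an effective divisor $\gamma$ disjoint from $\{d_1,d_4,d_5\}$, which assembles into the claimed formula for $(g)$.

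Part (iii) is the main obstacle. Viewed as an elliptic curve over $\bQ$ with origin $d_1$, the assertion becomes that the integral $\bQ$-point $d_2=(x_0,-x_0)$ has infinite order. Since $y^2=x^3+ax^2+bx$ has integer coefficients, Nagell--Lutz forces every torsion point of $C(\bQ)$ to have integer coordinates, so the strategy is to exhibit a multiple of $d_2$ whose $x$-coordinate is not an integer. Using the tangent slope $2k+1$ at $d_2$ already obtained in (ii), the tangent-chord formula gives $2d_2=(4(k+1)^2,4(k+1)^3)$. Iterating, the tangent slope at $2d_2$ comes out to $-(3k^2+6k+2)/(2(k+1))$ and the $x$-coordinate of $4d_2$ simplifies cleanly to
\[
\left(\frac{5(k+1)^2-1}{2(k+1)}\right)^2.
\]
The crucial observation is that $5(k+1)^2-1\equiv-1\pmod{k+1}$, so the numerator is coprime to $k+1$ and hence $2(k+1)\nmid 5(k+1)^2-1$; thus the $x$-coordinate of $4d_2$ is not an integer for any $k\geq1$. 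By Nagell--Lutz, $4d_2$ is not a torsion point of $C(\bQ)$, so $d_2$ itself has infinite order and $\cO(d_2-d_1)$ is not torsion in $\Pic^0(C)$. The alternative route via Mazur's classification would require ruling out each torsion order up to~$12$ case-by-case for every $k$; the slick form of $4d_2$ is what enables a uniform argument across the whole infinite family.
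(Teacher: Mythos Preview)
Your proof is correct. Parts (i) and (ii) follow essentially the same route as the paper's proof: identify $x_0,x_1$ as the roots of $x^2+(a-1)x+b$, read off the divisors of $x$, $x-y$, $x-x_0$, and check that $h=x^{k+1}-x^ky-2x_0^{k+1}$ vanishes to order at least $3$ at $d_2$ via the tangent slope $2k+1$. (Your extra verification that the cubic coefficient is nonzero is not strictly needed, since the statement only requires $\gamma$ to avoid $d_1,d_4,d_5$, not $d_2$; but it does no harm.)

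Part (iii) is where your argument genuinely diverges from the paper. The paper invokes Mazur's theorem to reduce to checking $nd_2\neq 0$ for $1\le n\le 12$, and then carries this out by a Magma computation over the function field $\bQ(k)$ (Computation~\ref{MZNdc,manBDC}), extracting the finite set of $k$-values where some $nd_2$ could be trivial and observing none is an integer $\ge 1$. Your route via Nagell--Lutz is more elementary and entirely computer-free: the explicit formula
\[
x(4d_2)=\left(\frac{5(k+1)^2-1}{2(k+1)}\right)^2
\]
together with $\gcd(5(k+1)^2-1,k+1)=1$ gives a uniform non-integrality statement valid for every $k\ge1$. This is a cleaner argument for this particular family. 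The paper's approach, on the other hand, is more mechanical and reusable --- the same Magma routine handles the heptagon family in \S\ref{hepta} without needing to discover a new closed-form multiple --- and the paper also notes (in the remark following the proof) a third alternative via Silverman's specialization theorem applied to the section $d_2$ of the rational elliptic fibration over $\bP^1_k$.
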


\begin{proof}
The discriminant equals $16a^2(b^2-4a)$ and it is non-zero for all integers $k$, hence, the curve $C$ is smooth. 
Part (i) is immediate noticing that $x=x^2+ax+b$ has solutions $x=x_0$ and $x=x_1$.  It follows from (i) that away from the point at infinity $d_1$, 
the rational function $f$ has zeros at $(m-1)d_4+d_5$ and a single pole at $d_2$, while at $d_1$, there is a pole of order $(m-1)$. Similarly, 
$g$ has poles at $d_1$, $d_4$, $d_5$ (of orders $2$, $(m-3)$ and $1$ respectively) and a zero of order at least $2$ at $d_2$. After a change of variables
$u=x-x_0$, $v=y+x_0$, we see that  $C$ has $v-(2k+1)u=0$ tangent line at $(0,0)$. 
After a further change of variables $w=v-(2k+1)u$, one can see that $x^{k+1}-x^ky-2x_0^{k+1}$ has multiplicity at least $3$ at $d_2$. Hence, 
$g$ has multiplicity at least $4$ at $d_2$. This proves (ii). 
To prove (iii), choose $d_1$ as the identity element of the Mordell-Weil group~$C(\bQ)$.  
By Mazur's theorem \cite{Mazur}, it suffices to prove that $nd_2\neq 0$ for $1\leq n\leq 12$. We check this in Computation~\ref{MZNdc,manBDC}.
\end{proof}

\begin{notation}\label{maps}
We label the sides of $\Delta$ as $D_1,D_2,D_3,D_4,D_5$ (see Figure~\ref{Sgsrhsrhsr}).
Note that $\Delta$ is inscribed in the square of side $m$ in the first quadrant, with one vertex at $(0,0)$ and sides labeled, starting from the $x$-axis and going counterclockwise, $G, B, A, F$. 
The normal fans of $\Delta$ and the square
give rise to toric surfaces $\bP_\Delta$ and $\bP^1\times\bP^1$. 
Let $S$ be the toric surface corresponding to the common 
refinement of the two fans and let 
$$\pi: S\ra \bP^1\times \bP^1,\quad \rho: S\ra \bP_{\Delta}$$
be the corresponding toric morphisms. For each of $\bP_{\Delta}$, $\bP^1\times\bP^1$, $S$, we denote the torus invariant divisor corresponding to a ray of the fan by the same letter, so 
$$B=(1,0),\quad A=(0,1),\quad F=(-1,0),\quad G=(0,-1),$$
$$D_1=(m-1,2),\quad D_2=(1,-4),\quad D_3=(0,-1),$$
$$D_4=(-(m-1),m-3),\quad D_5=(-1,1).$$
On $S$ we have $G=D_3$ and 
$\rho$ contracts divisors $A, B, F$, while $\pi$ contracts $D_1,D_2, D_4, D_5$. 
\end{notation}

\begin{lemma}\label{BAFG divisors}
The following equalities of divisors hold on $S$:
$$\pi^{-1}B=B+(m-1)D_1+D_2,\quad \pi^{-1}F=F+(m-1)D_4+D_5,$$
$$\pi^{-1}A=A+2D_1+(m-3)D_4+D_5,\quad \pi^{-1}G=D_3+4D_2.$$
\end{lemma}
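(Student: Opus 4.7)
The plan is to apply the standard toric calculus for pulling back torus-invariant Cartier divisors under a toric birational morphism. Recall that if $\pi\colon S\to \bP^1\times\bP^1$ is toric with both fans sharing the same lattice, then for any torus-invariant divisor $D$ on $\bP^1\times\bP^1$ with support function $\phi_D$ (piecewise linear on the fan of $\bP^1\times\bP^1$, linear on each maximal cone), one has
\[
 \pi^{*}D \;=\; \sum_{\sigma} \phi_D(v_\sigma)\, D_\sigma,
\]
the sum running over all rays $\sigma$ of the fan of $S$, with $v_\sigma$ the primitive generator. So the task reduces to: (a) compute $\phi_B,\phi_F,\phi_A,\phi_G$ on each of the four quadrants of $\bR^2$ (the maximal cones of $\bP^1\times\bP^1$); (b) locate each ray of $S$ in the appropriate quadrant; (c) read off the coefficients.

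For step (a), each $\phi_D$ is determined by taking value $1$ at the ray defining $D$ and $0$ at the three other rays of $\bP^1\times\bP^1$, then extending linearly over each quadrant. A direct computation gives $\phi_B(v_1,v_2)=v_1$ on quadrants $I$ and $IV$ and $0$ elsewhere; $\phi_F(v_1,v_2)=-v_1$ on $II$ and $III$, $0$ elsewhere; $\phi_A(v_1,v_2)=v_2$ on $I$ and $II$, $0$ elsewhere; $\phi_G(v_1,v_2)=-v_2$ on $III$ and $IV$, $0$ elsewhere.

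For step (b), since $m=2k+4\ge 6$, we have $m-1\ge 5>0$ and $m-3\ge 3>0$. Then one checks: $D_1=(m-1,2)$ lies in quadrant $I$; $D_2=(1,-4)$ in quadrant $IV$; $D_3=(0,-1)=G$ on the boundary of $III$ and $IV$; $D_4=(-(m-1),m-3)$ in quadrant $II$; $D_5=(-1,1)$ in quadrant $II$. For step (c), we evaluate. For instance, $\phi_B$ vanishes on every ray not in quadrant $I$ or $IV$, and on the relevant rays gives $\phi_B(B)=1$, $\phi_B(D_1)=m-1$, $\phi_B(D_2)=1$, yielding $\pi^{*}B=B+(m-1)D_1+D_2$. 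The same ray-by-ray evaluation yields the remaining three identities, e.g.\ $\phi_A(D_1)=2$, $\phi_A(D_4)=m-3$, $\phi_A(D_5)=1$ gives $\pi^{*}A = A+2D_1+(m-3)D_4+D_5$, and $\phi_G(D_2)=4$, $\phi_G(D_3)=1$ gives $\pi^{*}G=D_3+4D_2$.

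There is no real obstacle here; the only subtlety worth flagging is the hypothesis $k\ge 1$ (hence $m-3>0$), which is exactly what guarantees that $D_4$ lies strictly in the open second quadrant and not on the boundary with the third, so that a single linear formula for $\phi_B$ and $\phi_A$ applies at $D_4$. Once this is noted, the proof is a short table of inner products.
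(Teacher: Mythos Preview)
Your proof is correct and is essentially the same as the paper's, just phrased in terms of support functions rather than local coordinates. The paper states the equivalent local principle: when a ray with primitive generator $f=\alpha e_1+\beta e_2$ is added to a smooth cone $\langle e_1,e_2\rangle$, the multiplicity of $V(f)$ in $\pi^{-1}V(e_1)$ is $\alpha$; your evaluation of $\phi_D$ at each ray is exactly this computation carried out in each quadrant.
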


\begin{proof}
If $\pi: Y\ra X$ is the weighted blow-up of a toric surface obtained by adding 
a ray generated by a primitive vector $f:=\alpha e_1+\beta e_2$ to a smooth cone of the fan of~$X$ generated by primitive vectors $e_1,e_2$,  
then the multiplicity of $V(f)$ in $\pi^{-1}V(e_1)$ is~$\alpha$. (Here $V(r)$ is the torus invariant divisor 
corresponding to the ray~$r$). 
\end{proof}

As is customary, we view a rational function $f$ on a curve $C$ 
as the map $C\ra\bP^1$. 

\begin{proposition}\label{pentagons} 
Let $\phi=(f,g): C\ra\bP^1\times\bP^1$ be the morphism given by the rational functions $f$, $g$.
Let $U$ be the open torus in  $\bP^1\times\bP^1$, with coordinates $(u,v)=([1,u], [1,v])$ and let 
$\Gamma:=\phi(C)\cap U$. There are unique morphisms
$$\chi: C\ra \bP_{\Delta},\quad \psi: C\ra S,$$
that commute with $\phi$ and $\pi$, $\rho$ as defined in Notation \ref{maps}.
Then:
\begin{itemize}
\item[(1) ] The map  $\phi$ is birational onto its image and 
the equation of $\Gamma$ in $U$ is
$$(uv+2x_0^{k+2})\big(u-2x_0^{k+1}\big)^{m-1}-2u^{k+1}(v+x_0)^{k+2}\big(u-2x_0^{k+1}\big)^{k+2}-$$
$$-u^{m-3}(v+x_0)^{m-1}\big(uv+u(x_0-x_1)+2x_1x_0^{k+1}\big)=0.$$
The Newton polygon of $\Gamma$ is $\Delta$ and the multiplicity of $\Gamma$ at the point 
$q$ with $u=2x_0^{k+1}$, $v=-x_0$ is $m$.
\item[(2) ] For $D_i$ ($i=1,2,4,5$) in $\bP_{\Delta}$, we have $\chi^{-1}(D_i)=d_i$ (see Notation \ref{setup}). 
\item[(3) ] The induced map $\chi: C\ra \Bl_q\bP_{\Delta}$ is an embedding and the linear system $\cL_{\Delta}(m)$ has $C$ as an irreducible member.
Via this identification, we have  
$$\cO(C)_{|C}\cong\cO_C(2d_1-2d_2).$$ 
Furthermore, if $E$ is the exceptional divisor in  $\Bl_q\bP_{\Delta}$, then $\chi^{-1}(E)$ is a common fiber of the maps 
$C\rightarrow \bP^1$ induced by $f$ and $g$. In particular, 
$$\chi^{-1}(E)\sim (m-1)d_4+d_5\sim (m-1)d_1+d_2\sim 4d_2+\gamma\sim 2d_1+(m-3)d_4+d_5.$$
\end{itemize}
\end{proposition}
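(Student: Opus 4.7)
My plan is to prove the three parts sequentially, with (1) the computational core and (2)--(3) following from direction analysis and toric adjunction. Existence of $\chi$ and $\psi$ extending $\phi$ is automatic by properness of $\bP_\Delta$ and $S$ combined with the universal property of toric morphisms associated to fan refinements. To produce the stated equation of $\Gamma$, I would eliminate $x,y$ from $u=f(x,y)$, $v=g(x,y)$, and the Weierstrass equation $y^2=x^3+ax^2+bx$. The Newton polygon claim is then checked by verifying that the five vertices of $\Delta$ correspond to nonvanishing coefficients of $F$ and that no monomial outside $\Delta$ appears. For the multiplicity at $q=(2x_0^{k+1},-x_0)$, the substitution $U=u-2x_0^{k+1}$, $V=v+x_0$ causes each of the three bracketed summands of $F$ to acquire a high-order factor ($U^{m-1}$, $U^{k+2}V^{k+2}$, and $V^{m-1}$ respectively), so each contributes total order $\geq m$ at $(U,V)=(0,0)$; the leading degree-$m$ parts hit distinct monomials and no cancellation occurs, pinning $\mult_q F=m$. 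Birationality of $\phi$ follows from the bidegree $(m,m)$ of $\Gamma$ read off $\Delta$ together with $\deg f=\deg g=m$ from Lemma~\ref{basics}(ii): the equality $\deg\phi\cdot m=m$ forces $\deg\phi=1$.

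For part (2), I would use direction analysis, with the paper's convention that the vector $D_i$ from Notation~\ref{maps} is the outer normal of the edge $D_i$ (hence the negative of the actual fan ray of $\bP_\Delta$). The general toric fact is that for $p\in C$ with $\chi(p)$ in the interior of the divisor labeled $D_i$, one has $(\ord_p f,\ord_p g)=i_p\cdot(-D_i)$ with $i_p>0$ the intersection multiplicity. Inspecting Lemma~\ref{basics}(ii), the pair $(\ord_{d_i}f,\ord_{d_i}g)$ equals $-D_i$ primitively for each $i\in\{1,2,4,5\}$ (for instance $(-(m-1),-2)=-(m-1,2)=-D_1$), so $\chi(d_i)$ lies in the interior of $D_i$ with $i_{d_i}=1$. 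Since each of $D_1,D_2,D_4,D_5$ has lattice length $1$, the toric intersection formula gives $\bar\Gamma\cdot D_i=1$, whence $\chi^{-1}(D_i)=d_i$ with multiplicity $1$.

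For part (3), I would first establish that $\chi$ is an embedding. Toric adjunction on $\bP_\Delta$ with $\bar\Gamma^2=\vol(\Delta)=m^2$ and $\bar\Gamma\cdot K_{\bP_\Delta}=-|\partial\Delta\cap\bZ^2|=-m$ gives $p_a(\bar\Gamma)=\binom{m}{2}+1$. Assuming $\bar\Gamma$ is smooth away from $q$ (which one verifies from the transverse intersections at each $D_i$ in part (2) and the distinctness of the $m-4$ intersection points of $\bar\Gamma$ with $D_3$), the total $\delta$-invariant equals $p_a(\bar\Gamma)-p_g(C)=\binom{m}{2}$; together with $\mult_q\bar\Gamma=m$ and the Enriques inequality $\delta\geq\binom{m}{2}$, this forces $q$ to be an ordinary $m$-fold point, so the proper transform $\tilde\Gamma\subset\Bl_q\bP_\Delta$ is smooth and $\chi$ identifies $C$ with $\tilde\Gamma$. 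For $\chi^{-1}(E)$: the scheme $\phi^{-1}(q)$ has length $\mult_q\bar\Gamma=m$ and is contained in each of the degree-$m$ divisors $f^{-1}(2x_0^{k+1})$ and $g^{-1}(-x_0)$, so all three coincide as divisors on $C$; the listed linear equivalences are immediate since any two fibers of a morphism to $\bP^1$ are linearly equivalent. For $\cO(C)|_C$: adjunction on $X=\Bl_q\bP_\Delta$ gives $\cO(C)|_C\cong\cO_C(-K_X|_C)$, and expanding via $-K_X\sim\pi^*(\sum_{i=1}^5 D_i)-E$ yields
\[
\cO(C)|_C\;\sim\;d_1+d_2+\chi^{-1}(D_3)+d_4+d_5-\chi^{-1}(E).
\]
The same direction analysis gives $\chi^{-1}(D_3)=\gamma$ (points of $\gamma$ satisfy $(\ord f,\ord g)=(0,1)=-D_3$), and substituting $\chi^{-1}(E)\sim 4d_2+\gamma$ (the zero-divisor of $g$) followed by $d_1+d_2\sim d_4+d_5$ from Lemma~\ref{basics}(i) simplifies the right-hand side to $2d_1-2d_2$.

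The main obstacle is the elimination step in part (1): while the resultant is in principle mechanical, matching it to the three-summand closed form stated in the proposition and verifying that no monomial outside $\Delta$ appears demands careful bookkeeping. A secondary subtlety is confirming that $\bar\Gamma$ has no singularity besides $q$, which is needed to apply the $\delta$-invariant argument for the embedding claim.
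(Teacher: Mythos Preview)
Your proposal is essentially correct and in several places takes a different route from the paper. For part~(2), your direction analysis via $(\ord_p f,\ord_p g)\in\bR_{>0}\cdot(-D_i)$ is more direct than the paper's approach, which lifts to $S$, uses Lemma~\ref{BAFG divisors} to write $\phi^{-1}$ of each boundary divisor of $\bP^1\times\bP^1$ as a combination of the unknown $d'_i:=\chi^{-1}(D_i)$, and then matches multiplicities against Lemma~\ref{basics}(ii) to force $d_i=d'_i$. For $\cO(C)|_C$, your adjunction trick (using $p_a(C)=1$ to get $\cO(C)|_C\cong\cO_C(-K_X|_C)$) is slicker than the paper's route, which computes the full intersection matrix $(D_i\cdot D_j)$ on $\bP_\Delta$, solves for the class of $C$ in $\Cl(X)$, and only then restricts.

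Two points, however, need tightening. Your birationality argument reads the bidegree $(m,m)$ of $\Gamma$ off $\Delta=\NP(F)$, but this presupposes that $F$ is irreducible; one can close the gap by noting that $V(F)=\Gamma$ is irreducible, so $F=cG^e$ with $G$ irreducible, and the vertex $(m,1)\in\Delta$ rules out $e>1$. The paper instead inverts explicitly: from $uv=x(x^{k+1}-x^ky-2x_0^{k+1})$ one solves $x=u(v+x_0)/(u-2x_0^{k+1})$ and then $y$, which both establishes birationality and, after substituting into the Weierstrass equation and dividing out the spurious factor $x-x_0$, produces the stated three-term equation without a separate elimination. For the embedding, you do not actually need to \emph{assume} smoothness away from~$q$: the chain
\[
\tbinom{m}{2}=p_a(\bar\Gamma)-g(C)=\sum_p\delta_p\ \geq\ \delta_q\ \geq\ \tbinom{\mult_q\bar\Gamma}{2}=\tbinom{m}{2}
\]
already forces both $q$ ordinary and $\delta_p=0$ for $p\neq q$. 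The paper's argument is simpler still: it applies Proposition~\ref{prop:gen} directly to the proper transform $C'\subset\Bl_q\bP_\Delta$, obtaining $p_a(C')=1=g(C)$ and hence $C'\cong C$ with no separate smoothness verification needed.
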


\begin{proof}
We first prove (1). 
Set $f(x,y)=u$, $g(x,y)=v$ and solve for $x,y$. Noticing that 
$uv=x\big(x^{k+1}-x^ky-2x_0^{k+1}\big)$, 
we obtain after some calculations that
\begin{equation}\label{x,y}
x=\frac{u(v+x_0)}{u-2x_0^{k+1}},\quad y=\frac{x^{k+2}-u(x-x_0)}{x^{k+1}}.
\end{equation}
In particular, the map $\phi$ is birational onto its image. It follows that $(u,v)\in\Gamma$ must satisfy the equation 
obtained by plugging in the above formulas for $x,y$ in the Weierstrass equation of $C$. After clearing denominators, this equation is
$$\big(x^{k+2}-u(x-x_0)\big)^2=x^{2k+2}(x^3+ax^2+bx).$$
This equation has a solution $x=x_0$, since the point $y=x=x_0$ lies on $C$. More precisely, one can factor out $(x-x_0)$, by noticing that
$$(x^{k+2})^2-x^{2k+3}(x^2+ax+b)=x^{2k+3}(x-x_0)(x-x_1).$$
As $\Gamma$ is irreducible and $x$ is not always equal to $x_0$ along $C$, it follows that $(u,v)\in\Gamma$ must satisfy the equation 
$$u^2(x-x_0)-2ux^{k+2}=x^{2k+3}(x-x_1),$$
where $x$ is as in (\ref{x,y}). Substituting $x$ with the formula in  (\ref{x,y}) and simplifying $u^2$ ($u$ is not constant equal  to $0$ along $\Gamma$, otherwise $x=0$)
it follows $(u,v)\in\Gamma$ must satisfy the given equation. Note, the equation is of type $(m,m)$ in $\bP^1\times\bP^1$. 
Since each of the maps given by the rational functions $f$ and $g$ has degree $m$ and $\phi$ is birational 
onto its image, it follows that the closure of $\Gamma$ in $\bP^1\times\bP^1$ is a curve of type $(m,m)$. In particular, the equation we obtained is irreducible and defines 
$\Gamma$ on~$U$. 
As already noted, the Newton polygon is inscribed in the square with vertices $(0,0)$, $(0,m)$, $(m,0)$, $(m,m)$, the terms 
$$u^{m-3}v^{m-1},\quad u^{m-2}v^m,\quad u^mv,\quad 1$$ 
appear with non-zero coefficients, and there are no terms $u^mv^i$ except when $i=1$, or terms $u^{m-1}v^i$ when $i\geq k+3$. It follows that the Newton polygon 
has as an edge the segment joining the points $(m-2,m)$, $(m,1)$. Similarly, to check that the edges joining $(0,0)$, $(m-3,m-1)$ and $(m-3,m-1)$, $(m-2,m)$ respectively, it 
suffices to check that there are no terms $u^iv^j$ with $j/i>(m-1)/(m-3)$ and no term $u^{m-3}v^m$ respectively. This is straightforward. 
It remains to prove that 
$$u^{m-3},\quad u^{m-2},\quad u^{m-1},\quad u^m$$
appear with zero coefficients, but $u^{m-4}$ has a non-zero coefficient. Setting $v=0$, the equation becomes (after simplifying $x_0^{k+2}$)
$$2\big(u-2x_0^{k+1}\big)^{m-1}-2u^{k+1}\big(u-2x_0^{k+1}\big)^{k+2}-x_0^{k+1}u^{m-3}\big(u(x_0-x_1)+2x_1x_0^{k+1}\big)=0.$$
Recall that $m=2k+4$. Clearly, $u^{m-1}$ and $u^m$ appear with $0$ coefficient. It is straightforward to check that the coefficients of $u^{m-2}$ and $u^{m-3}$ are 
$$-4x_0^{k+1}\binom{m-1}1 
+4x_0^{k+1}
\binom{k+2}1-x_0^{k+1}(x_0-x_1)=0,\quad \text{and }$$
$$2(2x_0^{k+1})^2
\binom{m-1}2-2(2x_0^{k+1})^2
\binom{k+2}2-2x_0^{2k+2}x_1=0,\quad \text{respectively}.$$
The coefficient of $u^{m-4}$ is
$$-2(2x_0^{k+1})^3\binom{m-1}3 +2(2x_0^{k+1})^3\binom{k+2}3,$$
which is non-zero for all $k\geq0$. 
Making the change of variables $s:=u-2x_0^{k+1}$, $t=v+x_0$, the equation of $\Gamma$ becomes
$$s^{m-1}\big(st+2x_0^{k+1}t-x_0s\big)-2(st)^{k+2}\big(s+2x_0^{k+1}\big)^{k+1}-t^{m-1}\big(st+2x_0^{k+1}t-x_1s\big)=0,$$
which has a point of multiplicity $m$ at $s=t=0$. This finishes the proof of (1). 

We now prove (2) and (3). 
Denote $d'_i=\chi^{-1}(D_i)$.  
Clearly, $\vol(\Delta) = m^2$ and $|\partial\Delta\cap \bZ^2| = m$. 
Let $C'$ be the proper transform in $\Bl_q{\bP_{\Delta}}$ 
of the closure of $\Gamma$ in $\bP_{\Delta}$. Note that the map $\phi$ factors through $C'$, and 
$C$ is the desingularization of $C'$. Up to a change of coordinates on $U$, we are in the situation 
of  Proposition~\ref{prop:gen}. In particular, $C'$ has arithmetic genus one and hence it must be isomorphic to $C$. We  identify $C$ with its image in $\Bl_q{\bP_{\Delta}}$. 
As the edges $D_i$ for $i\neq 3$ of $\Delta$ have lattice length $1$, it follows that 
each of  $d'_i$, for $i=1,2,4,5$, is a point. Since $C$ does not pass through the torus invariant points of $\bP_{\Delta}$, 
 the cycle $d'_3$ is disjoint from $d'_i$ for $i=1,2,4,5$ and $C$  embeds into $\Bl_e S$ and is disjoint from the torus invariant divisors $A, B, F$. Hence, 
 $d'_i=\psi^{-1}(D_i)$ for all $i$. By Lemma \ref{BAFG divisors} 
$$\phi^{-1}B=(m-1)d'_1+d'_2,\quad \phi^{-1}F=(m-1)d'_4+d'_5,$$
$$\phi^{-1}A=2d'_1+(m-3)d'_4+d'_5,\quad \phi^{-1}G=d'_3+4d'_2.$$
By the definition of the map $\phi$, the preimages of the torus invariant divisors in $\bP^1\times\bP^1$ are given by the zeros and poles of the rational functions
$f$ and $g$, so by Lemma \ref{basics}, these are
$$\phi^{-1}(u=\infty)=(m-1)d_1+d_2,\quad \phi^{-1}(u=0)=(m-1)d_4+d_5,$$
$$\phi^{-1}(v=\infty)=2d_1+(m-3)d_4+d_5,\quad \phi^{-1}(v=0)=2d_2+\gamma.$$
where $\gamma$ is an effective divisor disjoint from $d_i$ for $i=1,2,4,5$.  
By considering multiplicities, the only possibility that these divisors match is when $d_i=d'_i$ for all $i$. For example, the divisor $\phi^{-1}(v=\infty)$ must equal $\phi^{-1}A$, hence, 
$d_i=d'_i$ for $i=1,4,5$. Similarly, $\phi^{-1}(u=\infty)$ must equal one of $\phi^{-1}B$ or $\phi^{-1}F$ and as $d_1=d'_1$, it must be that $d_2=d'_2$. 
The exceptional divisor $E$ of $\Bl_q(\bP_\Delta)$
restricts to $C$ as an effective degree $m$ divisor which is contracted by both maps $C\ra\bP^1$. Hence, it is a common fiber of the two maps and 
$E_{|C}\sim (m-1)d_1+d_2$. 

Up to a change of coordinates on $U$, the linear system $\cL_{\Delta}(m)$ has $C$ as an irreducible member. To prove that  
 $\Delta$ is a Lang--Trotter polygon, it suffices to prove that $\rd(C)\in\Pic^0(C)$ is non-torsion (this also implies that $\dim \cL_{\Delta}(m)=0$). Let $X=\Bl_q(\bP_\Delta)$
and let $E$ be the exceptional divisor.  We have the following relations between the torus invariant divisors on $\bP_\Delta$, and hence, on $X$:
 $$(m-1)D_1+D_2\sim (m-1)D_4+D_5,\quad D_3\sim 2D_1-4D_2+(m-3)D_4+D_5.$$
From the fan of $\bP_\Delta$, we can compute the intersection numbers  $D_i\cdot D_j$.  
Using that $C\cdot D_i=1$ ($i\neq 3$), $C\cdot E=m$, we obtain 
 $$C\sim m(m+1)D_1+(m-2)D_2-2(m-1)D_4-mE.$$ 
It follows that $\rd(C)=2d_1-2d_2$. 
\end{proof}

\begin{proof}[Proof of Theorem~\ref{asgsgsrSRH}]
By Prop.~\ref{pentagons}(3), up to a change of coordinates on $U$, the linear system $\cL_{\Delta}(m)$ has $C$ as an irreducible member. It follows from Prop.~
\ref{pentagons}(3) and Lemma \ref{basics}(iii) that $\rd(C)\in\Pic^0(C)$ is non-torsion. This also implies that $\dim \cL_{\Delta}(m)=0$ and hence, 
 $\Delta$ is a Lang--Trotter polygon.  The proper transforms in $\Bl_q S$ of the two one-parameter subgroups $C_1$ and $C_2$ of $\bP^1\times\bP^1$ 
have classes $\pi^{-1}B-E$ and  $\pi^{-1}A-E$, respectively. It follows  by Lemma \ref{BAFG divisors} that their proper transforms $C_1$, $C_2$  in $X=\Bl_q\bP_\Delta$ 
have classes $(m-1)D_1+D_2-E$ and $2D_1+(m-3)D_4+D_5-E$, respectively. It follows that on $X$, we have $C\cdot C_1=C\cdot C_2=0$ and $C_1$, $C_2$ are 
$(-1)$-curves. Since $\rho(X)=4$, it follows that the minimal model $(C,Y)$ of the elliptic pair $(C,X)$ is obtained by contracting $C_1$ and $C_2$ and $\rho(Y)=2$ and every prime is polyhedral. 
\end{proof} 
 
\begin{remark}
The classes of the two one-parameter subgroups $C_1$, $C_2$ can be found from Lemma \ref{BAFG divisors}. Using the relations between torus invariant divisors, one obtains
$$C_1\sim (m+1)D_1+D_2-2D_4-E,\quad C_2\sim (m-1)D_1+D_2-E.$$
It follows that 
$K+C=(k+1)C_1+(k+2)C_2$. 
For $1\leq k\leq 5$ we checked using Computation~\ref{efvwefvwef} 
that the root lattice is $\bD_6\oplus \bA_1\oplus \bA_1$ and the Mordell-Weil group of $C$ is $\bZ\times\bZ/2\bZ$. 
\end{remark}
 
\begin{remark}\label{strategy}
The reader may wonder how did we divinate the Weierstrass 
equation of $C$ in Notation~\ref{setup}.
We explain how 
to arrive at the equation of $C$ starting from the polygon~$\Delta$,
assuming 
it can be inscribed in a square with sides of length $m$.
In this case, we may add the normal rays of the square
to the rays of the normal fan of $\Delta$ to obtain a toric surface $S$ with maps
$S\ra\bP_{\Delta}$, $S\ra\bP^1\times\bP^1$. If the hypothetical curve $C$ defined by the polygon $\Delta$ is smooth, then the canonical map $\chi: C\rightarrow\Bl_e\bP_\Delta$ lifts to a map 
$C\rightarrow \Bl_e S$. The divisor $E_{|C}$ has degree $m$ and is contracted by the maps $C\rightarrow S$, and hence also by $\phi: C\rightarrow \bP^1\times\bP^1$.   
As the width of $\Delta$ in horizontal and vertical directions
is $m$, the two maps $C\ra\bP^1$ are of degree $m$. As $E_{|C}$ has degree~ $m$ and is contracted by both maps, it follows that $E_{|C}$ is a common fiber of both maps. If $\phi$ is given by $(f,g)$, where $f$ and $g$ are rational functions on $C$, it follows that 
the divisors of zeros and poles of both $f$ and $g$ (that is, the preimages of the torus invariant divisors in $\bP^1\times\bP^1$) are linearly equivalent to $E_{|C}$.  
The preimages of the torus invariant divisors of  $\bP^1\times\bP^1$ in $S$ can be computed directly from the fan of $S$ (as in Lemma \ref{BAFG divisors}).  
Letting $d_i=\chi^{-1}(D_i)$, where $D_i$ are the torus invariant divisors on $\bP_{\Delta}$, we obtain linear relations satisfied by the cycles~$d_i$ (points if the corresponding edge has lattice length $1$) that eventually determine a Weierstrass model of $C$.   For example, for the pentagons in Notation~\ref{maps}, one obtains from Lemma \ref{BAFG divisors}
and the above argument that 
$$E_{|C}\sim (m-1)d_4+d_5\sim (m-1)d_1+d_2\sim 4d_2+\gamma\sim 2d_1+(m-3)d_4+d_5.$$
It follows that $2d_4\sim 2d_1$, $d_4+d_5\sim d_1+d_2$. Choosing $d_1$ as the point at infinity and $d_4=(0,0)$ for an elliptic curve with Weierstrass equation $y^2=x^3+ax^2+bx$, we obtain a formula for the rational functions $f,g$ whose zeros and poles are as in Lemma \ref{basics}. Along the way, one has to impose the condition that in the linear system given by $(m-1)d_4+d_5\sim (m-1)d_1+d_2\sim  2d_1+(m-3)d_4+d_5$ there exists an element vanishing with multiplicity $\geq 4$ at $d_2$. 
\end{remark}

\begin{remark}
Pentagonal curves are fibers $C_k$
of an elliptic fibration $\cC\to \bP^1$ with the Weierstrass normal form
of Notation~\ref{setup} (the field of rational functions on $\bP^1$
is the field of rational functions in variable~$k$).
By~Computation~\ref{MZNdc,manBDC}, $\cC$ is a rational
elliptic fibration of Kodaira type $I_4I_2^{\oplus3}I_1^{\oplus2}$.
One can compute the Neron--Tate height of 
the section of this fibration 
corresponding to  
$d_2$ to conclude that it is not torsion in the Mordell--Weil group
of the  elliptic fibration.
This shows that $d_2$
is not torsion in a fiber $C_k$ for almost all $k$ 
by Silverman's specialization theorem~\cite{SilST}.
Mazur's theorem gives a more precise statement for every $k$ as above. 
\end{remark}

\subsection*{An infinite sequence of heptagons.}\label{hepta}
Let $k\geq 2$ be an integer and $m=2k+4$. Let $\Delta$ be the heptagon with vertices
$$(0,0),\quad (1,0),\quad (m,2),\quad (m,m-4),\quad (m-1,m),\quad (m-2,m),\quad (k,k+1).$$

\begin{figure}
\begin{tikzpicture}[scale=.5]
\tkzInit[xmax=8,ymax=8]\tkzGrid
\tkzDefPoint(0,0){P1}
\tkzDefPoint(1,0){P2}
\tkzDefPoint(8,2){P3}
\tkzDefPoint(8,4){P4}
\tkzDefPoint(7,8){P5}
 \tkzDefPoint(6,8){P6}
\tkzDefPoint(2,3){P7}
\tkzDrawSegments[color=black](P1,P2)
\tkzDrawSegments[color=black](P2,P3)
\tkzDrawSegments[color=black](P3,P4)
\tkzDrawSegments[color=black](P4,P5)
\tkzDrawSegments[color=black](P5,P6)
\tkzDrawSegments[color=black](P6,P7)
\tkzDrawSegments[color=black](P1,P7)
\node[below] at (4,0) {$G$};
\node[right] at (8,4) {$B$};
\node[left] at (0,4) {$F$};
\node[above] at (4,8) {$A$};
\node[below] at (.5,0) {$D_4$};
\node[below] at (4.5,1) {$D_3$};
\node[right] at (8,3) {$D_2$};
\node[left] at (7.7,6) {$D_1$};
\node[above] at (6.5,8) {$D_7$};
\node[left] at (4.2,6) {$D_6$};
\node[left] at (1.2,2) {$D_5$};
\end{tikzpicture}
\caption{Polygon $\Delta$ for $k=2$}\label{lkwjRBGkgr;r}
\end{figure}

\begin{theorem}\label{hepta thm}
The  polygon $\Delta$ is Lang--Trotter for every $k\ge2$.
In particular, $\oEff(\Bl_e\bP_\Delta)$ is not polyhedral in characteristic $0$.
Furthermore, for all but finitely many $k$, the set of non-polyhedral primes of $\Delta$ has positive density.
\end{theorem}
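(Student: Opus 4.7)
The plan is to mirror the proof of Theorem~\ref{asgsgsrSRH} using the strategy outlined in Remark~\ref{strategy}, with seven boundary divisors to track rather than five. The heptagon $\Delta$ is inscribed in the $m\times m$ square, touching the edges $G$, $B$, $A$ along $D_4$, $D_2$, $D_7$ respectively and meeting the left edge $F$ only at the vertex $(0,0)$. Forming the common refinement $S$ of the normal fans of $\Delta$ and the square yields toric morphisms $\rho\colon S\to\bP_\Delta$ (contracting the divisor $F$) and $\pi\colon S\to\bP^1\times\bP^1$ (contracting $D_1, D_3, D_5, D_6$). A toric computation analogous to Lemma~\ref{BAFG divisors} expresses $\pi^{-1}B$, $\pi^{-1}A$, $\pi^{-1}F$, $\pi^{-1}G$ as explicit $\bZ$-linear combinations of $F$ and $D_1,\ldots,D_7$, which, after pulling back to the candidate elliptic curve $C\subset\Bl_q\bP_\Delta$, yield linear relations among the points $d_i=\chi^{-1}(D_i)\in C$ (with $d_4, d_7$ being actual points, since those edges have lattice length~$1$).

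Guided by these relations, I would write down a one-parameter family of Weierstrass equations $C_k$ over $\bQ[k]$ together with rational functions $f,g$ of degree $m$ whose divisors of zeros and poles match the above preimages, so that $\phi=(f,g)\colon C_k\to\bP^1\times\bP^1$ is birational onto an image whose equation in the open torus $U\subset\bP^1\times\bP^1$ has Newton polygon $\Delta$ and multiplicity $m$ at a distinguished point $q$. As in the pentagonal case, Proposition~\ref{prop:gen} (using $\vol(\Delta)=m^2$ and $|\partial\Delta\cap\bZ^2|=m$) identifies $C_k$ with the proper transform in $\Bl_q\bP_\Delta$ of the unique member $\Gamma$ of $\cL_\Delta(m)$. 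Writing the class of $C_k$ in $\Cl(\Bl_q\bP_\Delta)$ in terms of $D_1, D_2, D_4, D_7, E$ using intersection numbers read off from the fan of $\bP_\Delta$, and restricting to $C_k$, I would obtain an explicit formula for $\rd(C_k)\in\Pic^0(C_k)$ as a $\bZ$-combination of the $d_i$, tied directly to rational points on the Weierstrass model.

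To prove non-torsion in characteristic $0$ for every $k\ge 2$ (hence Lang--Trotter by Theorem~\ref{goodpairsthm}, hence non-polyhedrality of $\oEff(\Bl_e\bP_\Delta)$ by Lemma~\ref{adfbafb}), I would view $\cC\to\bA^1$ as a rational elliptic surface with parameter $k$, compute its Kodaira--N\'eron type, and show that the Neron--Tate height of the section giving $\rd(C_k)$ is strictly positive on the generic fiber. Silverman's specialization theorem~\cite{SilST} then gives non-torsion for all but finitely many $k$, and Mazur's theorem disposes of any remaining values by a finite check. For positive density of non-polyhedral primes for almost all $k$, I would verify that the generic fiber of $\cC\to\bA^1$ has non-constant $j$-invariant (so $C_k$ has no complex multiplication for all but finitely many $k$) and then apply Lemma~\ref{qfvqefvefv} with $x_0=\rd(C)$ and with the $x_i$'s being images under $\ored$ of root representatives of $\bE_8\setminus\hat T$ for the root lattice of the minimal model $(C,Y)$.

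The main obstacle is the concrete parametric construction of $C_k$, $f$, and $g$: with seven linear constraints rather than five, producing a Weierstrass model that admits rational functions of the required divisor type is more delicate than in the pentagonal case, and the ensuing height computation on a rational elliptic surface with more singular fibers is correspondingly more involved. Once the existence of the parametric construction is secured, the remainder of the argument proceeds in parallel with Theorem~\ref{asgsgsrSRH} and Example~\ref{asrgwgfeew4g}.
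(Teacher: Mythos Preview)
Your outline follows essentially the same route as the paper's proof, and the overall architecture is sound: parametric Weierstrass model guided by Remark~\ref{strategy}, explicit $f,g$ to realize $C$ inside $\Bl_q\bP_\Delta$, Mazur's theorem for non-torsion, and Lemma~\ref{qfvqefvefv} for positive density. A few points where your plan drifts from what actually happens, and one genuine gap, are worth flagging.

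\textbf{Minor inaccuracies.} Since $\Delta$ has seven edges, $\Cl(\Bl_q\bP_\Delta)$ has rank~$6$, not~$5$; your proposed generating set $D_1,D_2,D_4,D_7,E$ is one short. Also, the elliptic fibration $\cC\to\bP^1$ parametrized by $k$ is \emph{not} rational: by Remark~\ref{hepta fibration} it is a K3 surface of Kodaira type $I_4^{\oplus3}IV^{\oplus3}$. This does not break Silverman's specialization theorem, but the height computation is not the one you anticipate. In practice the paper bypasses heights entirely and uses Mazur's theorem uniformly in $k$ (not just as a finite mop-up), computing $\rd(C)=\cO(2d_3+2d_6-4d_1)$ and checking that $n\cdot(2d_1-d_6-d_3)\neq0$ for $1\le n\le12$ as a polynomial identity in $k$.

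\textbf{The substantive gap.} Your last step, ``apply Lemma~\ref{qfvqefvefv},'' skips the hardest verification. That lemma requires that no multiple of $x_0=\rd(C)$ lie in the subgroup generated by the $x_i$. Concretely, after contracting the three one-parameter subgroups $C_1,C_2,C_3$ (for the width directions $(1,0),(0,1),(1,-1)$) one finds $T=\bA_3\oplus\bA_2\oplus\bA_1^{\oplus2}$ and $\Cl_0(Y)\simeq\bZ\times\bZ/2\bZ$; the non-polyhedrality criterion reduces to showing that $\rd(\alpha)=\cO(d_3-d_1)$ is not in the cyclic group generated by $\rd(\tfrac12 C)=\cO(d_3+d_6-2d_1)$. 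For Lemma~\ref{qfvqefvefv} to apply one must check that $\cO(d_1-d_3)$ and $\cO(d_6-d_3)$ are \emph{linearly independent} in $C_k(\bQ)$. The paper handles this by a second invocation of Silverman's specialization theorem (reducing to a single value of $k$ checked by computer), and this is precisely why the theorem only asserts positive density for all but finitely many $k$. Your plan, as stated, does not isolate or address this independence condition.
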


\begin{proof}
The strategy is the one in Remark \ref{strategy}. 
The corresponding curve $C$ is a smooth elliptic curve defined over $\bQ$ with equation 
$$y^2+exy+by=x^3+ax^2,\quad \text{where}$$
$$e=-(4k+2),\quad a=-\frac{k(2k+1)}{k+2},\quad b=\frac{4k(k+1)^4(2k+1)}{(k+2)^2(k-1)^2}.$$

Labeling the edges and the corresponding torus invariant divisors in $\bP_{\Delta}$ as in Figure~\ref{lkwjRBGkgr;r}, we let $d_i=\chi^{-1}(D_i)$. Then $d_2$ is an effective divisor of degree $m-6$ and
all $d_i$ for $i\neq 2$ are points on $C$.  Let $\tilde{d}_4$ be defined by $d_4+\tilde{d}_4\sim 2d_3$. The points $d_i$ have the following properties:
$$d_1+d_7\sim 2 d_3,\quad d_5+d_6\sim 2d_3,\quad 2d_6+\tilde{d}_4\sim 3 d_3.$$

We choose $d_3$ to be the point at infinity $[0,1,0]$. In our Weierstrass model, the following lines intersect $C$ at the following points with multiplicities:
\begin{align*}
x=0: d_5+d_6+d_3, \quad & y=0: 2d_6+\tilde{d}_4,\\
x=x_0: d_1+d_7+d_3, \quad & x=-a: d_4+\tilde{d}_4.
\end{align*}
The points are 
$d_6=(0,0)$, $d_5=(0,-b)$, $\tilde{d}_4=(-a,0)$, $d_4=(-a,ae-b)$, $d_1=(x_0,y_0)$, $d_7=(x_0,y_1)$, where $y_1=-y_0-ex_0-b$ and 
$$x_0=\frac{2k(k+1)^2}{(k-1)(k+2)},\quad y_0=-\frac{2k(k+1)^2(5k+3)}{(k-1)^2(k+2)}.$$

The torus invariant divisors $D_1,\ldots,D_7$ satisfy 
$$D_1+kD_5+(k+2)D_6+D_7\sim(m-1)D_3+D_4,\quad 4D_1+D_2+2D_3\sim (k+1)D_5+(k+3)D_6.$$
Using that $C\cdot D_i=1$ ($i\neq 2$), $C\cdot E=m$, we obtain
that  in $\Cl(\Bl_e\bP_\Delta)$ we have
$$C\sim -4D_1+(m-2)(m-1)D_3+mD_4+mD_5+(m+2)D_6-mE.$$

There are three one-parameter subgroups $C_1$,$C_2$,$C_3$ corresponding to lattice directions $\lambda_1=(1,0)$, 
$\lambda_2=(0,1)$,  $\lambda_3=(1,-1)$, and with respect to which the width of 
$\Delta$ is $m$ (hence, $C\cdot C_i=0$ for $i=1,2,3$). It follows that the following hold on $C$
$$d_1+d_7\sim d_5+d_6\sim d_4+\tilde{d}_4\sim 2d_3,\quad 2d_6+\tilde{d}_4\sim 3d_3,$$
$$\cO(C)_{|C}=\cO(2d_3+2d_6-4d_1).$$

There is a map $\phi: C\ra\bP^1\times\bP^1$, corresponding to rays $\lambda_1$, $\lambda_2$, and 
given by rational functions
$$f(x,y)=\frac{x^{k+1}y}{\alpha(x-x_0-1)+\beta(x+a)},\quad g(x,y)=\frac{x+a}{x^k(x-x_0)y},$$
$$\text{where}\quad \alpha=x_0+a=\frac{k(5k+3)}{(k-1)(k+2)},\quad \beta=x_0^{k+1}y_0.$$
The pullbacks of the torus invariant divisors of $\bP^1\times\bP^1$ corresponding to the edges $A,B,F,G$ correspond 
to the zeros and poles of $f,g$ by $f=F/B$, $g=G/A$. 
The map $\phi$ is birational onto its image. Letting $u,v$ be coordinates on $\bP^1\times\bP^1$ and solving for $x,y$ in $f(x,y)=u$, $g(x,y)=v$, we obtain that 
$\phi(C)$ has equation
$$\big((\alpha\beta)uv+(\alpha x_0)u-a\big)h_2(u,v)^{m-1}-h_1(u,v)^{m-2}h_3(u,v)^2v^2+$$
$$+\big(\beta(b+ex_0)uv+(ex_0\alpha)u-b\big)h_1(u,v)^kh_2(u,v)^{k+2}h_3(u,v)v=0,\quad \text{where}$$
$$h_1(u,v)=(x_0\beta)uv+(x_0\alpha)u,\quad h_2(u,v)=(\beta)uv-1,\quad h_3(u,v)=(x_0\alpha)u+x_0.$$
It is straightforward to check that this equation has $\Delta$ as its Newton polygon. 

One computes the classes of the one-parameter subgroups $C_1$, $C_2$, $C_3$ as
$$C_1\sim (k+1)D_5+(k+3)D_6-E,\quad C_2\sim (m-1)D_3+D_4-E,$$
$$C_3\sim (m-3)D_3+D_4+D_5+D_6-E.$$
It follows that $K+C\sim C_1+(k+1)C_2+(k+1)C_3$. 

Computation~\ref{MZNdc,manBDC} (based on Mazur's theorem as in \S~\ref{adrhdjd})
shows that $\cO(C)|_C$ is not torsion for $k\ge2$ by showing that $\cO(2d_1-d_6-d_3)$ is not torsion. In particular, 
$\Delta$ is a Lang--Trotter polygon in this range.
The point $p\in C$ such that $\cO(2d_1-d_6-d_3)=\cO(p-d_3)$ is given by 
$$x=\frac{4k(k+1)^2(2k+1)}{(k-1)^2(k+2)},\quad y=\frac{4k^2(k+1)^2(2k+1)(3k+1)}{(k-1)^2(k-2)}.$$

Denote $X=\Bl_e\bP_\Delta$ and let $\pi: X\ra Y$ be the map that contracts the one parameter subgroups $C_1$, $C_2$ and $C_3$. 
We now compute directly generators $\Cl(Y)$ and $\Cl_0(Y)$. The group $\Cl(X)$ is generated over $\bZ$ by $D_1,D_3,D_4,D_5,D_6$ and $E$. 
It follows that $C^\perp$ is generated over $\bZ$ by 
$$D_3-D_1,\quad D_4-D_1,\quad D_5-D_1,\quad D_6-D_1,\quad E-m D_1.$$
Denote by $\overline{D}_i$, $\overline{E}$ the classes of $D_i$, $E$ in $\Cl(Y)$. Setting the classes of $C_1$, $C_2$, $C_3$ to zero, we obtain the following relations in $\Cl(Y)$: 
$$\overline{D}_4=3\overline{D}_3-2\overline{D}_5,\quad 
\overline{D}_6=2\overline{D}_3-\overline{D}_5,\quad \overline{E}=(m+2)\overline{D}_3-2\overline{D}_5.$$
Then $\Cl(Y)$ is generated by $\overline{D}_1$,
$\overline{D}_3$, $\overline{D}_5$ and $C_Y^\perp\subseteq \Cl(Y)$ is generated by 
$$\alpha:=\overline{D}_3-\overline{D}_1,\quad \beta:=\overline{D}_5-\overline{D}_1,$$
with $\overline{D}_4-\overline{D}_1=3\alpha-2\beta$, 
$\overline{D}_6-\overline{D}_1=2\alpha-\beta$, 
$\overline{E}-m\overline{D}_1=(m+2)\alpha-2\beta$. Since the class of $C_Y$ can be expressed as $6\alpha-2\beta$, it follows that
$$\bE_8/T=\Cl_0(Y)=\bZ\{\alpha,\beta\}/\bZ\{6\alpha-2\beta\}\cong \bZ\times\bZ/2\bZ.$$ 
The class of $C$ is divisible by $2$. In $\Cl_0(Y)$ the 
class $\frac{1}{2}C_Y=3\alpha-\beta$ is the unique non-zero torsion element. It follows that there is a commutative diagram
\begin{equation}\label{diagram}
\begin{CD}
\bE_8/T=\Cl_0(Y) @>\ored>> \Pic^0(C)/\langle\rd(C)\rangle  \\
@VVV        @VVV\\
\bE_8/\hat{T}=\Cl_0(Y)/\text{torsion}   @>>> \Pic^0(C)/\langle\rd(\frac{1}{2}C)\rangle
\end{CD}
\end{equation}

One can compute directly using a resolution of $X$ that the root lattice is $$T=\bA_3\oplus \bA_2\oplus\bA_1^{2}.$$ 
There is a unique embedding of $T$ in $\bE_8$ and it follows that the group $\bE_8/T$ is isomorphic to $\bZ\{a,b\}/\bZ\{6a+2b\}$, with the roots in $\bE_8\setminus \hat{T}$ having image in $\bE_8/T$ belonging to the set $\{\pm a,\pm b,\pm (a+b), \pm (2a+b)\}$. It follows that in 
$\bE_8/\hat{T}\cong\bZ$, we have $a=\pm \alpha$ and in $\bE_8/\hat{T}\cong\bZ$ the images of these roots are the classes of 
$\{\pm \alpha, \pm 2\alpha, \pm 3\alpha\}$. In order to prove that $\ored(\gamma)\neq0$ for any root $\gamma\in\bE_8\setminus\hat{T}$ (for some characteristic $p$), by  (\ref{diagram}) it suffices to prove that $\rd(\alpha)$ is not in the subgroup generated by $\rd(\frac{1}{2}C)$. 
To prove that this holds for a set of primes of positive density, 
we apply Lemma \ref{qfvqefvefv} to 
$$x_i=\rd(i\alpha)=\cO_C(id_3-id_1),\quad
x_0=\rd(\frac{1}{2}C)=\cO(d_3+d_6-2d_1)$$
for $i=1,2,3$.
We check that the conditions in the lemma are satisfied.
The curve $C$ does not have complex multiplication because its $j$-invariant
$$\scriptstyle
-4096(k^{12}+1) - 24576(k^{11}+k) - 58368(k^{10}+k^2) - 66560(k^9+k^3) - 9216(k^8+k^4) + 
    92160(k^7+k^5) + 141312k^6   
    \over 
    \scriptstyle
    27(k^8 + 4k^7 + 6k^6 + 4k^5 + k^4)$$
is not an integer (see \cite{Silv_Advanced}*{Thm. II.6.1}). 
We already proved that $x_0$ is not torsion in $\Pic^0(C)$.  To prove that 
$x_1$ also has infinite order, it suffices to prove that $\cO(d_6-d_1)$
is not torsion, which follows again by  Computation~\ref{MZNdc,manBDC} (based on Mazur's theorem as in \S\ref{adrhdjd}). It remains to prove 
that 
$\rd(\alpha)$ and $\rd(\frac{1}{2}C)$ 
(equivalently, $\cO_C(d_1-d_3)$ and $\cO_C(d_6-d_3)$)
are linearly independent for almost all $k$.
Using Silverman's specialization theorem \cite{Silv}*{App. C, Thm. 20.3} for the elliptic fibration defined by all the heptagonal curves $C$ for 
$k\ge 2$ (see Remark \ref{hepta fibration}), it suffices to prove the statement for a specific~$k$, which we do by a computer calculation.
\end{proof}

\begin{remark}
 The Mordell-Weil group of $C$ is $\bZ$ for $k=2$ and $\bZ\times\bZ$ for $3\leq k\leq 6$. 
\end{remark}

\begin{remark}\label{hepta fibration}
``Heptagonal'' 
curves $C$ for $k\ge 2$ can be viewed as fibers $C_k$
of an elliptic fibration $\cC\to \bP^1$ with the Weierstrass normal form
of Notation~\ref{setup} (here the field of rational functions on $\bP^1$
is the field of rational functions in parameter~$k$).
By~Computation~\ref{MZNdc,manBDC}, $\cC$ is a K3
elliptic fibration of Kodaira type $I_4^{\oplus3}IV^{\oplus3}$.
\end{remark}


\section{A smooth Lang--Trotter polygon}\label{sgasrhasrh}

In this section we discuss an example
of a minimal elliptic pair $(C,Y)$
with $Y$ smooth and such that 
$\Pic^0(C)(\mathbb Q)$ has 
rank $9$.
Let $\Delta\subseteq\mathbb Q^2$ be
the lattice polytope whose vertices 
are the $19$ columns of the following matrix:
\[\small
\left[
\begin{matrix}
3&6&8&23&27&30&30&29&21&18&16&13&12&11&9&7&1&0&0\\
0&1&2&12&15&18&19&20&26&28&29&30&30&29&25&20&4&1&0
\end{matrix}.
\right]
\]
The polygon $\Delta$ has width
$m := 30$, which is obtained along
the directions $[1,0]$, $[0,1]$, $[1,-1]$.
Its volume is $m^2$ and it has $m$
boundary points. It follows that 
any curve in the linear system 
$\mathcal{L}_\Delta(m)$ has arithmetic 
genus one.
A computer calculation shows that 
$\mathcal{L}_\Delta(m)$ is zero-dimensional
and that its unique element is an  irreducible curve of geometric genus
one, whose defining polynomial 
has Newton polygon $\Delta$.
Thus we get an elliptic pair $(C,X)$,
where $X$ is the blowing-up of the 
toric surface defined by $\Delta$
and $C$ is the strict transform of
the unique curve linearly equivalent
to the following Weil divisor:
\[\small
\begin{bmatrix}
19&30&12&7&7&1&0&0&1&3&6&16&11&29&48&117&187&72&30&-30
\end{bmatrix}
\]
where the first $19$ entries are the 
coordinates of the pullbacks 
$D_1,\dots D_{19}$ of the prime
invariant divisors of
the toric variety, while the last
coordinate is the coefficient of
the exceptional divisor $E$.
Observe that $X$ is smooth of Picard
rank~$18$.
The linear system $|K_X+C|$ contains
eight disjoint $(-1)$-curves, three
of which come from the one-parameter
subgroups defined by the width directions of $\Delta$,
while the remaining ones come from 
curves of multiplicity $2,3,5,5,11$
at $(1,1)$. A list of eight Weil
divisors, each of which is linearly equivalent to one of the
above eight curves, is given by the
rows of the following matrix, where
we have kept the same notation used
for the curve $C$ above:
\[\small
\begin{bmatrix}
1&3&2&3&4&1&0&0&0&0&0&0&0&0&0&0&0&0&0&-1\\
0&0&0&0&0&0&0&0&0&0&0&0&0&1&2&5&8&3&1&-1\\
1&2&1&1&1&0&0&0&0&0&0&0&0&0&1&3&5&2&1&-1\\
1&1&0&0&0&0&0&0&1&1&1&2&1&2&3&8&13&5&2&-2\\
2&3&1&0&0&0&0&0&1&1&1&2&1&3&5&12&19&7&3&-3\\
3&5&2&1&1&0&0&0&0&0&1&3&2&5&8&20&32&12&5&-5\\
3&5&2&1&1&0&0&0&1&1&1&3&2&5&8&19&31&12&5&-5\\
7&11&4&2&2&0&0&0&0&1&2&6&4&11&18&44&70&27&11&-11
\end{bmatrix}
\]
Each of the divisors $D_2, D_5$ and
$D_{12}$ is a $(-1)$-curve of $X$, 
having intersection number $1$
with $C$, so that it is disjoint 
from the curves in $|K_X+C|$.
This claim can be easily proved
looking at the primitive generators
$\varrho_1,\dots,\varrho_{19}$ of the
normal fan of~$\Delta$. For example
$\varrho_1 = [0,1]$, $\varrho_2 = [-1,3]$, $\varrho_3 = [-1,2]$ show
that $D_2$ is a $(-1)$-curve, so that
the equality $D_2\cdot C = D_2 \cdot
(19D_1+30D_2+12D_3)$ gives 
$D_2\cdot C = 1$. A similar analysis
can be performed for the divisors 
$D_5$ and $D_{12}$.
As a consequence each of the three
divisors remains a $(-1)$-curve in
$Y$, after contracting the curves 
in $|K_X+C|$. In particular the 
linear system $|C+D_2+D_5+D_{12}|$
defines a rational map which
factorizes through $Y$, and there
it is defined by $|-K_Y+D_2+D_5+D_{12}|$.
The image of $Y$ via this linear
system is a smooth cubic surface of 
$\mathbb P^3$ whose equation can be
calculated by determining the
unique cubic relation between the
elements of a basis of 
$H^0(X,C+D_2+D_5+D_{12})$.
A distinguished basis of the latter 
vector space is given by a defining 
polynomial $f_0$ for $C$ together
with three polynomials $f_2,f_5,f_{12}$,
such that $\{f_0,f_i\}$ is a basis of
$H^0(X,C+D_i)$.
If we denote by $\varphi_i\colon
X\to X_i$ the contraction of $D_i$
then $C+D_i$ is the pullback of 
$\varphi_i(C)$ and thus we have
an isomorphism $H^0(X,C+D_i)
\simeq H^0(X_i,\varphi_i(C))$.
The curve $\varphi_i(C)$ is defined
by a modification $\Delta_i$ of the
polygon $\Delta$ obtained in the 
following way: the $(i-1)$-th and $(i+1)$-th 
edges are extended up to their intersection 
point. The latter is an integer point 
if and only if the equation
$\varrho_{i-1}+\varrho_{i+1} = \varrho_i$ holds, equivalently if
$D_i$ is a $(-1)$-curve on $X$. 
We display the construction of the
polygon $\Delta_i$ in the following
picture.

\begin{center}
\begin{tikzpicture}[scale=.5]
 \tkzDefPoint(0,-1){A1}
 \tkzDefPoint(0,0){A2}
 \tkzDefPoint(1,0){A3}
 \tkzDefPoint(2,-1){A4}
 \tkzDefPoint(0,1){p}
 \tkzDrawPoints[fill=black,color=black,size=5](A1,A2,A3,A4,p)
 \tkzDrawSegments(A1,A2 A3,A4 A2,A3)
 \tkzDrawSegments[densely dotted](A2,p A3,p)
\end{tikzpicture}
\end{center}

The normal fan to $\Delta_i$ coincides
with that of $\Delta$ at all rays but
$\varrho_i$. The dimension of the 
linear system increases by one because
a new monomial, corresponding to the
new point, has been added. 
A minimal model for the curve $C$
has equation 
$y^2 = x^3 + x^2 - 7860946299156x + 
8357826814810214400$.
Ordering counterclockwise the facets of $\Delta$, starting from the 
facet $(0,0)$ -- $(3,0)$, the indices
of facets of integer length one are 
$\{2, 3, 5, 7, 8, 10, 11, 12, 13, 14, 16,
18, 19\}$. For each such index one can 
compute the point $d_i\in C(\mathbb Q)$
cut out by the corresponding toric invariant
divisor $D_i$. This information is then 
used to compute the images of the $240$
roots and to determine the non-polyhedral
primes of $X$.
Using Computation~\ref{goodprimes}
we found $85$ non-polyhedral primes in the interval $[1,2000]$, or $28\%$.



\section{Halphen polygons}\label{finite orders}

We consider a variant of the notion of arithmetic elliptic pairs  as follows:

\begin{definition}\label{variant}
Let $(C,X)$ be an elliptic pair with $e:=e(C,X)<\infty$, defined over a finite extension $K$ of $\bQ$.
Let $R\subset K$ be its ring of algebraic integers. There exists a dense open subset $U\subset\Spec R$
and a pair of schemes $(\cC,\cX)$ flat over $U$, which we call an \emph{arithmetic elliptic pair of finite order} $e<\infty$, such that
\begin{itemize}
\item Each geometric fiber $(C,X)$ of $(\cC,\cX)$ is an elliptic pair of order $e$.
\item The contraction morphism $X\to Y$ to the minimal elliptic pair extends to the contraction of schemes $\cX\to\cY$ flat over $U$.
\end{itemize} 
We call $(\cC,\cY)$ the \emph{associated minimal arithmetic elliptic pair}. 
Let $X$, $Y$ be geometric fibers over a place $b\in U$, $b\ne 0$.
As before, we call $b$ a \emph{polyhedral prime} if $\oEff(Y)$ is polyhedral.
If $b$ is not polyhedral, then $\oEff(X)$ is also not polyhedral.
\end{definition}

Since over $\bC$ the subgroup $\langle\rd(C)\rangle\subset\Pic^0(C)$ is finite of order $e<\infty$, the order of the elliptic pair given by each geometric fiber of  $(\cC,\cX)$
stays constant on an open set in $\Spec R$, as it is defined by the condition that $i\rd(C)\neq 0$, for $i=1,\ldots,e-1$.

\begin{proposition}\label{almost too good}
Let $(\cC,\cX)$ be an arithmetic elliptic pair of finite order $e<\infty$ over some open set $U\subset\Spec R$. 
Let $(\cC,\cY)$ be the associated minimal arithmetic elliptic pair. 
Assume that 
\begin{itemize}
\item The geometric fiber $Y_\bC$ of $\cY$ has Du Val singularities, 
\item The cone $\oEff(Y_\bC)$ is not polyhedral. 
\end{itemize}
Then all but finitely many primes $b\in U$ are non-polyhedral. 
\end{proposition}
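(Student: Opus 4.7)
The plan is to combine Corollary~\ref{concrete} with a standard spreading-out argument on the one-dimensional arithmetic base $U$. First, after shrinking $U$ if necessary, I would ensure that every geometric fiber $(C,Y)$ of $(\cC,\cY)$ is a minimal elliptic pair of order $e$ with Du Val singularities and the same root lattice $T\subset\bE_8$. Indeed, Du Val singularities spread over a Zariski neighborhood of the generic point of $\Spec R$, and the root lattice $T$, its saturation $\hat T\subseteq\bE_8$, and the Picard number $\rho(Y_b)$ are all locally constant on the resulting open set. Since $\oEff(Y_\bC)$ is not polyhedral, Lemma~\ref{lkJSBwkjbg} forces $\rho(Y_\bC)\geq 3$, and hence $\rho(Y_b)\geq 3$ for every $b\in U$. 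Let $R$ be the rank of $T$. By Corollary~\ref{concrete}, a prime $b\in U$ is polyhedral if and only if there exist roots $\beta_1,\ldots,\beta_{8-R}\in\bE_8\setminus\hat T$, linearly independent modulo $\hat T$, with $\ored(\beta_i)|_b=0$.

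The key observation is that each root $\beta\in\bE_8\setminus\hat T$ (possibly after a further shrinking of $U$ so that a global lift to $\Cl(\cY)$ becomes available) gives rise to a section $\rd(\beta)$ of the relative Picard scheme $\Pic^0_{\cC/U}$, which is an abelian scheme of relative dimension one over $U$. For each integer $k\in\{0,1,\ldots,e-1\}$, set $s_{\beta,k} := \rd(\beta) - k\cdot \rd(C)$. The condition $\ored(\beta)|_b=0$ translates to $s_{\beta,k}|_b=0$ for some $k$. Since $U$ is a one-dimensional Dedekind scheme, any section of the smooth family $\Pic^0_{\cC/U}\to U$ is either identically zero or meets the zero section in a proper closed, hence finite, subset of $U$. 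Consequently, for each fixed $\beta$, either $\ored(\beta)|_\bC = 0$, in which case $\ored(\beta)|_b=0$ for every $b\in U$; or $\ored(\beta)|_\bC \neq 0$, in which case $\ored(\beta)|_b=0$ for only finitely many $b$.

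To conclude, set $V := \{\beta\in\bE_8\setminus\hat T : \ored(\beta)|_\bC = 0\}$, and let $S\subset U$ be the union, over the finitely many roots $\beta\notin V$, of the finite vanishing loci produced above. Then $S$ is finite. For every $b\in U\setminus S$, the set $\{\beta : \ored(\beta)|_b=0\}$ coincides with $V$, and by the non-polyhedrality of $\oEff(Y_\bC)$ combined with Corollary~\ref{concrete}, $V$ does not contain $8-R$ roots linearly independent modulo $\hat T$. Hence no such $b$ is polyhedral, which is the desired statement. The only real obstacle is the initial spreading-out (preservation of Du Val singularities and constancy of $T$, $\hat T$, together with the existence of global lifts of root classes to $\Cl(\cY)$); these are routine manipulations with flat families over Dedekind schemes, and the heart of the argument is simply the dichotomy for sections of $\Pic^0_{\cC/U}$.
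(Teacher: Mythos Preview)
Your approach is correct and matches the paper's: spread out the Du Val locus and the root lattice, then use that each of the finitely many relevant sections of $\Pic^0_{\cC/U}$ is either identically zero or vanishes at only finitely many closed points. The paper phrases this last step slightly more tersely (``by eventually discarding a finite set of places, the maximum number of linearly independent roots in $\ker(\ored)$ is constant''), but the content is the same.

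One small slip: when you translate $\ored(\beta)|_b=0$ into $s_{\beta,k}|_b=0$, you take $s_{\beta,k}=\rd(\beta)-k\cdot\rd(C)$ with $k\in\{0,\ldots,e-1\}$. The reduced restriction $\ored$ is defined modulo $\langle\rd(K)\rangle$, not $\langle\rd(C)\rangle$, and on the minimal resolution $Z$ one has $C\sim n(-K_Z)$ for some integer $n\geq 1$ (Theorem~\ref{sHAHA}), so $\langle\rd(C)\rangle$ can be a proper subgroup of $\langle\rd(K)\rangle$ when $n>1$. With your formulation, the set $V$ could miss roots $\beta$ with $\rd(\beta)|_\bC\in\langle\rd(K)\rangle\setminus\langle\rd(C)\rangle$, and then the polyhedrality test at a prime $b$ (which uses the full $\langle\rd(K)\rangle$) need not agree with membership in $V$. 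The fix is immediate: set $s_{\beta,k}=\rd(\beta)-k\cdot\rd(K)$ and let $k$ range over $\{0,\ldots,m-1\}$ where $m$ is the (finite, since $n\cdot\rd(K)=-\rd(C)$ has order $e$) order of $\rd(K)$, which is constant on a dense open in $U$. With that adjustment your argument goes through verbatim.
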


\begin{proof}
If the minimal elliptic pair $(C_\bC,Y_\bC)$ has du Val singularities, by replacing $U$ with a smaller open set, we may assume
that all geometric fibers $(C,Y)$ of $(\cC,\cY)$ over $U$ are minimal elliptic pairs of order $e$, with Du Val singularities and the same root lattice $T\subseteq\bE_8$. 
Indeed, there exists a scheme $\cZ$, smooth over (a possibly smaller) $U$, and a morphism $\pi:\cZ\to\cY$, flat over $U$, 
such that on geometric generic fibers $Z$ and $Y$, of $\cZ$ and $\cY$, this gives the minimal resolution $Z\to Y$.  We may assume that the exceptional locus of $\pi$ has geometric irreducible components $\cE_1,\ldots, \cE_r\subset \cZ$, smooth over $U$, such that the geometric generic fibers $E_1,\ldots, E_r$ are the exceptional $(-2)$-curves of the resolution $Z\to Y$. 
As each $\cE_i$ is flat over $U$, intersection numbers $E_i\cdot E_j$ of the geometric generic fibers do not depend on $b\in U$. 
In particular, the root lattice is the same for all $b\in U$, and all geometric fibers of $\cY\to U$ have Du Val singularities.  

Consider now  any geometric fiber $(C,Y)$ of $(\cC,\cY)$ and let $Z\ra Y$ be its minimal resolution. 
Recall that by Lemma \ref{lkJSBwkjbg} and Cor. \ref{concrete}, the cone $\oEff(Y)$ is polyhedral if and only if $\oEff(Z)$ is polyhedral, or equivalently, 
the kernel of the map 
$$
\ored:\,\Cl_0(X):=C^\perp/\langle K\rangle\to\Pic^0(C)/\langle\rd(K)\rangle
$$
contains $8$ linearly independent roots of $\bE_8=\Cl_0(Z)$. By assumption, 
the subgroup $\langle\rd(C)\rangle$ of $\Pic^0(C)$ is finite of fixed order $e<\infty$, for all geometric fibers. By 
Theorem \ref{sHAHA}, $C\sim n(-K)$ for some integer $n$. 
It follows that the subgroup $\langle\rd(K)\rangle$ of $\Pic^0(C)$ is finite of order $\leq e$ for every geometric fiber. 
Since there are finitely many roots in $\bE_8$, it follows that by eventually discarding a finite set of places $b\in U$, $b\neq0$, 
the maximum number of linearly independent roots of $\bE_8=\Cl_0(Z)$ contained in $\ker(\ored)$ is constant. This finishes the proof. 
\end{proof}

As in Notation \ref{arithmetic set-up}, we may consider arithmetic \emph{toric} elliptic pairs of finite order. 
Consider a lattice polygon $\Delta\subseteq\bZ^2$ and 
let $\cP$ be the projective  toric scheme over $\Spec\bZ$ given by the normal fan of $\Delta$. 
Let $\cX$ be the blow-up of $\cP$ along the identity section of the torus group scheme.
We will assume that $\Delta$ is a good, but not Lang--Trotter polygon, a so-called Halphen polygon (Def. \ref{def:good}). 
Then $(C_\bC,X_\bC)$ is an elliptic pair of finite order $e:=e(C_\bC,X_\bC)<\infty$ and 
$(\cC,\cX)$ an arithmetic elliptic pair  of finite order, flat over an open subset $U\subset\Spec \bZ$ 
(Def.~\ref{variant}). Let $\cX\to\cY$ be the morphism inducing 
the map to the minimal model on each geometric fiber.
\begin{definition}
A polygon $\Delta\subseteq\bZ^2$ such that the associated toric arithmetic elliptic pair  $(\cC,\cX)$
satisfies the conditions in Proposition \ref{almost too good} will be called 
a \emph{Halphen}$^+$ polygon. 
\end{definition}

\begin{theorem}\label{parabolic}
Let $\Delta$ be a Halphen$^+$ polygon. Then  $\oEff(X_\Delta)$ is not polyhedral in characteristic $0$ and characteristic $p$, for all 
but finitely many primes $p$. 
\end{theorem}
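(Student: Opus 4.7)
The plan is to observe that Theorem \ref{parabolic} follows almost immediately from the definitions and results that have already been set up. A Halphen$^+$ polygon is defined so that its associated arithmetic elliptic pair $(\cC,\cX)$ satisfies both hypotheses of Proposition \ref{almost too good}: the geometric fiber $Y_\bC$ of the minimal model $\cY$ has Du Val singularities, and $\oEff(Y_\bC)$ is not polyhedral. So the task is simply to propagate non-polyhedrality from $Y$ up to $X$ in each characteristic.

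First I would handle characteristic $0$ directly. Since $\Delta$ is Halphen$^+$, $\oEff(Y_\bC)$ is not polyhedral by definition. The contraction $X_\bC \to Y_\bC$ to the minimal model is a surjective morphism of normal projective irreducible varieties, so Lemma \ref{fvqevf} applies: if $\oEff(X_\bC)$ were polyhedral, then $\oEff(Y_\bC)$ would also be polyhedral. Contradiction, so $\oEff(X_{\Delta,\bC})$ is not polyhedral.

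Next I would apply Proposition \ref{almost too good} to the arithmetic elliptic pair $(\cC,\cX)$ of finite order $e$ over an open set $U \subset \Spec \bZ$. The proposition's hypotheses are built into the Halphen$^+$ condition, so its conclusion yields: all but finitely many primes $b \in U$ are non-polyhedral, meaning $\oEff(Y)$ is not polyhedral in characteristic~$p$, for the geometric fiber $(C,Y)$ over $p$. The last sentence of Definition \ref{variant} records that non-polyhedrality of $\oEff(Y)$ forces non-polyhedrality of $\oEff(X)$ (equivalently, applying Lemma \ref{fvqevf} fiberwise to $X_p \to Y_p$). The finitely many primes excluded are those outside $U$ together with the finite exceptional set coming from Proposition \ref{almost too good}, which is still a finite set in total.

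There is no genuine obstacle here; the entire theorem is a packaging result, and the real work lies in verifying the hypotheses of the definition for a specific example (such as Theorem \ref{ex3}), where one must check by hand that the Du Val condition on $Y_\bC$ holds and exhibit enough structure to conclude non-polyhedrality of $\oEff(Y_\bC)$. The content of Theorem \ref{parabolic} itself is thus best viewed as a clean restatement of Proposition \ref{almost too good} pulled back under the contraction morphism $\cX \to \cY$.
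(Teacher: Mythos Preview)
Your proposal is correct and follows essentially the same approach as the paper, which simply states that the theorem is an immediate consequence of Proposition~\ref{almost too good}. You have unpacked that one-line argument in detail: the Halphen$^+$ condition places you squarely in the hypotheses of Proposition~\ref{almost too good}, and the passage from $Y$ to $X$ is handled by the contraction morphism via Lemma~\ref{fvqevf} (equivalently, the last sentence of Definition~\ref{variant}).
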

\begin{proof}
This is an immediate consequence of Proposition \ref{almost too good}.
\end{proof}

\begin{theorem}\label{ex3} 
Consider the polygon $\Delta$ with vertices:
\[
\left[
\begin{matrix}
0 & 1 & 6 & 8 & 7 & 5 & 1\\
0 & 0 & 1 & 2 & 5 & 8 & 2
\end{matrix}
\right]
\]
Then $\Delta$ is a Halphen$^+$ polygon and $\oEff(X_\Delta)$ is not polyhedral 
in characteristic $0$, and in characteristic $p$ for all primes $p\neq 2, 3, 5, 7, 11, 19, 71$. 
\end{theorem}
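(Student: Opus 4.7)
\textbf{Proof plan for Theorem \ref{ex3}.} The strategy is to verify directly that $\Delta$ is a Halphen$^+$ polygon and then invoke Theorem \ref{parabolic}; the task of pinning down the exceptional set of primes $\{2,3,5,7,11,19,71\}$ is separated from the non-polyhedrality assertion itself.

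First, I would verify that $\Delta$ is a good polygon in the sense of Definition \ref{def:good}. Using Computation \ref{sdfvwefvwefv} I would compute $\vol(\Delta)$ and $|\partial\Delta\cap\bZ^2|$ and read off the multiplicity $m$ for which $\vol(\Delta)=m^2$ and $|\partial\Delta\cap\bZ^2|=m$. Then via Computation \ref{asarsgwRG} I would compute $\cL_\Delta(m)$, verify $\dim\cL_\Delta(m)=0$, extract the unique member $\Gamma$, and check (Computation \ref{adfafgarg}) that $\Gamma$ is irreducible with Newton polygon equal to $\Delta$. By Theorem \ref{goodpairsthm} this already gives an elliptic pair $(C,X_\Delta)$. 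To show $\Delta$ is Halphen (not Lang--Trotter), compute the order $e=e(C,X_\Delta)$ via Lemma--Definition \ref{asfar}(3) by locating the smallest $e$ with $h^0(X_\Delta,eC)=2$, exhibiting an effective member of $|eC|$ disjoint from $C$; this forces $\rd(C)$ torsion in $\Pic^0(C_\bC)$.

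Next I would construct the minimal model $(C,Y_\bC)$ by running the Zariski decomposition of $K+C$ on $X_\Delta$ (Computation \ref{sfsvwefv}), obtaining curves $C_1,\ldots,C_s$ to contract via Theorem \ref{asfhadrhad}. From the fan and the lengths of the edges I would read off their intersection matrix; the key step \emph{that needs to be checked by hand} is that every contraction produces only ADE configurations, so that $Y_\bC$ has Du Val singularities. Let $Z\to Y_\bC$ be the minimal resolution; using Computation \ref{efvwefvwef} I would identify the root lattice $T\subset\bE_8=\Cl_0(Z)$ and its saturation $\hat T$, and describe the induced reduced restriction map $\ored\colon\bE_8/\hat T\to\Pic^0(C)/\langle\rd(K)\rangle$. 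I would then apply Corollary \ref{concrete}: computing the images under $\ored$ of all $240$ roots of $\bE_8$ lying outside $\hat T$, and verifying that strictly fewer than $8-R$ of them (with $R=\rk T$) lie in $\ker(\ored)$ and are linearly independent mod $\hat T$. This establishes non-polyhedrality of $\oEff(Y_\bC)$ in characteristic $0$, so $\Delta$ is Halphen$^+$.

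By Theorem \ref{parabolic}, $\oEff(X_\Delta)$ is non-polyhedral in characteristic~$0$ and in characteristic $p$ for all but finitely many primes $p$. To identify the exceptional set precisely as $\{2,3,5,7,11,19,71\}$, I would run characteristic-$p$ versions of the above computations at each small candidate prime. The characteristics $2,3,5,7,11$ are the primes where the reduction of the defining polynomial of $\Gamma$ either develops extra components, loses the Newton polygon $\Delta$, or fails to yield an arithmetic-genus-one curve; the remaining primes $19$ and $71$ are expected to be those at which new torsion relations appear in $\ored$, enlarging $\ker(\ored)$ enough to supply $8-R$ linearly independent kernel roots and render $\oEff(Y)$ polyhedral. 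Each such prime is verified one at a time by direct computation analogous to that used in Theorem \ref{sgasrg}.

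The main obstacle is the verification of the Du Val condition on $Y_\bC$: in the finite-order setting, Corollary \ref{kjsHFkjshf} does not apply, and one has no \emph{a priori} guarantee that the Zariski-decomposition components $C_1,\ldots,C_s$ contract to a union of ADE singularities. One must verify this explicitly from the intersection matrix computed on the minimal resolution of $X_\Delta$. A secondary subtlety is that, unlike in the Lang--Trotter case where non-polyhedrality in every geometric fiber follows from infinite order of $\rd(C)$, here one must separately certify for each small prime $p\in\{2,3,5,7,11,19,71\}$ that the failure of the general argument is due to a genuine polyhedral or degenerate phenomenon rather than to a computational artifact.
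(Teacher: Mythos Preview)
Your overall strategy is the paper's: verify $\Delta$ is good with $m=8$, compute $e(C,X_\Delta)=4$ (Halphen), find $K_X+C=2C_1+2C_2+C_3$, check that $Y$ is Du Val, identify $T=A_2^{\oplus3}\oplus A_1$ and $\Cl_0(Y)\simeq\bZ\times\bZ/3\bZ$, then apply Corollary~\ref{concrete}. One efficiency note: the paper does not trace ADE configurations on a resolution. It computes $C_i\cdot C_j$ directly (the three curves are pairwise disjoint with $C_1^2=-\tfrac14$, $C_2^2=-\tfrac{3}{14}$, $C_3^2=-\tfrac{8}{3}$), observes that $K_X+C$ is therefore an integral effective combination of curves with negative definite intersection matrix, concludes $P\equiv0$ in the Zariski decomposition, and invokes Theorem~\ref{asfhadrhad}(2). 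This is cleaner than your proposed check.

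Your handling of the exceptional primes is where the proposal goes astray. First, the theorem asserts nothing about $p\in\{2,3,5,7,11,19,71\}$; these are merely the primes where the uniform argument breaks down, so there is no ``verification one at a time'' that $\oEff(Y)$ becomes polyhedral there. Second, and more seriously, you give no mechanism for \emph{bounding} the set of candidate bad primes---checking ``each small candidate prime'' is not a proof unless you know when to stop. The paper handles this via Computation~\ref{badprimes}, which returns exactly the finite set of primes at which one of the following differs from characteristic~$0$: the Newton polygon of $\Gamma$ or of the $C_i$, the smoothness/Weierstrass model of $C$, the irreducibility of the $C_i$, or the set of roots of $\bE_8$ lying in $\ker(\ored)$. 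Outside that set the characteristic-$0$ analysis carries over verbatim. Your specific attributions are also incorrect: $p=19$ is a Newton-polygon failure (the coefficient of $uv^2$ in $\Gamma$ is $19$), and $p=71$ is a prime of bad reduction for $C$ (conductor $2130=2\cdot3\cdot5\cdot71$), not a torsion coincidence in $\ored$.
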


We will use this polygon later in the proof of Theorem~\ref{asgarh}. 

\begin{proof}
We have $\vol(\Delta) = 64$ and $|\partial \Delta\cap\bZ| = 8$
(see Computation~\ref{sdfvwefvwefv}). By Computation~\ref{asarsgwRG}, in characteristic $0$ the linear system $\cL_\Delta(8)$ 
has dimension $0$ and the unique curve 
$\Gamma\in\cL_\Delta(8)$  has equation
\begin{gather*}
   4\textcolor{red}{u^8v^2} + 24\textcolor{red}{u^7v^5} -61 u^7v^4 + 58u^7v^3 - 53u^7v^2 + 10u^6v^6 - 126u^6v^5 + \\
   + 244u^6v^4 - 186u^6v^3 + 150u^6v^2 + 20\textcolor{red}{u^6v} - \textcolor{red}{u^5v^8} + 8u^5v^7 - 48u^5v^6 + \\
   + 230u^5v^5 - 286u^5v^4 + 120u^5v^3 -159u^5v^2 - 88u^5v + 10u^4v^6 - 66u^4v^5 - \\
   - 56u^4v^4 + 144u^4v^3 +94u^4v^2 + 154u^4v - 6u^3v^5 + 89u^3v^4 -  26u^3v^3 - \\
   - 135u^3v^2 - 146u^3v -   54u^2v^3 + 52u^2v^2 + 114u^2v + 19\textcolor{red}{uv^2} - 46uv - 5\textcolor{red}u + \textcolor{red}4= 0.
\end{gather*}
The exponents of the red monomials
are the vertices of $\Delta$, so that 
the Newton polygon of $\Gamma$ is
$\Delta$ in characteristic $0$ and characteristic $p\neq 2,3,5,19$. By Computations ~\ref{adfafgarg} and \ref{asdcvq}, 
the curve $\Gamma$ is irreducible and
its strict transform $C\subseteq X_\Delta$ 
is a smooth elliptic curve in characteristic $0$, with 
minimal equation
\begin{equation}\label{minimalW parabolic}
y^2 + xy +y = x^3 + x^2 - 520x + 4745. 
\end{equation}
This is the curve labelled 
\href{https://www.lmfdb.org/EllipticCurve/Q/2130/j/4}{2130.j4}
in the LMFDB database~\cite{lmfdb}.
The Mordell--Weil group is $\bZ\times\bZ/4\bZ$. 
By Computation~\ref{asarsgwRG}, in characteristic $0$ the linear system $\cL_{k\Delta}(8k)$ 
has dimension $0$ if $k=2,3$ and dimension $1$ if $k=4$.  
It follows that  $\rd(C)\in \Pic^0(C)(\bQ)$ is torsion, of order $e=4$. Hence, 
$\Delta$ is a Halphen polygon. 

The theorem now follows from Computation \ref{badprimes}. We give the details. 
By  Computation \ref{badprimes}, the curve $C$ is irreducible and smooth in 
characteristic $0$ or characteristic $p\neq 2, 3, 5, 7, 11, 19, 71$. Unless otherwise specified, we will from now on assume we are in one of these situations. 

The normal fan of $\Delta$ has rays $v_1=(0,1)$, $v_2=(-1,5)$, $v_3=(-1,2)$, $v_4=(-3,-1)$, 
$v_5=(-3,-2)$, $v_6=(3,-2)$, $v_7=(2,-1)$. We denote $D_1,\ldots, D_7$ the corresponding torus invariant divisors in
$\bP_\Delta$, and abusing notations, also their pull-backs to $X_\Delta$. The divisors $D_1,\ldots,D_5,E$ form a basis for $\Cl(X)$ and we have:
$$D_6\sim 2D_1+9D_2+3D_3-5D_4-7D_5,\quad D_7\sim -3D_1-13D_2-4D_3+9D_4+12D_5,$$
$$K_X\sim 3D_2-5D_4-6D_5+E,\quad C\sim 2D_1+10D_2+7D_3+21D_4+24D_5-8E.$$
Note, the class of $C$ is independent of the characteristic if the Newton polygon stays the same. 
Since $\Delta$ has lattice width $8$ in the horizontal and vertical direction, 
the proper transforms $C_1$ and $C_2$ on $X_\Delta$ of the $1$-parameter subgroups  $(u=1)$ and $(v=1)$, 
are among the curves that must be contracted by the morphism $X\ra Y$ to the associated minimal elliptic pair.
Using Computation \ref{sfsvwefv} we find that $K_X+C=2C_1+2C_2+C_3$, with curves $C_i$ with classes
$$C_1\sim D_2+D_3+3D_4+3D_5-E\quad C_2\sim D_1+5D_2+2D_3-E,$$
$$C_3\sim D_2+D_3+10D_4+12D_5-3E.$$
Computation \ref{sfsvwefv} gives that the curve $C_3$ has equation 
$$u^3v-u^2v^3+3u^2v^2-5u^2v+uv+2u-1=0,$$
and so its Newton polygon has vertices $(0,0)$, $(1,0)$, $(3,1)$, $(2,3)$ in all characteristics other than $2$. 
This polygon has no non-trivial Minkowski decompositions, so the curve $C_3$ is irreducible in the situations we consider.  
The curves $C_1$, $C_2$ are irreducible in all characteristics, as they are proper transforms of $1$-parameter subgroups. 

From the intersection numbers $D_i\cdot D_j$ on $\bP_\Delta$ (or using Computation \ref{efvwefvwef}) we find that 
$C_1^2=-\frac{1}{4}$, $C_2^2=-\frac{3}{14}$, $C_3^2=-\frac{8}{3}$ and $C_i\cdot C_j=0$ for all $i\neq j$. Since the intersection matrix $(C_i\cdot C_j)_{i,j}$ is negative definite,
it follows that the Zariski decomposition of $K_X+C=N+P$ has the positive part $P\sim0$. By Theorem \ref{asfhadrhad}, the minimal model $Y$  has 
Du Val singularities. Denote $\overline{D}$ the class of a divisor $D$ in $\Cl(Y)$. Setting the classes of $C_1$, $C_2$, $C_3$ to zero, we obtain that
$\Cl(Y)$ is freely generated by $\overline{D}_2$, $\overline{D}_3$ and $\overline{D}_5$ and 
$$\overline{D}_1\sim 2\overline{D}_2+5\overline{D}_3-6\overline{D}_5,\quad \overline{D}_4\sim 2\overline{D}_2+2\overline{D}_3-3\overline{D}_5,$$
$$\overline{E}\sim 7\overline{D}_2+7\overline{D}_3-6\overline{D}_5,\quad C_Y=\overline{C}\sim 3\overline{D}_3-3\overline{D}_5.$$

We consider $\al:=\overline{D}_2-\overline{D}_5$, $\be:=\overline{D}_3-\overline{D}_5$ in $\Cl(Y)$.
Then $C_Y^\perp=\bZ\{\al,\be\}$ and 
$$\Cl_0(Y)=\bZ\{\al,\be\}/\bZ\{3\be\}=\bZ\times\bZ/3\bZ.$$
By Computation \ref{efvwefvwef}, or using a minimal resolution of $\bP_\Delta$, the root lattice is $T=\bA_2^3\oplus\bA_1$
and $\rho(Y)=3$. 
By Cor. \ref{concrete}, the cone $\oEff(Y)$ is non-polyhedral if and only if $\ored(\gamma)\neq0$, for all roots $\gamma\in\bE_8\setminus\hat{T}$. 
There is a unique way to embed $\bA_2^3\oplus\bA_1$ in $\bE_8$ (\cite{OS}[p.86]). There are 
generators $a, b$ of $\bE_8/T$ with $\ord(a)=\infty$, $\ord(b)=3$ such that the images of the roots of $\bE_8$  in $\Cl_0(Y)=\bE_8/T$ are
$$\pm ka \ (k=0,1,2,3, 12),\ \ \pm (ka-b) \ (k=2,3,4,5,6),\ \  \pm (ka-2b) \ (k=6,7,8,9,10).$$
The sets of generators $\{a,b\}$, $\{\al,\be\}$ of $\Cl_0(Y)$ are related by $b\in\{\pm \be\}$, $a\in \{\pm \al,\pm \al\pm \be\}$. 
The images of the roots of $\bE_8$ in $\Cl_0(Y)$, in terms of $\al, \be$, are
$$\pm k\al \ (k=0,1,2,3,4,5,7,8,10,12),\quad \pm k\al\pm \be\ (k=1,\ldots, 10).$$

We denote $d_i$ the effective divisor on $C$ such that $\cO(d_i)=\cO(D_i)_{\vert C}$. For every $i\neq6$, we have that $d_i\in C(\bQ)$.
It follows that in $\Pic^0(C)$ we have 
$$\rd(\al)=\cO_C(d_2-d_5),\quad \rd(\be)=\cO_C(d_3-d_5).$$
Using Computation \ref{asdcvq}, the points $d_2,d_3,d_5\in\Pic^0(C)(\bQ)$, using (\ref{minimalW parabolic}), are  
$d_2=(9,25)$, $d_3=(23,63)$, $d_5=(53,-387)$. 
Using Magma, we compute 
$$\rd(\al)=(-7,93),\quad \rd(\be)=(13,13),\quad \rd(2\be)=(-27,13),\quad \rd(3\be)=(13,-27),$$
and the order of $\rd(\be)$ in $\Pic^0(C)(\bQ)$ is $4$. 
As $C$ has class $3\be$ in $\Cl_0(Y)$, it follows that 
$\oEff(Y)$ is non-polyhedral (in some characteristic) if and only if none of 
$$\rd(k\al) \ (k=1,2,3,4,5,7,8,10,12), \quad \rd(k\al\pm \be) \ (k=1,\ldots, 10)$$
belong to $\{0,\rd(\be), \rd(2\be), \rd(3\be)\}$ of $\Pic^0(C)$, which is the subgroup 
generated by $\rd(\be)$ (from the above formulas, one can see that the order of $\rd(\be)$ is $4$ in characteristic $0$ or $p\neq 2,5$). 
Clearly, this is equivalent to
$\rd(k\be)$, for all $k=7,8,9,10,12$, not belonging to this subgroup. This is done within Computation \ref{badprimes}, which 
gives that this is the case for all primes $p\neq 2, 3, 5, 7, 11, 19, 71$. 
\end{proof}


\section{On the effective cone of $\oM_{0,n}$}\label{adsfhasrdharh}

For any toric variety $X$, we denote by $\Bl_eX$ the blow-up of $X$ at the identity element of the torus. 
Let $\oLM_n$ be the Losev--Manin moduli space \cite{LM}, which is also a toric variety.
Its curious feature,
noticed in \cite{CT_Duke},
is that $\oLM_n$ is ``universal'' among all projective toric varieties.
Moreover, $\Bl_e\oLM_n$ is universal among $\Bl_eX$.
Here we make this philosophical statement very precise:

\begin{theorem}\label{srgwrG}
Let $X$ be a projective toric variety. For any $n$ large enough (see the proof for an effective estimate),
there exists a sequence of projective toric varieties $\oLM_n=X_1$, $\ldots$, $X_s=X$ and rational maps induced by toric rational maps
$$\Bl_e\oLM_n=\Bl_eX_1\dra\Bl_eX_2\dra\ldots\dra\Bl_eX_s=\Bl_eX.$$
Every map $\Bl_eX_k\dra\Bl_eX_{k+1}$ decomposes as a small $\bQ$-factorial modification (SQM) $\Bl_eX_k\dra Z_k$ and a surjective morphism $Z_k\to \Bl_eX_{k+1}$.
If the cone $\barEff(\Bl_e\oLM_n)$ is (rational) polyhedral then $\barEff(\Bl_eX)$ is also (rational) polyhedral.
\end{theorem}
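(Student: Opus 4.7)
I will build the chain at the level of fans and then lift to blow-ups at the identity, reducing the polyhedrality statement to Lemma~\ref{fvqevf} combined with the SQM-invariance of the pseudo-effective cone. Let $X = X_\Sigma$ with fan $\Sigma$ in a lattice $N$. Recall that $\oLM_n$ is the toric variety on the lattice $L_n := \bZ^n/\bZ(1,\ldots,1)$ whose fan is the braid arrangement $\cB_n$, with rays $\bar{e}_I := \overline{\sum_{i\in I}e_i}$ indexed by nonempty proper subsets $I \subsetneq [n]$.

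First I would carry out a combinatorial construction: for $n$ sufficiently large (bounded in terms of the entries of the primitive ray generators $v_1,\ldots,v_r$ of $\Sigma$ in some $\bZ$-basis of $N$), produce a surjective lattice homomorphism $\pi\colon L_n \twoheadrightarrow N$ together with subsets $I_1,\ldots,I_r\subset[n]$ such that $\pi(\bar{e}_{I_j}) = v_j$ for every $j$. This is straightforward: spread the nonzero coordinates of each $v_j$ across disjoint blocks of the $n$ coordinates of $L_n$, and choose $\pi$ to be the corresponding summation map (with spare coordinates used to ensure surjectivity).

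The key step is then to produce, via a sequence of toric SQMs starting at $\oLM_n$, a projective $\bQ$-factorial toric variety $Z$ whose fan $\cB'_n$ has the same rays as $\cB_n$ but such that $\pi$ defines a morphism of fans $\cB'_n \to \Sigma$, i.e., every maximal cone of $\cB'_n$ has $\pi$-image contained in some cone of $\Sigma$. Once this is arranged, $\pi$ induces a surjective toric morphism $Z \to X$, and the composition $\oLM_n \dashrightarrow Z \to X$ is the required rational contraction. If arranging this single SQM directly violates projectivity or $\bQ$-factoriality, one inserts intermediate projective simplicial toric varieties $X_2,\ldots,X_{s-1}$ (corresponding to intermediate fans in $L_n$ or in quotient lattices) and proceeds stepwise; this is exactly what the statement of the theorem accommodates by allowing a chain of length $s \geq 2$ rather than a single step.

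Finally, to pass to blow-ups at $e$, observe that every toric morphism restricts to a group homomorphism of tori and hence sends identity to identity; when the underlying lattice map is surjective (as for $\pi$), the morphism is smooth at $e$, so base change yields a surjective morphism $\Bl_e Z \to \Bl_e X$. SQMs are isomorphisms in a Zariski neighborhood of $e$, which lies in the dense torus and hence away from any flipping locus, so they lift canonically to SQMs of the blow-ups. Polyhedrality of $\barEff(\Bl_e \oLM_n)$ propagates across SQMs (which identify N\'eron--Severi spaces and preserve effective cones) and across each surjective morphism by Lemma~\ref{fvqevf}. The main obstacle will be the combinatorial step of arranging the SQMs on $\oLM_n$ to make $\pi$ fan-compatible while keeping the intermediate fans projective and simplicial; controlling this is precisely what forces the theorem to permit a sequence of intermediate varieties rather than a single rational contraction.
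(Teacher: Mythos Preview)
Your argument has a genuine gap at the step where you pass from toric morphisms to morphisms of blow-ups. You assert that if $\pi\colon Z\to X$ is a toric morphism coming from a surjective lattice map, then ``base change yields a surjective morphism $\Bl_e Z\to\Bl_e X$.'' This is false whenever $\dim Z>\dim X$. The fiber $\pi^{-1}(e_X)$ is the kernel subtorus $K\subset Z$, a positive-dimensional subvariety through $e_Z$. For the rational map $\Bl_{e_Z}Z\dashrightarrow\Bl_{e_X}X$ to be a morphism, the pullback of the ideal of $e_X$ must become invertible on $\Bl_{e_Z}Z$; but blowing up the single point $e_Z$ only turns that point into a divisor, while the rest of $K$ remains a higher-codimension locus. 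Equivalently, the fiber product $Z\times_X\Bl_{e_X}X$ is $\Bl_K Z$, not $\Bl_{e_Z}Z$. So no morphism exists, and the toric SQMs you perform on $\oLM_n$ itself (which are isomorphisms near $e$, as you note) cannot repair this.

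This is precisely the subtlety that the paper isolates in Lemma~\ref{toric} (quoted from \cite{CT_Duke}): for a rank-one projection of lattices one constructs an SQM of the \emph{blow-up} $\Bl_e X$ --- not of $X$ --- after which there is an honest surjective morphism to $\Bl_e X'$. These SQMs are not toric SQMs of the underlying toric varieties; they genuinely modify the geometry near the exceptional divisor. The paper then iterates this rank-one step (Corollary~\ref{sasrgasr}) and combines it with an explicit projection from the lattice of $\oLM_n$ onto the lattice of $\bP^r$ whose image of the Losev--Manin rays covers all lattice points in a large region (Figure~\ref{hgfhg}); the rays of any given $X$ can then be reached via a further sequence of toric birational contractions and SQMs, to which Lemma~\ref{toric} again applies. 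Your high-level plan of projecting lattices and tracking rays is in the right spirit, but the essential content --- the SQM on the blow-up needed to pass between dimensions --- is missing. (Minor point: the lattice of $\oLM_n$ has rank $n-3$, generated by $e_1,\dots,e_{n-2}$ with $\sum e_i=0$, not $\bZ^n/\bZ(1,\dots,1)$.)
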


\begin{remark}
In \cite{CT_Duke} we used an analogous implication that if 
$\barEff(\Bl_e\oLM_n)$ is a Mori Dream Space then $\barEff(\Bl_eX)$ is a Mori Dream Space.
\end{remark}

The second statement in Thm. \ref{srgwrG}  follows from the first, using Lemma \ref{fvqevf} and the fact that if $Z\dra Z'$ is an SQM, then we can identify 
$\Num^1(Z)_{\bR}=\Num^1(Z')_{\bR}$ and $\barEff(Z)=\barEff(Z')$. 
The proof of the first statement in Thm. \ref{srgwrG} is based on the main technical result of \cite{CT_Duke}, which we give here in a slightly reformulated form:

\begin{lemma}[{\cite{CT_Duke}*{Prop. 3.1}}]\label{toric}
Let $\pi:\,N\to N'$ be a surjective map of lattices 
with kernel of rank $1$ spanned by a vector $v_0\in N$. 
Let $\Gamma$ be a finite set of rays in $N_{\bR}$ spanned by elements of $N$, 
which includes both rays $\pm{R_0}$ spanned by $\pm{v_0}$.
Let $\cF'\subset N'_\bR$ be a complete simplicial fan with rays given by $\pi(\Gamma)$
(ignore two zero vectors in the image). 
Suppose that the corresponding toric variety $X'$ is projective
(notice that it is also $\bQ$-factorial because $\cF'$ is simplicial). Then
there exists a complete simplicial fan $\cF\subset N_\bR$
with rays given by $\Gamma$ and such that
the corresponding toric variety $X$ is projective. Moreover, there exists a rational map
$\Bl_eX\dra\Bl_eX'$ which decomposes into an SQM $\Bl_eX\dra Z$ and a surjective morphism $Z\to \Bl_eX'$ (of relative dimension $1$).
\end{lemma}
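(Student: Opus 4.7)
The argument splits into two stages: constructing the fan $\cF$ (and therefore a projective toric variety $X$) and producing the claimed SQM factorization of the induced rational map of blow-ups.

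\emph{Construction of $\cF$ and its projectivity.} For each ray $\rho'\in\cF'(1)$, fix once and for all a preferred preimage $\hat\rho\in\Gamma$. For each maximal cone $\sigma'=\Cone(\rho'_1,\ldots,\rho'_k)$ of $\cF'$, form the two simplicial cones $\Cone(\hat\rho_1,\ldots,\hat\rho_k,\pm R_0)$; because the choice of preimage depends only on $\rho'$, these glue along faces into a complete simplicial fan $\cF_0$ of $N_\bR$ with ray set $\{\pm R_0\}\cup\{\hat\rho:\rho'\in\cF'(1)\}$. Any remaining ray $\rho\in\Gamma$ projects to some $\rho'\in\cF'(1)$ and so lies in the two-plane $P_{\rho'}:=\pi^{-1}(\text{line through }\rho')$. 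Processing the $\rho'$'s one at a time, I would incorporate the rays of $\Gamma\cap P_{\rho'}$ into $\cF_0$ via the induced star subdivisions inside the slab $\pi^{-1}(\rho')$ (and its opposite if $-\rho'\in\cF'$), ending with a simplicial complete fan $\cF$ whose ray set is exactly $\Gamma$ and every cone of which sits inside $\pi^{-1}(\sigma')$ for some $\sigma'\in\cF'$. For projectivity, pull back a strictly convex support function $\psi'$ for $\cF'$ to $\psi:=\psi'\circ\pi$; this is piecewise linear for $\cF$ but only weakly convex (constant along fibers of $\pi$). Adding a small perturbation $\epsilon\varphi$, where $\varphi$ is piecewise linear for $\cF$ and strictly convex on each slab $P_{\rho'}$ (easily built sector by sector), restores strict convexity for $\epsilon>0$ sufficiently small, so $X=X(\cF)$ is projective.

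\emph{Rational map.} Since every cone of $\cF$ maps into a cone of $\cF'$, $\pi$ induces a surjective toric morphism $f\colon X\to X'$, whose generic fiber is the one-dimensional torus $T_{\ker\pi}\simeq\bG_m$. Because $f(e_X)=e_{X'}$, the composition $\Bl_{e_X}X\to X\to X'$ descends to a rational map $\Bl_{e_X}X\dra\Bl_{e_{X'}}X'$ defined away from the strict transform $\tilde F$ of the fiber $F:=f^{-1}(e_{X'})$. The curve $\tilde F$ has codimension $\dim X-1\geq 2$ in $\Bl_{e_X}X$.

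\emph{SQM decomposition.} Let $Z$ be the normalized main component of the graph closure of $\Bl_{e_X}X\dra\Bl_{e_{X'}}X'$, with projections $p_1\colon Z\to\Bl_{e_X}X$ and $p_2\colon Z\to\Bl_{e_{X'}}X'$. By construction $p_2$ is surjective with one-dimensional fibers, and $p_1$ is an isomorphism outside the codimension $\geq 2$ locus $\tilde F$. The remaining step is to upgrade $Z$ to a $\bQ$-factorial variety so that $\Bl_{e_X}X\dra Z$ becomes a genuine SQM. I would do this by running a relative $(K_Z+\epsilon\, p_2^*A)$-MMP over $\Bl_{e_{X'}}X'$ for $A$ ample on the base and $\epsilon>0$ small; since $p_1$ is small, no divisorial contraction can occur and termination yields the required $\bQ$-factorial model. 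Verifying this relative MMP and its termination in full generality is the main obstacle; a more hands-on alternative uses local toric coordinates at $e_X$ in which $f$ is a linear projection, reducing the SQM to an explicit flip that exchanges $\tilde F$ with a divisor over $e_{X'}$, with the fan $\cF$ handling the global bookkeeping.
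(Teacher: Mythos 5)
The paper does not actually prove this lemma; it is quoted from \cite{CT_Duke}*{Prop.~3.1}, so your argument has to be measured against the proof given there. Your first stage is essentially that construction: lift $\cF'$ to $N_\bR$ by a piecewise-linear section together with the rays $\pm R_0$, then insert the remaining rays of $\Gamma$ by star subdivisions, obtaining a toric morphism $f\colon X\to X'$ which over the open torus $T'\subset X'$ is the trivial bundle $T'\times\bP^1$. (Your projectivity argument is imprecise: the walls across which $\psi'\circ\pi$ fails to be strictly convex are the $(n-1)$-dimensional cones $\Cone(\hat\rho_1,\dots,\hat\rho_k)$, which are not contained in the two-planes $P_{\rho'}$; it is cleaner to note that $X(\cF_0)\to X'$ is a projective morphism, $D_{R_0}+D_{-R_0}$ being relatively ample, and that star subdivisions are blow-ups.) You also correctly locate the indeterminacy of $\Bl_eX\dra\Bl_eX'$ along the strict transform $\tilde F$ of the fiber $F=f^{-1}(e')$ over the identity $e'$ of the torus of $X'$.

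The genuine gap is the last step. Write $n=\rk N$. Over a general point of $\tilde F$ the fiber of the graph closure is the entire exceptional $\bP^{n-2}$ of $\Bl_eX'$, so $p_1^{-1}(\tilde F)$ is a \emph{divisor} $G\simeq\tilde F\times\bP^{n-2}$ (your $Z$ is just $\Bl_{\tilde F}\Bl_eX$ up to normalization). Hence $p_1$ is a divisorial contraction, $\Bl_eX\dra Z$ is not an isomorphism in codimension one, and the assertion ``since $p_1$ is small, no divisorial contraction can occur'' is false. Worse, the model you need is produced precisely \emph{by} a divisorial contraction, namely the contraction of $G$ along its second ruling $G\to\bP^{n-2}$: if your MMP performed no divisorial contraction, its output would still carry the extra divisor $G$ and could not be an SQM of $\Bl_eX$. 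In addition, the relative MMP is unavailable in the generality required (arbitrary dimension and every characteristic $p$, which is the point of the paper), and even where it runs, nothing forces it to contract $G$ rather than, say, the strict transform of $E$, which is also covered by $K$-negative curves contracted over $\Bl_eX'$. The correct argument, which your closing sentence only gestures at, exploits the explicit structure: since $f^{-1}(T')\simeq T'\times\bP^1$, the fiber $F\simeq\bP^1$ lies in the smooth locus with trivial normal bundle, so $\tilde F\simeq\bP^1$ has normal bundle $\cO(-1)^{\oplus(n-1)}$ in $\Bl_eX$. This is the standard flipping curve: $G\simeq\bP^1\times\bP^{n-2}$ has $\cO_G(G)\simeq\cO(-1,-1)$, one contracts the other ruling by exhibiting an explicit relatively semiample divisor, the resulting $Z$ is smooth along the flipped $\bP^{n-2}$ (hence $\bQ$-factorial) and isomorphic to $\Bl_eX$ in codimension one, and the morphism $\Bl_{\tilde F}\Bl_eX\to\Bl_eX'$ descends to $Z$ by rigidity. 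Carrying out this explicit flip is the real content of the lemma, and it is missing from your proposal.
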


\begin{corollary}\label{sasrgasr}
Let $\pi:\,N\to N'$ be a surjective map of lattices 
with kernel spanned by vectors $v_1,\ldots,v_s\in N$. 
Let $\Gamma$ be a finite set of rays in $N_{\bR}$ spanned by elements of $N$, 
which includes the rays $\pm{R_i}$ spanned by $\pm{v_i}$ for $i=1,\ldots,s$. 
Let $\cF'\subset N'_\bR$ be a complete simplicial fan with rays given by $\pi(\Gamma)$
(ignore zero vectors in the image). 
Suppose that the corresponding toric variety $X'$ is projective
(notice that it is also $\bQ$-factorial because $\cF'$ is simplicial). 
Then there exists a complete simplicial fan $\cF\subset N_\bR$
with rays  $\Gamma\cup\{\pm R_1\}\cup\ldots\cup\{\pm{R_s}\}$ and such that
the corresponding toric variety $X$ is projective. Moreover, 
there exists a sequence of toric varieties $X=X_1$, $\ldots$, $X_s=X'$ and rational maps induced by toric rational maps
$$\Bl_eX=\Bl_eX_1\dra\Bl_eX_2\dra\ldots\dra\Bl_eX_s=\Bl_eX'$$
such that every map $\Bl_eX_k\dra\Bl_eX_{k+1}$ decomposes as an SQM $\Bl_eX_k\dra Z_k$ and a surjective morphism $Z_k\to \Bl_eX_{k+1}$.
\end{corollary}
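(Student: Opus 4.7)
The plan is to prove this by induction on $s$, applying Lemma~\ref{toric} at each step. The case $s=1$ is exactly Lemma~\ref{toric}. For the inductive step with $s\ge 2$, I would first factor $\pi$ as
$$N\xrightarrow{\;q\;}N''\xrightarrow{\;\pi''\;}N',$$
where $N'':=N/\mathrm{sat}(\bZ v_s)$, $q$ is the quotient (with rank-one kernel spanned by $v_s$), and $\pi''$ is the induced surjection, whose kernel is spanned by $q(v_1),\ldots,q(v_{s-1})$.

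Next, I would set $\Gamma'':=q(\Gamma)\setminus\{0\}$, a finite set of rays in $N''_\bR$. Since $\Gamma$ contains $\pm R_i$ for $i=1,\ldots,s$, the set $\Gamma''$ contains $\pm q(R_i)$ for $i<s$, and clearly $\pi''(\Gamma'')=\pi(\Gamma)$, so the hypotheses of the corollary are inherited. By the inductive hypothesis applied to $\pi''$ with ray set $\Gamma''$ and target fan $\cF'$, one obtains a complete simplicial projective fan $\cF''$ on $N''$ with rays in $\Gamma''$, an associated toric variety $X''$, and a chain
$$\Bl_e X''\dra\cdots\dra\Bl_e X'$$
of rational maps, each a composition of an SQM and a surjective morphism.

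Finally, I would apply Lemma~\ref{toric} to $q:N\to N''$ with ray set $\Gamma$ and target fan $\cF''$: the rays $\pm R_s\in\Gamma$ span $\ker q$, and the rays of $\cF''$ lie in $q(\Gamma)$ by construction, so the hypotheses are met. This yields a complete simplicial projective fan $\cF$ on $N$ with rays in $\Gamma$, a toric variety $X$, and a rational map $\Bl_e X\dra\Bl_e X''$ decomposing as an SQM followed by a surjection. Concatenating this with the inductively constructed chain produces the required sequence.

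The only points that need attention are bookkeeping: one must verify at each step that the ray set fed to Lemma~\ref{toric} still contains the rank-one kernel direction and surjects onto the rays of the already-constructed fan in the quotient. Both follow from the inductive invariant $\Gamma_k:=q_k(\Gamma_{k-1})\setminus\{0\}$ together with the observation that the $\pm R_i$ are transported to the corresponding $\pm q_k(R_i)$. A~minor edge case arises when $v_s$ lies in $\mathrm{span}_\bQ(v_1,\ldots,v_{s-1})$: then some $q(v_i)$ may vanish, so $\pi''$ has kernel of rank strictly less than $s-1$, and the inductive step simply produces a shorter chain, which is absorbed into the final sequence without changing the argument.
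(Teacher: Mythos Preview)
Your proof is correct and follows essentially the same approach as the paper: induct on $s$, factor $\pi$ through a rank-one quotient (the paper quotients by $v_1$, you by $v_s$, which is immaterial), apply the inductive hypothesis on the quotient lattice, and then invoke Lemma~\ref{toric} for the final rank-one step. Your added care about saturation and the edge case of dependent $v_i$ is fine but not needed in the paper's applications, where the $v_i$ form a basis of the kernel.
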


\begin{proof}
We argue by induction on $s$, the case $s=1$ is Lemma~\ref{toric}.
We can assume $v_1$ is a primitive vector.
Let $N''=N/\langle v_1\rangle$. We have a factorization of $\pi$ into $\pi_0:\,N\to N''$ and $\pi':\,N''\to N'$.
Let $\Gamma''$ be the image under $\pi_0$ of $\Gamma$ (ignore zero vectors in the image). 
Then we are in the situation of Lemma~\ref{toric}.
For the  map $\pi'$, we use the step of the induction.
\end{proof}

\begin{proof}[Proof of Theorem~\ref{srgwrG}]
We follow the same strategy as \cite{CT_Duke}. 

Applying  $\bQ$-factorialization,
we can assume that $X$ is a $\bQ$-factorial toric projective variety of dimension $r$.
The toric data of $\oLM_n$ is as follows. Fix general vectors $e_1,\ldots,e_{n-2}\in\bR^{n-3}$
such that $e_1+\ldots+e_{n-2}=0$. The lattice $N$ is generated by $e_1,\ldots,e_{n-2}$.
The rays of the fan of $\oLM_n$ are spanned by the primitive lattice vectors $\sum_{i\in I}e_i$, for each subset 
$I$ of $S:=\{1,\ldots, n-2\}$ with $1\le |I|\le n-3$. 
Notice that rays of this fan come in opposite pairs.
We are not going to need cones of higher dimension of this fan.
We partition 
$$S=S_1\coprod \ldots  \coprod S_{r+1}$$ 
into subsets of equal size $m\geq3$ (so that $n=m(r+1)+2$).
We also fix some indices $n_i\in S_i$, for $i=1,\ldots,r+1$. 
Let $N''\subset N$ be a sublattice spanned by the following vectors:
\begin{equation}\label{axfafgfgfg}
e_{n_i}+e_j\quad\hbox{\rm for}\quad j\in S_i\setminus\{n_i\},\ i=1,\ldots,r+1.
\end{equation}
Let $N'=N/N''$ be the quotient group and let $\pi$ be the projection map.
Then we have the following:
\begin{enumerate}
\item $N'$ is a lattice;
\item $N'$ is spanned by the vectors $\pi(e_{n_i})$, for $i=1,\ldots,r+1$;
\item $\pi(e_{n_1})+\ldots+\pi(e_{n_{r+1}})=0$ is the only linear relation between these vectors.
\end{enumerate}
It follows at once that the toric surface with lattice $N'$ and rays spanned by $\pi(e_{n_i})$ for $i=1,\ldots,r+1$,  is a projective space $\bP^r$. 
Choose a basis $f_1,\ldots,f_r$ for the lattice $N'$ so that $\pi(e_{n_1})=-f_1$, $\ldots$,  $\pi(e_{n_r})=-f_r$.
Fix one of the indices $1,\ldots,r+1$, we start with $r+1$.
Choose $e=\sum_{i\in I}e_i$ such that $n_1,\ldots,n_r\not\in I$, 
$|I\cap S_1|=k_1$, $\ldots$, $|I\cap S_r|=k_r$ and $|I|=k_1+\ldots+k_r$. 
Then $\pi(e)=k_1f_1+\ldots+k_rf_r$
and
$$\pi(e+e_{n_{r+1}})=(k_1+1)f_1+\ldots+(k_r+1)f_r.$$
It follows that images of the rays of $\oLM_n$ contain 
all points with non-zero coordinates bounded by $m$. Repeating this for all $r+1$ octants
shows that the images of the rays of $\oLM_n$ span all lattice points within the region illustrated in Figure~\ref{hgfhg}
for $r=2$, which contains all rays of $X$ if $m$ is large enough.
To be precise, for each $i\in\{1,\ldots,r\}$, in the octant spanned by 
$$f_1\,\ldots,f_{i-1},f_{i+1},\ldots, f_{r+1}\quad (f_{r+1}:=\pi(-e_{n_{r+1}})=-f_1-\ldots-f_r),$$
the region containing all the images of rays of $\oLM_n$ is determined by 
$$mf_1,\quad \ldots,\quad mf_{i-1},\quad mf_{i+1}\quad\ldots\quad mf_{r+1}=-mf_1-\ldots-mf_{r}.$$

It remains to notice (see \cite{OP}, \cite{CT_Duke}*{Prop. 3.1}) that there exists a $\bQ$-factorial projective toric variety $W$ with rays given by the images of the rays of $\oLM_n$
and that the toric birational rational map $W\dashrightarrow X$ is a composition of birational toric morphisms and toric SQMs.
Thus  we are done by Corollary~\ref{sasrgasr}. 
\end{proof}

\begin{corollary}\label{wfvwgv}
Let $Y$ be a projective toric surface with lattice $\bZ^2$ and with fan 
spanned by rays contained in the polygon with vertices
\[
(\pm m,\pm m),\quad (0,\pm m),\quad (\pm m,0),
\]
for some $m\geq 3$ (see Figure~\ref{hgfhg} for $m=4$).   
If $\barEff(\Bl_eY)$ is not (rational) polyhedral then $\barEff(\oM_{0,3m+2})$ is not (rational) polyhedral.
\end{corollary}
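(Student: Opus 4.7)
The plan is to chain two reductions: first transfer non-polyhedrality from $\Bl_eY$ backwards to $\Bl_e\oLM_n$ via Theorem~\ref{srgwrG}, and then transfer it forward to $\oM_{0,n}$ using the rational contraction $\oM_{0,n}\dra \Bl_e\oLM_n$ established in \cite{CT_Duke}. The key is to extract the right value of $n$ from the proof of Theorem~\ref{srgwrG} in the dimension-two case.

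First I would specialize the construction in the proof of Theorem~\ref{srgwrG} to $r=\dim Y=2$. There $n$ is chosen of the form $n=m(r+1)+2$ and the parameter $m\ge3$ controls the size of the region of reachable rays, so here $n=3m+2$. The explicit description in the proof shows that the rays obtained as images under $\pi$ of rays of $\oLM_n$ fill out precisely the primitive lattice vectors lying in the hexagonal region with vertices $(\pm m,\pm m),(0,\pm m),(\pm m,0)$ (this is the $r=2$ analogue of the region illustrated in Figure~\ref{hgfhg}). By assumption, every ray of $Y$ lies in this region, so Corollary~\ref{sasrgasr} applies to the fan of $Y$.

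Second, applying Theorem~\ref{srgwrG} to $X=Y$ with $n=3m+2$ gives a chain of rational maps
\[
\Bl_e\oLM_n=\Bl_eX_1\dra\Bl_eX_2\dra\cdots\dra\Bl_eX_s=\Bl_eY,
\]
in which each step factors as an SQM $\Bl_eX_k\dra Z_k$ followed by a surjective morphism $Z_k\to\Bl_eX_{k+1}$. Since an SQM induces an identification $\Num^1(\Bl_eX_k)_\bR=\Num^1(Z_k)_\bR$ under which the pseudo-effective cones agree, and since Lemma~\ref{fvqevf} shows that a surjective morphism of normal projective varieties transfers (rational) polyhedrality of $\oEff$ from source to target, polyhedrality of $\oEff(\Bl_e\oLM_n)$ would force polyhedrality of $\oEff(\Bl_eY)$. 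Taking the contrapositive: if $\oEff(\Bl_eY)$ is not (rational) polyhedral, then $\oEff(\Bl_e\oLM_n)$ is not (rational) polyhedral either.

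Finally, I would invoke the rational contraction $\oM_{0,n}\dra \Bl_e\oLM_n$ from \cite{CT_Duke}, which (being a composition of SQMs and surjective morphisms) again propagates polyhedrality from source to target by the same combination of SQM-invariance and Lemma~\ref{fvqevf}. Thus non-polyhedrality of $\oEff(\Bl_e\oLM_n)$ forces non-polyhedrality of $\oEff(\oM_{0,n})$. Chaining the two implications yields the corollary for $n=3m+2$. The only substantive point to verify is that the construction in the proof of Theorem~\ref{srgwrG}, for $r=2$, really exhausts the hexagonal region with the stated vertices; this is essentially a bookkeeping check on the images $\pi(e_j)$ and $\pi(e_j+e_{n_i})$ used there, so no serious obstacle is expected.
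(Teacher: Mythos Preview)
Your proposal is correct and follows essentially the same approach as the paper: argue by contrapositive, passing from $\oM_{0,n}$ to $\Bl_e\oLM_n$ via the rational contraction of \cite{CT_Duke} together with Lemma~\ref{fvqevf}, and from $\Bl_e\oLM_n$ to $\Bl_eY$ via Theorem~\ref{srgwrG} with the effective bound $n=3m+2$ extracted from its proof for $r=2$. The paper phrases it as a direct contradiction argument rather than chaining contrapositives, but the content is identical.
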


\begin{proof}
We argue by contradiction.
If $\barEff(\oM_{0,n})$ is (rational) polyhedral then the pseudo-effective cone
$\barEff(\Bl_e\oLM_n)$ is also (rational) polyhedral
by Lemma~\ref{fvqevf} and \cite{CT_Duke}*{Theorem 1.1}. In this case $\barEff(\Bl_eY)$
 is (rational) polyhedral by Theorem~\ref{srgwrG}
 (and effective estimates in its proof).
 \end{proof}

\begin{figure}[htbp]
\begin{tikzpicture}[scale=.5]

\tkzInit[xmin=-4.8, xmax=4.8, ymin = -4.8, ymax=4.8]\tkzGrid

  \draw[line width=.2mm ] (-5.2,0) -- (5.2,0);
  \draw[line width=.2mm]  (0,-5.2) -- (0,5.2);
  

 \tkzDefPoint(1,-1){Q1}
 \tkzDrawPoints[fill=black,color=black,size=5](Q1)
 \tkzDefPoint(-1,0){Q2}
 \tkzDrawPoints[fill=black,color=black,size=5](Q2)
 \tkzDefPoint(1,-2){Q3}
 \tkzDrawPoints[fill=black,color=black,size=5](Q3)
 \tkzDefPoint(0,0){Q4}
 \tkzDrawPoints[fill=black,color=black,size=5](Q4)
 \tkzDefPoint(-1,1){Q5}
 \tkzDrawPoints[fill=black,color=black,size=5](Q5)
 \tkzDefPoint(1,-3){Q6}
 \tkzDrawPoints[fill=black,color=black,size=5](Q6)
 \tkzDefPoint(0,1){Q7}
 \tkzDrawPoints[fill=black,color=black,size=5](Q7)
 \tkzDefPoint(-1,2){Q8}
 \tkzDrawPoints[fill=black,color=black,size=5](Q8)
 \tkzDefPoint(0,2){Q9}
 \tkzDrawPoints[fill=black,color=black,size=5](Q9)
 \tkzDefPoint(-1,3){Q10}
 \tkzDrawPoints[fill=black,color=black,size=5](Q10)
 \tkzDefPoint(0,3){Q11}
 \tkzDrawPoints[fill=black,color=black,size=5](Q11)
 \tkzDefPoint(0,4){Q12}
 \tkzDrawPoints[fill=black,color=black,size=5](Q12)
 \tkzDefPoint(0,-1){Q13}
 \tkzDrawPoints[fill=black,color=black,size=5](Q13)
 \tkzDefPoint(-2,0){Q14}
 \tkzDrawPoints[fill=black,color=black,size=5](Q14)
 \tkzDefPoint(0,-2){Q15}
 \tkzDrawPoints[fill=black,color=black,size=5](Q15)
 \tkzDefPoint(-2,1){Q16}
 \tkzDrawPoints[fill=black,color=black,size=5](Q16)
 \tkzDefPoint(1,0){Q17}
 \tkzDrawPoints[fill=black,color=black,size=5](Q17)
 \tkzDefPoint(0,-3){Q18}
 \tkzDrawPoints[fill=black,color=black,size=5](Q18)
 \tkzDefPoint(-2,2){Q19}
 \tkzDrawPoints[fill=black,color=black,size=5](Q19)
 \tkzDefPoint(1,1){Q20}
 \tkzDrawPoints[fill=black,color=black,size=5](Q20)
 \tkzDefPoint(0,-4){Q21}
 \tkzDrawPoints[fill=black,color=black,size=5](Q21)
 \tkzDefPoint(1,2){Q22}
 \tkzDrawPoints[fill=black,color=black,size=5](Q22)
 \tkzDefPoint(1,3){Q23}
 \tkzDrawPoints[fill=black,color=black,size=5](Q23)
 \tkzDefPoint(1,4){Q24}
 \tkzDrawPoints[fill=black,color=black,size=5](Q24)
 \tkzDefPoint(-3,0){Q25}
 \tkzDrawPoints[fill=black,color=black,size=5](Q25)
 \tkzDefPoint(2,0){Q26}
 \tkzDrawPoints[fill=black,color=black,size=5](Q26)
 \tkzDefPoint(-3,1){Q27}
 \tkzDrawPoints[fill=black,color=black,size=5](Q27)
 \tkzDefPoint(2,1){Q28}
 \tkzDrawPoints[fill=black,color=black,size=5](Q28)
 \tkzDefPoint(-4,-1){Q29}
 \tkzDrawPoints[fill=black,color=black,size=5](Q29)
 \tkzDefPoint(2,2){Q30}
 \tkzDrawPoints[fill=black,color=black,size=5](Q30)
 \tkzDefPoint(-4,-2){Q31}
 \tkzDrawPoints[fill=black,color=black,size=5](Q31)
 \tkzDefPoint(2,3){Q32}
 \tkzDrawPoints[fill=black,color=black,size=5](Q32)
 \tkzDefPoint(-4,-3){Q33}
 \tkzDrawPoints[fill=black,color=black,size=5](Q33)
 \tkzDefPoint(2,4){Q34}
 \tkzDrawPoints[fill=black,color=black,size=5](Q34)
 \tkzDefPoint(-4,-4){Q35}
 \tkzDrawPoints[fill=black,color=black,size=5](Q35)
 \tkzDefPoint(-4,0){Q36}
 \tkzDrawPoints[fill=black,color=black,size=5](Q36)
 \tkzDefPoint(3,0){Q37}
 \tkzDrawPoints[fill=black,color=black,size=5](Q37)
 \tkzDefPoint(3,1){Q38}
 \tkzDrawPoints[fill=black,color=black,size=5](Q38)
 \tkzDefPoint(-3,-1){Q39}
 \tkzDrawPoints[fill=black,color=black,size=5](Q39)
 \tkzDefPoint(3,2){Q40}
 \tkzDrawPoints[fill=black,color=black,size=5](Q40)
 \tkzDefPoint(-3,-2){Q41}
 \tkzDrawPoints[fill=black,color=black,size=5](Q41)
 \tkzDefPoint(3,3){Q42}
 \tkzDrawPoints[fill=black,color=black,size=5](Q42)
 \tkzDefPoint(-3,-3){Q43}
 \tkzDrawPoints[fill=black,color=black,size=5](Q43)
 \tkzDefPoint(3,4){Q44}
 \tkzDrawPoints[fill=black,color=black,size=5](Q44)
 \tkzDefPoint(-3,-4){Q45}
 \tkzDrawPoints[fill=black,color=black,size=5](Q45)
 \tkzDefPoint(4,0){Q46}
 \tkzDrawPoints[fill=black,color=black,size=5](Q46)
 \tkzDefPoint(4,1){Q47}
 \tkzDrawPoints[fill=black,color=black,size=5](Q47)
 \tkzDefPoint(-2,-1){Q48}
 \tkzDrawPoints[fill=black,color=black,size=5](Q48)
 \tkzDefPoint(4,2){Q49}
 \tkzDrawPoints[fill=black,color=black,size=5](Q49)
 \tkzDefPoint(-2,-2){Q50}
 \tkzDrawPoints[fill=black,color=black,size=5](Q50)
 \tkzDefPoint(4,3){Q51}
 \tkzDrawPoints[fill=black,color=black,size=5](Q51)
 \tkzDefPoint(-2,-3){Q52}
 \tkzDrawPoints[fill=black,color=black,size=5](Q52)
 \tkzDefPoint(4,4){Q53}
 \tkzDrawPoints[fill=black,color=black,size=5](Q53)
 \tkzDefPoint(-2,-4){Q54}
 \tkzDrawPoints[fill=black,color=black,size=5](Q54)
 \tkzDefPoint(-1,-1){Q55}
 \tkzDrawPoints[fill=black,color=black,size=5](Q55)
 \tkzDefPoint(-1,-2){Q56}
 \tkzDrawPoints[fill=black,color=black,size=5](Q56)
 \tkzDefPoint(-1,-3){Q57}
 \tkzDrawPoints[fill=black,color=black,size=5](Q57)
 \tkzDefPoint(-1,-4){Q58}
 \tkzDrawPoints[fill=black,color=black,size=5](Q58)
 \tkzDefPoint(3,-1){Q59}
 \tkzDrawPoints[fill=black,color=black,size=5](Q59)
 \tkzDefPoint(2,-1){Q60}
 \tkzDrawPoints[fill=black,color=black,size=5](Q60)
 \tkzDefPoint(2,-2){Q61}
 \tkzDrawPoints[fill=black,color=black,size=5](Q61)
 \node[right] at (4,4.5) {\tiny{$(m,m)$}};
 \node[left] at (-4,-4.5) {\tiny{$(-m,-m)$}};
\end{tikzpicture}

\caption{}\label{hgfhg}
\end{figure}

Variations in the choice of projections used in the proof of Thm. \ref{srgwrG} can lead to further variations and improvements, such as the following: 
\begin{corollary}\label{shape2}
Let $Y$ be a projective toric surface with lattice $\bZ^2$ and with fan 
spanned by rays contained in the polygon with vertices
\begin{equation}\label{polygon2}
(\pm l,\pm l),\quad (\pm 1,\mp l),\quad (\pm 1,\mp 1),\quad  (\pm l,\mp 1)
\end{equation}
for some $l\geq 2$ (see Figure~\ref{poly2} for $l=4$).   
If $\barEff(\Bl_eY)$ is not (rational) polyhedral then $\barEff(\oM_{0,2l+5})$ is not (rational) polyhedral.
\end{corollary}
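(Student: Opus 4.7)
The plan is to mimic the proof of Corollary \ref{wfvwgv} using a variant of Theorem \ref{srgwrG} that replaces the symmetric tripartition of $S=\{1,\ldots,n-2\}$ with an asymmetric one, tailored to the smaller value $n=2l+5$. Arguing by contradiction, I assume $\barEff(\oM_{0,2l+5})$ is (rational) polyhedral; then Lemma \ref{fvqevf} together with \cite{CT_Duke}*{Theorem 1.1} shows $\barEff(\Bl_e\oLM_{2l+5})$ is (rational) polyhedral, and it suffices to produce a rational contraction $\Bl_e\oLM_{2l+5}\dashrightarrow\Bl_e Y$, which forces $\barEff(\Bl_eY)$ to be polyhedral and contradicts the hypothesis.

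Set $n=2l+5$, so $|S|=2l+3$. Partition $S=S_1\sqcup S_2\sqcup S_3$ with $|S_1|=|S_2|=l+1$ and $|S_3|=1$, and fix $n_i\in S_i$. Let $N''\subseteq N_{\oLM_n}$ be the sublattice spanned by the $2l$ two-element subset sums $e_{n_i}+e_j$ for $j\in S_i\setminus\{n_i\}$, $i=1,2$, each of which is a ray of $\oLM_n$. In the quotient $N'=N/N''$, setting $v_i=\pi(e_j)$ for $j\in S_i\setminus\{n_i\}$ forces $\pi(e_{n_i})=-v_i$ for $i=1,2$, and the relation $\sum\pi(e_j)=0$ gives $\pi(e_{n_3})=-(l-1)(v_1+v_2)$; thus $N'\cong\bZ^2$ with basis $\{v_1,v_2\}$. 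For any $I\subset S$, writing $a_i=|I\cap S_i|$ and $\epsilon_i=[n_i\in I]$, one computes
\[
\pi\bigl(\textstyle\sum_{j\in I}e_j\bigr) = \bigl((a_1-2\epsilon_1)-(l-1)\epsilon_3,\ (a_2-2\epsilon_2)-(l-1)\epsilon_3\bigr),
\]
with $a_i-2\epsilon_i\in\{-1,0,\ldots,l+1\}$ for $i=1,2$ and $\epsilon_3\in\{0,1\}$. The resulting image, closed under negation via complementary subsets, contains the union
\[
[-1,l+1]^2\ \cup\ [-l,2]^2\ \cup\ [-(l+1),1]^2\ \cup\ [-2,l]^2.
\]

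A short quadrant-by-quadrant check shows that every lattice point of the polygon of Corollary \ref{shape2} lies in this union: the upper-right quadrant of the polygon is $[0,l]^2\subset[-1,l+1]^2$, and by symmetry the lower-left is contained in $[-(l+1),1]^2$; in the upper-left quadrant the polygon points lie either in the strip $x\in\{-1,0\}$ (covered by $[-1,l+1]^2$) or in the strip $y\in\{0,1\}$ (covered by $[-(l+1),1]^2$), and the lower-right is covered symmetrically by $[-l,2]^2$ and $[-2,l]^2$. Once this coverage is in hand, \cite{OP} produces a $\bQ$-factorial projective toric surface $W$ whose fan has rays among these image lattice points and contains all rays of the fan of $Y$, together with a toric birational map $W\dashrightarrow Y$ factoring as SQMs and surjective toric morphisms. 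Corollary \ref{sasrgasr}, applied with kernel $N''$ of rank $2l$ (factored into $2l$ rank-one projections, each with kernel a ray of $\oLM_n$), yields a chain of SQMs and surjective morphisms $\Bl_e\oLM_n\dashrightarrow\Bl_e W$; composing with $\Bl_e W\to\Bl_e Y$ gives the desired rational contraction, and Lemma \ref{fvqevf} completes the contradiction. The main technical obstacle is the combinatorial coverage check, which has to handle the non-convexity of the polygon and the asymmetric partition simultaneously; everything else is a direct application of the machinery of \S\ref{adsfhasrdharh}.
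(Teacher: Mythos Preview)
Your approach is essentially the paper's own: same asymmetric partition $|S_1|=|S_2|=l+1$, $|S_3|=1$, same sublattice $N''$, same appeal to Corollary~\ref{sasrgasr} and \cite{OP}. The only difference is cosmetic (your basis $v_i$ is the negative of the paper's $f_i$).

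There is, however, a small computational slip that makes your argument as written more complicated than necessary and, strictly speaking, logically broken. You claim $a_i-2\epsilon_i\in\{-1,\dots,l+1\}$, but in fact the range is $\{-1,\dots,l\}$: if $\epsilon_i=0$ then $a_i\le |S_i\setminus\{n_i\}|=l$, and if $\epsilon_i=1$ then $a_i-2\le (l+1)-2=l-1$. Consequently the image of the rays of $\oLM_n$ is exactly $[-1,l]^2\cup[-l,1]^2$ (two squares, not four), and your claim that the image contains $[-1,l+1]^2\cup[-l,2]^2\cup[-(l+1),1]^2\cup[-2,l]^2$ is false --- for instance $(l+1,l+1)$ is not an image. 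Since your quadrant-by-quadrant check only establishes that the polygon sits inside those four squares, the chain ``polygon $\subset$ four squares $\subset$ image'' does not go through.

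The fix is immediate and simplifies the whole thing: with the correct range, the image is precisely $[-1,l]^2\cup[-l,1]^2$, which \emph{is} the polygon~\eqref{polygon2}. No quadrant check is needed. This is exactly what the paper does (in the $f_i$ basis, the two squares are $Q=[-l,1]^2$ and its translate $Q'=Q+(l-1,l-1)=[-1,l]^2$).
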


\begin{figure}[htbp]

\begin{tikzpicture}[scale=.5]
\tkzInit[xmin=-4.5, xmax=4.5, ymin = -4.5, ymax=4.5]\tkzGrid

  \draw[line width=.2mm ] (-5,0) -- (5,0);
  \draw[line width=.2mm]  (0,-5) -- (0,5);

 \tkzDefPoint(1,-1){Q1}
 \tkzDrawPoints[fill=black,color=black,size=5](Q1)
 \tkzDefPoint(-1,0){Q2}
 \tkzDrawPoints[fill=black,color=black,size=5](Q2)
 \tkzDefPoint(1,-2){Q3}
 \tkzDrawPoints[fill=black,color=black,size=5](Q3)
 \tkzDefPoint(0,0){Q4}
 \tkzDrawPoints[fill=black,color=black,size=5](Q4)
 \tkzDefPoint(-1,1){Q5}
 \tkzDrawPoints[fill=black,color=black,size=5](Q5)
 \tkzDefPoint(1,-3){Q6}
 \tkzDrawPoints[fill=black,color=black,size=5](Q6)
 \tkzDefPoint(0,1){Q7}
 \tkzDrawPoints[fill=black,color=black,size=5](Q7)
 \tkzDefPoint(-1,2){Q8}
 \tkzDrawPoints[fill=black,color=black,size=5](Q8)
 \tkzDefPoint(1,-4){Q9}
 \tkzDrawPoints[fill=black,color=black,size=5](Q9)
 \tkzDefPoint(0,2){Q10}
 \tkzDrawPoints[fill=black,color=black,size=5](Q10)
 \tkzDefPoint(-1,3){Q11}
 \tkzDrawPoints[fill=black,color=black,size=5](Q11)
 \tkzDefPoint(0,3){Q12}
 \tkzDrawPoints[fill=black,color=black,size=5](Q12)
 \tkzDefPoint(-1,4){Q13}
 \tkzDrawPoints[fill=black,color=black,size=5](Q13)
 \tkzDefPoint(0,4){Q14}
 \tkzDrawPoints[fill=black,color=black,size=5](Q14)
 \tkzDefPoint(0,-1){Q15}
 \tkzDrawPoints[fill=black,color=black,size=5](Q15)
 \tkzDefPoint(-2,0){Q16}
 \tkzDrawPoints[fill=black,color=black,size=5](Q16)
 \tkzDefPoint(0,-2){Q17}
 \tkzDrawPoints[fill=black,color=black,size=5](Q17)
 \tkzDefPoint(-2,1){Q18}
 \tkzDrawPoints[fill=black,color=black,size=5](Q18)
 \tkzDefPoint(1,0){Q19}
 \tkzDrawPoints[fill=black,color=black,size=5](Q19)
 \tkzDefPoint(0,-3){Q20}
 \tkzDrawPoints[fill=black,color=black,size=5](Q20)
 \tkzDefPoint(1,1){Q22}
 \tkzDrawPoints[fill=black,color=black,size=5](Q22)
 \tkzDefPoint(0,-4){Q23}
 \tkzDrawPoints[fill=black,color=black,size=5](Q23)
 \tkzDefPoint(1,2){Q25}
 \tkzDrawPoints[fill=black,color=black,size=5](Q25)
 \tkzDefPoint(1,3){Q26}
 \tkzDrawPoints[fill=black,color=black,size=5](Q26)
 \tkzDefPoint(1,4){Q27}
 \tkzDrawPoints[fill=black,color=black,size=5](Q27)
 \tkzDefPoint(-3,0){Q28}
 \tkzDrawPoints[fill=black,color=black,size=5](Q28)
 \tkzDefPoint(-3,1){Q29}
 \tkzDrawPoints[fill=black,color=black,size=5](Q29)
 \tkzDefPoint(2,0){Q30}
 \tkzDrawPoints[fill=black,color=black,size=5](Q30)
 \tkzDefPoint(2,1){Q31}
 \tkzDrawPoints[fill=black,color=black,size=5](Q31)
 \tkzDefPoint(-4,-1){Q33}
 \tkzDrawPoints[fill=black,color=black,size=5](Q33)
 \tkzDefPoint(2,2){Q34}
 \tkzDrawPoints[fill=black,color=black,size=5](Q34)
 \tkzDefPoint(-4,-2){Q35}
 \tkzDrawPoints[fill=black,color=black,size=5](Q35)
 \tkzDefPoint(2,3){Q36}
 \tkzDrawPoints[fill=black,color=black,size=5](Q36)
 \tkzDefPoint(-4,-3){Q37}
 \tkzDrawPoints[fill=black,color=black,size=5](Q37)
 \tkzDefPoint(2,4){Q38}
 \tkzDrawPoints[fill=black,color=black,size=5](Q38)
 \tkzDefPoint(-4,-4){Q39}
 \tkzDrawPoints[fill=black,color=black,size=5](Q39)
 \tkzDefPoint(-4,0){Q40}
 \tkzDrawPoints[fill=black,color=black,size=5](Q40)
 \tkzDefPoint(-4,1){Q41}
 \tkzDrawPoints[fill=black,color=black,size=5](Q41)
 \tkzDefPoint(3,0){Q42}
 \tkzDrawPoints[fill=black,color=black,size=5](Q42)
 \tkzDefPoint(3,1){Q43}
 \tkzDrawPoints[fill=black,color=black,size=5](Q43)
 \tkzDefPoint(-3,-1){Q44}
 \tkzDrawPoints[fill=black,color=black,size=5](Q44)
 \tkzDefPoint(3,2){Q45}
 \tkzDrawPoints[fill=black,color=black,size=5](Q45)
 \tkzDefPoint(-3,-2){Q46}
 \tkzDrawPoints[fill=black,color=black,size=5](Q46)
 \tkzDefPoint(3,3){Q47}
 \tkzDrawPoints[fill=black,color=black,size=5](Q47)
 \tkzDefPoint(-3,-3){Q48}
 \tkzDrawPoints[fill=black,color=black,size=5](Q48)
 \tkzDefPoint(3,4){Q49}
 \tkzDrawPoints[fill=black,color=black,size=5](Q49)
 \tkzDefPoint(-3,-4){Q50}
 \tkzDrawPoints[fill=black,color=black,size=5](Q50)
 \tkzDefPoint(4,0){Q51}
 \tkzDrawPoints[fill=black,color=black,size=5](Q51)
 \tkzDefPoint(4,1){Q52}
 \tkzDrawPoints[fill=black,color=black,size=5](Q52)
 \tkzDefPoint(-2,-1){Q53}
 \tkzDrawPoints[fill=black,color=black,size=5](Q53)
 \tkzDefPoint(4,2){Q54}
 \tkzDrawPoints[fill=black,color=black,size=5](Q54)
 \tkzDefPoint(-2,-2){Q55}
 \tkzDrawPoints[fill=black,color=black,size=5](Q55)
 \tkzDefPoint(4,3){Q56}
 \tkzDrawPoints[fill=black,color=black,size=5](Q56)
 \tkzDefPoint(-2,-3){Q57}
 \tkzDrawPoints[fill=black,color=black,size=5](Q57)
 \tkzDefPoint(4,4){Q58}
 \tkzDrawPoints[fill=black,color=black,size=5](Q58)
 \tkzDefPoint(-2,-4){Q59}
 \tkzDrawPoints[fill=black,color=black,size=5](Q59)
 \tkzDefPoint(4,-1){Q60}
 \tkzDrawPoints[fill=black,color=black,size=5](Q60)
 \tkzDefPoint(-1,-1){Q61}
 \tkzDrawPoints[fill=black,color=black,size=5](Q61)
 \tkzDefPoint(-1,-2){Q62}
 \tkzDrawPoints[fill=black,color=black,size=5](Q62)
 \tkzDefPoint(-1,-3){Q63}
 \tkzDrawPoints[fill=black,color=black,size=5](Q63)
 \tkzDefPoint(-1,-4){Q64}
 \tkzDrawPoints[fill=black,color=black,size=5](Q64)
 \tkzDefPoint(3,-1){Q65}
 \tkzDrawPoints[fill=black,color=black,size=5](Q65)
 \tkzDefPoint(2,-1){Q67}
 \tkzDrawPoints[fill=black,color=black,size=5](Q67)
 \node[right] at (4,4.5) {\tiny{$(l,l)$}};
 \node[left] at (-4,-4.5) {\tiny{$(-l,-l)$}};
 \node[left] at (-1,4.5) {\tiny{$(-1,l)$}};
 \node[right] at (1,-4.5) {\tiny{$(1,-l)$}};
 \node[right] at (4,-1.5) {\tiny{$(l,-1)$}};
 \node[left] at (-4,1.5) {\tiny{$(-l,1)$}};
\end{tikzpicture}

\caption{}\label{poly2}
\end{figure}

\begin{proof}
Similarly, we argue by contradiction.
If $\barEff(\oM_{0,n})$ is (rational) polyhedral then the pseudo-effective cone
$\barEff(\Bl_e\oLM_n)$ is also (rational) polyhedral
by Lemma~\ref{fvqevf} and \cite{CT_Duke}*{Theorem 1.1}. In this case $\barEff(\Bl_eY)$
 is (rational) polyhedral using the same idea as in the proof of Theorem~\ref{srgwrG}. It suffices to prove 
 that one can project in such a way that the images of the rays of the fan of  $\oLM_n$ are contained in the polygon given by (\ref{polygon2}). 
 
The rays of the fan of $\oLM_n$ are spanned by the primitive lattice vectors $\sum_{i\in I}e_i$, for each subset 
$I$ of $S:=\{1,\ldots, n-2\}$ with $1\le |I|\le n-3$. 
We partition 
$$S=S_1\coprod S_2 \coprod S_3,\quad |S_1|=|S_2|=l+1,\quad |S_3|=1.$$ 

We fix some indices $n_i\in S_i$, for $i=1,2$ and let $S_3=\{n_3\}$.
 Let $N''\subset N$ be a sublattice spanned by the following vectors:
$$e_{n_i}+e_j\quad\hbox{\rm for}\quad j\in S_i\setminus\{n_i\},\ i=1,2.$$
Let $N'=N/N''$ be the quotient group and let $\pi$ be the projection map.
Then we have the following:
\begin{enumerate}
\item $N'$ is a lattice;
\item $N'$ is spanned by the vectors $\pi(e_{n_i})$, for $i=1,2,3$;
\item $-(l-1)\pi(e_{n_1})+-(l-1)\pi(e_{n_2})+\pi(e_{n_3})=0$ is the only linear relation between these vectors.
\end{enumerate} 
Choose a basis $f_1, f_2$ for the lattice $N'$ given by $\pi(e_{n_1})=f_1$, $\pi(e_{n_2})=f_2$. 
Then 
$$\pi(e_{n_3})=(l-1)f_1+(l-1)f_2.$$ 
We calculate the images $\pi(\sum_{i\in I} e_i)$ of the rays of the fan of $\oLM_n$. Consider the case when $n_1, n_2, n_3\notin I$. 
If $|I\cap S_1|=i$, $|I\cap S_2|=j$, then clearly the images of such rays are given by 
$-if_1-jf_2$ and all values $0\leq i,j\leq l$ are possible. This gives a square $P$ which in the given basis, has coordinates 
$$(-l,-l),\quad (-l,0),\quad (0,-l),\quad (0,0).$$ 
If $n_1\in I$, $n_2, n_3\notin I$, the images $\pi(\sum_{i\in I} e_i)$  will be contained in the translation of $P$ by $f_1=(1,0)$. 
Similarly,  if $n_3\notin I$, then $\pi(\sum_{i\in I} e_i)$ is contained in the union of $P$ with its translates
by $f_1=(1,0)$, $f_2=(0,1)$ and $f_1+f_2=(1,1)$, i.e., the square $Q$ with sides 
$$(-l,-l),\quad (-l,1),\quad (1,-l),\quad (1,1).$$ 
Finally, if $n_3\in I$, then $\pi(\sum_{i\in I} e_i)$ will be contained in the translate $Q'$ of $Q$ by $f_3=(l-1,l-1)$. Hence, all images 
of rays are contained in the sum of $Q$ and $Q'$, i.e., the polygon given in (\ref{polygon2}). 
 \end{proof}

%


\begin{corollary}\label{shape3}
Let $Y$ be a projective toric surface with lattice $\bZ^2$ and with fan 
spanned by rays contained in the polygon with vertices
\[
(\pm 3,\pm 1),\quad (\pm 3,\pm 5),\quad (\pm 2,\pm 6),\quad  (\pm 1,\pm 6),
\quad(\pm 1,\mp 3),
\]
(see Figure~\ref{poly3}).   
If $\barEff(\Bl_eY)$ is not (rational) polyhedral then $\barEff(\oM_{0,10})$ 
is not (rational) polyhedral.
\end{corollary}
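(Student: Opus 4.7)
The plan is to mirror the proofs of Corollaries~\ref{wfvwgv} and~\ref{shape2}: argue by contradiction. Suppose that $\barEff(\oM_{0,10})$ is (rational) polyhedral. Then by Lemma~\ref{fvqevf} and \cite{CT_Duke}*{Thm.~1.1}, $\barEff(\Bl_e\oLM_{10})$ is also (rational) polyhedral; it then suffices, via Theorem~\ref{srgwrG} and Corollary~\ref{sasrgasr} combined with one further application of Lemma~\ref{fvqevf}, to exhibit a rational contraction $\Bl_e\oLM_{10}\dashrightarrow\Bl_eY$, which would contradict the hypothesis on~$Y$. The construction reduces to producing a surjective projection $\pi\colon N\twoheadrightarrow\bZ^2$ of the rank-$7$ lattice $N$ of $\oLM_{10}$ (with generators $e_1,\dots,e_8$ subject to $e_1+\cdots+e_8=0$) satisfying: (i)~$\ker\pi$ is generated by primitive $\oLM_{10}$-rays of the form $e_I=\sum_{i\in I}e_i$, as required by Corollary~\ref{sasrgasr}, and (ii)~the image set $\{\sum_{i\in I}v_i:\emptyset\ne I\subsetneq\{1,\dots,8\}\}$ with $v_i:=\pi(e_i)$ contains every lattice point of the polygon of Figure~\ref{poly3}.

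The naive partition-based projection used for \ref{wfvwgv} and \ref{shape2}---where $\{1,\dots,8\}=S_1\sqcup S_2\sqcup S_3$ and $\ker\pi=\langle e_{\{n_i,j\}}:j\in S_i\setminus\{n_i\}\rangle$---is insufficient: a finite case analysis over the size triples $(|S_1|,|S_2|,|S_3|)$ summing to~$8$ shows that the resulting image is contained in a Z-shape of $y$-extent at~most~$10$, whereas Figure~\ref{poly3} has $y$-extent~$12$. More generally, any single-step $e_I$-generated rank-$5$ sublattice $\ker\pi\subset N$ leaves only two free vectors $v_1,v_2\in\bZ^2$ parametrizing the $v_i$'s, so the image of the rays of $\oLM_{10}$ lies in a parallelogram-type set $\{Pv_1+Qv_2\}$ containing at~most $7\cdot 7=49$ lattice points (and often substantially fewer, depending on $\det(v_1,v_2)$), which cannot cover the polygon of Figure~\ref{poly3} (whose interior alone contains well over~$49$ lattice points).

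The remedy is to iterate Lemma~\ref{toric} instead of invoking Corollary~\ref{sasrgasr} in a single step: reduce the rank of the target lattice one step at a time, five times in total, and at the $k$-th step choose a primitive vector $v_0$ whose span is a ray of the \emph{current} intermediate fan (not necessarily of $\oLM_{10}$). At each step Lemma~\ref{toric} produces a rational contraction $\Bl_eX_k\dashrightarrow\Bl_eX_{k+1}$ of the associated blown-up toric varieties, whose composition $\Bl_e\oLM_{10}\dashrightarrow\cdots\dashrightarrow\Bl_eY$ is the required contraction. Because the intermediate rays include images that are no longer of the form $e_I$, the iterated kernel in~$N$ is strictly more flexible than any single-step $e_I$-generated kernel and can be arranged so that the final $2$-dimensional fan's rays exhaust the polygon of Figure~\ref{poly3}. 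The main obstacle is the concrete choice of the five primitive rays along the chain so as to achieve this coverage; this is a finite but intricate combinatorial search, sensitive to the asymmetric shape of the polygon (in particular to the notches at $(\pm1,\mp3)$). Once such a sequence is exhibited, Lemma~\ref{fvqevf} descends polyhedrality along the contraction and yields the desired contradiction.
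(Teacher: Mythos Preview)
Your overall contradiction strategy is correct, but the central claim of your proposal---that no rank-$5$ sublattice of $N$ generated by vectors $e_I$ can serve as the kernel of a suitable projection---is false, and the paper's proof refutes it explicitly. The paper applies Corollary~\ref{sasrgasr} directly: it takes the five $\oLM_{10}$-rays
\[
e_1+e_2+e_4+e_6,\quad e_1+e_2+e_5+e_7,\quad e_1+e_4+e_6+e_7,\quad e_5+e_6,\quad e_1+e_5+e_8,
\]
checks that they span the kernel of an explicit surjection $\pi:\bZ^7\to\bZ^2$ (given by a $2\times 7$ integer matrix), and verifies that the images of all rays $e_I$ are exactly the lattice points of Figure~\ref{poly3}. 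No iteration through intermediate fans is needed.

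The flaw in your impossibility argument is the inference that an $e_I$-generated kernel forces the images $v_i=\pi(e_i)$ to be parametrized by ``two free vectors,'' confining $\{\sum_{i\in I}v_i\}$ to a parallelogram of at most $49$ points. That structure is specific to the \emph{partition} projections of Corollaries~\ref{wfvwgv} and~\ref{shape2}, where each kernel generator has the form $e_{n_i}+e_j$ and hence forces many of the $v_i$ to coincide up to sign. A generic choice of five $e_I$'s imposes no such coincidences: in the paper's projection the eight images $v_1,\dots,v_8$ take seven distinct values in $\bZ^2$, and their subset sums comfortably cover the $55$ lattice points of the figure. Your proposed iterated scheme using rays of intermediate fans might be made to work, but it is both unnecessary and left unexecuted in your write-up; the actual content of the proof is the discovery of the five rays above, which you would still need to supply.
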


\begin{proof}
It suffices to prove that $\barEff(\Bl_e\oLM_{10})$ is not 
(rational) polyhedral.
We do a variation of the method  in the proof of Thm. \ref{srgwrG},
projecting the lattice $\bZ^7$ of the Losev-Manin space $\oLM_{10}$ 
(spanned by $\{e_1,\ldots, e_8\}$ and subject to the relation 
$\sum_{i=1}^8 e_i=0$) from the following rays of the fan of  $\oLM_{10}$:
$e_1+e_2+e_4+e_6$, $e_1+e_2+e_5+e_7$,
$e_1+e_4+e_6+e_7$, $e_5+e_6$ and 
$e_1+e_5+e_8$.
These vectors generate the kernel of the map $\pi:\bZ^7\ra\bZ^2$
given by
\[
 \begin{pmatrix*}[r]
  1&0&1&-2&-1&1&0\\
  0&1&-1&-3&-2&2&1
 \end{pmatrix*}
\]
We conclude observing that the images of the rays of $\oLM_{10}$ via 
$f$ are the points of Figure~\ref{poly3}.
\end{proof}
\begin{figure}[htbp]

\begin{tikzpicture}[scale=.5]
\tkzInit[xmin=-3.5,xmax=3.5,ymin=-6.5,ymax=6.5]\tkzGrid
  \draw[line width=.2mm ] (-4,0) -- (4,0);
  \draw[line width=.2mm]  (0,-7) -- (0,7);

   \tkzDefPoint(-3,-5){P1}
   \tkzDefPoint(-3,-1){P2}
   \tkzDefPoint(-2,-6){P3}
   \tkzDefPoint(-1,-6){P4}
   \tkzDefPoint(-1,3){P5}
   \tkzDefPoint(1,-3){P6}
   \tkzDefPoint(1,6){P7}
   \tkzDefPoint(2,6){P8}
   \tkzDefPoint(3,1){P9}
   \tkzDefPoint(3,5){P10}
   \tkzDefPoint(1,-1){Q1}
   \tkzDrawPoints[fill=black,color=black,size=5](Q1)
   \tkzDefPoint(-1,0){Q2}
   \tkzDrawPoints[fill=black,color=black,size=5](Q2)
   \tkzDefPoint(1,-2){Q3}
   \tkzDrawPoints[fill=black,color=black,size=5](Q3)
   \tkzDefPoint(0,0){Q4}
   \tkzDrawPoints[fill=black,color=black,size=5](Q4)
   \tkzDefPoint(-1,1){Q5}
   \tkzDrawPoints[fill=black,color=black,size=5](Q5)
   \tkzDefPoint(1,-3){Q6}
   \tkzDrawPoints[fill=black,color=black,size=5](Q6)
   \tkzDefPoint(-1,2){Q7}
   \tkzDrawPoints[fill=black,color=black,size=5](Q7)
   \tkzDefPoint(0,1){Q8}
   \tkzDrawPoints[fill=black,color=black,size=5](Q8)
   \tkzDefPoint(-1,3){Q9}
   \tkzDrawPoints[fill=black,color=black,size=5](Q9)
   \tkzDefPoint(0,2){Q10}
   \tkzDrawPoints[fill=black,color=black,size=5](Q10)
   \tkzDefPoint(0,3){Q11}
   \tkzDrawPoints[fill=black,color=black,size=5](Q11)
   \tkzDefPoint(0,4){Q12}
   \tkzDrawPoints[fill=black,color=black,size=5](Q12)
   \tkzDefPoint(0,-1){Q13}
   \tkzDrawPoints[fill=black,color=black,size=5](Q13)
   \tkzDefPoint(-2,0){Q14}
   \tkzDrawPoints[fill=black,color=black,size=5](Q14)
   \tkzDefPoint(0,-2){Q15}
   \tkzDrawPoints[fill=black,color=black,size=5](Q15)
   \tkzDefPoint(-2,1){Q16}
   \tkzDrawPoints[fill=black,color=black,size=5](Q16)
   \tkzDefPoint(1,0){Q17}
   \tkzDrawPoints[fill=black,color=black,size=5](Q17)
   \tkzDefPoint(0,-3){Q18}
   \tkzDrawPoints[fill=black,color=black,size=5](Q18)
   \tkzDefPoint(1,1){Q19}
   \tkzDrawPoints[fill=black,color=black,size=5](Q19)
   \tkzDefPoint(0,-4){Q20}
   \tkzDrawPoints[fill=black,color=black,size=5](Q20)
   \tkzDefPoint(1,2){Q21}
   \tkzDrawPoints[fill=black,color=black,size=5](Q21)
   \tkzDefPoint(1,3){Q22}
   \tkzDrawPoints[fill=black,color=black,size=5](Q22)
   \tkzDefPoint(1,4){Q23}
   \tkzDrawPoints[fill=black,color=black,size=5](Q23)
   \tkzDefPoint(1,5){Q24}
   \tkzDrawPoints[fill=black,color=black,size=5](Q24)
   \tkzDefPoint(1,6){Q25}
   \tkzDrawPoints[fill=black,color=black,size=5](Q25)
   \tkzDefPoint(2,0){Q26}
   \tkzDrawPoints[fill=black,color=black,size=5](Q26)
   \tkzDefPoint(2,1){Q27}
   \tkzDrawPoints[fill=black,color=black,size=5](Q27)
   \tkzDefPoint(2,2){Q28}
   \tkzDrawPoints[fill=black,color=black,size=5](Q28)
   \tkzDefPoint(2,3){Q29}
   \tkzDrawPoints[fill=black,color=black,size=5](Q29)
   \tkzDefPoint(2,4){Q30}
   \tkzDrawPoints[fill=black,color=black,size=5](Q30)
   \tkzDefPoint(2,5){Q31}
   \tkzDrawPoints[fill=black,color=black,size=5](Q31)
   \tkzDefPoint(2,6){Q32}
   \tkzDrawPoints[fill=black,color=black,size=5](Q32)
   \tkzDefPoint(3,1){Q33}
   \tkzDrawPoints[fill=black,color=black,size=5](Q33)
   \tkzDefPoint(-3,-1){Q34}
   \tkzDrawPoints[fill=black,color=black,size=5](Q34)
   \tkzDefPoint(3,2){Q35}
   \tkzDrawPoints[fill=black,color=black,size=5](Q35)
   \tkzDefPoint(-3,-2){Q36}
   \tkzDrawPoints[fill=black,color=black,size=5](Q36)
   \tkzDefPoint(3,3){Q37}
   \tkzDrawPoints[fill=black,color=black,size=5](Q37)
   \tkzDefPoint(-3,-3){Q38}
   \tkzDrawPoints[fill=black,color=black,size=5](Q38)
   \tkzDefPoint(3,4){Q39}
   \tkzDrawPoints[fill=black,color=black,size=5](Q39)
   \tkzDefPoint(-3,-4){Q40}
   \tkzDrawPoints[fill=black,color=black,size=5](Q40)
   \tkzDefPoint(3,5){Q41}
   \tkzDrawPoints[fill=black,color=black,size=5](Q41)
   \tkzDefPoint(-3,-5){Q42}
   \tkzDrawPoints[fill=black,color=black,size=5](Q42)
   \tkzDefPoint(-2,-1){Q43}
   \tkzDrawPoints[fill=black,color=black,size=5](Q43)
   \tkzDefPoint(-2,-2){Q44}
   \tkzDrawPoints[fill=black,color=black,size=5](Q44)
   \tkzDefPoint(-2,-3){Q45}
   \tkzDrawPoints[fill=black,color=black,size=5](Q45)
   \tkzDefPoint(-2,-4){Q46}
   \tkzDrawPoints[fill=black,color=black,size=5](Q46)
   \tkzDefPoint(-2,-5){Q47}
   \tkzDrawPoints[fill=black,color=black,size=5](Q47)
   \tkzDefPoint(-2,-6){Q48}
   \tkzDrawPoints[fill=black,color=black,size=5](Q48)
   \tkzDefPoint(-1,-1){Q49}
   \tkzDrawPoints[fill=black,color=black,size=5](Q49)
   \tkzDefPoint(-1,-2){Q50}
   \tkzDrawPoints[fill=black,color=black,size=5](Q50)
   \tkzDefPoint(-1,-3){Q51}
   \tkzDrawPoints[fill=black,color=black,size=5](Q51)
   \tkzDefPoint(-1,-4){Q52}
   \tkzDrawPoints[fill=black,color=black,size=5](Q52)
   \tkzDefPoint(-1,-5){Q53}
   \tkzDrawPoints[fill=black,color=black,size=5](Q53)
   \tkzDefPoint(-1,-6){Q54}
   \tkzDrawPoints[fill=black,color=black,size=5](Q54)
   \tkzDefPoint(2,-1){Q55}
   \tkzDrawPoints[fill=black,color=black,size=5](Q55)

\end{tikzpicture}

\quad

\caption{}\label{poly3}
\end{figure}

\begin{proof}[Proof of Theorem~\ref{asgarh}]
If the characteristic is $0$ or 
any prime $p\neq 2,3,5,7,11,19,71$,
one can use the Halphen$^+$ polygon $\Delta$ from Theorem \ref{ex3}. 
Indeed, after the shear transformation $(x,y)\mapsto (x,x-y)$, 
the rays of the normal fan of $\Delta$ are
\[
(0,-1),\quad (-1,-6),\quad (-1,-3),\quad (-3,-2),\quad (-3,-1),\quad (3,5),\quad (2,3),
\]
which are among the points of Figure ~\ref{poly3}, so that
we can apply Corollary~\ref{shape3}.

In order to conclude we are going to produce, for any 
$p \in \{2, 3, 5, 7, 11, 19, 71\}$, a suitable good 
lattice polygon $\Delta$, whose normal fan has rays 
among the points of Figure~\ref{poly3}. In particular,
since the characteristic is positive, $\Delta$ is Halphen,
with $e := e(C,X_\Delta) < \infty$. The pencil $|eC|$ 
defines a fibration $\pi\colon X\to \mathbb P^1$. Let us
denote by $S_i := \pi^{-1}(q_i)$, for $i=1,\dots,\lambda$, 
the reducible fibers and by $\mu_i$
the number of irreducible components of $S_i$. 
It is not hard to see that any such irreducible 
component is defined over the field $\mathbb F_p$, so we only have to check a finite family. 
We then conclude showing that 
\[
\sum_{i=1}^{\lambda}(\mu_i-1) < {\rm Rank}({\rm Pic}(X)) - 2 
= \#{\rm Vertices}(\Delta) - 3,
\]
which, by Remark~\ref{rem:nonpol}, implies that 
the effective cone is not polyhedral.

In Computation~\ref{sporadic} we analize in detail the
case $p=2$, while in the following table we list, for any 
$p \in \{2, 3, 5, 7, 11, 19, 71\}$, the polygon $\Delta$,
the corresponding $e(C,X_\Delta)$, and the cardinality
of the reducible fibers.

\begin{center}
{
\scriptsize
\begin{longtable}{|c|c|c|c|}
\hline
$p$ & {\rm vertices}  & $e(C,X_\Delta)$ & $[\mu_1,\dots,\mu_\lambda]$\\
\hline
&&&\\
$2$ 
&
$
\left[
\begin{smallmatrix}
 0 & 6 & 9 & 10 & 9 & 3 & 1 \\
 0 & 2 & 4 & 5 & 6 & 10 & 4 
\end{smallmatrix}
\right]
$
&
$1$
&
$[2,3]$
\\
&&&\\
\hline
&&&\\
$3$ 
&
$
\left[
\begin{smallmatrix}
 0 & 5 & 7 & 12 & 13 & 14 & 12 & 6 & 2 \\
 0 & 2 & 3 & 6 & 8 & 11 & 12 & 14 & 6 
\end{smallmatrix}
\right]
$
&
$2$
&
$[2,5]$
\\
&&&\\
\hline
&&&\\
$5$ 
&
$
\left[
\begin{smallmatrix}
 0 & 2 & 12 & 13 & 13 & 12 & 11 & 9 & 4 \\
 0 & 1 & 7 & 8 & 9 & 11 & 12 & 13 & 12 
\end{smallmatrix}
\right]
$
&
$2$
&
$[3,4]$
\\
&&&\\
\hline
&&&\\
$7$ 
&
$
\left[
\begin{smallmatrix}
 0 & 1 & 4 & 8 & 10 & 4 & 3 & 1 \\
 0 & 0 & 1 & 4 & 7 & 10 & 8 & 3 
\end{smallmatrix}
\right]
$
&
$2$
&
$[2,4]$
\\
&&&\\
\hline
&&&\\
$11$ 
&
$
\left[
\begin{smallmatrix}
 0 & 12 & 13 & 13 & 12 & 11 & 9 & 7 & 1 \\
 0 & 4 & 8 & 9 & 11 & 12 & 13 & 12 & 2 
\end{smallmatrix}
\right]
$
&
$2$
&
$[3,3]$
\\
&&&\\
\hline
&&&\\
$19$ 
&
$
\left[
\begin{smallmatrix}
 0 & 2 & 8 & 9 & 3 & 2 \\
 0 & 1 & 5 & 6 & 10 & 8 
\end{smallmatrix}
\right]
$
&
$2$
&
$[2,2]$
\\
&&&\\
\hline
&&&\\
$71$ 
&
$
\left[
\begin{smallmatrix}
 0 & 3 & 8 & 12 & 13 & 12 & 11 & 5 & 4 \\
 0 & 1 & 4 & 7 & 8 & 10 & 11 & 13 & 12 
\end{smallmatrix}
\right]
$
&
$3$
&
$[2,2,4]$
\\
&&&\\
\hline
\end{longtable}
}
\end{center}


\end{proof}

\begin{remark}
\label{rem:LT}
By Computation~\ref{sdfvwefvwefv}, the rays of the normal 
fan of Polygon~111 are among the points
in Figure~\ref{poly3}. By Example~\ref{ex:exp}, $\Delta$ is a 
Lang--Trotter polygon, so that by Theorem~\ref{goodpairsthm}, 
we have another proof that 
$\overline{\Eff}(X_\Delta)$ is not polyhedral in characteristic~$0$. 
Moreover, in Database~\ref{adhadthstj}, we collect many more 
Lang--Trotter polygons such that
their normal fans (sometimes after a shear transformation)
fit into Figure~\ref{poly3}.
This shows that $\overline{\Eff}(\oM_{0,10})$
is not polyhedral in characteristic $p$ for $p<2000$.
We find that this is a strong indication that
one could also use Lang--Trotter polygons to prove that 
$\overline{\Eff}(\oM_{0,10})$ is not polyhedral in characteristic $p$, 
for all primes $p$.
\end{remark}


\section{Databases and Magma Computations}\label{asfgSrhasrh}
\begin{database}\label{vewfvefvef}
We give in Table~\ref{tab:good} the list of all Lang--Trotter polygons with $m\leq 7$.
It is obtained as follows.
We consider all lattice polygons
of volume up to $49$ (modulo equivalence)
appearing in the database~\cite{bel}.
We impose the conditions of Definition~\ref{def:good}
using our Magma package.
Computation~\ref{sdfvwefvwefv} gives (i) and (ii).
Computations~\ref{asarsgwRG} and \ref{adfafgarg} give (iii), (iv) and the equation of $\Gamma$.
This leaves $184$ lattice polygons and in all the cases 
the curve $C$ turns out to be 
smooth by Computation~\ref{adfafgarg}.
Furthermore, for all but one polygon in this list, 
we also have that the point $e$ is an ordinary 
multiple point of $\Gamma$.
The exceptional case is {\bf Polygon~23},
in which case the tangent cone to the 
curve $\Gamma$ at $e$ contains a double 
line. The curve $C$ turns out
to be tangent to the exceptional divisor
at the corresponding point,
so that also in this case $C$ is smooth.
Therefore, 
for any polygon in the list, $C$ 
is a smooth genus $1$ curve and
moreover, since $\Delta$ has at least $4$ vertices
and $|\partial\Delta\cap \bZ^2| = m \leq 7$, 
we also have that at least one
edge $F$ of $\Delta$ has lattice length~$1$.
By Proposition~\ref{prop:gen} we conclude that
the curve $C$ has a rational point $p_F$
that we can chose as the origin, so that
in what follows we can treat $C$ as an elliptic curve.
This fact allows to check the last condition of
the definition of a Lang--Trotter polygon, i.e., that 
$\mathcal O_X(C)|_C = \rd(C)$ is non-torsion.
Indeed, we can compute the minimal equation of the elliptic curve $C$
using Computation~\ref{asdcvq}.
We are then able to compute the
order $d$ of the torsion subgroup of the Mordell-Weil group
of the elliptic curve, and we have that
$\rd(C)$ is not torsion if and only if 
$\rd(dC)$ is non-trivial. 
By~Definition-Lemma~\ref{asfar}
this is equivalent to $h^0(X,dC) = 1$,
and the latter condition can be 
checked by Computation~\ref{asarsgwRG}.

Another approach is to find a multiple of $d$
using Nagell--Lutz Theorem \cite{ST}: if~$p$
is a prime of good reduction for $C$, then 
the specialization map induces an injective 
homomorphism of abelian groups
$C(\bQ)_{\rm tors}\to C(\bF_p)$.
Therefore, the torsion order $d$ of $C(\bQ)$
divides the order of $C(\bF_p)$ 
for any prime $p$ of good reduction, which is easy to compute from
the defining equation of $\Gamma$.
We then find a multiple of  $d$
by taking the greatest common divisor of the
orders of $C(\bF_p)$ as $p$ varies.
{
\scriptsize
\begin{longtable}{c|c|c|c|c|c|c|}
\cline{2-7}
& & & & & &\\
$5$
&
$
\left[
\begin{smallmatrix}
 0 & 4 & 2 & 1 & 5 \\
 0 & 4 & 5 & 5 & 3 
\end{smallmatrix}
\right]
$
&
$
\left[
\begin{smallmatrix}
 4 & 0 & 2 & 5 & 3 \\
 2 & 4 & 1 & 5 & 0 
\end{smallmatrix}
\right]
$
&
$
\left[
\begin{smallmatrix}
 4 & 0 & 5 & 1 & 2 \\
 2 & 1 & 0 & 4 & 5 
\end{smallmatrix}
\right]
$
&
$
\left[
\begin{smallmatrix}
 4 & 0 & 0 & 5 & 1 \\
 2 & 4 & 5 & 3 & 0 
\end{smallmatrix}
\right]
$
&
$
\left[
\begin{smallmatrix}
 3 & 1 & 0 & 5 & 0\\
 1 & 5 & 1 & 2 & 0
\end{smallmatrix}
\right]
$
&
$
\left[
\begin{smallmatrix}
 4 & 0 & 0 & 5 & 1 \\
 1 & 4 & 5 & 2 & 0 
\end{smallmatrix}
\right]
$
\\
& & & & & & \\
\cline{2-7}
\multicolumn{7}{c}{\ }\\
\cline{2-7}
& & & & & & \\
$6$
&
$
\left[
\begin{smallmatrix}
 1 & 0 & 6 & 5 & 6 \\
 3 & 4 & 2 & 6 & 0 
\end{smallmatrix}
\right]
$
&
$
\left[
\begin{smallmatrix}
 2 & 1 & 0 & 5 & 6 \\
 0 & 0 & 6 & 1 & 2 
\end{smallmatrix}
\right]
$
&
$
\left[
\begin{smallmatrix}
 6 & 0 & 2 & 1 & 0 \\
 4 & 2 & 1 & 6 & 0 
\end{smallmatrix}
\right]
$
&
$
\left[
\begin{smallmatrix}
 5 & 1 & 6 & 6 & 0 \\
 6 & 5 & 1 & 0 & 4 
\end{smallmatrix}
\right]
$
&
$
\left[
\begin{smallmatrix}
 2 & 5 & 0 & 7 & 5 \\
 5 & 1 & 4 & 0 & 6
\end{smallmatrix}
\right]
$
&
$
\left[
\begin{smallmatrix}
 0 & 6 & 3 & 6 & 4 \\
 6 & 4 & 1 & 5 & 0 
\end{smallmatrix}
\right]
$
\\
& & & & & & \\
\cline{2-7} & & & & & & \\
& 
$
\left[
\begin{smallmatrix}
 6 & 3 & 1 & 0 & 2 \\
 1 & 5 & 0 & 0 & 6 
\end{smallmatrix}
\right]
$
&
$
\left[
\begin{smallmatrix}
 4 & 2 & 6 & 6 & 5 & 0 \\
 5 & 1 & 5 & 6 & 0 & 2 
\end{smallmatrix}
\right]
$
&
$
\left[
\begin{smallmatrix}
 3 & 5 & 1 & 4 & 0 & 6 \\
 5 & 2 & 2 & 0 & 3 & 6 
\end{smallmatrix}
\right]
$
&
$
\left[
\begin{smallmatrix}
 2 & 3 & 1 & 6 & 0 & 0 \\
 5 & 0 & 1 & 2 & 5 & 6 
\end{smallmatrix}
\right]
$
&
$
\left[
\begin{smallmatrix}
 3 & 4 & 1 & 0 & 6 & 0 \\
 6 & 1 & 0 & 1 & 2 & 0 
\end{smallmatrix}
\right]
$
&
$
\left[
\begin{smallmatrix}
 3 & 2 & 5 & 0 & 0 & 6 \\
 5 & 0 & 2 & 5 & 6 & 3 
\end{smallmatrix}
\right]
$
\\
 & & & & & & \\
\cline{2-7} & & & & & & \\
 & 
$
\left[
\begin{smallmatrix}
1 & 6 & 5 & 0 & 3 & 4\\
2 & 1 & 4 & 0 & 5 & 6
\end{smallmatrix}
\right]
$
&
$
\left[
\begin{smallmatrix}
 4 & 1 & 0 & 2 & 6 & 3 \\
 1 & 6 & 6 & 1 & 4 & 0 
\end{smallmatrix}
\right]
$
&
$
\left[
\begin{smallmatrix}
 1 & 2 & 5 & 6 & 2 & 0 \\
 5 & 6 & 0 & 1 & 1 & 2 
\end{smallmatrix}
\right]
$
&
$
\left[
\begin{smallmatrix}
 5 & 0 & 5 & 3 & 6 & 6 \\
 3 & 4 & 6 & 0 & 5 & 6 
\end{smallmatrix}
\right]
$
&
$
\left[
\begin{smallmatrix}
 5 & 0 & 2 & 5 & 6 & 3 \\
 3 & 2 & 5 & 0 & 1 & 6 
\end{smallmatrix}
\right]
$
&
$
\left[
\begin{smallmatrix}
 1 & 2 & 5 & 0 & 6 & 0 \\
 6 & 1 & 3 & 1 & 4 & 0 
\end{smallmatrix}
\right]
$
\\
 & & & & & & \\
\cline{2-7} & & & & & & \\
 & 
$
\left[
\begin{smallmatrix}
 3 & 1 & 2 & 5 & 0 & 6 \\
 0 & 1 & 5 & 3 & 6 & 2 
\end{smallmatrix}
\right]
$
&
$
\left[
\begin{smallmatrix}
 2 & 3 & 5 & 2 & 6 & 0 \\
 1 & 0 & 6 & 5 & 5 & 4 
\end{smallmatrix}
\right]
$
&
$
\left[
\begin{smallmatrix}
 4 & 0 & 0 & 5 & 1 & 6 \\
 1 & 5 & 6 & 3 & 0 & 2 
\end{smallmatrix}
\right]
$
&
$
\left[
\begin{smallmatrix}
 1 & 6 & 5 & 6 & 0 & 4 \\
 2 & 5 & 6 & 6 & 3 & 0 
\end{smallmatrix}
\right]
$
&
&
\\
& & & & & & \\
\cline{2-7}
\multicolumn{7}{c}{\ }\\
\cline{2-7}
& & & & & & \\
$7$
&
$
\left[
\begin{smallmatrix}
 4 & 6 & 2 & 7 & 0 \\
 7 & 4 & 8 & 5 & 0 
\end{smallmatrix}
\right]
$
&
$
\left[
\begin{smallmatrix}
 1 & 1 & 7 & 0 & 3 \\
 3 & 0 & 1 & 0 & 7 
\end{smallmatrix}
\right]
$
&
$
\left[
\begin{smallmatrix}
 { 4} & { 1} & { 7} & { 0} & { 5} \\
 { 2} & { 6} & { 0} & { 5} & { 8} 
\end{smallmatrix}
\right]
$
&
$
\left[
\begin{smallmatrix}
 7 & 0 & 1 & 2 & 0 \\
 4 & 3 & 6 & 7 & 0 
\end{smallmatrix}
\right]
$
&
$
\left[
\begin{smallmatrix}
 6 & 5 & 1 & 0 & 7 \\
 6 & 7 & 3 & 4 & 0 
\end{smallmatrix}
\right]
$
&
$
\left[
\begin{smallmatrix}
 5 & 6 & 7 & 2 & 0 \\
 1 & 7 & 0 & 3 & 5 
\end{smallmatrix}
\right]
$
\\
 & & & & & & \\
\cline{2-7} & & & & & & \\
 & 
$
\left[
\begin{smallmatrix}
 1 & 0 & 3 & 7 & 5 \\
 0 & 0 & 5 & 1 & 7 
\end{smallmatrix}
\right]
$
&
$
\left[
\begin{smallmatrix}
 6 & 7 & 0 & 2 & 0 \\
 7 & 7 & 4 & 0 & 6 
\end{smallmatrix}
\right]
$
&
$
\left[
\begin{smallmatrix}
 1 & 4 & 2 & 7 & 0 \\
 2 & 7 & 0 & 7 & 6 
\end{smallmatrix}
\right]
$
&
$
\left[
\begin{smallmatrix}
 0 & 0 & 5 & 1 & 7 \\
 2 & 0 & 2 & 7 & 3 
\end{smallmatrix}
\right]
$
&
$
\left[
\begin{smallmatrix}
 7 & 2 & 6 & 0 & 4 & 5 \\
 0 & 6 & 5 & 4 & 7 & 7 
\end{smallmatrix}
\right]
$
&
$
\left[
\begin{smallmatrix}
 0 & 7 & 6 & 4 & 7 & 5 \\
 3 & 5 & 2 & 0 & 7 & 0 
\end{smallmatrix}
\right]
$
\\
 & & & & & & \\
\cline{2-7} & & & & & & \\
 & 
$
\left[
\begin{smallmatrix}
 4 & 7 & 2 & 6 & 7 & 0 \\
 5 & 6 & 0 & 1 & 7 & 2 
\end{smallmatrix}
\right]
$
&
$
\left[
\begin{smallmatrix}
 2 & 1 & 0 & 5 & 6 & 7 \\
 7 & 7 & 0 & 6 & 4 & 5 
\end{smallmatrix}
\right]
$
&
$
\left[
\begin{smallmatrix}
 6 & 7 & 5 & 2 & 0 & 0 \\
 3 & 1 & 4 & 0 & 6 & 7 
\end{smallmatrix}
\right]
$
&
$
\left[
\begin{smallmatrix}
 0 & 1 & 7 & 5 & 2 & 3 \\
 5 & 2 & 7 & 3 & 0 & 0 
\end{smallmatrix}
\right]
$
&
$
\left[
\begin{smallmatrix}
 5 & 7 & 7 & 6 & 1 & 0 \\
 6 & 6 & 7 & 0 & 3 & 2 
\end{smallmatrix}
\right]
$
&
$
\left[
\begin{smallmatrix}
 7 & 7 & 2 & 2 & 0 & 0 \\
 5 & 4 & 1 & 7 & 1 & 0 
\end{smallmatrix}
\right]
$
\\
 & & & & & & \\
\cline{2-7} & & & & & & \\
 & 
$
\left[
\begin{smallmatrix}
 4 & 1 & 7 & 1 & 0 & 0 \\
 0 & 4 & 5 & 7 & 6 & 7 
\end{smallmatrix}
\right]
$
&
$
\left[
\begin{smallmatrix}
 4 & 5 & 6 & 7 & 1 & 0 \\
 7 & 7 & 0 & 0 & 5 & 4 
\end{smallmatrix}
\right]
$
&
$
\left[
\begin{smallmatrix}
 0 & 1 & 7 & 5 & 3 & 4 \\
 7 & 4 & 5 & 1 & 1 & 0 
\end{smallmatrix}
\right]
$
&
$
\left[
\begin{smallmatrix}
 2 & 4 & 6 & 0 & 6 & 7 \\
 7 & 0 & 3 & 7 & 6 & 5 
\end{smallmatrix}
\right]
$
&
$
\left[
\begin{smallmatrix}
 6 & 1 & 7 & 2 & 0 & 3\\
 2 & 5 & 0 & 1 & 2 & 7
\end{smallmatrix}
\right]
$
&
$
\left[
\begin{smallmatrix}
 7 & 7 & 6 & 2 & 0 & 2 \\
 6 & 7 & 3 & 5 & 4 & 0 
\end{smallmatrix}
\right]
$
\\
 & & & & & & \\
\cline{2-7} & & & & & & \\
 & 
$
\left[
\begin{smallmatrix}
 5 & 1 & 7 & 4 & 0 & 2 \\
 3 & 4 & 6 & 7 & 0 & 7 
\end{smallmatrix}
\right]
$
&
$
\left[
\begin{smallmatrix}
 5 & 7 & 6 & 5 & 1 & 0 \\
 6 & 7 & 1 & 0 & 2 & 3 
\end{smallmatrix}
\right]
$
&
$
\left[
\begin{smallmatrix}
 7 & 1 & 0 & 2 & 2 & 0 \\
 7 & 0 & 3 & 6 & 0 & 5 
\end{smallmatrix}
\right]
$
&
$
\left[
\begin{smallmatrix}
 6 & 7 & 1 & 5 & 1 & 0 \\
 0 & 0 & 5 & 7 & 3 & 4 
\end{smallmatrix}
\right]
$
&
$
\left[
\begin{smallmatrix}
 5 & 1 & 0 & 3 & 5 & 7 \\
 2 & 7 & 7 & 0 & 6 & 5 
\end{smallmatrix}
\right]
$
&
$
\left[
\begin{smallmatrix}
 1 & 5 & 4 & 7 & 0 & 2 \\
 4 & 2 & 6 & 4 & 0 & 7 
\end{smallmatrix}
\right]
$
\\
 & & & & & & \\
\cline{2-7} & & & & & & \\
 & 
$
\left[
\begin{smallmatrix}
 3 & 1 & 6 & 0 & 5 & 7 \\
 0 & 6 & 5 & 5 & 7 & 7 
\end{smallmatrix}
\right]
$
&
$
\left[
\begin{smallmatrix}
 2 & 1 & 6 & 0 & 7 & 0 \\
 0 & 0 & 3 & 6 & 2 & 7 
\end{smallmatrix}
\right]
$
&
$
\left[
\begin{smallmatrix}
 7 & 0 & 0 & 7 & 4 & 5 \\
 5 & 1 & 2 & 7 & 0 & 0 
\end{smallmatrix}
\right]
$
&
$
\left[
\begin{smallmatrix}
 1 & 5 & 2 & 7 & 3 & 0 \\
 3 & 5 & 0 & 3 & 6 & 7 
\end{smallmatrix}
\right]
$
&
$
\left[
\begin{smallmatrix}
 0 & 0 & 7 & 6 & 7 & 5 \\
 1 & 0 & 4 & 6 & 2 & 7 
\end{smallmatrix}
\right]
$
&
$
\left[
\begin{smallmatrix}
 7 & 6 & 1 & 0 & 4 & 5 \\
 1 & 5 & 0 & 0 & 6 & 7 
\end{smallmatrix}
\right]
$
\\
 & & & & & & \\
\cline{2-7} & & & & & & \\
 & 
$
\left[
\begin{smallmatrix}
 6 & 2 & 5 & 0 & 6 & 7 \\
 1 & 7 & 0 & 7 & 6 & 5 
\end{smallmatrix}
\right]
$
&
$
\left[
\begin{smallmatrix}
 0 & 0 & 1 & 7 & 5 & 4 \\
 1 & 0 & 3 & 2 & 6 & 7 
\end{smallmatrix}
\right]
$
&
$
\left[
\begin{smallmatrix}
 4 & 3 & 3 & 0 & 5 & 7 \\
 6 & 7 & 1 & 3 & 0 & 2 
\end{smallmatrix}
\right]
$
&
$
\left[
\begin{smallmatrix}
 3 & 4 & 7 & 1 & 1 & 0 \\
 6 & 7 & 2 & 3 & 0 & 0 
\end{smallmatrix}
\right]
$
&
$
\left[
\begin{smallmatrix}
 6 & 7 & 0 & 1 & 3 & 2 \\
 2 & 0 & 3 & 1 & 6 & 7 
\end{smallmatrix}
\right]
$
&
$
\left[
\begin{smallmatrix}
 6 & 5 & 0 & 7 & 6 & 7 \\
 5 & 7 & 4 & 1 & 0 & 0 
\end{smallmatrix}
\right]
$
\\
 & & & & & & \\
\cline{2-7} & & & & & & \\
 & 
$
\left[
\begin{smallmatrix}
 6 & 7 & 2 & 4 & 1 & 0 \\
 6 & 5 & 3 & 0 & 7 & 7 
\end{smallmatrix}
\right]
$
&
$
\left[
\begin{smallmatrix}
 6 & 7 & 2 & 1 & 0 & 4 \\
 4 & 2 & 0 & 0 & 1 & 7 
\end{smallmatrix}
\right]
$
&
$
\left[
\begin{smallmatrix}
 6 & 7 & 4 & 0 & 1 & 2 \\
 4 & 3 & 1 & 6 & 7 & 0 
\end{smallmatrix}
\right]
$
&
$
\left[
\begin{smallmatrix}
 7 & 0 & 6 & 0 & 4 & 5 \\
 7 & 2 & 3 & 3 & 0 & 0 
\end{smallmatrix}
\right]
$
&
$
\left[
\begin{smallmatrix}
 4 & 2 & 7 & 6 & 7 & 0 \\
 2 & 6 & 2 & 7 & 0 & 5 
\end{smallmatrix}
\right]
$
&
$
\left[
\begin{smallmatrix}
 1 & 0 & 7 & 7 & 5 & 4 \\
 2 & 0 & 3 & 2 & 7 & 7 
\end{smallmatrix}
\right]
$
\\
 & & & & & & \\
\cline{2-7} & & & & & & \\
 & 
$
\left[
\begin{smallmatrix}
 3 & 0 & 5 & 2 & 3 & 7 \\
 6 & 5 & 3 & 0 & 0 & 7 
\end{smallmatrix}
\right]
$
&
$
\left[
\begin{smallmatrix}
 5 & 2 & 0 & 7 & 3 & 4 \\
 2 & 5 & 1 & 7 & 0 & 0 
\end{smallmatrix}
\right]
$
&
$
\left[
\begin{smallmatrix}
 2 & 3 & 6 & 7 & 7 & 0 \\
 5 & 7 & 2 & 4 & 3 & 0 
\end{smallmatrix}
\right]
$
&
$
\left[
\begin{smallmatrix}
 7 & 1 & 7 & 0 & 4 & 6 \\
 5 & 1 & 7 & 2 & 0 & 1 
\end{smallmatrix}
\right]
$
&
$
\left[
\begin{smallmatrix}
 2 & 0 & 7 & 6 & 7 & 5 \\
 6 & 5 & 2 & 0 & 3 & 7 
\end{smallmatrix}
\right]
$
&
$
\left[
\begin{smallmatrix}
 1 & 0 & 6 & 7 & 5 & 3 \\
 3 & 5 & 4 & 7 & 2 & 0 
\end{smallmatrix}
\right]
$
\\
 & & & & & & \\
\cline{2-7} & & & & & & \\
 & 
$
\left[
\begin{smallmatrix}
 0 & 2 & 7 & 5 & 3 & 4 \\
 4 & 1 & 7 & 1 & 0 & 0 
\end{smallmatrix}
\right]
$
&
$
\left[
\begin{smallmatrix}
 0 & 6 & 5 & 0 & 3 & 7 \\
 4 & 5 & 7 & 5 & 0 & 7 
\end{smallmatrix}
\right]
$
&
$
\left[
\begin{smallmatrix}
 3 & 2 & 4 & 5 & 0 & 7 \\
 6 & 7 & 0 & 0 & 2 & 1 
\end{smallmatrix}
\right]
$
&
$
\left[
\begin{smallmatrix}
 0 & 1 & 6 & 3 & 7 & 2 \\
 5 & 7 & 3 & 0 & 2 & 0 
\end{smallmatrix}
\right]
$
&
$
\left[
\begin{smallmatrix}
 4 & 7 & 6 & 1 & 0 & 5 \\
 1 & 0 & 4 & 5 & 4 & 7 
\end{smallmatrix}
\right]
$
&
$
\left[
\begin{smallmatrix}
 7 & 7 & 1 & 0 & 3 & 6 \\
 2 & 0 & 6 & 5 & 7 & 6 
\end{smallmatrix}
\right]
$
\\
 & & & & & & \\
\cline{2-7} & & & & & & \\
 & 
$
\left[
\begin{smallmatrix}
 5 & 7 & 1 & 4 & 3 & 0 \\
 6 & 5 & 4 & 1 & 0 & 7 
\end{smallmatrix}
\right]
$
&
$
\left[
\begin{smallmatrix}
 4 & 3 & 0 & 7 & 7 & 5 \\
 6 & 1 & 3 & 6 & 7 & 0 
\end{smallmatrix}
\right]
$
&
$
\left[
\begin{smallmatrix}
 3 & 2 & 5 & 0 & 7 & 7 \\
 0 & 0 & 4 & 7 & 1 & 2 
\end{smallmatrix}
\right]
$
&
$
\left[
\begin{smallmatrix}
 6 & 4 & 5 & 1 & 0 & 7 \\
 2 & 0 & 0 & 3 & 1 & 7 
\end{smallmatrix}
\right]
$
&
$
\left[
\begin{smallmatrix}
 1 & 2 & 3 & 7 & 0 & 0 \\
 5 & 7 & 1 & 3 & 1 & 0 
\end{smallmatrix}
\right]
$
&
$
\left[
\begin{smallmatrix}
 3 & 2 & 1 & 0 & 7 & 0 \\
 5 & 0 & 0 & 5 & 2 & 7 
\end{smallmatrix}
\right]
$
\\
 & & & & & & \\
\cline{2-7} & & & & & & \\
 & 
$
\left[
\begin{smallmatrix}
 2 & 3 & 5 & 7 & 3 & 0 \\
 5 & 7 & 2 & 6 & 1 & 0 
\end{smallmatrix}
\right]
$
&
$
\left[
\begin{smallmatrix}
 1 & 0 & 5 & 4 & 6 & 7 \\
 0 & 0 & 6 & 7 & 1 & 2 
\end{smallmatrix}
\right]
$
&
$
\left[
\begin{smallmatrix}
 5 & 2 & 0 & 7 & 0 & 1 \\
 3 & 7 & 2 & 5 & 1 & 0 
\end{smallmatrix}
\right]
$
&
$
\left[
\begin{smallmatrix}
 4 & 3 & 0 & 6 & 7 & 7 \\
 6 & 7 & 2 & 0 & 1 & 0 
\end{smallmatrix}
\right]
$
&
$
\left[
\begin{smallmatrix}
 0 & 5 & 0 & 7 & 2 & 1 \\
 2 & 6 & 0 & 5 & 7 & 7 
\end{smallmatrix}
\right]
$
&
$
\left[
\begin{smallmatrix}
 7 & 3 & 1 & 0 & 6 & 5 \\
 2 & 5 & 2 & 0 & 7 & 7 
\end{smallmatrix}
\right]
$
\\
 & & & & & & \\
\cline{2-7} & & & & & & \\
 & 
$
\left[
\begin{smallmatrix}
 3 & 4 & 3 & 1 & 0 & 0 & 7 \\
 5 & 0 & 0 & 1 & 6 & 7 & 2 
\end{smallmatrix}
\right]
$
&
$
\left[
\begin{smallmatrix}
 6 & 4 & 2 & 5 & 0 & 5 & 7 \\
 5 & 6 & 0 & 7 & 1 & 1 & 2 
\end{smallmatrix}
\right]
$
&
$
\left[
\begin{smallmatrix}
 6 & 1 & 7 & 0 & 0 & 3 & 4 \\
 4 & 3 & 2 & 1 & 0 & 6 & 7 
\end{smallmatrix}
\right]
$
&
$
\left[
\begin{smallmatrix}
 1 & 7 & 2 & 3 & 2 & 0 & 0 \\
 5 & 4 & 1 & 7 & 7 & 1 & 0 
\end{smallmatrix}
\right]
$
&
$
\left[
\begin{smallmatrix}
 1 & 4 & 6 & 0 & 7 & 4 & 7 \\
 5 & 1 & 4 & 4 & 1 & 7 & 0 
\end{smallmatrix}
\right]
$
&
$
\left[
\begin{smallmatrix}
 5 & 1 & 4 & 4 & 1 & 7 & 0 \\
 1 & 4 & 6 & 0 & 7 & 4 & 6 
\end{smallmatrix}
\right]
$
\\
 & & & & & & \\
\cline{2-7} & & & & & & \\
 & 
$
\left[
\begin{smallmatrix}
 1 & 6 & 0 & 5 & 7 & 7 & 4 \\
 1 & 3 & 3 & 6 & 6 & 7 & 0 
\end{smallmatrix}
\right]
$
&
$
\left[
\begin{smallmatrix}
 6 & 5 & 1 & 0 & 4 & 6 & 7 \\
 4 & 7 & 3 & 4 & 1 & 0 & 0 
\end{smallmatrix}
\right]
$
&
$
\left[
\begin{smallmatrix}
 2 & 7 & 1 & 6 & 0 & 6 & 5 \\
 1 & 6 & 3 & 7 & 2 & 2 & 0 
\end{smallmatrix}
\right]
$
&
$
\left[
\begin{smallmatrix}
 3 & 0 & 2 & 0 & 5 & 1 & 7 \\
 2 & 1 & 7 & 0 & 6 & 7 & 5 
\end{smallmatrix}
\right]
$
&
$
{\color{blue}
\left[
\begin{smallmatrix}
 {\bf 6} & {\bf 5} & {\bf 1} & {\bf 8} & {\bf 0} & {\bf 0} & {\bf 3} \\
 {\bf 1} & {\bf 4} & {\bf 3} & {\bf 2} & {\bf 6} & {\bf 7} & {\bf 0} 
\end{smallmatrix}
\right]
}
$
&
$
\left[
\begin{smallmatrix}
 1 & 4 & 5 & 4 & 7 & 0 & 0 \\
 6 & 1 & 5 & 7 & 0 & 5 & 4 
\end{smallmatrix}
\right]
$
\\
 & & & & & & \\
\cline{2-7} & & & & & & \\
 & 
$
\left[
\begin{smallmatrix}
 0 & 0 & 6 & 4 & 7 & 3 & 1 \\
 1 & 0 & 6 & 7 & 5 & 7 & 6 
\end{smallmatrix}
\right]
$
&
$
\left[
\begin{smallmatrix}
 1 & 3 & 6 & 2 & 7 & 3 & 0 \\
 2 & 6 & 5 & 0 & 7 & 0 & 5 
\end{smallmatrix}
\right]
$
&
$
\left[
\begin{smallmatrix}
 2 & 6 & 7 & 6 & 1 & 0 & 5 \\
 1 & 7 & 7 & 2 & 3 & 2 & 0 
\end{smallmatrix}
\right]
$
&
$
\left[
\begin{smallmatrix}
 5 & 4 & 4 & 1 & 0 & 7 & 6 \\
 1 & 0 & 6 & 4 & 3 & 6 & 7 
\end{smallmatrix}
\right]
$
&
$
\left[
\begin{smallmatrix}
 5 & 1 & 4 & 4 & 2 & 7 & 0 \\
 2 & 1 & 6 & 0 & 5 & 7 & 2 
\end{smallmatrix}
\right]
$
&
$
\left[
\begin{smallmatrix}
 1 & 7 & 2 & 3 & 3 & 0 & 0 \\
 1 & 7 & 0 & 6 & 0 & 3 & 4 
\end{smallmatrix}
\right]
$
\\
 & & & & & & \\
\cline{2-7} & & & & & & \\
 & 
$
\left[
\begin{smallmatrix}
 5 & 6 & 1 & 7 & 0 & 2 & 0 \\
 2 & 4 & 5 & 3 & 1 & 7 & 0 
\end{smallmatrix}
\right]
$
&
$
\left[
\begin{smallmatrix}
 3 & 0 & 4 & 6 & 3 & 7 & 7 \\
 1 & 3 & 6 & 3 & 7 & 1 & 0 
\end{smallmatrix}
\right]
$
&
$
\left[
\begin{smallmatrix}
 4 & 5 & 5 & 7 & 1 & 0 & 0 \\
 6 & 7 & 1 & 2 & 0 & 1 & 0 
\end{smallmatrix}
\right]
$
&
$
\left[
\begin{smallmatrix}
 1 & 1 & 0 & 6 & 7 & 6 & 4 \\
 2 & 0 & 0 & 6 & 4 & 2 & 7 
\end{smallmatrix}
\right]
$
&
$
\left[
\begin{smallmatrix}
 2 & 6 & 7 & 4 & 1 & 3 & 0 \\
 5 & 3 & 5 & 7 & 0 & 7 & 0 
\end{smallmatrix}
\right]
$
&
$
\left[
\begin{smallmatrix}
 6 & 0 & 0 & 1 & 6 & 3 & 7 \\
 2 & 5 & 4 & 2 & 0 & 7 & 0 
\end{smallmatrix}
\right]
$
\\
 & & & & & & \\
\cline{2-7} & & & & & & \\
 & 
$
\left[
\begin{smallmatrix}
 3 & 3 & 6 & 0 & 6 & 7 & 7 \\
 1 & 7 & 3 & 3 & 0 & 1 & 0 
\end{smallmatrix}
\right]
$
&
$
\left[
\begin{smallmatrix}
 0 & 0 & 6 & 7 & 7 & 4 & 2 \\
 4 & 3 & 3 & 1 & 0 & 6 & 7 
\end{smallmatrix}
\right]
$
&
$
\left[
\begin{smallmatrix}
 4 & 6 & 1 & 5 & 0 & 4 & 7 \\
 6 & 1 & 5 & 0 & 7 & 0 & 4 
\end{smallmatrix}
\right]
$
&
$
\left[
\begin{smallmatrix}
 1 & 6 & 6 & 7 & 1 & 0 & 3 \\
 3 & 5 & 7 & 7 & 6 & 5 & 0 
\end{smallmatrix}
\right]
$
&
$
\left[
\begin{smallmatrix}
 6 & 5 & 1 & 5 & 0 & 7 & 7 \\
 0 & 0 & 7 & 6 & 7 & 4 & 5 
\end{smallmatrix}
\right]
$
&
$
\left[
\begin{smallmatrix}
 2 & 7 & 6 & 7 & 0 & 6 & 5 \\
 3 & 3 & 1 & 2 & 0 & 6 & 7 
\end{smallmatrix}
\right]
$
\\
 & & & & & & \\
\cline{2-7} & & & & & & \\
 & 
$
\left[
\begin{smallmatrix}
 5 & 6 & 1 & 4 & 7 & 5 & 0 \\
 6 & 2 & 2 & 0 & 7 & 0 & 3 
\end{smallmatrix}
\right]
$
&
$
\left[
\begin{smallmatrix}
 1 & 0 & 6 & 7 & 6 & 4 & 5 \\
 6 & 5 & 0 & 0 & 4 & 7 & 7 
\end{smallmatrix}
\right]
$
&
$
\left[
\begin{smallmatrix}
 6 & 2 & 5 & 1 & 0 & 5 & 7 \\
 3 & 4 & 0 & 3 & 1 & 6 & 7 
\end{smallmatrix}
\right]
$
&
$
\left[
\begin{smallmatrix}
 6 & 4 & 4 & 5 & 0 & 0 & 7 \\
 2 & 5 & 0 & 0 & 1 & 2 & 7 
\end{smallmatrix}
\right]
$
&
$
\left[
\begin{smallmatrix}
 5 & 6 & 4 & 1 & 0 & 4 & 7 \\
 6 & 4 & 7 & 3 & 4 & 1 & 0 
\end{smallmatrix}
\right]
$
&
\\
& & & & & & \\
\cline{2-7}
\multicolumn{7}{c}{\ }\\
\caption{List of Lang--Trotter polygons for $m\leq 7$}
\label{tab:good}
\end{longtable}
}

\end{database}

\begin{database}\label{adhadthstj}
A database of Lang--Trotter polygons
that can be used to 
show that the pseudo-effective 
cone of $\oM_{0,n}$ is not polyhedral for $n\geq 10$
in characteristic $p$ for any prime $p<2000$
(see Remark~\ref{rem:LT}).
For each polygon, the
corresponding non-polyhedral primes
are displayed.

\begin{center}
{
\scriptsize
\begin{longtable}{|c|c|}
\hline
{\rm vertices}  & {\rm non-polyhedral primes} \\
\hline
&\\
$
\left[
\begin{smallmatrix}
 0 & 6 & 9 & 10 & 9 & 3 & 1 \\
 0 & 2 & 4 & 5 & 6 & 10 & 4  
\end{smallmatrix}
\right]
$
&
$\begin{smallmatrix}
2, 19, 29, 31, 53, 71, 83, 97, 103, 131, 167, 211, 233, 257, 263, 269, 277, 313, 347, 373, 419, 439,\\ 
461, 487, 491, 577, 593, 619, 643, 653, 661, 709, 761, 827, 907, 919, 941, 953, 991, 1013, 1061,\\ 
1097, 1123, 1213, 1223, 1231, 1249, 1289, 1367, 1451, 1481, 1483, 1499, 1543, 1549, 1583, 1721,\\ 
1723, 1741, 1787, 1871, 1873
\end{smallmatrix}
$
\\
&\\
\hline
&\\
$
\left[
\begin{smallmatrix}
 10 & 10 & 9 & 6 & 3 & 0 & 2 & 7 \\
 4 & 3 & 1 & 0 & 1 & 10 & 9 & 6 
\end{smallmatrix}
\right]
$
&
$\begin{smallmatrix}
7, 11, 13, 53, 59, 71, 107, 109, 127, 149, 157, 167, 173, 179, 181, 263, 271, 277, 283, 293, 337, 419,\\ 
421, 443, 449, 463, 487, 593, 601, 619, 643, 653, 677, 727, 751, 757, 761, 773, 797, 857, 859, 877,\\ 
887, 911, 929, 937, 997, 1019, 1031, 1049, 1061, 1069, 1087, 1091, 1103, 1163, 1231, 1249, 1291,\\ 
1301, 1319, 1373, 1427, 1439, 1447, 1451, 1459, 1489, 1493, 1523, 1553, 1559, 1571, 1609, 1613,\\ 
1669, 1721, 1741, 1747, 1777, 1787, 1811, 1871, 1889, 1901, 1933, 1973, 1987, 1993, 1997
\end{smallmatrix}
$
\\
&\\
\hline
&\\
$
\left[
\begin{smallmatrix}
 5 & 6 & 9 & 11 & 12 & 2 & 0 & 2 & 3 \\
 0 & 0 & 1 & 2 & 4 & 10 & 11 & 5 & 3 
\end{smallmatrix}
\right]
$
&
$\begin{smallmatrix}
23, 29, 41, 59, 67, 71, 131, 139, 179, 181, 191, 199, 223, 229, 241, 251, 307, 311, 331, 337, 349,\\ 
379, 401, 409, 419, 421, 443, 461, 491, 547, 571, 577, 587, 601, 631, 647, 661, 673, 701, 733, 739,\\ 
751, 787, 827, 839, 857, 859, 911, 919, 937, 971, 977, 983, 991, 1013, 1019, 1021, 1039, 1061, 1063,\\ 
1087, 1109, 1123, 1129, 1171, 1187, 1213, 1223, 1229, 1237, 1249, 1259, 1277, 1279, 1307, 1327,\\ 
1381, 1409, 1429, 1447, 1459, 1493, 1511, 1549, 1571, 1579, 1583, 1597, 1619, 1621, 1699, 1723,\\ 
1741, 1759, 1811, 1823, 1831, 1847, 1873, 1913, 1931, 1933, 1979, 1987
\end{smallmatrix}
$
\\
&\\
\hline
&\\
$
\left[
\begin{smallmatrix}
 0 & 5 & 9 & 10 & 12 & 11 & 5 & 4 \\
 12 & 10 & 7 & 6 & 3 & 2 & 0 & 0 
\end{smallmatrix}
\right]
$
&
 $\begin{smallmatrix}
 23, 31, 37, 41, 47, 53, 73, 101, 131, 139, 197, 199, 223, 233, 307, 317, 331, 383, 389, 401, 421, 439,\\ 
 449, 461, 479, 487, 499, 509, 569, 571, 593, 599, 607, 631, 641, 673, 701, 709, 743, 787, 811, 829,\\ 
 857, 863, 877, 881, 907, 911, 941, 1019, 1021, 1123, 1151, 1153, 1171, 1217, 1231, 1237, 1259, 1291,\\ 
 1297, 1423, 1429, 1481, 1583, 1609, 1657, 1723, 1753, 1783, 1823, 1871, 1879, 1889, 1901, 1907,\\ 
 1973, 1979, 1987, 1997
 \end{smallmatrix}
$
\\
&\\
\hline
&\\
$
\left[
\begin{smallmatrix}
 0 & 2 & 12 & 13 & 13 & 11 & 9 & 4 \\
 0 & 1 & 7 & 9 & 10 & 12 & 13 & 12 
\end{smallmatrix}
\right]
$
&
 $\begin{smallmatrix}
31, 37, 47, 79, 131, 139, 151, 181, 211, 223, 239, 257, 271, 281, 307, 331, 373, 389, 409, 433, 457,\\ 
461, 479, 523, 569, 577, 587, 641, 659, 683, 709, 719, 733, 743, 761, 769, 809, 821, 823, 853, 859,\\ 
863, 887, 953, 997, 1013, 1063, 1093, 1103, 1117, 1129, 1153, 1163, 1181, 1201, 1237, 1249, 1283,\\ 
1361, 1367, 1439, 1471, 1531, 1553, 1601, 1609, 1699, 1721, 1741, 1789, 1867, 1871, 1873, 1889,\\ 
1907, 1931, 1973, 1979, 1997
\end{smallmatrix}
$
\\
&\\
\hline
&\\
$
\left[
\begin{smallmatrix}
 0 & 2 & 12 & 13 & 12 & 11 & 8 & 7 & 4 \\
 0 & 1 & 7 & 9 & 11 & 12 & 13 & 13 & 12 
\end{smallmatrix}
\right]
$
&
$
\begin{smallmatrix}
31, 61, 71, 89, 97, 109, 127, 139, 149, 163, 173, 191, 193, 227, 233, 257, 271, 281, 311, 313, 347,\\ 
349, 353, 389, 421, 433, 457, 463, 467, 479, 491, 499, 541, 563, 571, 587, 607, 613, 631, 643, 683,\\ 
733, 743, 751, 757, 769, 797, 809, 821, 853, 857, 863, 907, 941, 967, 971, 991, 997, 1013, 1019,\\ 
1031, 1049, 1051, 1063, 1087, 1091, 1093, 1097, 1109, 1153, 1163, 1193, 1217, 1279, 1283, 1303,\\ 
1321, 1433, 1439, 1451, 1481, 1483, 1493, 1499, 1511, 1543, 1559, 1571, 1597, 1621, 1667, 1693,\\ 
1723, 1759, 1823, 1867, 1913, 1931, 1973, 1979, 1987
\end{smallmatrix}
$
\\
&\\
\hline
&\\
$
\left[
\begin{smallmatrix}
 13 & 9 & 5 & 4 & 2 & 1 & 0 & 1 & 11 \\
 8 & 0 & 3 & 4 & 7 & 9 & 12 & 13 & 9 
\end{smallmatrix}
\right]
$
&
$
\begin{smallmatrix}
11, 19, 59, 83, 101, 107, 113, 163, 167, 181, 197, 269, 293, 307, 313, 317, 337, 347, 349, 359, 373,\\ 
401, 461, 491, 499, 509, 521, 569, 617, 643, 647, 661, 677, 683, 739, 787, 797, 809, 821, 827, 829,\\ 
839, 859, 883, 887, 941, 983, 1087, 1109, 1117, 1163, 1213, 1237, 1277, 1283, 1291, 1303, 1307,\\ 
1429, 1451, 1483, 1493, 1553, 1597, 1621, 1637, 1667, 1733, 1801, 1901, 1933, 1993, 1997
\end{smallmatrix}
$
\\
&\\
\hline
&\\
$
\left[
\begin{smallmatrix}
 0 & 1 & 10 & 12 & 13 & 12 & 10 & 7 & 1 \\
 0 & 0 & 3 & 4 & 6 & 9 & 13 & 12 & 2 
\end{smallmatrix}
\right]
$
&
$
\begin{smallmatrix}
11, 23, 29, 31, 43, 59, 67, 73, 137, 149, 157, 223, 229, 271, 277, 281, 283, 293, 353, 367, 439, 457,\\ 
461, 491, 503, 577, 599, 601, 641, 643, 647, 653, 661, 691, 733, 757, 941, 977, 997, 1019, 1049, 1051,\\ 
1061, 1069, 1193, 1249, 1301, 1303, 1327, 1373, 1451, 1471, 1487, 1543, 1553, 1559, 1579, 1597,\\ 
1607, 1627, 1669, 1699, 1723, 1753, 1777, 1789, 1831, 1847, 1877, 1913, 1933, 1949, 1997, 1999
\end{smallmatrix}
$
\\
&\\
\hline
&\\
$
\left[
\begin{smallmatrix}
 0 & 12 & 13 & 13 & 11 & 9 & 7 & 1 \\
 0 & 4 & 9 & 10 & 12 & 13 & 12 & 2 
\end{smallmatrix}
\right]
$
&
$
\begin{smallmatrix}
7, 11, 67, 101, 139, 199, 251, 313, 331, 337, 353, 373, 383, 419, 421, 431, 503, 541, 557, 571, 587,\\ 
601, 607, 617, 619, 659, 709, 719, 733, 751, 857, 877, 883, 911, 947, 967, 1033, 1093, 1123, 1163,\\ 
1193, 1277, 1279, 1283, 1289, 1303, 1319, 1327, 1381, 1409, 1423, 1429, 1439, 1453, 1459, 1499,\\ 
1531, 1549, 1621, 1657, 1663, 1667, 1787, 1879, 1913, 1951
\end{smallmatrix}
$
\\
&\\
\hline
&\\
$
\left[
\begin{smallmatrix}
 0 & 2 & 12 & 13 & 13 & 12 & 11 & 9 & 4 \\
 0 & 1 & 7 & 8 & 9 & 11 & 12 & 13 & 12 
\end{smallmatrix}
\right]
$
&
$
\begin{smallmatrix}
5, 17, 23, 29, 41, 43, 67, 73, 79, 101, 103, 107, 113, 157, 173, 179, 191, 193, 227, 229, 239, 251,\\ 
263, 277, 281, 283, 313, 331, 337, 349, 353, 367, 379, 389, 397, 443, 449, 457, 463, 467, 479, 487,\\ 
503, 509, 521, 557, 563, 587, 617, 641, 643, 647, 653, 659, 701, 773, 787, 809, 823, 859, 887, 907,\\ 
911, 937, 941, 947, 983, 991, 1009, 1013, 1019, 1039, 1049, 1087, 1091, 1097, 1103, 1187, 1217,\\ 
1279, 1289, 1303, 1307, 1321, 1327, 1373, 1399, 1409, 1427, 1429, 1453, 1471, 1483, 1487, 1493,\\ 
1511, 1523, 1553, 1579, 1619, 1621, 1663, 1667, 1693, 1697, 1709, 1721, 1723, 1733, 1759, 1777,\\ 
1831, 1867, 1871, 1873, 1877, 1889, 1907, 1931, 1951, 1973, 1993, 1997
\end{smallmatrix}
$
\\
&\\
\hline
&\\
$
\left[
\begin{smallmatrix}
 0 & 5 & 10 & 12 & 14 & 14 & 5 & 4 \\
 0 & 2 & 5 & 7 & 10 & 11 & 14 & 12 
\end{smallmatrix}
\right]
$
&
$
\begin{smallmatrix}
31, 37, 47, 79, 103, 127, 137, 149, 151, 163, 199, 211, 223, 229, 257, 269, 271, 311, 347, 353, 359,\\ 
389, 397, 401, 419, 439, 443, 457, 461, 463, 487, 499, 503, 523, 569, 571, 631, 677, 701, 727, 751,\\ 
773, 823, 853, 883, 911, 919, 947, 953, 967, 991, 1019, 1039, 1063, 1097, 1123, 1151, 1153, 1171,\\ 
1193, 1201, 1217, 1223, 1231, 1279, 1283, 1289, 1303, 1307, 1327, 1373, 1423, 1447, 1453, 1471,\\ 
1499, 1511, 1523, 1543, 1567, 1571, 1607, 1693, 1699, 1723, 1733, 1753, 1759, 1777, 1783, 1801,\\ 
1831, 1861, 1877, 1879, 1889, 1913, 1951, 1987, 1999
\end{smallmatrix}
$
\\
&\\
\hline
&\\
$
\left[
\begin{smallmatrix}
 0 & 5 & 7 & 12 & 13 & 14 & 12 & 6 & 2 \\
 0 & 2 & 3 & 6 & 8 & 11 & 12 & 14 & 6 
\end{smallmatrix}
\right]
$
&
$
\begin{smallmatrix}
3, 17, 19, 61, 67, 127, 197, 223, 241, 251, 263, 271, 277, 307, 359, 367, 431, 463, 487, 563, 641,\\ 
659, 701, 719, 733, 751, 761, 797, 823, 829, 839, 877, 887, 911, 967, 977, 1031, 1049, 1093, 1123,\\ 
1153, 1163, 1223, 1249, 1277, 1321, 1327, 1433, 1447, 1453, 1481, 1571, 1613, 1627, 1663, 1709,\\ 
1733, 1759, 1787, 1801, 1847, 1901, 1997
\end{smallmatrix}
$
\\
&\\
\hline
\caption{}\label{tab-M010}
\end{longtable}
}
\end{center}

\end{database}
%
%
%


We give an overview of the MAGMA package, which can be downloaded from:
\begin{center}
\url{https://github.com/alaface/non-polyhedral}
\end{center}
and contains  descriptions of all functions.
We first use Polygon 111 as a running example, then we
study infinite families of pentagons and heptagons
from Section~\ref{asrgasrharh}. After that we find
non-polyhedral primes up to $2000$ for the polygon of
Section~\ref{sgasrhasrh}, and finally we study Halphen 
polygons, in particular the one in Example \ref{ex3}.

\begin{computation}\label{sdfvwefvwefv}
Normal fan of the lattice polygon $\Delta$, the fan of the minimal resolution of the toric surface $\bP_\Delta$, $\vol(\Delta)$, number of boundary points.
\begin{tbox}
{\footnotesize
\begin{verbatim}
> pol := Polytope([[6,1],[5,4],[1,3],[8,2],[0,6],[0,7],[3,0]]);
  Transpose(Matrix(Reorder(Rays(NormalFan(pol)))));
[ 3 -1 -1 -2 -3  1  3]
[ 2  3  2 -3 -5  0  1]
> Transpose(Matrix(Reorder(Rays(Resolution(NormalFan(pol))))));
[ 3  1  0 -1 -1 -1 -1 -1 -2 -3 -1  0  1  3  2]
[ 2  1  1  3  2  1  0 -1 -3 -5 -2 -1  0  1  1]
> [Volume(pol),#BoundaryPoints(pol)];
[ 49, 7 ]
\end{verbatim}
}
\end{tbox}
\end{computation}

\begin{computation}\label{asarsgwRG}
Dimension of linear systems $\cL_\Delta(m)$ 
and $\cL_{k\Delta}(km)$
(over different fields),
equation $f$ of $\Gamma\subset\bG_m^2$, Newton polytope of $f$.\\
\begin{tbox}
{\footnotesize
\begin{verbatim}
> m := Width(pol);
  #FindCurves(pol,m,Rationals());
1
> #FindCurves(2*pol,2*m,GF(5));
2
> f := FindCurves(pol,m,Rationals())[1];
 Transpose(Matrix(Vertices(NPolytope(f))));
[8 6 5 3 1 0 0]
[2 1 4 0 3 7 6]
\end{verbatim}
}
\end{tbox}
\end{computation}

\begin{computation}\label{adfafgarg}
Irreducibility and geometric genus of $\Gamma$.
\begin{tbox}
{\footnotesize
\begin{verbatim}
> IsIrreducible(FindCurve(pol,m,Rationals()));       
true
> Genus(FindCurve(pol,m,Rationals()));
1
\end{verbatim}
}
\end{tbox}
\end{computation}

\begin{computation}\label{sfsvwefv}
In the minimal resolution $\tilde X$
of $X$, a divisor linearly equivalent to the pullback of $C$
together with the prime components of the pullback of $K_X+C$, their multiplicities, Newton polygons and equations. 
\begin{tbox}
{\footnotesize
\begin{verbatim}
> AdjSys(pol);
[
    [ 19, 7, 2, 1, 0, 0, 0, 0, 0, 1, 2, 5, 8, 20, 13, -7 ],
    [ 3, 1, 0, 0, 0, 0, 0, 0, 0, 0, 0, 0, 1, 3, 2, -1 ],
    [ 1, 0, 0, 0, 0, 0, 0, 0, 1, 2, 1, 1, 1, 2, 1, -1 ],
    [ 8, 3, 1, 1, 0, 0, 0, 0, 0, 1, 1, 2, 3, 8, 5, -3 ]
]
> MultAdjSys(pol); 
[ 2, 1, 1 ]
> PolsAdjSys(pol);
[
    x[1] - 1,
    x[1] - x[2],
    x[1]^3*x[2] - 3*x[1]^2*x[2] - x[1]*x[2]^2 + 5*x[1]*x[2] - x[1] + x[2]^3 - 
        2*x[2]^2
]
\end{verbatim}
}
\end{tbox}
\end{computation}

\begin{computation}\label{efvwefvwef}
Root lattice of $\Delta$, the map $\Cl(X)\to\Cl(Y)$, 
intersection matrices of $X$ and $Y$ (the latter is not 
with respect to a basis).
\begin{tbox}
{\footnotesize
\begin{verbatim}
> RootLat(pol);
A6 A1
> Cl,g := MapToY(pol);
  Cl;
Full Quotient RSpace of degree 3 over Integer Ring
Column moduli:
[ 0, 0, 0 ]
> imatX(pol);
[-10/33   1/11      0      0      0      0    1/3      0]
[  1/11  -8/11      1      0      0      0      0      0]
[     0      1   -9/7    1/7      0      0      0      0]
[     0      0    1/7  -11/7      1      0      0      0]
[     0      0      0      1   -3/5    1/5      0      0]
[     0      0      0      0    1/5  -12/5      1      0]
[   1/3      0      0      0      0      1   -2/3      0]
[     0      0      0      0      0      0      0     -1]
> imatY(pol);
[ 17/14    8/7  13/14  25/14    4/7  31/14    1/2 103/14]
[   8/7    3/7    9/7    6/7    5/7   15/7      0   39/7]
[ 13/14    9/7   5/14  29/14    1/7  27/14    1/2  87/14]
[ 25/14    6/7  29/14  17/14   10/7  39/14    1/2 135/14]
[   4/7    5/7    1/7   10/7   -1/7   11/7      0   23/7]
[ 31/14   15/7  27/14  39/14   11/7  45/14    3/2 201/14]
[   1/2      0    1/2    1/2      0    3/2   -1/2    3/2]
[103/14   39/7  87/14 135/14   23/7 201/14    3/2 573/14]
\end{verbatim}
}
\end{tbox}
\end{computation}

\begin{computation}\label{asdcvq}
Minimal equation of $C$ and images of intersection points
with the toric boundary divisors
using the standard MAGMA algorithm, $\rd(C)$ and images of roots
in $\Pic^0(C)$ (identified with $C$),
polyhedrality of specific primes. The algorithm constructs a birational map
$u: C\dashrightarrow E$, with $E$ given by a minimal  Weierstrass equation in $\bP^2$. 
We consider only examples where  $E$ is smooth and the map $u$ is defined everywhere in characteristic $0$. 
Since $C$ has arithmetic genus $1$, it follows that $C$ is smooth and the map $u$ is an isomorphism. 
Similarly, for specific primes $p$, we discard those primes for which $E$ is not smooth, or for which the map $u$ is not defined 
everywhere. 

\begin{tbox}
{\footnotesize
\begin{verbatim}
> E,u := EllCur(pol);
  E;
Elliptic Curve defined by y^2 + x*y = x^3 - x^2 - 4*x + 4 over Rational Field
> Cl,g := MapToY(pol);
  C := FindCurve(pol,Width(pol),Rationals());
  A := Ambient(C);
  f := Equation(C);
  h := map<A->Ambient(E) | [Evaluate(p,[A.1,A.2,1]) : 
  p in DefiningEquations(u)]>;
  ff := [i : i in [1..#Vertices(pol)] | Volume(OrdFacets(pol)[i]) eq 1];
  pts := [E!PtsCur(h,f,u,pol,i) : i in ff];
  B := resC(pol,E,ff,pts);
  pts;
[ (6 : 10 : 1), (-3/16 : -133/64 : 1), (0 : 2 : 1), (496 : -11286 : 1),
(1 : -1 : 1), (32/49 : -510/343 : 1), (16/9 : -14/27 : 1) ]
> res := resC(pol,E,ff,pts);
  res;
[ (16/9 : -14/27 : 1), (1 : -1 : 1), (2 : 0 : 1) ]
> roots := FindRoots(pol);
  ImgRoots := [&+[Eltseq(v)[i]*B[i] : i in [1..#B]] : v in roots];
  ImgRoots;
[ (-2 : 2 : 1), (0 : 2 : 1), (85/49 : -244/343 : 1), (-19/16 : -119/64 : 1), 
(394519648/356869881 : -4479186863510/6741628921971 : 1), (4303/4489 : 
-323950/300763 : 1), (0 : -2 : 1), (-19/16 : 195/64 : 1) ]
> C := g(CinS(pol));
  ImgC := &+[Eltseq(C)[i]*B[i] : i in [1..#B]];
  ImgC;
(-1 : -2 : 1)
> NonPolyhedralPrimes(pol,2000);
{ 47, 71, 103, 197, 233, 239, 277, 313, 367, 379, 409, 503, 563, 599, 647, 
677, 683, 691, 719, 727, 761, 829, 911, 997, 1103, 1123, 1151, 1171, 1187, 1231,
1283, 1327, 1481, 1493, 1709, 1723, 1861, 1907, 1997 }
\end{verbatim}
}
\end{tbox}
\end{computation}

\begin{computation}\label{MZNdc,manBDC}
For the sequence of polygons $\Delta_k$ (we use pentagons from \S\ref{adrhdjd} and $k>0$ as an example),
verify that $\cO(C)|_C$ is not torsion using Mazur's theorem.
Find the type of the rational elliptic fibration
with fibers  $C_k$ (when $k$ varies).
\begin{tbox}
{\footnotesize
\begin{verbatim}
> K<t> := FunctionField(Rationals());
  a := -(12*t^2+24*t+11);
  b := 4*(t+1)^2*(3*t+2)*(3*t+4);
  E := EllipticCurve([0,a,0,b,0]);
  P<x,y,z> := Ambient(E);
<  p := E!(Points(Scheme(E,x-2*(t+1)*(3*t+2)*z))[2]);
  KodairaSymbols(E);
[ <I2, 1>, <I2, 1>, <I2, 1>, <I4, 1>, <I1, 2> ]
> val := {}; 
  for n in [1..12] do 
   q := n*p;
   d := Lcm([Denominator(r) : r in Eltseq(q)]);
   M := Matrix([[a*d : a in Eltseq(q)],[0,1,0]]);
   g := Gcd([Numerator(f) : f in Minors(M,2)]);
   val := val join {r[1] : r in Roots(g)};
  end for;
  val;
{ -2, -4/3, -1, -2/3, 0 }
\end{verbatim}
}
\end{tbox}

For the heptagons in  \S\ref{hepta} (and $k>1$) we use the following variation:
\begin{tbox}
{\footnotesize
\begin{verbatim}
> K<k> := FunctionField(Rationals());      
  a :=-k*(2*k+1)/(k+2);
  e := -(4*k+2);     
  x0 := 2*k*(k+1)^2/((k-1)*(k+2));
  y0 := x0*(5*k+3)/(1-k);
  b := -2*x0*(2*k+1)*(k+1)^2/((k+2)*(1-k));
  E := EllipticCurve([e,a,b,0,0]);  
  q := E![x0,y0];
  r := E![0,0];
  s := q+q;
  p := s-r;
  Order(p);    
0   
> KodairaSymbols(E);                
[ <IV, 1>, <I4, 1>, <I4, 1>, <I4, 1>, <IV, 1>, <IV, 1> ]

> val := {};
  for n in [1..12] do
   q := n*p;
   d := Lcm([Denominator(r) : r in Eltseq(q)]);
   M := Matrix([[a*d : a in Eltseq(q)],[0,1,0]]);
   g := Gcd([Numerator(f) : f in Minors(M,2)]);
   val := val join {r[1] : r in Roots(g)};
   end for;
  val;
{ -2, 1 }
\end{verbatim}
}
\end{tbox}
\end{computation}

\begin{computation}\label{goodprimes}
We find the non-polyhedral primes for
the polygon of \S\ref{sgasrhasrh} with a smooth minimal model $Y$. The~function DP3 computes the smooth cubic surface contraction of $Y$ and its hyperplane section, the elliptic curve. 
\begin{tbox}
{\footnotesize
\begin{verbatim}
> pol := Polytope([
  [3,0],[6,1],[8,2],[23,12],[27,15],[30,18],[30,19],
  [29,20],[21,26],[18,28],[16,29],[13,30],[12,30],
  [11,29],[9,25],[7,20],[1,4],[0,1],[0,0]
  ]);
  MX := imatX(pol);
  MY := imatY(pol);
  ind := [i : i in [1..#Vertices(pol)] | MX[i,i] eq MY[i,i]];
  vv,E,pts := DP3(pol,ind);
  roots := FindRoots(pol);
  B := resC(pol,E,vv,pts);
  ImgRoots := [&+[Eltseq(v)[i]*B[i] : i in [1..#B]] : v in roots];
  Cl,g := MapToY(pol);
  C := g(CinS(pol));
  ImgC := &+[Eltseq(C)[i]*B[i] : i in [1..#B]];
  {p : p in PrimesInInterval(2,2000) | p notin BadPrimes(E) and 
  not IsPolyhedralPrime(roots,ImgRoots,C,ImgC,p)};
{ 29, 43, 67, 71, 89, 101, 113, 167, 179, 181, 191, 197, 211, 233, 239, 241, 
263, 269, 313, 337, 349, 359, 379, 383, 409, 449, 461, 491, 557, 587, 617, 701, 
727, 733, 751, 769, 773, 809, 811, 829, 857, 877, 911, 929, 937, 977, 1031, 
1039, 1051, 1087, 1091, 1093, 1097, 1117, 1129, 1153, 1187, 1193, 1223, 1229, 
1231, 1237, 1249, 1259, 1303, 1319, 1321, 1433, 1481, 1489, 1511, 1523, 1553, 
1583, 1607, 1609, 1663, 1669, 1709, 1753, 1873, 1877, 1907, 1949, 1999 }
\end{verbatim}
}
\end{tbox}
\end{computation}

\begin{computation}\label{badprimes}
For a Halphen polygon $\Delta$ such that the corresponding
curve $C$ is smooth in characteristic $0$,
the function Bprimes computes the set of ``bad primes", namely, for any other prime $p$, in characteristic $p$  we have that:
\begin{itemize}
\item The Newton polygon of $C$ is equal to $\Delta$. 
\item The curve $C$ is smooth, with the same (smooth) Weierstrass model.
\item $K_X+C$ admits a uniform (over $p$) Zariski decomposition $N+P$, with 
$$P=0,\quad N=\sum a_i C_i,$$
with the curves $C_i$ irreducible. In particular, $Y$ has du Val singularities.  
\item If $Z\to Y$ is the minimal resolution, the roots in $\bE_8=\Cl_0(Z)$ that lie in $\Ker(\ored)$ stay 
the same as in characteristic $0$. 
\end{itemize}

For the Halphen polygon in Example \ref{ex3} we obtain: 
\begin{tbox}
{\footnotesize
\begin{verbatim}
> pol := Polytope([[0,0],[1,0],[6,1],[8,2],[7,5],[5,8],[1,2]]);
> Bprimes(pol);   
{ 2, 3, 5, 7, 11, 19, 71 } 
 \end{verbatim}
}
\end{tbox}
\end{computation}

\begin{computation}
 \label{sporadic}
We fix the prime $p := 2$ and the integer $e$ such that 
$|eC|$ is a pencil in characteristic $p$.
We then compute the cardinality of reducible fibers of the 
fibration $\pi\colon X\to {\mathbb P}^1$ associated to $|eC|$
(by the proof of Theorem~\ref{asgarh}, it is enough to consider 
fibers over points defined on ${\mathbb F}_p$). 

\begin{tbox}
{\footnotesize
\begin{verbatim}
> p := 2;
> e := 1;
> pol := Polytope([[0,0],[6,2],[9,4],[10,5],[9,6],[3,10],[1,4]]);
> ls := FindCurves(e*pol,e*Width(pol),GF(p));
> pencil := [ls[2]] cat [ls[1]+t*ls[2] : t in GF(p)];
> red := [];
> for g in pencil do
 mu := #[f : f in Factorization(g) | #Monomials(f[1]) gt 1];
 if mu gt 1 then Append(~red,mu); end if;
 end for;
> red;
[ 3, 2 ]
 \end{verbatim}
}
\end{tbox}
\end{computation}

\begin{bibdiv}
\begin{biblist}

\bib{MHV}{article}{
  title={On-shell structures of MHV amplitudes beyond the planar limit},
  author={{Arkani-Hamed}, Nima},
  author={Bourjaily, Jacob},
  author={Cachazo, Freddy},
  author={Postnikov, Alexander},
  author={Trnka, Jaroslav},
  journal={J. of High Energy Physics},
  volume={2015},
  number={6},
  pages={179},
  year={2015},
  publisher={Springer}
}

\bib{ADHL}{book}{
   author={Arzhantsev, Ivan},
   author={Derenthal, Ulrich},
   author={Hausen, J{\"u}rgen},
   author={Laface, Antonio},
   title={Cox rings},
   series={Cambridge Studies in Advanced Mathematics},
   volume={144},
   publisher={Cambridge University Press, Cambridge},
   date={2015},
   pages={viii+530},
}

\bib{Artin}{article}{
    AUTHOR = {Artin, Michael},
     TITLE = {Some numerical criteria for contractability of curves on
              algebraic surfaces},
   JOURNAL = {Amer. J. Math.},
    VOLUME = {84},
      YEAR = {1962},
     PAGES = {485--496},
     }


\bib{AS}{article}{
AUTHOR = {Alexeev, Valery},
AUTHOR={Swinarski, David},
     TITLE = {Nef divisors on {$\overline M_{0,n}$} from {GIT}},
 BOOKTITLE = {Geometry and arithmetic},
    SERIES = {EMS Ser. Congr. Rep.},
     PAGES = {1--21},
 PUBLISHER = {Eur. Math. Soc., Z\"{u}rich},
      YEAR = {2012},
 }

\bib{bel}{article}{
 AUTHOR = {Balletti, Gabriele},
 TITLE = {Enumeration of Lattice Polytopes by Their Volume},
 YEAR = {2020},
 JOURNAL = {Discrete Comput. Geom.},
 URL = {https://github.com/gabrieleballetti/small-lattice-polytopes/tree/master/data/2-polytopes}
}

\bib{Bash}{article}{
	year = {1972},
	volume = {27},
	number = {6},
	pages = {25--70},
	author = {Bashmakov, Marc},
	title = {The cohomology of abelian varieties over a number field},
	journal = {Russian Mathematical Surveys}
}

\bib{bauer}{article}{
 AUTHOR = {Bauer, Thomas},
     TITLE = {A simple proof for the existence of {Z}ariski decompositions
              on surfaces},
   JOURNAL = {J. Algebraic Geom.},
    VOLUME = {18},
      YEAR = {2009},
    NUMBER = {4},
     PAGES = {789--793},
}

\bib{mag}{article}{
    AUTHOR = {Bosma, Wieb},
    author={Cannon, John},
    author={Playoust, Catherine},
     TITLE = {The {M}agma algebra system. {I}. {T}he user language},
      NOTE = {Computational algebra and number theory (London, 1993)},
   JOURNAL = {J. Symbolic Comput.},
    VOLUME = {24},
      YEAR = {1997},
    NUMBER = {3-4},
     PAGES = {235--265},
}

\bib{BDPP}{article}{
AUTHOR = {Boucksom, S\'{e}bastien},
AUTHOR = {Demailly, Jean-Pierre}, 
AUTHOR = {P\u{a}un, Mihai},
AUTHOR = {Peternell, Thomas},
     TITLE = {The pseudo-effective cone of a compact {K}\"{a}hler manifold and
              varieties of negative {K}odaira dimension},
   JOURNAL = {J. Algebraic Geom.},
    VOLUME = {22},
      YEAR = {2013},
    NUMBER = {2},
     PAGES = {201--248},
}

\bib{BG}{article}{
AUTHOR = {Belkale, P.},
AUTHOR ={Gibney, A.},
     TITLE = {Basepoint free cycles on {$\overline{{\rm M}}_{0,n}$} from
              {G}romov-{W}itten theory},
   JOURNAL = {Int. Math. Res. Not. IMRN},
      YEAR = {2021},
    NUMBER = {2},
     PAGES = {855--884},
}
	
\bib{BGM}{article}{
AUTHOR = {Belkale, Prakash},
AUTHOR={Gibney, Angela}, 
AUTHOR={Mukhopadhyay,
              Swarnava},
     TITLE = {Vanishing and identities of conformal blocks divisors},
   JOURNAL = {Algebr. Geom.},
    VOLUME = {2},
      YEAR = {2015},
    NUMBER = {1},
     PAGES = {62--90},
}

\bib{BoMu}{article}{
AUTHOR = {Bombieri, E.},
AUTHOR = {Mumford, D.}, 
     TITLE = {Enriques' classification of surfaces in char. $p$. Part III},
        JOURNAL = {Invent. Math.},
    VOLUME = {35},
      YEAR = {1976},
     PAGES = {197--232},
}

\bib{BoMuII}{incollection}{
AUTHOR = {Bombieri, E.},
AUTHOR = {Mumford, D.}, 
     TITLE = {Enriques' classification of surfaces in char. $p$. Part II},
     BOOKTITLE = {Complex analysis and algebraic geometry},
     PAGES = {23--42},
      YEAR = {1977},
}

\bib{Cas}{article}{
    AUTHOR = {Castravet, Ana-Maria},
     TITLE = {The {C}ox ring of {$\overline M_{0,6}$}},
   JOURNAL = {Trans. Amer. Math. Soc.},
    VOLUME = {361},
      YEAR = {2009},
    NUMBER = {7},
     PAGES = {3851--3878},
}

\bib{Ca}{article}{
AUTHOR = {Castravet, Ana-Maria},
     TITLE = {Mori dream spaces and blow-ups},
 BOOKTITLE = {Algebraic geometry: {S}alt {L}ake {C}ity 2015},
    SERIES = {Proc. Sympos. Pure Math.},
    VOLUME = {97},
     PAGES = {143--167},
 PUBLISHER = {Amer. Math. Soc., Providence, RI},
      YEAR = {2018},
}

\bib{CD}{article}{
AUTHOR = {Cantat, Serge},
AUTHOR = {Dolgachev, Igor},
     TITLE = {Rational surfaces with a large group of automorphisms},
   JOURNAL = {J. Amer. Math. Soc.},
    VOLUME = {25},
      YEAR = {2012},
    NUMBER = {3},
     PAGES = {863--905},
}

\bib{CLS}{book}{
   author={Cox, David A.},
   author={Little, John B.},
   author={Schenck, Henry K.},
   title={Toric varieties},
   series={Graduate Studies in Mathematics},
   volume={124},
   publisher={American Mathematical Society, Providence, RI},
   date={2011},
   pages={xxiv+841},
}

\bib{Cox}{article}{   
    AUTHOR = {Cox, David A.},
     TITLE = {The homogeneous coordinate ring of a toric variety},
   JOURNAL = {J. Algebraic Geom.},
    VOLUME = {4},
      YEAR = {1995},
    NUMBER = {1},
     PAGES = {17--50},
}

\bib{CT_Comp}{article}{   
    AUTHOR = {Castravet, Ana-Maria},
    AUTHOR={Tevelev, Jenia},
     TITLE = {Rigid Curves on $\oM_{0,n}$ and Arithmetic Breaks},
   JOURNAL = {Contemporary Math.},
    VOLUME = {564},
      YEAR = {2012},
     PAGES = {19--67},
}

\bib{CT_Crelle}{article}{   
    AUTHOR = {Castravet, Ana-Maria},
    AUTHOR={Tevelev, Jenia},
     TITLE = {Hypertrees, projections, and moduli of stable rational curves},
   JOURNAL = {J. Reine Angew. Math.},
    VOLUME = {675},
      YEAR = {2013},
     PAGES = {121--180},
}

\bib{CT_Duke}{article}{   
  AUTHOR = {Castravet, Ana-Maria},
  AUTHOR={Tevelev, Jenia},
    TITLE = {{$\overline{M}_{0,n}$} is not a {M}ori dream space},
   JOURNAL = {Duke Math. J.},
    VOLUME = {164},
      YEAR = {2015},
    NUMBER = {8},
     PAGES = {1641--1667},
}

\bib{CT_AG}{article}{   
    AUTHOR = {Castravet, Ana-Maria},
    AUTHOR={Tevelev, Jenia},
     TITLE = {Derived category of moduli of pointed curves. {I}},
   JOURNAL = {Algebr. Geom.},
    VOLUME = {7},
      YEAR = {2020},
    NUMBER = {6},
     PAGES = {722--757},
 }

\bib{deb}{book}{
   author={Debarre, Olivier},
   title={Higher-dimensional algebraic geometry},
   series={Universitext},
   publisher={Springer-Verlag, New York},
   date={2001},
   pages={xiv+233},
}

\bib{DGJ}{article}{
    AUTHOR = {Doran, Brent},
    author={Giansiracusa, Noah},
    author={Jensen, David},
     TITLE = {A simplicial approach to effective divisors in
              {$\overline{M}_{0,n}$}},
   JOURNAL = {Int. Math. Res. Not. IMRN},
      YEAR = {2017},
    NUMBER = {2},
     PAGES = {529--565},
}

\bib{Dynkin}{article}{
    author = {Dynkin, E.B.},
    title = {Semisimple subalgebras of semisimple Lie algebras},
    journal = {Trans. Am. Math. Soc.},
    volume = {6},
    pages = {111--244},
    year = {1957}
}

\bib{FL}{article}{
AUTHOR = {Fulger, Mihai},
AUTHOR = {Lehmann, Brian},
     TITLE = {Zariski decompositions of numerical cycle classes},
   JOURNAL = {J. Algebraic Geom.},
    VOLUME = {26},
      YEAR = {2017},
    NUMBER = {1},
     PAGES = {43--106},
}

\bib{F}{article}{
author =  {Fedorchuk, Maksym},
TITLE =  {Symmetric F-conjecture for $g\leq 35$},
 JOURNAL = {arXiv:2007.13457}, 
   year={2020},
}

\bib{FS}{article}{
AUTHOR = {Fedorchuk, Maksym},
AUTHOR = {Smyth, David Ishii},
     TITLE = {Ample divisors on moduli spaces of pointed rational curves},
   JOURNAL = {J. Algebraic Geom.},
    VOLUME = {20},
      YEAR = {2011},
    NUMBER = {4},
     PAGES = {599--629},
}

\bib{FT}{article}{
author = {Fujino, Osamu},
author={ Tanaka, Hiromu},
journal = {Proc. Japan Acad. Ser. A Math. Sci.},
number = {8},
pages = {109--114},
title = {On log surfaces},
volume = {88},
year = {2012}
}

\bib{Fujino}{article}{
author = {Fujino, Osamu},
TITLE = {On minimal model theory for algebraic log surfaces},
   JOURNAL = {arXiv:2004.00246},
   year={2020},
}

\bib{Fulger}{article}{
author = {Fulger, Mihai},
TITLE = {Seshadri constants for curve classes},
   JOURNAL = {arXiv:1707.00734},
   year={2017},
}

\bib{GG}{article}{
  AUTHOR = {Giansiracusa, Noah},
 AUTHOR = {Gibney, Angela},
     TITLE = {The cone of type {$A$}, level 1, conformal blocks divisors},
   JOURNAL = {Adv. Math.},
    VOLUME = {231},
      YEAR = {2012},
    NUMBER = {2},
     PAGES = {798--814},
}

\bib{G}{article}{
AUTHOR = {Gibney, Angela},
     TITLE = {Numerical criteria for divisors on {$\overline M_g$} to be
              ample},
   JOURNAL = {Compos. Math.},
    VOLUME = {145},
      YEAR = {2009},
    NUMBER = {5},
     PAGES = {1227--1248},
}

\bib{GJM}{article}{
  AUTHOR = {Giansiracusa, Noah},
AUTHOR = {Jensen, David},
AUTHOR={Moon, Han-Bom},
     TITLE = {G{IT} compactifications of {$M_{0,n}$} and flips},
   JOURNAL = {Adv. Math.},
    VOLUME = {248},
      YEAR = {2013},
     PAGES = {242--278},
}

\bib{GKK}{article}{
    AUTHOR = {Gonz\'{a}lez, Jos\'{e} Luis},
    AUTHOR = {Karu, Kalle},
     TITLE = {Some non-finitely generated {C}ox rings},
   JOURNAL = {Compos. Math.},
    VOLUME = {152},
      YEAR = {2016},
    NUMBER = {5},
     PAGES = {984--996},
}

\bib{GKM}{article}{
 AUTHOR = {Gibney, Angela},
 AUTHOR={Keel, Sean}, 
 AUTHOR={Morrison, Ian},
     TITLE = {Towards the ample cone of {$\overline M_{g,n}$}},
   JOURNAL = {J. Amer. Math. Soc.},
    VOLUME = {15},
      YEAR = {2002},
    NUMBER = {2},
     PAGES = {273--294},
 }
	
\bib{GM_compositio}{article}{
	Author = {Gupta, Rajiv},
	Author= {Murty, M. Ram},
     TITLE = {Primitive points on elliptic curves},
   JOURNAL = {Compositio Math.},
    VOLUME = {58},
      YEAR = {1986},
    NUMBER = {1},
     PAGES = {13--44},
}	

\bib{GM}{article}{
	Author = {Gupta, Rajiv},
	Author= {Murty, M. Ram},
	Journal = {Inventiones mathematicae},
	Number = {1},
	Pages = {225--235},
	Title = {Cyclicity and generation of points mod p on elliptic curves},
	Volume = {101},
	Year = {1990},
}

\bib{GM_nef}{article}{
AUTHOR = {Gibney, Angela}, 
AUTHOR = {Maclagan, Diane},
TITLE = {Lower and upper bounds for nef cones},
   JOURNAL = {Int. Math. Res. Not. IMRN},
      YEAR = {2012},
    NUMBER = {14},
     PAGES = {3224--3255},
}

\bib{GNW}{article}{
    AUTHOR = {Goto, Shiro},
    author={Nishida, Koji},
    author={Watanabe, Keiichi},
     TITLE = {Non-{C}ohen-{M}acaulay symbolic blow-ups for space monomial
              curves and counterexamples to {C}owsik's question},
   JOURNAL = {Proc. Amer. Math. Soc.},
    VOLUME = {120},
      YEAR = {1994},
    NUMBER = {2},
     PAGES = {383--392},
}

\bib{Grivaux}{article}{
AUTHOR = {Grivaux, Julien},
     TITLE = {Parabolic automorphisms of projective surfaces (after {M}.
              {H}. {G}izatullin)},
   JOURNAL = {Mosc. Math. J.},
    VOLUME = {16},
      YEAR = {2016},
    NUMBER = {2},
     PAGES = {275--298},
}

\bib{HK}{article}{
   author={Hu, Yi},
   author={Keel, Sean},
   title={Mori dream spaces and GIT},
   note={Dedicated to William Fulton on the occasion of his 60th birthday},
   journal={Michigan Math. J.},
   volume={48},
   date={2000},
   pages={331--348},
}

\bib{hkl}{article}{
   author={Hausen, J\"{u}rgen},
   author={Keicher, Simon},
   author={Laface, Antonio},
   title={Computing Cox rings},
   journal={Math. Comp.},
   volume={85},
   date={2016},
   number={297},
   pages={467--502},
}

\bib{HKL}{article}{
    AUTHOR = {Hausen, J\"{u}rgen},
    AUTHOR = {Keicher, Simon},
    AUTHOR = {Laface, Antonio},
     TITLE = {On blowing up the weighted projective plane},
   JOURNAL = {Math. Z.},
    VOLUME = {290},
      YEAR = {2018},
    NUMBER = {3-4},
     PAGES = {1339--1358},
 }

\bib{HM}{article}{
	Author = {Harris, J.},
	Author= {Mumford, D.},
     TITLE = {On the {K}odaira dimension of the moduli space of curves},
      NOTE = {With an appendix by William Fulton},
   JOURNAL = {Invent. Math.},
     VOLUME = {67},
      YEAR = {1982},
    NUMBER = {1},
     PAGES = {23--88},
}

\bib{HasT}{incollection}{
    AUTHOR = {Hassett, Brendan},
    author={Tschinkel, Yuri},
     TITLE = {On the effective cone of the moduli space of pointed rational
              curves},
 BOOKTITLE = {Topology and geometry: commemorating {SISTAG}},
    SERIES = {Contemp. Math.},
    VOLUME = {314},
     PAGES = {83--96},
 PUBLISHER = {Amer. Math. Soc., Providence, RI},
      YEAR = {2002},
}

\bib{HeYang}{article}{
   author={He, Zhuang},
   author={Yang, Lei},
     TITLE = {Birational geometry of blow-ups of projective spaces along
              points and lines},
   JOURNAL = {Int. Math. Res. Not. IMRN},
      YEAR = {2021},
    NUMBER = {9},
     PAGES = {6442--6497},
 }

\bib{KMcK}{article}{
   AUTHOR = {Keel, Sean},
   AUTHOR = {M\textsuperscript{c}Kernan, James},  
   title={Contractible Extremal Rays on $\overline{M}_{0,n}$},
   eprint={arXiv:alg-geom/9607009v1},
   date={1996},
}

\bib{KM}{book}{
AUTHOR = {Koll\'{a}r, J\'{a}nos},
AUTHOR = {Mori, Shigefumi},
     TITLE = {Birational geometry of algebraic varieties},
    SERIES = {Cambridge Tracts in Mathematics},
    VOLUME = {134},
 PUBLISHER = {Cambridge University Press, Cambridge},
      YEAR = {1998},
     PAGES = {viii+254},
}

\bib{FA}{book}{
     editor = {Koll\'ar, J\'anos},
     title = {Flips and abundance for algebraic threefolds - A summer seminar at the University of Utah (Salt Lake City, 1991)},
     series = {Ast\'erisque},
     number = {211},
     year = {1992},
     url = {http://www.numdam.org/item/AST_1992__211__1_0},
}

\bib{LazI}{book}{
AUTHOR = {Lazarsfeld, Robert},
     TITLE = {Positivity in algebraic geometry. {I}},
    SERIES = {Ergebnisse der Mathematik und ihrer Grenzgebiete. 3. Folge. A
              Series of Modern Surveys in Mathematics},
    VOLUME = {49},
 PUBLISHER = {Springer-Verlag, Berlin},
      YEAR = {2004},
     PAGES = {xviii+385},
 }

\bib{LazII}{book}{
AUTHOR = {Lazarsfeld, Robert},
     TITLE = {Positivity in algebraic geometry. {II}},
    SERIES = {Ergebnisse der Mathematik und ihrer Grenzgebiete. 3. Folge. A
              Series of Modern Surveys in Mathematics},
    VOLUME = {49},
 PUBLISHER = {Springer-Verlag, Berlin},
      YEAR = {2004},
     PAGES = {xviii+385}
}

\bib{LM}{article}{
    AUTHOR = {Losev, A.},
    AUTHOR = {Manin, Y.},
     TITLE = {New moduli spaces of pointed curves and pencils of flat
              connections},
  JOURNAL = {Michigan Math. J.},
    VOLUME = {48},
      YEAR = {2000},
     PAGES = {443--472},   
}

\bib{lmfdb}{techreport}{
  author       = {LMFDB},
  title        = {The $L$-functions and Modular Forms Database},
  note = {available at \url{http://www.lmfdb.org}},
  year         = {2020},
}

\bib{LT}{article}{
author = {Lang, S.},
author = {Trotter, H.},
journal = {Bull. Amer. Math. Soc.},
number = {2},
pages = {289--292},
title = {Primitive points on elliptic curves},
volume = {83},
year = {1977}
}

\bib{lu}{article}{
   author={Laface, Antonio},
   author={Ugaglia, Luca},
   title={On base loci of higher fundamental forms of toric varieties},
   journal={J. Pure Appl. Algebra},
   volume={224},
   date={2020},
   number={12},
   pages={106447, 18},
}


\bib{Mazur}{article}{
     author = {Mazur, Barry},
     title = {Modular curves and the Eisenstein ideal},
     journal = {Publ.~Math.~IH\'ES},
     publisher = {Institut des Hautes \'Etudes Scientifiques},
     volume = {47},
     year = {1977},
     pages = {33--186},
}

\bib{Nikulin}{article}{
AUTHOR = {Nikulin, Viacheslav V.},
     TITLE = {A remark on algebraic surfaces with polyhedral {M}ori cone},
   JOURNAL = {Nagoya Math. J.},
    VOLUME = {157},
      YEAR = {2000},
     PAGES = {73--92},
}

\bib{Obinna}{techreport}{
 title={Database of blow-ups of toric surfaces of Picard number $2$},
 author={Obinna, Stephen},
 year={2017},
 series={University of Massachusetts},
 note = { \url{https://people.math.umass.edu/~tevelev/obinna/}},
}

\bib{OS}{article}{
  title={The Mordell-Weil lattice of a rational elliptic surface},
  author={Oguiso, Keiji},
  author={Shioda, Tetsuji},
  journal={Rikkyo Daigaku sugaku zasshi},
  volume={40},
  number={1},
  year={1991},
  pages={83--99}
}

\bib{OP}{article}{
author = {Oda, Tadao},
author={Park, Hye Sook},
journal = {Tohoku Math. J. (2)},
number = {3},
pages = {375--399},
publisher = {Tohoku University, Mathematical Institute},
title = {Linear Gale transforms and Gel'fand--Kapranov--Zelevinskij decompositions},
volume = {43},
year = {1991}
}

\bib{Opie}{article}{
    AUTHOR = {Opie, Morgan},
     TITLE = {Extremal divisors on moduli spaces of rational curves with marked points},
   JOURNAL = {Michigan Math. J.},
    VOLUME = {65},
      YEAR = {2016},
    NUMBER = {2},
     PAGES = {251--285},
}

\bib{Se}{article}{
	Author = {Serre, Jean-Pierre},
	Journal = {Inventiones mathematicae},
	Number = {4},
	Pages = {259--331},
	Title = {Propri{\'e}t{\'e}s galoisiennes des points d'ordre fini des courbes elliptiques},
	Volume = {15},
	Year = {1971},
}

\bib{SilST}{article}{
author = {Silverman, Joseph H.},
journal = {Journal für die reine und angewandte Mathematik},
pages = {197--211},
title = {Heights and the specialization map for families of abelian varieties},
volume = {342},
year = {1983},
}

\bib{Silv_Advanced}{book}{
AUTHOR = {Silverman, Joseph H.},
     TITLE = {Advanced topics in the arithmetic of elliptic curves},
    SERIES = {Graduate Texts in Mathematics},
    VOLUME = {151},
 PUBLISHER = {Springer-Verlag, New York},
      YEAR = {1994},
     PAGES = {xiv+525},
}

\bib{Silv}{book}{
  AUTHOR = {Silverman, Joseph H.},
     TITLE = {The arithmetic of elliptic curves},
    SERIES = {Graduate Texts in Mathematics},
    VOLUME = {106},
   EDITION = {Second},
 PUBLISHER = {Springer, Dordrecht},
      YEAR = {2009},
     PAGES = {xx+513},
}

\bib{ST}{book}{
  author={Silverman, Joseph H.},
   author={Tate, John T.},
   title={Rational points on elliptic curves},
   series={Undergraduate Texts in Mathematics},
   edition={2},
   publisher={Springer, Cham},
   date={2015},
   pages={xxii+332},
}

\bib{Tanaka}{article}{
AUTHOR = {Tanaka, Hiromu},
     TITLE = {Minimal models and abundance for positive characteristic log
              surfaces},
   JOURNAL = {Nagoya Math. J.},
    VOLUME = {216},
      YEAR = {2014},
     PAGES = {1--70},
}

\bib{Scattering}{article}{
    AUTHOR = {Tevelev, Jenia},
  title={Scattering amplitudes of stable curves},
   date={2020},
    JOURNAL = {arXiv:2007.03831},
    EPRINT =   {https://arxiv.org/pdf/2007.03831.pdf}
}

\bib{Chebotarev}{article}{
author={Tschebotareff, N.},
year={1926},
title={Die Bestimmung der Dichtigkeit einer Menge von Primzahlen, welche zu 
einergegebenen Substitutionsklasse geh\"oren},
journal={Math.Ann.},
volume={95},
pages={191--228}
}

\bib{Ver}{article}{
    AUTHOR = {Vermeire, Peter},
     TITLE = {A counterexample to {F}ulton's conjecture on {$\overline
              M_{0,n}$}},
   JOURNAL = {J. Algebra},
    VOLUME = {248},
      YEAR = {2002},
    NUMBER = {2},
     PAGES = {780--784},
}

\bib{W}{article}{
  title={Kummer theory of abelian varieties and reductions of Mordell-Weil groups},
  author={Weston, Tom},
  journal={Acta Arithmetica},
  volume={110},
  pages={77--88},
  year={2003}
}

	
\end{biblist}
\end{bibdiv}

\end{document}